\newcommand{\EE}{\mathbb{ E}}
\newcommand{\PP}{\mathbb{P}}
\newcommand{\R}{\mathbb{R}}
\newcommand{\C}{\mathbb{C}}
\newcommand{\Q}{\mathbb{Q}}
\newcommand{\HH}{\mathbb{H}}
\newcommand{\N}{\mathbb{N}}
\newcommand{\D}{\mathbb{D}}
\newcommand{\Z}{\mathbb{Z}}
\newcommand{\TT}{\mathbb{T}}
\newcommand{\pa}{\partial}
\newcommand{\F}{{\cal F}}
\def\til{\widetilde}
\def\ha{\widehat}
\def\sem{\setminus}
\def\lin{\overline}
\def\ulin{\underline}
\def\luto{\stackrel{\rm l.u.}{\longrightarrow}}
\def\L{{\cal L}}
\def\conf{\stackrel{\rm Conf}{\twoheadrightarrow}}
\DeclareMathOperator{\rad}{rad}
\DeclareMathOperator{\dist}{dist} \DeclareMathOperator{\dcap}{dcap}
\DeclareMathOperator{\hcap}{hcap} \DeclareMathOperator{\id}{id}
\DeclareMathOperator{\Imm}{Im }
\DeclareMathOperator{\mA}{m} 
 \DeclareMathOperator{\cc}{c}
\DeclareMathOperator{\bb}{b} \DeclareMathOperator{\doub}{doub}
\DeclareMathOperator{\Hull}{Hull}  \DeclareMathOperator{\ii}{i}
\theoremstyle{plain}
\newtheorem{Theorem}{Theorem}[section]
\newtheorem{Lemma}[Theorem]{Lemma}
\newtheorem{Corollary}[Theorem]{Corollary}
\newtheorem{Proposition}[Theorem]{Proposition}
\theoremstyle{definition}
\newtheorem{Definition}[Theorem]{Definition}
\newtheorem{Remark}[Theorem]{Remark}
\numberwithin{equation}{section}
\newcommand{\BGE}{\begin{equation}}
\newcommand{\BGEN}{\begin{equation*}}
\newcommand{\EDE}{\end{equation}}
\newcommand{\EDEN}{\end{equation*}}
\def\dto{\stackrel{\rm Cara}{\longrightarrow}}
\begin{document}
\title{Green's function for cut points of  chordal SLE\\ attached with boundary arcs}
\author{Dapeng Zhan\thanks{Michigan State University, zhan@math.msu.edu}
}
\maketitle


\begin{abstract}
{A technique of two-curve Green's function is used to study the Green's function of cut points of chordal SLE$_\kappa$ for $\kappa\in(4,8)$. In order to apply the technique, we take the union of the SLE curve with two open boundary arcs, which share two boundary points other than the endpoints of the SLE curve. The Green's function of interest is, for any $z_0$ in the domain, the limit as $r\downarrow 0$ of the $r^{-\alpha}$ times the probability that $\{|z-z_0|<r\}$ contains a cut point of the above union, where $\alpha=\frac 38\kappa-1$. We prove the limit exists, obtain an exact formula  up to a multiplicative constant, and derive a rate of convergence.
 }
\end{abstract}


\section{Introduction and Main Result}
The Schramm-Loewner evolution (SLE), first introduced by Oded Schramm in 1999 (\cite{S-SLE}), is a one-parameter ($\kappa\in(0,\infty)$) family of measures on plane curves. 
The SLE with different parameters have been proved to be scaling limits of various lattice models. We refer the reader to Lawler's textbook \cite{Law-SLE} for basic properties of SLE.

An SLE$_\kappa$ curve is simple for $\kappa\in(0,4]$, space-filling for $\kappa\in[8,\infty)$, and non-simple and non-space-filling for $\kappa\in(4,8)$ (cf.\ \cite{RS}). The Hausdorff dimension of an SLE$_\kappa$ curve is $\min\{1+\frac\kappa 8,2\}$ (cf.\ \cite{RS,Bf}).
Suppose $\kappa\in(0,8)$. Then the probability that a chordal SLE$_\kappa$ curve $\eta$ exactly passes through any fixed interior point $z_0$ in the domain is zero. People were interested in the decay rate of the probability $\PP[\dist(z_0,\eta)<r]$ as $r\downarrow 0$. Specifically, it is about the limit $\lim_{r\downarrow 0} r^{-\alpha} \PP[\dist(z_0,\eta)<r]$, where $\alpha=1-\frac \kappa 8>0$ equals $2$ minus the Hausdorff dimension. The limit is called the Green's function for SLE.

A Green's function problem usually includes two parts: proving the existence of the limit  and finding the exact formula of the Green's function. The exact formula of the Green's function of chordal SLE was predicted in \cite{RS}, and the convergence and the formula were proved later in \cite{LR}. The paper \cite{LR} also proves the convergence of two-point Green's function, i.e., the limit of the renormalized probability that an SLE$_\kappa$ curve gets close to two distinct points, and uses the one- and two- point Green's functions to prove the existence of Minkowski content of SLE.

Miller and Wu studied (cf.\ \cite{MW}) the Hausdorff dimensions of the sets of double points and cut points of SLE. Among other results, they proved that the Hausdorff dimension of the set of cut points of SLE$_\kappa$ is $3-\frac{3}8\kappa$ for $\kappa\in(4,8)$. They derived a Green's function type estimate: for a chordal SLE$_\kappa$ curve $\eta$ and a point $z_0$ in the same domain,
\BGE \PP[\eta \mbox{ has a cut point contained in }\{|z-z_0|<r\}]=r^{\alpha_0+o(1)},\quad \mbox{as }r\downarrow 0,\label{MW-est}\EDE
where   the exponent $\alpha_0$ given by
\BGE \alpha_0=\frac 38\kappa-1\label{alpha0}\EDE
equals $2$ minus the Hausdorff dimension of the cut-set, and $o(1)\to 0$ as $r\downarrow 0$.

 The intention of the current paper is to improve the estimate (\ref{MW-est}). Let $\eta$ be a chordal SLE$_\kappa$ curve in a simply connected domain from $a_1$ to $a_2$. We expect  that the limit
 \BGE G_{D;a_1,a_2}(z_0):= \lim_{r\downarrow 0} r^{-\alpha_0} \PP[\eta \mbox{ has a cut point contained in }\{|z-z_0|<r\}]\label{Green-original}\EDE should converge to a finite positive number.
This is inspired by \cite{Brownian-cut}, which studies the one-point and two-point Green's functions for cut points of Brownian motion, and uses them to prove the existence of Minkowski content for the set of cut points.
 

A new technique was introduced in \cite{Two-Green-interior}, which studies the Green's function of $2$-SLE$_\kappa$ for $\kappa\in(0,8)$. A $2$-SLE$_\kappa$ configuration is a pair of random curves $(\eta_1,\eta_2)$ in a simply connected domain $D$ connecting four marked boundary points such that when any curve is given, the other curve is a chordal SLE$_\kappa$ in a complement domain of the given curve. The Green's function for such $2$-SLE$_\kappa$ at $z_0\in D$ is the limit $\lim_{r\downarrow 0} r^{-\alpha}\PP[\eta_j\cap \{|z-z_0|<r\}\ne \emptyset,j=1,2]$, where $\alpha>0$ is given by $\frac{(12-\kappa)(\kappa+4)}{8\kappa}$. Such a limit turns out to converge to a positive number, whose value can be described using a hypergeometric function.

The main idea in \cite{Two-Green-interior} is the following. Assume that $D=\D:=\{|z|<1\}$ and $z_0=0$ by conformal invariance. Suppose $\eta_j$ connects $a_j$ with $b_j$, $j=1,2$, and $a_1,b_1,a_2,b_2$ are oriented clockwise. After a reduction we may assume that $b_1=1$ and $b_2=-1$. Then we simultaneously grow $\eta_1$ and $\eta_2$ respectively from $a_1$ and $a_2$ with some random speeds such that at any time $t$ before the process stops,
\begin{enumerate}
  \item [(C1)] the conformal radius of the remaining domain viewed from $0$ is $e^{-t}$, and
  \item [(C2)] $1$ and $-1$ equally split the harmonic measure of the remaining domain  viewed from $0$.
\end{enumerate}
 The process stops at some time $T$ when the two curves together disconnect $0$ from any of $b_1,b_2$. By Koebe's $1/4$ theorem and Beurling's estimate, we then know that for any $t\in[0,T)$, $\max\{\dist(0,\eta_j[0,t]),j=1,2\}\asymp e^{-t}$. After the time $T$, the two curves will not both get closer to $0$, and so $\max\{\dist(0,\eta_j),j=1,2\}\asymp e^{-T}$.

Thus, the original limit problem is transformed into finding the decay rate of $\PP[T>s]$ as $s=-\log(r)\to \infty$. For any $t\in[0,T)$, one may find a conformal map, which maps the remaining domain at $t$ conformally onto $\D$, and fixes $0,1,-1$. Then $\eta_1(t)$ and $\eta_2(t)$ are mapped to $e^{iZ_1(t)}$ and $-e^{iZ_2(t)}$ for some $Z_1(t),Z_2(t)\in(0,\pi)$. The $(Z_1,Z_2)$ is a two-dimensional diffusion process with lifetime $T$. The explicit transition density of this process was derived in \cite{Two-Green-interior} using It\^o's calculus and orthogonal polynomials of two variables, which implies the key estimate that there is a function $G$ on $(0,\pi)^2$ such that if $(Z_1,Z_2)$ starts from $(z_1,z_2)$, then $P[T>s]=G(z_1,z_2) e^{-\alpha_1 s} (1+O(e^{-\beta s}))$ for some constant $\beta>0$. Such an estimate was then used to prove the convergence of the Green's function.

The two-curve technique in \cite{Two-Green-interior} was later used in \cite{Two-Green-boundary} to study the existence of Green's function for $2$-SLE$_\kappa$ at a boundary point. The results of \cite{Two-Green-interior,Two-Green-boundary} may serve as the first few steps in proving the existence of Minkowski content of double points of SLE$_\kappa$.

In the current paper, we are going to apply the two-curve technique to study the cut point Green's function. Let $D$ be a Jordan domain, and $a_1\ne a_2\in\pa D$. Let $\kappa\in(4,8)$, and $\gamma_1$ be a chordal SLE$_\kappa$ curve  in $D$ from $a_1$ to $a_2$. In order to apply the technique, we introduce two other points $b_1,b_2\in\pa D$ such that $b_j$ lies on the open boundary arc of $D$ from $a_j$ to $a_{3-j}$ in the clockwise direction. Let $A_j$ be the open boundary arc of $D$ with end points $b_1,b_2$ that contains $a_j$, $j=1,2$. Instead of considering the Green's function for cut points of $\gamma_1$, we study the Green's function for the cut points of the union $\gamma_1\cup A_1\cup A_2$. This is a compromise we have to make now in order to apply the two-curve technique in \cite{Two-Green-interior}. Our main theorem is the following.


\begin{Theorem}
Let $z_0\in D$ and $R=\dist(z_0,D^c)$. Let $f$ be a conformal map from $D$ onto $\D$ such that $f(z_0)=0$. Let $w_j,v_j\in\R$ be such that $e^{iw_j}=f(a_j)$ and $e^{iv_j}=f(b_j)$, $j=1,2$. Let $\alpha_0$ be as in (\ref{alpha0}). Define  $ {G_{D;a_1,b_1,a_2,b_2}(z_0)}=|f'(z_0)|^{\alpha_0} \til G(w_1,v_1,w_2,v_2)$,
where
\BGE \til G(w_1,v_1,w_2,v_2):=\Big|\sin\Big(\frac{w_1-w_2}2\Big)\Big|^{\frac 8\kappa -1}\Big|\sin\Big(\frac{v_1-v_2}2\Big)\Big|^{\frac{(\kappa-4)^2}{2\kappa}} \prod_{j=1}^2\prod_{s=1}^2 \Big|\sin\Big(\frac{w_j-v_s}2\Big)\Big|^{1-\frac 4\kappa}.\label{G}\EDE
 Let $\beta_0=1-\frac 8{5\kappa}$.
Then there is a positive constant $C_0$ depending only on $\kappa$ such that
$$\PP[\gamma_1\cup A_1\cup A_2\mbox{ has a cut point contained in }\{|z-z_0|<r\}]$$
$$=C_0 {G_{D;a_1,b_1,a_2,b_2}(z_0)} r^{\alpha_0}(1+O(r/R)^{\beta_0}),\quad \mbox{as } r/R\downarrow 0.$$
Here the implicit constants in the $O(\cdot)$ symbol depend only on $\kappa$.
\label{Main-thm}
\end{Theorem}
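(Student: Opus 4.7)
The plan is to adapt the two-curve growth technique of \cite{Two-Green-interior} to the present one-curve-plus-two-arcs setting. By conformal invariance of chordal SLE$_\kappa$ and the expected conformal covariance of the Green's function, I first reduce to $D=\D$ with $z_0=0$. Using reversibility of chordal SLE$_\kappa$ for $\kappa\in(4,8)$, I view $\gamma_1$ as a pair of curves grown from $a_1$ and $a_2$ simultaneously, meeting in the interior at a cut point. An interior cut point of $\gamma_1\cup A_1\cup A_2$ close to $0$ then corresponds, up to Koebe/Beurling-scale error, to the event that the two tips simultaneously approach $0$ while (a) $\gamma_1$ separates $b_1$ from $b_2$ but (b) $0$ still sees both $b_1$ and $b_2$ in the complement.

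Next, parametrize the two halves of $\gamma_1$ by a common clock $t$ so that, up to the first time $T$ at which $0$ is disconnected from $b_1$ or $b_2$ by the growing hulls: (C1) the conformal radius at $0$ of the remaining domain $D_t$ equals $e^{-t}$, and (C2) a harmonic-measure-balance condition involving the two tips and $b_1,b_2$ is preserved. Mapping $D_t$ conformally onto $\D$ fixing $0$ and using the residual rotational degree of freedom to impose (C2), the prime-end images of the two tips and of $b_1,b_2$ yield a three-dimensional diffusion $\vec X(t)$ on the moduli space $\mathcal M$ of four ordered marked points on $\pa\D$ modulo rotation. Using It\^o calculus together with the two-curve SLE$_\kappa$ commutation relation from \cite{Two-Green-interior}, I would derive the explicit SDE and generator $\mathcal A$ of $\vec X$.

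The main computation is to verify that the function $\til G$ defined by (\ref{G}), viewed as a function on $\mathcal M$, satisfies $\mathcal A\til G = -\alpha_0 \til G$ and is the (unique up to scaling) positive ground state of $\mathcal A$. The specific exponents $\frac 8\kappa-1$, $\frac{(\kappa-4)^2}{2\kappa}$ and $1-\frac 4\kappa$ in (\ref{G}) are forced by the requirement that all drift and second-order terms arising from the two tips and from the pair $b_1,b_2$ cancel; this is a direct if lengthy SLE calculation. Once $\til G$ is identified, a Girsanov change of measure with martingale $M_t = e^{\alpha_0 t}\til G(\vec X(t))/\til G(\vec X(0))\mathbf 1_{\{T>t\}}$ turns $\vec X$ into an ergodic diffusion on $\mathcal M$, and a spectral-gap argument, following \cite{Two-Green-interior} but with multivariate orthogonal polynomials adapted to the three-dimensional $\mathcal M$, yields
\BGEN \PP[T>s] = C_1\til G(\vec X(0))\, e^{-\alpha_0 s}\bigl(1+O(e^{-\beta_0 s})\bigr). \EDEN
Converting back through Koebe's $1/4$ theorem and Beurling's estimate, and tracking the conformal distortion factor $|f'(z_0)|^{\alpha_0}$ coming from $f$, then yields the stated asymptotics.

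I expect the two principal obstacles to be (i) choosing condition (C2) so that $\til G$ becomes exactly the ground state of $\mathcal A$ at eigenvalue $\alpha_0$, which couples the choice of normalization to the Coulomb-gas exponents in (\ref{G}); and (ii) isolating the explicit spectral gap $\beta_0=1-\frac{8}{5\kappa}$, which presumably reduces to locating the second eigenvalue of a multivariate Jacobi-type operator on $\mathcal M$. A secondary technical difficulty is controlling atypical configurations at the stopping time $T$, which requires separate one-sided estimates showing that after $T$ the curves do not return close to $0$ except with probability $O((r/R)^{\alpha_0+\beta_0})$, so that $\{T>\log(R/r)\}$ and the cut-point event of the theorem differ only by sub-leading terms.
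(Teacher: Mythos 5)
Your overall strategy is the right one---reversibility, two-curve growth with a capacity/balance normalization, a Girsanov change of measure identified with $e^{\alpha_0 t}\til G$, and an orthogonal-polynomial computation of the transition density. But two of your implementation choices would break, and a third mis-states the source of the rate.

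First, the dimension of the reduced diffusion. You propose to impose your condition (C2) only to kill the rotational freedom and then work with a three-dimensional diffusion on the moduli space of four marked boundary points modulo rotation. The paper's normalization (C2) is stronger: it requires that after uniformizing the remaining domain to $\D$ fixing $0$, the images $e^{iV_1},e^{iV_2}$ of $b_1,b_2$ remain \emph{antipodal}, i.e.\ $V_1(\ulin u(t))-V_2(\ulin u(t))\equiv\pi$. This is not merely a choice of rotation; it is a genuine constraint that selects the time curve $\ulin u$ through the two-parameter region $\cal D$, and it is what collapses the state to the two-dimensional process $\ulin Z^u=(W_1-V_1,W_2-V_2)\in(0,\pi)^2$. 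That collapse is not cosmetic: after the change of variables $(X,Y)=(\cos_2(Z_1+Z_2),\sin_2(Z_1-Z_2))$ the generator becomes a classical second-order operator on the disk whose $\langle\cdot\rangle_\Psi$-orthogonal polynomial eigenfunctions are explicit Jacobi polynomials, and the entire transition-density estimate (Lemma \ref{asym-lemma}) rests on that. A three-dimensional moduli diffusion does not admit an equally explicit orthogonal system, and the spectral expansion you are relying on would be out of reach. Note also that the problem is posed for general $v_1-v_2$; the paper first proves the result under $v_1-v_2=\pi$ (where (C2) holds at time $0$) and then reduces the remaining cases by running a single curve until $\theta=\pi$, using (\ref{pathetamA}). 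Your sketch does not account for this reduction.

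Second, the provenance of $\beta_0=1-\frac{8}{5\kappa}$. You suggest it arises as a spectral gap (second eigenvalue) of the operator on $\mathcal M$. It does not. The spectral gap of the operator is $\lambda_0-\lambda_1=\frac58\kappa-1$, giving an error term $O(e^{(1-\frac58\kappa)t})$ in the transition-density estimate. Separately, the comparison between the event at scale $r$ and the conditional event at the rescaled radius $R_\pm$ (Lemma \ref{conditional prob}) introduces a Koebe-distortion error $O(R)$. In the final argument one chooses the intermediate scale $R=r^{\beta_0}$ so that these two errors match: $r^{\beta_0}\asymp e^{(1-\frac58\kappa)t_\pm}$ with $e^{t_\pm}\asymp r^{\beta_0-1}$, and solving $(1-\beta_0)(\frac58\kappa-1)=\beta_0$ gives $\beta_0=1-\frac8{5\kappa}$. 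If you treat $\beta_0$ as a spectral quantity you will not reproduce the stated rate.

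Third, on the identification of $\til G$: verifying that $\til G$ is an eigenfunction of the generator by brute cancellation is, in principle, an option, but it forgoes the structural shortcut the paper uses. There, $\til G$ emerges automatically as (the inverse of) the Radon--Nikodym ratio $M_c/M_*$ between two explicit two-parameter SLE$_\kappa(\ulin\rho)$-martingales --- the one generating the commuting SLE$_\kappa(\kappa-6,0,0)$ pair (which produces the cut-point configuration) and the one generating the SLE$_\kappa(2,\kappa-4,\kappa-4)$ pair (which makes the reduced diffusion ergodic). The exponents $\frac8\kappa-1$, $\frac{(\kappa-4)^2}{2\kappa}$, $1-\frac4\kappa$ and the eigenvalue $-\alpha_0$ all drop out of this quotient without solving any PDE. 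You would also need the two-directional domain Markov property for chordal SLE$_\kappa$ (Theorem \ref{DMP-thm}), which your outline silently assumes; its proof is nontrivial and occupies the paper's appendix.
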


We will basically follow  the approach in \cite{Two-Green-interior}.
First suppose $D=\D$, $z_0=0$, $b_1=1$, and $b_2=-1$. Let $\gamma_2$ be a time-reversal of $\gamma_1$, which by reversibility of SLE$_\kappa$ (cf.\ \cite{MS3}) is a chordal SLE$_\kappa$ in $\D$ from $a_2$ to $a_1$. We then grow $\gamma_1$ and $\gamma_2$ simultaneously respectively from $a_1$ and $a_2$ with random speeds such that at any time $t$ before the first time $T$ that $A_1\cup \eta_1[0,T]$ intersects $A_2\cup \eta_2[0,T]$, Conditions (C1) and (C2) are both satisfied. By mapping the remaining domain conformally onto $\D$ with $0,1,-1$ fixed, we obtain a two-dimensional diffusion process $(Z_1(t),Z_2(t))$ taking values in $(0,\pi)^2$. We then use orthogonal polynomials to derive the transition density of $(Z_1,Z_2)$, and use it to prove Theorem \ref{Main-thm}.

This paper can be viewed as the first step of proving the existence of Minkowski content of cut points of SLE. For that purpose, following the approach of \cite{LR}, we will also need   {the ordered two-point Green's function, i.e., for $z_1\ne z_2\in D$,  the limit
$$\lim_{r_1,r_2\downarrow 0} r_1^{-\alpha_0}r_2^{-\alpha_0} \PP[\gamma_1\mbox{ first visits a cut point of } \gamma_1\cup A_1\cup A_2\mbox{ in }\{|z-z_1|<r_1\}$$
$$\mbox{ and then visits a cut point of }\gamma_1\cup A_1\cup A_2\mbox{ in }\{|z-z_2|<r_2\}].$$  }

{The first result on two-point Green's functions was derived in \cite{LW}, where the ordered two-point Green's function for a chordal SLE$_\kappa$ curve $\gamma_1$ in $D$ from $a_1$ to $a_2$ is expressed as $$G_{D;a_1,a_2}(z_1)\cdot \EE[G_{D_{\gamma_*};z_1,a_2}(z_2)],$$ where $G_{D;a_1,a_2}$ is the one-point Green's function for $\gamma_1$, $D_{\gamma_*}$ is the complement domain of a random curve $\gamma_*$ in $D$ whose boundary contains $a_2$, where $\gamma_*$ is the first arm of a two-sided radial SLE$_\kappa$ curve in $D$ from $a_1$ to $a_2$ via $z_1$, which is also a radial SLE$_\kappa(2)$ curve in $D$ from $a_1$ to $z_1$ with the force point at $a_2$, and the expectation is with respect to the randomness of $\gamma_*$. Here the SLE$_\kappa(2)$ curve  $\gamma_*$ can be intuitively viewed as the chordal SLE$_\kappa$ curve $\gamma_1$ conditioned on the singular event that it passes through $z_1$, and $z_1$ is a boundary point of $D_{\gamma_*}$.}

{Thus, we expect that the ordered two-point Green's function for cut points of $\gamma_1\cup A_1\cup A_2$ can be expressed as the product
$$G_{D;a_1,b_1,a_2,b_2}(z_1)\cdot \EE[G_{D_{\gamma_*};z_1,b_1,a_2,b_2}(z_2)],$$
where  $G_{D;a_1,b_1,a_2,b_2}$ is the (one-point) Green's function as in Theorem \ref{Main-thm}, $D_{\gamma_*}$ is the complement domain of a random curve $\gamma_*$ in $D$ whose boundary contains $a_2$, where $\gamma_*$ is an SLE$_\kappa(2,\kappa-4,\kappa-4)$ curve in $D$ from $a_1$ to $z_1$ with force points at $a_2,b_1,b_2$. We will see later in Section \ref{subsubsection-construction} that such an SLE$_\kappa(2,\kappa-4,\kappa-4)$ curve $\gamma_*$ can be intuitively viewed as the chordal SLE$_\kappa$ curve $\gamma_1$  conditioned on the singular event that $\gamma_1\cup A_1\cup A_2$ has a cut point at $z_1$.
}

 {One may also consider the boundary Green's function for cut points of SLE. This is the limit of (\ref{Green-original}) in the case that $z_0$ is a boundary point. Here $\alpha_0$ should be replaced by another exponent $\alpha_1$, and $\pa D$ is assumed to be analytic near $z_0$. Similarly as in \cite{LW}, the boundary Green's function for cut points may be used to study the two-point cut point Green's function. While the existence of the boundary Green's function for cut points is beyond reach now, the technique of this paper may be used to prove the convergence of the limit:
$$\lim_{r\downarrow 0} r^{-1} \PP[\gamma_1\cup A_1\cup A_2\mbox{ has a cut point contained in }\{|z-b_1|<r\}],$$
where $b_1,b_2,A_1,A_2$ are as before. This means that we set $b_1=z_0$, and take $\alpha_1=1$.
}

Theorem \ref{Main-thm} does not immediately imply the convergence of the original limit (\ref{Green-original}), which motivated this paper. We made the problem easier by adding two marked boundary points. One approach to attack the original problem is to add two more  marked boundary points, which means that the attached boundary arcs do not share end points. In that case the formula of the Green's function is expected to be {a multivariable special function}.

We believe that the limit (\ref{Green-original}) does converge.  {We may consider the simple case that} $D=\HH=\{\Imm z>0\}$, $a_1=0$ and $a_2=\infty$. By the scaling property and reflection symmetry of SLE, there is a positive function $S(\theta)$ on $(0,\pi)$, which is symmetric about $\pi/2$, such that  $G_{\HH;0,\infty}(z)= {|z|^{-\alpha_0} S(\arg z)}$. {To get the explicit formula of $G$, we need to find out the formula of $S$.} The boundary exponent $\alpha_1=1$ suggests that $\lim_{\theta\downarrow 0}S(\theta)/\theta$ is finite and positive. 

Below is a sketch of the rest of the paper. In Section \ref{preliminary}, we review Loewner equations and two-time-parameter stochastic processes. In Section \ref{section-Ensemble-radial}, we recall the setup and results from \cite[Sections 3,4,5]{Two-Green-interior} with suitable modifications to fit the  {setting} here. We then prove Theorem \ref{Main-thm}  in Section \ref{section-cutpoint}. In the appendix, we give a detailed proof of a two directional  domain Markov property for chordal SLE$_\kappa$ which satisfies reversibility.

\section{Preliminary} \label{preliminary}
Let $\HH=\{z\in\C :\Imm z>0\}$ and $\D=\{z\in\C:|z|<1\}$. Let $J(z)=-1/z$.
By $f:D\conf E$ we mean that $f$ maps $D$ conformally onto $E$. Let $\cot_2,\sin_2,\cos_2,\tan_2$ denote the functions $\cot(\cdot/2),\sin(\cdot/2),\cos(\cdot/2),\tan(\cdot/2)$, respectively.

\subsection{Prime ends}
For the sake of rigor, we will use the notion of prime ends (cf.\ \cite{Ahl}). We prefer the description used in \cite{LERW}. Let $D$ be a simply connected domain. Consider the set of pairs $(f,\zeta)$, where $f:D\conf \D$, and $\zeta\in \lin \D$. Such a set is nonempty by Riemann mapping theorem. Define an equivalence relation ``$\sim$'' on this set such that $(f_1,\zeta_1)\sim(f_2,\zeta_2)$  if (the continuation of) $f_2\circ f_1^{-1}$ maps $\zeta_1$ to $\zeta_2$. The set of $(f,\zeta)$ modulo the equivalence relation is called the prime end closure of $D$, which is equipped with a unique metrizable topology such that for any $f:D\conf \D$, the map $[(f,\zeta)]\mapsto \zeta$ is a homeomorphism from $\ha D$ onto $\lin\D$. Note that $z\mapsto [(f,f(z))]$ is an embedding from $D$ into $\ha D$. We identify $D$ as an open subset of $\ha D$ by identifying $z\in D$ with $[(f,f(z))]$, which does not depend on the choice of $f$.  We call $\ha\pa D:=\ha D\sem D$ the prime end boundary of $D$, and every  $ p\in \ha\pa D$  a prime end of $D$. If $g:D_1\conf D_2$, then $g$ induces a homeomorphism $g:[(f,\zeta)]\mapsto [(f\circ g^{-1},\zeta)]$ from $\ha D_1$ onto $\ha D_2$, which maps $\ha\pa D_1$ onto $\ha\pa D_2$.

Let $\lin D^\#$ be the closure of $D$ in the Riemann sphere $\ha\C:=\C\cup\{\infty\}$, and $\pa^{\#} D=\lin D^\#\sem D$. Let $\zeta\in \pa^{\#} D$ (a point in $\ha\C$) and $p\in \ha\pa D$ (a prime end of $D$). If $z\in D$ and $z\to p$ (w.r.t.\ the topology of $\ha D$) implies that $z\to \zeta$ (w.r.t.\ the metric of $\ha\C$), then we say that  $\zeta$ is determined by $p$. On the other hand, if $z\in D$ and $z\to\zeta$ implies that $z\to p$, then we say that $\zeta$ determines $p$. If $p$ and $\zeta$ determine each other, then we do not distinguish them. For example, if $D$ is a Jordan domain in $\ha\D$ such as $\D$ or $\HH$, then every $p\in\ha D$ is identified with a point $\zeta\in \pa^\# D$. 

Suppose $\til D\subset D$ are two simply connected domains. Let $p $ be a prime end of $D$. If $\til D$ is a neighborhood of $p$ in $D$, i.e., the intersection of $D$ and a neighborhood of $p$ in $\ha D$, then by Schewarz reflection principle, there is a unique prime end $\til p$ of $\til D$ such that for $z\in \til D$, $z\to p$ in $D$ iff $z\to \til p$ in $\til D$. When this happens, we do not distinguish $\til p$ from $p$, and say that $D$ and $\til D$ share the prime end $p$.

\subsection{Chordal  Loewner equations}\label{chordal}
A relatively closed subset $K$ of $\HH$ is called an $\HH$-hull if $K$ is bounded and $\HH\sem K$ is a simply connected domain, and $\HH\sem K$ is called an $\HH$-domain.  For an $\HH$-hull $K$, there is a unique  $g_K:\HH\sem K\conf \HH$ such that $g_K(z)=z+\frac cz+O(1/z^2)$ as $z\to \infty$ for some $c\ge 0$. The constant $c$, denoted by $\hcap(K)$, is called the $\HH$-capacity of $K$, which is zero iff $K=\emptyset$.  We write $\hcap_2$ for $\hcap(\cdot/2)$.
For a set $S\subset\C$, if there is an $\HH$-hull $K$ such that $\HH\sem K$ is
the unbounded connected component of $\HH\sem \lin S$, then we say that $K$ is  the $\HH$-hull generated by $S$, and write $K=\Hull(S)$. In this case we write $\hcap(S)$ and $\hcap_2(S)$ respectively for $\hcap(K)$ and $\hcap_2(K)$.


Let $\ha w \in C([0,T),\R)$ for some $T\in(0,\infty]$. The chordal Loewner equation driven by $\ha w$  is
$$\pa_t g_t(z)=  \frac{2}{g_t(z)-\ha w(t)},\quad 0\le t<T;\quad g_0(z)=z.$$
 { For a fixed $z\in\C$, this is an ODE in $t$ with initial value at $0$. Let $\tau_z\in(0,\infty]$ be such that $[0,\tau_z)$ is the maximal  interval of the solution.}
For  $t\in[0,T)$, let $K_t=\{z\in\HH:\tau_z\le t\}$. It turns out that each $K_t$ is an $\HH$-hull with $\hcap_2(K_t)=t$, 
and each $g_t$ agrees with the $g_{K_t}$ associated with $K_t$. We call $g_t$ and $K_t$ the chordal Loewner maps and hulls, respectively, driven by $\ha w$.






If for every $t\in[0,T)$,  $g_{t}^{-1}$ extends continuously to $\lin\HH$, and $\eta(t):=g_{t}^{-1}({\ha w(t)})$, $0\le t<T$, is a continuous curve, then we say that $\eta$ is the chordal Loewner curve driven by $\ha w$. Such $\eta$ may not exist in general. When it exists, we have $\eta(0)=\ha w(0)\in\R$, $\eta(t)\in\lin{K_t}\subset \lin\HH$  and $K_t=\Hull(\eta[0,t])$ for all $0\le t<T$. On the other hand, if there is a continuous curve $\eta$ such that  $K_t=\Hull(\eta[0,t])$ for each $0\le t<T$, then $\eta$ is the chordal Loewner curve driven by $\ha w$. For every $t_0\in[0,T)$, the set $S_{t_0}:=(t_0,T)\cap \eta^{-1}(\HH\sem K_{t_0})$ is dense in $[t_0,T)$, and as $t\to t_0$ along $S_{t_0}$, $g_{t_0}(\eta(t))\to \ha w(t_0)$, and so $\eta(t)$ tends to the prime end  $g_{t_0}^{-1}(\ha w(t_0))$ of $\HH\sem K_{t_0}$. When there is no ambiguity, we also use $\eta(t)$ to denote the prime end $g_t^{-1}(\ha w(t))$ of $\HH\sem K_t$. 

For $\kappa\in(0,\infty)$, chordal SLE$_\kappa$ is defined by solving the chordal Loewner equation with the driving function being $\ha w(t)=\sqrt\kappa B(t)$, $0\le t<\infty$, where $B$ is a standard linear Brownian motion. It is known (cf.\ \cite{RS,LSW}) that a.s.\ $\ha w$ generates a chordal Loewner curve $\eta$, which satisfies $\eta(0)=\ha w(0)=0$ and $\lim_{t\to\infty}\eta(t)=\infty$. Such $\eta$ is called a chordal SLE$_\kappa$ curve in $\HH$ from $0$ to $\infty$, or a standard chordal SLE$_\kappa$. If $D$ is a simply connected domain with two distinct prime ends $a,b$, then there is a conformal map $f$ from $\HH$ onto $D$ such that $f(0)=a$ and $f(\infty)=b$. Then the $f$-image of a standard chordal SLE$_\kappa$, which is a continuous curve in the prime end closure of $D$, is called an SLE$_\kappa$ curve in $D$ from $a$ to $b$. Whenever $\pa^\# D$ is locally connected,   the chordal SLE$_\kappa$ is also a curve in the spherical closure of $ D$.

Chordal SLE$_\kappa$ satisfies the following DMP (domain Markov property, cf.\ \cite{Law-SLE}). Let $\eta$ be a  chordal SLE$_\kappa$ curve in $D$ from $a$ to $b$. Let $\cal A$ be a $\sigma$-algebra independent of $\eta$, and $\F^{\cal A}$ be the usual (complete and right continuous) augmentation of the filtration $(\sigma(\eta_1(s):s\le t)\vee {\cal A})_{t\ge 0}$. Then for any $\F^{\cal A}$-stopping time $T$, conditionally on $\F^{\cal A}_T$ and the event $\{T<\infty\}$, $\eta(T+\cdot)$ is a chordal SLE$_\kappa$ curve from (the prime end) $\eta(T)$ to $b$ in the connected component of $D\sem \eta[0,T]$ which shares the prime end $b$ with $D$. This follows easily from the Markov property of the (rescaled) Brownian motion $\sqrt\kappa B_t$.

For $\kappa\in(0,8]$, chordal SLE$_\kappa$ satisfies reversibility (cf.\ \cite{LSW} for $\kappa=8$, \cite{reversibility} for $\kappa\in(0,4]$, and \cite{MS3} for $\kappa\in(4,8)$). This means that, if $\eta$ is a chordal SLE$_\kappa$ curve in $D$ from $a$ to $b$, then any time-reversal of $\eta$ is a time-change of a chordal SLE$_\kappa$ curve in $D$ from $b$ to $a$.



\subsection{Radial Loewner equations}
A relatively closed subset $K$ of $\D$ is called a $\D$-hull if  $\D\sem K$ is a simply connected domain that contains $0$. For an $\D$-hull $K$, there is a unique conformal map $g_K$ from $\D\sem K$ onto $\D$ such that $g_K(0)=0$ and $g_K'(0)\ge 1$. The constant $\log |g_K'(0)|\ge 0$ is called the $\D$-capacity of $K$ and is denoted by $\dcap(K)$. 
For a set $S\subset\C$, if there is a $\D$-hull $K$ such that $D\sem K$ is the connected component of $\D\sem \lin S$ that contains $0$, then we say that $K$ is  the $\D$-hull generated by $S$, and write $K=\Hull(S)$, $\dcap(S)=\dcap(K)$, and $g_S=g_K$.

Let $\ha w \in C([0,T),\R)$ for some $T\in(0,\infty]$. The radial Loewner equation driven by $\ha w$  is
$$\pa_t g_t(z)= g_t(z) \frac{e^{i\ha w(t)}+g_t(z)}{e^{i\ha w(t)}-g_t(z)},\quad 0\le t<T;\quad g_0(z)=z.$$
 { For $z\in\C$, let $\tau_z\in (0,\infty]$ be such that $[0,\tau_z)$ is the maximal interval of the solution.}
For  $t\in[0,T)$, let $K_t=\{z\in\D:\tau_z\le t\}$. It turns out that each $K_t$ is an $\D$-hull with $\dcap(K_t)=t$, 
and each $g_t$ agrees with the $g_{K_t}$ associated with $K_t$. We call $g_t$ and $K_t$ the radial Loewner maps and hulls, respectively, driven by $\ha w$. By the Schwarz lemma and Koebe's $1/4$ theorem, we have $e^{-t}/4\le \dist(0,K_t)\le e^{-t}$ for $0<t<T$.

If for every $t\in[0,T)$,  $g_{t}^{-1}$ extends continuously to $\lin\D$, and $\eta(t):=g_{t}^{-1}(e^{i\ha w(t)})$, $0\le t<T$, is a continuous curve, then we say that $\eta$ is the radial Loewner curve driven by $\ha w$. Such $\eta$ may not exist in general. When it exists, we have $\eta(0)=e^{i\ha w(0)}\in\pa \D$, $\eta(t)\in\lin{K_t}\subset \lin\D$, and $K_t=\Hull(\eta[0,t])$ for all $0\le t<T$. On the other hand, if there is a continuous curve $\eta$ such that  $K_t=\Hull(\eta[0,t])$ for each $t\in[0,T)$, then $\eta$ is the radial Loewner curve driven by $\ha w$. When there is no ambiguity, we also use $\eta(t)$ to denote the prime end $g_t^{-1}(e^{i\ha w(t)})$ of $\D\sem K_t$.

We will use covering radial Loewner equations. Let $e^i$ denote the map $z\mapsto e^{iz}$, which maps $\HH$ onto $\D\sem \{0\}$. Let $\ha w\in C([0,T),\R)$. The covering radial Loewner equation driven by $\ha w$ is
$$\pa_t \til g_t(z)=\cot_2(\til g_t(z)-\ha w(t)),\quad \til g_0(z)=z.$$
For each $0\le t<T$, let $\til K_t=\{z\in\HH:\til \tau_z\le t\}$, where $\til \tau_z$
 {is such that $[0,\til\tau_z)$ is the maximal interval of the solution.} We call $\til g_t$ and $\til K_t$ respectively the covering radial Loewner maps and hulls driven by $\ha w$. If $g_t$ and $K_t$ are respectively the radial Loewner  maps and hulls driven by the same $\ha w$, then $\til K_t=(e^i)^{-1}(K_t)$ and $e^i\circ \til g_t=g_t\circ e^i$ for all $0\le t<T$.

If there is a continuous and strictly increasing function $u$ on $[0,T)$ with $u(0)=0$, such that $K_{u^{-1}(t)}$ and $g_{u^{-1}(t)}$, $0\le t<u(T)$, are respectively the radial Loewner hulls and maps driven by $\ha w\circ u^{-1}$, then we say that $K_t$ and $g_t$ are respectively radial Loewner hulls and maps with speed $du$ (understood as a measure) driven by $\ha w$. If $u$ is absolutely continuous and $u'=q$, we also say that the speed is $q$. We similarly define radial Loewner curve and covering radial Loewner hulls and maps with some speed. If $\eta$ is a radial Loewner curve with some speed, then we also understand  $\eta(t)$ as the prime end $g_{\eta[0,t]}^{-1}(\ha w(t))$ of $\HH\sem \Hull(\eta[0,t])$. 

We now recall radial SLE$_\kappa(\ulin\rho)$ processes, where $\kappa>0$ and $\ulin\rho=(\rho_1,\dots,\rho_m)\in\R^m$ for some $m\in\N\cup\{0\}$. Let $e^{iw},e^{iv_1},\dots,e^{iv_m}$ be distinct points on $\pa\D$. A radial SLE$_\kappa(\ulin\rho)$ curve in $\D$ started from $e^{iw}$ aimed at $0$ with force points $e^{iv_1},\dots,e^{iv_m}$ is the radial Loewner curve driven by $\ha w(t)$, $0\le t<\ha T$, which solves the SDE:
$$d\ha w(t)=\sqrt\kappa dB_t+\sum_{j=1}^m \frac{\rho_j}2 \cot_2(\ha w(t)-\til g^{\ha w}_t(v_j))\,dt,\quad \ha w(0)=w,$$
where $B$ is a standard Brownian motion, and $\til g^{\ha w}_t$ are covering radial Loewner maps driven by $\ha w$. The process stops whenever the solution $t\mapsto \til g^{\ha w}_t(v_j)$ blows up for any $j$. The  radial SLE$_\kappa(\ulin\rho)$ curve, whose existence follows from the Girsanov Theorem (cf.\ \cite{RY}), starts from $e^{iw}$, and may or may not end at $0$ (depending on $\kappa$ and $\ulin\rho$). The following proposition is \cite[Lemma 3.4]{Two-Green-interior}.

\begin{Proposition}
Let $\kappa>0$, $n\in\N$. Suppose $\ulin\rho=(\rho_1,\dots,\rho_n)\in\R^n$ satisfies $\rho_1,\rho_n\ge \frac\kappa 2-2$ and $\rho_k\ge 0$, $1\le k\le n$.  Let $e^{iw},e^{iv_1},\dots,e^{iv_n}$ be distinct points on $\TT$ such that $w>v_1>\cdots>v_n>w-2\pi$.  Let $\eta(t)$, $0\le t<T$, be a radial SLE$_\kappa(\ulin\rho)$ curve in $\D$ started from $e^{iw}$ aimed at $0$ with force points $e^{iv_1},\dots,e^{iv_n}$. Then a.s.\ $T=\infty$, $0$ is a subsequential limit of $\eta(t)$ as $t\to \infty$, and $\eta$ does not hit the arc $J:=\{e^{i\theta}:v_1\ge \theta\ge v_n\}$. \label{transience}
\end{Proposition}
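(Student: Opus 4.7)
The plan is to analyze the processes $X_j(t) := \ha w(t) - \til g_t(v_j)$, $j = 1,\ldots,n$, where $\til g_t$ is the covering radial Loewner map driven by $\ha w$. Combining the SDE for $\ha w$ with the ODE $\pa_t \til g_t(v_j) = -\cot_2(X_j(t))$ yields
\[ dX_j = \sqrt\kappa\, dB_t + \cot_2(X_j)\, dt + \sum_{k=1}^n \frac{\rho_k}{2}\cot_2(X_k)\, dt. \]
Initially $X_j(0) = w - v_j$, so $0 < X_1 < X_2 < \cdots < X_n < 2\pi$. Since $\pa_t(X_{k+1}-X_k) = \cot_2(X_{k+1}) - \cot_2(X_k) < 0$ as long as both lie in $(0,2\pi)$, the ordering is preserved and moreover $X_n(t) - X_1(t) \leq v_1 - v_n < 2\pi$ throughout the lifetime $T$. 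By order preservation, only $X_1 \to 0^+$ or $X_n \to 2\pi^-$ can be the first exit from $(0,2\pi)^n$.

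The key step is to rule out both. For $\epsilon > 0$ set $\sigma_\epsilon := \inf\{t : X_n(t) \geq 2\pi - \epsilon\}$. On $[0, \sigma_\epsilon]$ every $X_k \in (0, 2\pi - \epsilon)$, so $\cot_2(X_k) \geq \cot_2(2\pi - \epsilon)$; combined with $\rho_k \geq 0$ this bounds the drift of $X_1$ below by $\frac{\rho_1+2}{2}\cot_2(X_1) - C_\epsilon$ for a deterministic constant $C_\epsilon$. A Yamada--Watanabe type one-dimensional SDE comparison (same Brownian driver) then dominates $X_1$ from below on $[0, \sigma_\epsilon]$ by the solution $\til X_1$ of
\[ d\til X_1 = \sqrt\kappa\, dB_t + \Big[\tfrac{\rho_1+2}{2}\cot_2(\til X_1) - C_\epsilon\Big]\, dt, \quad \til X_1(0) = X_1(0). \]
Near $\til X_1 = 0$ this is a bounded perturbation of a scaled Bessel SDE of dimension $d = 1 + 2(\rho_1+2)/\kappa$, and the hypothesis $\rho_1 \geq \kappa/2 - 2$ gives $d \geq 2$. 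A standard Lyapunov-function argument (using $-\log \til X_1$ when $d = 2$ and $\til X_1^{2-d}$ when $d > 2$) shows $\til X_1$ never reaches $0$, whence neither does $X_1$ on $[0, \sigma_\epsilon]$. A symmetric argument applied to $\ulin X_n := 2\pi - X_n$ (using $\rho_n \geq \kappa/2 - 2$) shows $X_n < 2\pi$ on $[0, \sigma'_\epsilon]$, where $\sigma'_\epsilon := \inf\{t : X_1(t) \leq \epsilon\}$.

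Combining: if $T < \infty$, exactly one of $X_1(T) = 0$ or $X_n(T) = 2\pi$ occurs, since simultaneous blow-up is ruled out by $X_n - X_1 \leq v_1 - v_n < 2\pi$. In the former case, picking $\epsilon$ with $X_n(T) < 2\pi - \epsilon$ gives $T < \sigma_\epsilon$ by continuity, contradicting $X_1 > 0$ on $[0, \sigma_\epsilon]$; the other case is symmetric. Hence $T = \infty$ a.s.\ and $X_j(t) \in (0, 2\pi)$ for all $j, t$. For non-hitting of $J$: for any $v \in (v_n, v_1)$ lying in some $(v_k, v_{k-1})$, the same order argument (now comparing $\til g_t(v)$ to $\til g_t(v_k)$ and $\til g_t(v_{k-1})$) gives $\ha w(t) - \til g_t(v) \in (X_{k-1}(t), X_k(t)) \subset (0, 2\pi)$, so the covering ODE at $v$ never blows up and $e^{iv} \notin K_t$ for any $t$.

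For the accumulation of $\eta$ at $0$: the standing estimate $e^{-t}/4 \leq \dist(0, K_t) \leq e^{-t}$ produces a point $z_t \in K_t$ with $|z_t| \leq e^{-t}$; the radial segment from $0$ to $z_t$ crosses $\pa K_t \cap \D \subset \lin\eta[0, t]$, yielding $s_t \leq t$ with $|\eta(s_t)| \leq e^{-t}$. The lower bound $\dist(0, K_t) \geq e^{-t}/4$ forbids $\eta$ from touching $0$ at any finite time, so the $s_t$'s cannot cluster at a finite value; a subsequence $s_{t_m} \to \infty$ then witnesses $0$ as a subsequential limit of $\eta$. The principal obstacle is the Bessel-type comparison near both singular endpoints of $(0, 2\pi)$; the monotonicity bound $X_n - X_1 \leq v_1 - v_n$ is the crucial ingredient that allows the two one-sided arguments to be combined into a global non-explosion statement.
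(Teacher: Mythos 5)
Your overall strategy---tracking $X_j=\ha w-\til g_\cdot(v_j)$, exploiting the monotone decay of $X_n-X_1$ (so $X_n-X_1\le v_1-v_n<2\pi$), and running a Bessel comparison near each endpoint of $(0,2\pi)$---is the right one, and the SDE, the order preservation, and the use of the hypotheses on $\ulin\rho$ are all correct. (The paper cites this statement from \cite[Lemma 3.4]{Two-Green-interior} rather than proving it, so there is no in-paper proof to compare to.) The genuine gap is in the combining step. From $X_n(T)<2\pi-\epsilon$ you infer ``$T<\sigma_\epsilon$ by continuity,'' where $\sigma_\epsilon=\inf\{t:X_n(t)\ge2\pi-\epsilon\}$. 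This is false: $X_n$ has a nondegenerate Brownian component and is not monotone, so $X_n$ may exceed $2\pi-\epsilon$ at some $s<T$ and return below it by time $T$. In that case $\sigma_\epsilon\le s<T$, the Bessel comparison only yields $X_1>0$ on $[0,\sigma_\epsilon]$, the interval $(\sigma_\epsilon,T)$ is left uncontrolled, and no contradiction is reached --- knowing $X_n(T)<2\pi-\epsilon$ tells you nothing about $\sup_{[0,T]}X_n$. The bound $X_n-X_1\le v_1-v_n$ is the right input (it makes $\{X_1\le\epsilon_0\}$ and $\{X_n\ge2\pi-\epsilon_0\}$ disjoint in time, with $\epsilon_0:=(2\pi-(v_1-v_n))/2$), but it must be exploited by restarting the comparison each time $X_1$ re-enters $(0,\epsilon_0)$: for example, run the comparison on $[r,\sigma^{(r)}]$ with $\sigma^{(r)}:=\inf\{t\ge r:X_n(t)\ge2\pi-\epsilon_0\}$ for each rational $r\ge0$, and on $\{X_1(T)=0\}$ choose a rational $r$ in a terminal interval where $X_n<2\pi-\epsilon_0$; or iterate over the downcrossing/upcrossing stopping-time intervals of $X_1$ across $[\epsilon_0/2,\epsilon_0]$ (which cannot accumulate at a finite time, by continuity); or compare $Z:=\min(X_1,\,2\pi-X_n)$ directly, noting that on $\{Z<\epsilon_0\}$ the two branches can never coincide, so no Tanaka local-time term arises and $Z$ obeys a lower Bessel-type drift bound of index governed by $\min(\rho_1,\rho_n)+2\ge\kappa/2$.

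A secondary technical point: $-\log x$ and $x^{2-d}$ are scale functions for the \emph{unperturbed} Bessel SDE, not for $\til X_1$. With the $-C_\epsilon$ perturbation present, $\tfrac\kappa2 f''+\til b f'$ picks up a positive term of order $C_\epsilon x^{1-d}$ (of order $C_\epsilon/x$ when $d=2$) near $0$, so $f(\til X_1)$ is not a local supermartingale there and the Lyapunov argument as stated does not close. Either use the exact scale function $s(x)=\int^x y^{1-d}e^{2C_\epsilon y/\kappa}\,dy$, which still diverges at $0^+$ exactly when $d\ge2$, or note that the bounded constant drift shift is a Girsanov change on any finite time horizon, so non-hitting of $0$ transfers from the unperturbed Bessel process.
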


\subsection{Two-parameter stochastic processes}\label{section-two-parameter}
Now we recall the results in  \cite[Section 2.3]{Two-Green-interior}. We assign a partial order $\le$ to $\R_+^2=[0,\infty)^2$  such that $\ulin t=(t_1,t_2)\le(s_1,s_2)= \ulin s$ iff $t_j\le s_j$, $j=1,2$. The minimal element of $\R_+^2$ is $\ulin 0=(0,0)$. We write $\ulin t<\ulin s$ if $t_j<s_j$, $j=1,2$. We define $\ulin t\wedge \ulin s=(t_1\wedge s_1,t_2\wedge s_2)$. Given $\ulin t,\ulin s\in\R_+^2$, we define $[\ulin t,\ulin s]=\{\ulin r\in{\R_+^2}:\ulin t\le \ulin r\le \ulin s\}$. Let $\ulin e_1=(1,0)$ and $\ulin e_2=(0,1)$. So $(t_1,t_2)=t_1\ulin e_1+ t_2\ulin e_2$.
For a function $X$ defined on a subset $U$ of $\R_+^2$, $j\in\{1,2\}$, and $t_j\in \R_+$, we use $X|^j_{t_j}(t)$ to denote the function $t\mapsto X(t_j\ulin e_j+t \ulin e_{3-j})$, whose domain is those $t\in\R_+$ such that $t_j \ulin e_j+t \ulin e_{3-j}\in U$.

\begin{Definition}
Let $\F_{\ulin t}$, $\ulin t\in\R_+^2$, be a family of  $\sigma$-algebras on a measurable space $\Omega$ such that $\F_{\ulin t}\subset \F_{\ulin s}$ whenever $\ulin t\le \ulin s$. Then we call  $(\F_{\ulin t})_{\ulin t\in\R_+^2}$ an ${\R_+^2}$-indexed filtration on $\Omega$, and simply denote it by $\F$. 
\end{Definition}

From now on, we let $\F$ be an $\R_+^2$-indexed filtration on $\Omega$, and let $\F_{\ulin\infty}=\vee_{\ulin t\in\R_+^2}\F_{\ulin t}$.

\begin{Definition}
 A random map $\ulin T:\Omega\to[0,\infty]^2$ is called an $\F$-stopping time if for any deterministic $\ulin t\in{\R_+^2}$, $\{\ulin T\le \ulin t\}\in \F_{\ulin t}$.
We call $\ulin T$ finite if it takes values in $\R_+^2$, and bounded if there  is a deterministic $\ulin t\in \R_+^2$ such that $\ulin T\le \ulin t$. For an   $\F$-stopping time $\ulin T$, we define $\F_{\ulin T}=\{A\in \F_{\ulin \infty}:A\cap \{\ulin T\leq \ulin t\}\in \F_{\ulin t}, \forall \ulin t\in{\R_+^2}\}$. \label{stoppingtime}
\end{Definition}


\begin{Definition}
		A relatively open subset $\cal R$ of $\R_+^2$ is called a history complete region, or simply an HC region, if for any $\ulin t\in \cal R$, we have $[\ulin 0, \ulin t]\subset\cal R$. Given an HC region $\cal R$, for $j\in\{1,2\}$, define   $T^{\cal R}_j:\R_+\to\R_+\cup\{\infty\}$ by $T^{\cal R}_j(t)=\sup\{s\ge 0:s \ulin e_j+t\ulin e_{3-j}\in{\cal R}\}$, where we set $\sup\emptyset =0$.

 {The space of HC regions is naturally equipped with a $\sigma$-algebra, which is generated by the family  $\{{\cal R}:\ulin t\in {\cal R}\}$, $\ulin t\in\R_+^2$.
A map $\cal D$ from $\Omega$ into the space of HC regions} is called an $\F$-stopping region if for any $\ulin t\in\R_+^2$, $\{\omega\in\Omega:\ulin t\in {\cal D}(\omega)\}\in\F_{ \ulin t}$. {Such a map is clearly measurable.} A random function $X({\ulin t})$  {defined on $\Omega$} with a random domain $\cal D$ is called an $\F$-adapted HC process if $\cal D$ is an $\F$-stopping region, and for every  $\ulin t\in\R_+^2$, $X_{\ulin t}$ restricted to $\{\ulin t\in\cal D\}$ is $\F_{ \ulin t}$-measurable. \label{Def-HC}
	\end{Definition}

The following proposition is 
\cite[Lemma 2.7]{Two-Green-interior}.
	
	\begin{Proposition}
		Let $\ulin T$ and $\ulin S$ be two $\F$-stopping times. Then (i)  $\{\ulin T\le \ulin S\} \in\F_{\ulin S}$; (ii) if $\ulin S$ is a constant $\ulin s\in\R_+^2$, then $\{\ulin T\le \ulin S\} \in\F_{ \ulin T}$; and (iii) if $f$ is an $\F_{ \ulin T}$-measurable function, then ${\bf 1}_{\{\ulin T\le \ulin S\}}f$ is $\F_{ \ulin S}$-measurable. In particular, if $\ulin T\le \ulin S$, then $\F_{ \ulin T}\subset \F_{ \ulin S}$. \label{T<S}
	\end{Proposition}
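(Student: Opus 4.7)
The plan is to verify the three items in order and then deduce the final assertion from (iii). Throughout, every claim reduces to a check at a deterministic level $\ulin t\in\R_+^2$, and the key observation I would use repeatedly is that on $\{\ulin S\le\ulin t\}$ the inequality $\ulin T\le\ulin S$ forces $\ulin T\le\ulin t$, so that
$$\{\ulin T\le\ulin S\}\cap\{\ulin S\le\ulin t\}=\{\ulin T\le\ulin S\}\cap\{\ulin T\le\ulin t\}\cap\{\ulin S\le\ulin t\}.$$

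For (i), I would show the right-hand side lies in $\F_{\ulin t}$ by writing $\{\ulin T\le\ulin S\}=\{T_1\le S_1\}\cap\{T_2\le S_2\}$ and decomposing each factor through the rational identity
$$\{T_i\le S_i\}=\bigcap_{q\in\Q_+}\big(\{T_i\le q\}\cup\{S_i\ge q\}\big).$$
This reduces matters to checking, for each $i\in\{1,2\}$ and each rational $q\ge 0$, that the events $\{T_i\le q\}\cap\{\ulin T\le\ulin t\}$ and $\{S_i\ge q\}\cap\{\ulin S\le\ulin t\}$ belong to $\F_{\ulin t}$. When $q\le t_i$, the first of these equals $\{\ulin T\le q\,\ulin e_i+t_{3-i}\,\ulin e_{3-i}\}$, which lies in $\F_{q\,\ulin e_i+t_{3-i}\,\ulin e_{3-i}}\subseteq\F_{\ulin t}$ by monotonicity of $\F$; when $q>t_i$ it collapses to $\{\ulin T\le\ulin t\}$. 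The event $\{S_i\ge q\}\cap\{\ulin S\le\ulin t\}$ is then the complement inside $\{\ulin S\le\ulin t\}$ of the countable union $\bigcup_{q'\in\Q_+,\,q'<q}\{S_i\le q'\}\cap\{\ulin S\le\ulin t\}$, each term of which is in $\F_{\ulin t}$ by the same padding argument applied to $\ulin S$.

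Part (ii) is immediate from $\{\ulin T\le\ulin s\}\cap\{\ulin T\le\ulin t\}=\{\ulin T\le\ulin s\wedge\ulin t\}\in\F_{\ulin s\wedge\ulin t}\subseteq\F_{\ulin t}$. For (iii), given an $\F_{\ulin T}$-measurable $f$ and a Borel set $B$, splitting on whether $0\in B$ and using $\{\ulin T\le\ulin S\}^c\in\F_{\ulin S}$ from (i) reduces everything to showing $\{f\in B\}\cap\{\ulin T\le\ulin S\}\in\F_{\ulin S}$. Intersecting with $\{\ulin S\le\ulin t\}$ and again invoking $\{\ulin T\le\ulin S\}\cap\{\ulin S\le\ulin t\}\subseteq\{\ulin T\le\ulin t\}$ yields
$$\{f\in B\}\cap\{\ulin T\le\ulin S\}\cap\{\ulin S\le\ulin t\}=\big(\{f\in B\}\cap\{\ulin T\le\ulin t\}\big)\cap\big(\{\ulin T\le\ulin S\}\cap\{\ulin S\le\ulin t\}\big),$$
a product of two $\F_{\ulin t}$-measurable sets --- the first by $\F_{\ulin T}$-measurability of $f$, the second by (i). The final assertion follows on applying (iii) with $f=\mathbf{1}_A$ for arbitrary $A\in\F_{\ulin T}$.

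The one mildly delicate point is in (i): since the definition of an $\F$-stopping time is phrased via the vector partial order $\{\ulin T\le\ulin t\}$ rather than through coordinate events $\{T_i\le q\}$, the rational decomposition only becomes useful after one has already restricted to $\{\ulin T\le\ulin t\}$ and padded the complementary coordinate by $t_{3-i}$. Once this bookkeeping is in place, the rest is just the classical one-parameter argument run coordinatewise, and I would expect no genuine analytic difficulty.
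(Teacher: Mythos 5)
Your proof is correct. The paper does not prove this proposition locally but cites it as Lemma~2.7 of the reference \cite{Two-Green-interior}, so there is no in-paper proof to compare against; your argument is a clean standalone derivation. The two structural points you rely on are exactly the right ones in the $\R_+^2$-indexed setting: the observation that $\{\ulin T\le\ulin S\}\cap\{\ulin S\le\ulin t\}$ may be freely intersected with $\{\ulin T\le\ulin t\}$ (so that one never needs the coordinate events $\{T_i\le q\}$ in isolation), and the ``padding'' identity $\{T_i\le q\}\cap\{\ulin T\le\ulin t\}=\{\ulin T\le q\ulin e_i+t_{3-i}\ulin e_{3-i}\}$ for $q\le t_i$, which is what replaces the unavailable one-parameter fact that $\ulin T\wedge\ulin S$ is a stopping time. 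The coordinatewise rational decomposition of $\{T_i\le S_i\}$, the treatment of $\{S_i\ge q\}$ as a relative complement inside $\{\ulin S\le\ulin t\}$, and the reduction of (iii) to (i) via the $0\in B$ dichotomy are all standard and all carried out correctly. I see no gaps.
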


	\begin{Definition}
Suppose that there are two $\R_+$-indexed  filtrations $\F^1$ and $\F^2$ such that $\F_{(t_1,t_2)}=\F^1_{t_1}\vee \F^2_{t_2}$, $(t_1,t_2)\in\R_+^2$. Then we say that $\F$ is a separable filtration generated by $\F^1$ and $\F^2$. For such a separable filtration, if $T_j$ is an $\F^j$-stopping time, $j=1,2$, then $\ulin T:=(T_1,T_2)$ is   called a separable  stopping time  (w.r.t.\ $\F^1$ and $\F^2$).  \label{separable}
\end{Definition}

The following proposition is  \cite[Lemma 2.13]{Two-Green-interior}.

\begin{Proposition}
Suppose $\F$ is separable. Let $\ulin T$ and $\ulin S$ be two $\F$-stopping times, where $\ulin S$ is separable. Then $\{\ulin T\le \ulin S\}\in\F_{ \ulin T}$. \label{separable-lemma}
\end{Proposition}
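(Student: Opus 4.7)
The plan is to unwind the definition of $\F_{\ulin T}$. The event $E := \{\ulin T \le \ulin S\}$ lies in $\F_{\ulin T}$ as soon as, for every deterministic $\ulin t \in \R_+^2$, $E \cap \{\ulin T \le \ulin t\} \in \F_{\ulin t}$ (membership in $\F_{\ulin \infty}$ being immediate). The useful algebraic identity is
$$E \cap \{\ulin T \le \ulin t\} \;=\; \{\ulin T \le \ulin S \wedge \ulin t\},$$
so it suffices to show $\{\ulin T \le \ulin S \wedge \ulin t\} \in \F_{\ulin t}$. Separability of $\ulin S$ is essential here, because it guarantees that $\ulin S \wedge \ulin t = (S_1 \wedge t_1, S_2 \wedge t_2)$ with each $S_j \wedge t_j$ a \emph{bounded} $\F^j$-stopping time.

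Next I would discretize each coordinate inside its own component filtration via a dyadic approximation from above: set $S_j^n := 2^{-n}\lceil 2^n(S_j \wedge t_j)\rceil$, which is an $\F^j$-stopping time taking values in $\{k/2^n : 0 \le k \le \lceil 2^n t_j \rceil\}$, with $S_j^n \downarrow S_j \wedge t_j$. Writing $\ulin S^n = (S_1^n, S_2^n)$, one has $\{\ulin T \le \ulin S \wedge \ulin t\} = \bigcap_n \{\ulin T \le \ulin S^n\}$ by monotone convergence of each coordinate.

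For fixed $n$, decompose
$$\{\ulin T \le \ulin S^n\} \;=\; \bigcup_{k_1, k_2} \{S_1^n = k_1/2^n\} \cap \{S_2^n = k_2/2^n\} \cap \{\ulin T \le (k_1/2^n, k_2/2^n)\}$$
as a finite union over the possible values. Separability gives $\{S_j^n = k_j/2^n\} \in \F^j_{k_j/2^n}$, so the first two factors together sit in $\F^1_{k_1/2^n} \vee \F^2_{k_2/2^n} = \F_{(k_1/2^n, k_2/2^n)}$; the third factor lies in the same $\sigma$-algebra because $\ulin T$ is an $\F$-stopping time. With $\ulin t^n := (\lceil 2^n t_1 \rceil/2^n, \lceil 2^n t_2 \rceil/2^n)$ bounding all the indices, the whole union lies in $\F_{\ulin t^n}$. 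Since $\ulin t^n \downarrow \ulin t$, intersecting over $n$ gives $\{\ulin T \le \ulin S \wedge \ulin t\} \in \bigcap_n \F_{\ulin t^n} = \F_{\ulin t}$, the last identity relying on right-continuity of the two-parameter filtration (a standing regularity assumption in this framework).

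The \emph{main obstacle} --- and the point where separability enters crucially --- is that $\ulin T$ itself is only an $\F$-stopping time, not assumed separable, so its components $T_1, T_2$ need not live in $\F^1, \F^2$ individually. What rescues the argument is that $\ulin S$ \emph{is} separable, which lets one rewrite the two-dimensional event as a countable disjoint union of product events whose marginals lie in the correct component filtrations; after this reduction the argument collapses to the classical one-parameter dyadic approximation for stopping times.
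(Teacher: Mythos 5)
Your strategy is sound and the role of separability is identified correctly: discretize $\ulin S$ dyadically from above, decompose $\{\ulin T\le\ulin S^n\}$ into a finite union of product events, and observe that each factor sits in a component filtration thanks to $\ulin S$ being separable. The algebraic identity $\{\ulin T\le\ulin S\}\cap\{\ulin T\le\ulin t\}=\{\ulin T\le\ulin S\wedge\ulin t\}$ and the monotone-convergence reduction $\{\ulin T\le\ulin S\wedge\ulin t\}=\bigcap_n\{\ulin T\le\ulin S^n\}$ are both correct.

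However, there is a genuine gap at the very last step. Because you set $S_j^n=2^{-n}\lceil 2^n(S_j\wedge t_j)\rceil$, the largest value $S_j^n$ can take is $\lceil 2^n t_j\rceil/2^n$, which strictly exceeds $t_j$ when $t_j$ is not a dyadic rational of order $n$. Consequently you only get $\{\ulin T\le\ulin S^n\}\in\F_{\ulin t^n}$ with $\ulin t^n>\ulin t$, and you then need $\bigcap_n\F_{\ulin t^n}=\F_{\ulin t}$, i.e.\ right-continuity of $\F$. But right-continuity is \emph{not} a hypothesis of the proposition and is \emph{not} a standing assumption of this section: the paper introduces the right-continuous augmentation $\lin\F$ only afterwards, in a separate definition followed by Proposition~\ref{right-continuous-0}, and works with both $\F$ and $\lin\F$ as distinct objects. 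So as written your argument proves a weaker statement.

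The gap is easy to close without invoking right-continuity: truncate the approximation at $\ulin t$. Replace $S_j^n$ by $S_j^n\wedge t_j$, which is still an $\F^j$-stopping time taking only finitely many values, still decreases to $S_j\wedge t_j$, and now satisfies $S_j^n\le t_j$ pointwise. Every value $\ulin s$ in the decomposition then satisfies $\ulin s\le\ulin t$, so $\{\ulin S^n=\ulin s\}\cap\{\ulin T\le\ulin s\}\in\F_{\ulin s}\subset\F_{\ulin t}$, the finite union $\{\ulin T\le\ulin S^n\}$ lies in $\F_{\ulin t}$ for every $n$, and hence so does the countable intersection. One does need to check that the events $\{S_j^n\wedge t_j=s_j\}$ lie in $\F^j_{s_j}$ including at the endpoint $s_j=t_j$, but this is routine since $\{S_j^n\wedge t_j=t_j\}=\{S_j>\lfloor 2^n t_j\rfloor/2^n\}\in\F^j_{\lfloor 2^n t_j\rfloor/2^n}\subset\F^j_{t_j}$. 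With this modification your proof is complete under exactly the stated hypotheses.
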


\begin{Definition}
The right-continuous augmentation of $\F$ is another $\R_+^2$-indexed filtration $\lin \F$ defined by $\lin\F_{\ulin t}=\bigcap_{\ulin s>\ulin t}\F_{\ulin s}$, $\ulin t\in\R_+^2$.  We say that  $\F$ is right-continuous if $\lin\F =\F $. 
\end{Definition}

The following proposition follows from a standard argument.

\begin{Proposition}
The right-continuous augmentation $\lin\F$ of $\F$ is right-continuous.
An $[0,\infty]^2$-valued map $\ulin T$ on $\Omega$ is an   $\lin\F$-stopping time if and only if $\{\ulin T<\ulin t\}\in\F_{\ulin t}$ for any $\ulin t\in\R_+^2$; and for such $\ulin T$, $A\in \lin\F_{\ulin T}$ if and only if $A\cap\{\ulin T<\ulin t\}\in\F_{\ulin t}$ for any $\ulin t\in\R_+^2$. If $\F$ is right-continuous, and if $(\ulin T^n)_{n\in\N}$ is a decreasing sequence of $\F$-stopping times, then $\ulin T:=\inf_n \ulin T^n$ is also an $\F$-stopping time, and $\F_{\ulin T}=\bigcap_n \F_{\ulin T^n}$.
  \label{right-continuous-0}
\end{Proposition}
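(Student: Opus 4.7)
The plan is to mimic the classical one-parameter arguments, with the key bookkeeping observation that in $\R_+^2$ under the coordinatewise partial order, strict inequalities can be interpolated using the shifts $\ulin\delta_n:=(1/n)(\ulin e_1+\ulin e_2)$. This yields the set identities $\{\ulin T\le \ulin s\}=\bigcap_n\{\ulin T<\ulin s+\ulin\delta_n\}$ and $\{\ulin T<\ulin t\}=\bigcup_{n:\ulin\delta_n<\ulin t}\{\ulin T\le \ulin t-\ulin\delta_n\}$, which let one translate between the $\{\cdot\le\cdot\}$ and $\{\cdot<\cdot\}$ formulations. The only care required throughout is to shift by vectors with strictly positive components in both coordinates, so as to remain within the strict partial order $<$ rather than the weaker relation ``$\le$ with inequality''.

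For right-continuity of $\lin\F$, the inclusion $\lin\F_{\ulin t}\subset\bigcap_{\ulin s>\ulin t}\lin\F_{\ulin s}$ is immediate from monotonicity. For the reverse, given $A\in\bigcap_{\ulin s>\ulin t}\lin\F_{\ulin s}$ and any $\ulin u>\ulin t$, I would pick $\ulin s$ with $\ulin t<\ulin s<\ulin u$ (possible because $\ulin u-\ulin t$ has strictly positive entries) and conclude $A\in\lin\F_{\ulin s}\subset\F_{\ulin u}$; since $\ulin u>\ulin t$ is arbitrary, $A\in\lin\F_{\ulin t}$.

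Next I would establish the characterization of $\lin\F$-stopping times. If $\ulin T$ is one, then $\{\ulin T<\ulin t\}=\bigcup_n\{\ulin T\le \ulin t-\ulin\delta_n\}$, and each term lies in $\lin\F_{\ulin t-\ulin\delta_n}\subset\F_{\ulin t-\ulin\delta_n/2}\subset\F_{\ulin t}$, so $\{\ulin T<\ulin t\}\in\F_{\ulin t}$. Conversely, given $\{\ulin T<\ulin t\}\in\F_{\ulin t}$ for every $\ulin t$, for any $\ulin s$ one has $\{\ulin T\le \ulin s\}=\bigcap_n\{\ulin T<\ulin s+\ulin\delta_n\}\in\bigcap_n\F_{\ulin s+\ulin\delta_n}=\lin\F_{\ulin s}$. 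The analogous characterization of $A\in\lin\F_{\ulin T}$ in terms of $A\cap\{\ulin T<\ulin t\}\in\F_{\ulin t}$ follows by applying the same two decompositions to $A\cap\{\ulin T\le \ulin s\}$ in place of $\{\ulin T\le \ulin s\}$.

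Finally, for the decreasing-sequence statement, I would assume $\F=\lin\F$ and $\ulin T^n\downarrow \ulin T$. Since convergence is coordinatewise, $\{\ulin T<\ulin t\}=\bigcup_n\{\ulin T^n<\ulin t\}\in\F_{\ulin t}$ by the preceding characterization applied to each $\ulin T^n$, so $\ulin T$ is an $\F$-stopping time. Proposition \ref{T<S} gives $\F_{\ulin T}\subset\bigcap_n\F_{\ulin T^n}$ since $\ulin T\le \ulin T^n$. For the reverse inclusion, I would take $A\in\bigcap_n\F_{\ulin T^n}$; the relation $A\cap\{\ulin T^n\le \ulin s\}\in\F_{\ulin s}$ combined with the $\{<\}$-versus-$\{\le\}$ decomposition gives $A\cap\{\ulin T^n<\ulin t\}\in\F_{\ulin t}$, hence $A\cap\{\ulin T<\ulin t\}=\bigcup_n A\cap\{\ulin T^n<\ulin t\}\in\F_{\ulin t}$, which by the characterization of $\lin\F_{\ulin T}=\F_{\ulin T}$ yields $A\in\F_{\ulin T}$. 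No genuine obstacle arises; the whole argument is a careful but routine transfer of the one-parameter template.
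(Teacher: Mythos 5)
Your proof is correct, and it is precisely the standard one-parameter argument transferred to $\R_+^2$ via diagonal shifts, which is what the paper invokes when it says the proposition ``follows from a standard argument'' without supplying details. The set identities $\{\ulin T\le \ulin s\}=\bigcap_n\{\ulin T<\ulin s+\ulin\delta_n\}$ and $\{\ulin T<\ulin t\}=\bigcup_{n}\{\ulin T\le \ulin t-\ulin\delta_n\}$ are verified componentwise exactly as you indicate, and the rest is routine bookkeeping.
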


Now we fix a probability measure $\PP$ on $(\Omega,\F_{\ulin\infty})$,  and let $\EE[\cdot|\F_t]$ denote the corresponding conditional expectations.
	
\begin{Definition}
	An  $\F$-adapted process $(X_{\ulin t})_{\ulin t\in\R_+^2}$ is called an  $\F$-martingale (w.r.t.\ $ \PP$) if  for any $\ulin s\le \ulin t\in\R_+^2$, a.s.\ $\EE[X_{\ulin t}|\F_{\ulin s}]=X_{\ulin s}$. If there is $\zeta\in L^1(\Omega,\F_{\ulin\infty},\PP)$ such that a.s.\ $X_{\ulin t}=\EE[\zeta|\F_{ \ulin t}]$ for all $\ulin t\in\R_+^2$, then we say that $X$ is closed by $\zeta$.
\end{Definition}

The following is \cite[Lemma 2.11]{Two-Green-interior}.



		
	\begin{Proposition} [Optional Stopping Theorem]
		Suppose $X$ is a continuous $\F$-martingale.
Then the following are true. (i) If $X$ is closed by $\zeta$, then for any finite $\F$-stopping time $\ulin T$, $X_T=\EE[\zeta|\F_{ \ulin T}]$. (ii) If $\ulin T\le \ulin S$ are two bounded $\F$-stopping times, then $\EE[X_{\ulin S}|\F_{\ulin T}]=X_{\ulin T}$.\label{OST}
	\end{Proposition}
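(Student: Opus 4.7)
The plan is to mimic the classical proof of the optional stopping theorem in the one-parameter setting: approximate each stopping time from above by a dyadic-valued stopping time, verify the conclusion in the discrete case directly from the defining property of $\F_{\ulin T}$ and the two-parameter martingale identity, and then pass to the limit using the continuity of $X$. For the approximation, given $\ulin T=(T_1,T_2)$ I define $\ulin T^n:=(2^{-n}\lceil 2^n T_1\rceil,2^{-n}\lceil 2^n T_2\rceil)$; the identity $\{\ulin T^n\le \ulin t\}=\{\ulin T\le 2^{-n}\lfloor 2^n\ulin t\rfloor\}$ shows $\ulin T^n$ is an $\F$-stopping time taking values in a countable grid, and the sequence decreases componentwise to $\ulin T$.

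In the discrete case, suppose $\ulin T$ takes values in a countable set $G\subset\R_+^2$. The event $\{\ulin T=\ulin t\}=\{\ulin T\le \ulin t\}\setminus \bigcup_{\ulin s\in G,\,\ulin s\le \ulin t,\,\ulin s\ne \ulin t}\{\ulin T\le \ulin s\}$ lies in $\F_{\ulin t}\cap \F_{\ulin T}$ for each $\ulin t\in G$. Thus for any $A\in\F_{\ulin T}$, the inclusion $A\cap\{\ulin T=\ulin t\}\in\F_{\ulin t}$ combined with $X_{\ulin t}=\EE[\zeta|\F_{\ulin t}]$ yields
$$\EE[X_{\ulin T}\mathbf{1}_A]=\sum_{\ulin t\in G}\EE[X_{\ulin t}\mathbf{1}_{A\cap\{\ulin T=\ulin t\}}]=\sum_{\ulin t\in G}\EE[\zeta\mathbf{1}_{A\cap\{\ulin T=\ulin t\}}]=\EE[\zeta\mathbf{1}_A],$$
which proves (i) in the discrete case. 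For (ii) in the discrete case, observe that if $\ulin s_0$ is a deterministic bound on $\ulin S$, then the restriction of the martingale to $[\ulin 0,\ulin s_0]$ is closed by $X_{\ulin s_0}$; applying the discrete (i) to both $\ulin T$ and $\ulin S$ and using $\F_{\ulin T}\subset\F_{\ulin S}$ from Proposition \ref{T<S}(iii), the tower property gives $\EE[X_{\ulin S}|\F_{\ulin T}]=\EE[X_{\ulin s_0}|\F_{\ulin T}]=X_{\ulin T}$.

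The main obstacle is the passage from $\ulin T^n$ to $\ulin T$ without assuming right-continuity of $\F$. Applying the discrete case to $\ulin T^n$ gives $X_{\ulin T^n}=\EE[\zeta|\F_{\ulin T^n}]$, and continuity of $X$ yields $X_{\ulin T^n}\to X_{\ulin T}$ almost surely. Since the $\sigma$-algebras $\F_{\ulin T^n}$ are decreasing in $n$, the backward martingale convergence theorem gives $\EE[\zeta|\F_{\ulin T^n}]\to \EE[\zeta|\bigcap_n \F_{\ulin T^n}]$ almost surely and in $L^1$. One has only the inclusion $\F_{\ulin T}\subset \bigcap_n\F_{\ulin T^n}$, and this inclusion may be strict in general. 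To close the gap, I would first record as a preliminary lemma the fact that $X_{\ulin T}$ is $\F_{\ulin T}$-measurable whenever $X$ is continuous and $\F$-adapted and $\ulin T$ is an $\F$-stopping time (the content of the commented-out measurability proposition, provable by approximating $\ulin T$ from above by discrete stopping times and using continuity); projecting the limit equation onto $\F_{\ulin T}$ then gives
$$X_{\ulin T}=\EE[X_{\ulin T}|\F_{\ulin T}]=\EE\Big[\,\EE\big[\zeta\,\big|\,\bigcap\nolimits_n\F_{\ulin T^n}\big]\,\Big|\,\F_{\ulin T}\,\Big]=\EE[\zeta|\F_{\ulin T}],$$
establishing (i) in general. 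Part (ii) in the general case then follows from (i) applied with $\zeta=X_{\ulin s_0}$, exactly as in the discrete case.
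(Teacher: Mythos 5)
The paper itself does not prove this proposition; it cites \cite[Lemma 2.11]{Two-Green-interior}, so I am evaluating your argument on its own terms. Your overall strategy is the correct one and, I believe, the same as the cited proof: reduce to a discrete stopping time via approximation from above, verify the identity on a countable grid directly from the definition of $\F_{\ulin T}$, and pass to the limit by continuity of $X$ together with backward (reversed) martingale convergence. The discrete step is handled correctly, and you rightly isolate the genuine difficulty: without right-continuity of $\F$, the backward-martingale limit is $\EE[\zeta\mid\bigcap_n\F_{\ulin T^n}]$ rather than $\EE[\zeta\mid\F_{\ulin T}]$, and the bridge between the two requires showing a priori that $X_{\ulin T}$ is $\F_{\ulin T}$-measurable, which is exactly the content of the commented-out measurability proposition. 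Given that lemma, the projection argument and part (ii) are correct.

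The one step where your sketch is too quick is the proof of that measurability lemma itself. Approximating $\ulin T$ by the \emph{uniform} dyadic $\ulin T^n=(2^{-n}\lceil 2^nT_1\rceil,2^{-n}\lceil 2^nT_2\rceil)$ and invoking continuity only yields that $X_{\ulin T}$ is $\bigcap_n\F_{\ulin T^n}$-measurable, which is again the quantity you are trying to improve on. To get $\{X_{\ulin T}\in U\}\cap\{\ulin T\le\ulin t\}\in\F_{\ulin t}$ one would like to truncate the approximants at $\ulin t$, but in the two-parameter setting $\ulin T^n\wedge\ulin t$ need not be an $\F$-stopping time, and the stopping-time replacement $(\ulin T^n)^{\ulin t}$ (equal to $\ulin T^n$ on $\{\ulin T^n\le\ulin t\}$ and to $\ulin t$ otherwise) fails to converge to $\ulin T$ on the part of $\{\ulin T\le\ulin t\}$ where a coordinate $T_j$ equals a non-dyadic $t_j$. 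The standard repair is to discretize with a $\ulin t$-dependent grid containing the coordinates of $\ulin t$, for instance $G_{n,j}=\{kt_j/2^n:0\le k\le 2^n\}\cup\{t_j+k/2^n:k\in\Z_+\}$ and $T_j^{n,\ulin t}=\min\{s\in G_{n,j}:s\ge T_j\}$. Then $\ulin T^{n,\ulin t}$ is a countably-valued $\F$-stopping time, $\ulin T^{n,\ulin t}\downarrow\ulin T$, and $\{\ulin T^{n,\ulin t}\le\ulin t\}=\{\ulin T\le\ulin t\}$, so $X_{\ulin T^{n,\ulin t}}\mathbf 1_{\{\ulin T\le\ulin t\}}$ is $\F_{\ulin t}$-measurable and converges to $X_{\ulin T}\mathbf 1_{\{\ulin T\le\ulin t\}}$. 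With that amendment your proof is complete.
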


 \section{Ensemble of Two Radial Loewner Chains}\label{section-Ensemble-radial}
\subsection{Deterministic ensemble} \label{section-det}
Let $m\in\N\cup\{0\}$. Let $w_1,w_2,v_1,\dots,v_m\in\R$ be such that $e^{iw_1},e^{iw_2},e^{iv_1},\dots,e^{iv_m}$ are {pairwise} distinct. For $j=1,2$, let $\ha w_j\in C([0,\ha T_j),\R)$ be a radial Loewner driving function with $\ha w_j{(0)}=w_j $.
Suppose $\ha w_j$ generates radial Loewner hulls $K_j(t)$,  radial Loewner maps $g_j(t,\cdot)$, covering radial Loewner hulls $\til K_j(t)$ and covering radial Loewner maps $\til g_j(t,\cdot)$, $0\le t<\ha T_j$.
 Let $\cal D$ denote the set of $(t_1,t_2)\in[0,\ha T_1)\times [0,\ha T_2)$ such that $\lin{K_1(t_1)}\cap \lin{K_2(t_2)}=\emptyset$ and $e^{iv_1 },\dots,e^{iv_m}\not\in \lin{K_1(t_1)}\cup \lin{K_2(t_2)}$. Then $\cal D$ is an HC region as in Definition \ref{Def-HC}, and we may define functions $T^{\cal D}_1$ and $T^{\cal D}_2$.
For $(t_1,t_2)\in\cal D$, let $K(t_1,t_2)=K_1(t_1)\cup K_2(t_2)$. Then $K(t_1,t_2)$ is also an $\D$-hull. Let
 $g((t_1,t_2),\cdot)=g_{K(t_1,t_2)}$, and $\mA(t_1,t_2)=\dcap(K(t_1,t_2))$. For $(t_1,t_2)\in\cal D$ and $j\ne k\in\{1,2\}$, let
$K_{j,t_k}(t_j)=g_k(t_k,K_j(t_j))$, and $g_{j,t_k}(t_j,\cdot)=g_{K_{j,t_k}(t_j)}$.
Let $\til K(t_1,t_2),\til K_{j,t_k}(t_j)\subset\HH$ be the pre-images of $K(t_1,t_2),K_{j,t_k}(t_j)$, respectively, under the map $e^i$.
Let $\til g(\ulin t,\cdot)$, $\ulin t\in\cal D$, be the unique family of maps, such that $\til g(\ulin t,z)$ is jointly continuous in $\ulin t,z$, $\til g(\ulin 0,\cdot)=\id$, and for each $\ulin t\in\cal D$, $\til g(\ulin t,\cdot):\HH\sem  \til K(\ulin t)\conf\HH$, and $e^i\circ \til g(\ulin t,\cdot)=  g(\ulin t,\cdot)\circ e^i$.  Define $\til g_{1,t_2}(t_1,\cdot)$ and $\til g_{2,t_1}(t_2,\cdot)$, $(t_1,t_2)\in\cal D$, similarly. %
Fix $j\ne k\in\{1,2\}$ and $s\in\{1,\dots,m\}$. Let $(t_1,t_2)\in{\cal D}$. We define the following real valued functions on $\cal D$:
$$ V_s(t_1,t_2)=\til g((t_1,t_2),v_s ),\quad V_{s,1}(t_1,t_2)=\til g'((t_1,t_2),v_s );$$ 
$$ W_j(t_1,t_2)=\til g_{k,t_j}(t_{k},\ha w_j(t_j)),\quad W_{j,h}(t_1,t_2)=\til g_{k,t_j}^{(h)}(t_{k},\ha w_j(t_j));$$ 
$$ W_{j,S}(t_1,t_2)=\frac{W_{j,3}(t_1,t_2)}{W_{j,1}(t_1,t_2)}-\frac 32 \Big(\frac{W_{j,2}(t_1,t_2)}{W_{j,1}(t_1,t_2)}\Big)^2. $$ 
Here  the prime and the superscript $(h)$ are respectively the partial derivative and $h$-th partial derivative w.r.t.\ the space variable: $v_s$ or $\ha w_j(t_j)$. 

Let $j\ne k\in\{1,2\}$ and $s\in\{1,\dots,m\}$. By \cite[Section 3.1]{Two-Green-interior}, for any $t_k\ge 0$, $K_{j,t_k}(t_j)$ and $g_{j,t_k}(t_j,\cdot)$, $0\le t_j<T^{\cal D}_j(t_k)$, are radial Loewner hulls and maps, respectively, driven by $W_j|^{k}_{t_k}$ with speed $(W_{j,1}|^k_{t_k})^2$. Moreover, we have the following formulas. 
\BGE \pa_{t_j} \mA =W_{j,1} ^2\pa t_j;\label{pamA}\EDE
\BGE\pa_{t_j} W_k=W_{j,1}^2 \cot_2(W_k-W_j)\pa t_j,\quad \pa_{t_j} V_s=W_{j,1}^2\cot_2(V_s-W_j)\pa t_j;\label{dWkVs}\EDE
\BGE \frac{\pa_{t_j} W_{k,1}}{W_{k,1}}=W_{j,1}^2\cot_2'(W_k-W_j)\pa t_j,\quad \frac{\pa_{t_j} V_{s,1}}{V_{s,1}}=W_{j,1}^2\cot_2'(V_s-W_j)\pa t_j;\label{dWj1Vs1}\EDE
\BGE \pa_{t_j} W_{k,S}=W_{j,1}^2W_{k,1}^2 \cot_2'''(W_k-W_j)\pa t_j;\label{pajWKS}\EDE
\BGE \pa_{t_j}\til g_{k,t_j}(t_k,\ha z)|_{\ha z=\ha w_j(t_j)}=-3\til g_{k,t_j}''(t_k,\ha w_j(t_j))=-3W_{j,2};\label{-3}\EDE
\BGE \frac{\pa_{t_j} \til g_{k,t_j}'(t_k,\ha z)|_{\ha z=\ha w_j(t_j)}}{\til g_{k,t_j}'(t_k,\ha z)|_{\ha z=\ha w_j(t_j)}}=\frac 12 \Big(\frac{W_{j,2}}{W_{j,1}}\Big)^2-\frac 43 \frac{W_{j,3}}{W_{j,1}}-\frac 16 (W_{j,1}^2-1). \label{1/2-4/3}\EDE

Finally, suppose that $\ha w_1$ and $\ha w_2$ generate radial Loewner curves $\eta_1$ and $\eta_2$, respectively. For each $j\ne k\in(1,2)$ and $\ulin t=(t_1,t_2)\in\cal D$, we define $\eta_j^{t_k}(t_j)=g_k(t_k,\eta_j(t_j))$. Then we have $K_{j,t_k}(t_j)=\Hull(\eta_j^{t_k}[0,t_j])$. So for any $t_k\ge 0$, $\eta_{j}^{t_k}(t_j)$, $0\le t_j<T^{\cal D}_j(t_k)$, is a radial Loewner curve driven by $W_j|^{k}_{t_k}$ with speed $(W_{j,1}|^k_{t_k})^2$.


\subsection{Commutation couplings}\label{section-Probability-measures}
\subsubsection{The statement}
We use the setup in the previous subsection. We view $(\ha w_1(t))_{0\le t<\ha T_1}$ and $(\ha w_2(t))_{0\le t<\ha T_2}$ as elements in $\Sigma:=\bigcup_{0<T\le\infty} C([0,T),\R)$. Now suppose $\ha w_1$ and $\ha w_2$ are random. Then their laws are probability measures on $\Sigma$.
Let  $\F^1$ and $\F^2$ be the $\R_+$-indexed filtrations respectively generated by $\ha w_1$ and $\ha w_2$.
More specifically, for $j=1,2$ and $t\in\R_+$, $\F^j_t$ is the $\sigma$-algebra generated by $\{ s<\ha T_j\}\cap \{\ha w(s)\in U\}$, $0\le s\le t$,  $U\in\cal{B}(\R)$.
We are going to prove the following.

\begin{Proposition}
  Let $\kappa\in(0,\infty)$  and $\ulin\rho=(\rho_1,\dots,\rho_m)\in\R^m$. We write  $e^{i\ulin v}$ for $(e^{iv_1},\dots,e^{iv_m})$. There is a coupling of two random radial Loewner curves $\eta_j(t)$, $0\le t<\ha T_j$, $j=1,2$, with radial Loewner driving functions $\ha w_j$ and  maps $g_j(t,\cdot)$ such that $\ha w_j(0)=w_j$ and the following holds. For $j=1,2$, $\eta_j$ is a radial SLE$_\kappa(2,\ulin\rho)$ curve  in $\D$ started from $e^{iw_1}$ aimed at $0$ with force points $(e^{iw_2},e^{i\ulin v})$. For any $j\ne k\in\{1,2\}$ and any $\F^k$-stopping time $\tau_k$ with $\tau_k<\ha T_k$, conditionally on $\F^k_{\tau_k}$, the $\eta_j^{\tau_k}(t_j)=g_k(\tau_k,\eta_j(t_j))$, $0\le t_j<T^{\cal D}_j(\tau_k)$, has the law of a time-change of a radial SLE$_\kappa(2,\ulin\rho)$ curve in $\D$ started from $g_k(\tau_k,e^{iw_j})$ aimed at $0$ with force points $(e^{i\ha w_k(\tau_k)},g_k(\tau_k,e^{iv_1}),\dots,g_k(\tau_k,e^{iv_m}))$ stopped at some stopping time. \label{prop-commu}
\end{Proposition}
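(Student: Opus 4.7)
The plan is to construct the coupling via a two-parameter Girsanov change of measure, using the stochastic framework from Section \ref{section-two-parameter}. Under a product reference measure $\PP_0$, take $\ha w_1$ and $\ha w_2$ to be independent driving functions of radial SLE$_\kappa(\ulin\rho)$ curves started respectively from $w_1$ and $w_2$ with force points $v_1,\dots,v_m$ only (the ``mutual'' force point is temporarily suppressed). By Proposition \ref{transience}, each generates a radial Loewner curve transient to $0$, giving the HC region $\cal D$ as in Section \ref{section-det}, together with the one-parameter filtrations $\F^j$ and the separable filtration $\F:=\F^1\vee\F^2$.

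The coupling density $M(\ulin t)$ on $\cal D$ is built as a product of a ``mutual'' partition-function factor $|\sin_2(W_1-W_2)|^{2/\kappa}$, derivative corrections $W_{1,1}^{\alpha}W_{2,1}^{\alpha}$, and the usual SLE$_\kappa(\rho_s)$ partition-function factors $|\sin_2(W_j-V_s)|^{\rho_s/\kappa}$ with associated $V_{s,1}^{\bullet}$ corrections, for $j\in\{1,2\}$, $s\in\{1,\dots,m\}$. The exponents (in particular the ``central-charge'' exponent $\alpha$ associated with the $\rho=2$ force point) are pinned down by requiring that the bounded-variation part of $\pa_{t_j}\log M$, computed directly from \eqref{pamA}--\eqref{1/2-4/3}, cancels exactly the drift that the extra force point at $W_{3-j}$ with weight $2$ would introduce into the SDE for $\ha w_j$ under $\PP_0$. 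A standard It\^o calculation then shows that for every frozen $t_k$, the process $M|^k_{t_k}$ is a local $\F^j|^k_{t_k}$-martingale; by the separable structure (Definition \ref{separable}) and Proposition \ref{separable-lemma}, this upgrades $M$ to a local $\F$-martingale in the two-parameter sense.

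Reweighting $\PP_0$ by $M$ on bounded separable $\F$-stopping regions exhausting $\cal D$ (capping $W_{j,1}$ and imposing lower bounds on $\sin_2(W_1-W_2)$ and $\sin_2(W_j-V_s)$) and applying Proposition \ref{OST} with Kolmogorov-type consistency produces a probability measure $\PP$ under which one-parameter Girsanov along each coordinate axis identifies $\eta_j$ marginally as a radial SLE$_\kappa(2,\ulin\rho)$ with the mutual force point at $W_{3-j}$. For the commutation property, fix any $\F^k$-stopping time $\tau_k<\ha T_k$ and apply one-parameter Girsanov in the $t_j$-direction at the frozen boundary $t_k=\tau_k$: the conditional density $M(\cdot\ulin e_j+\tau_k\ulin e_k)/M(\tau_k\ulin e_k)$ against $\PP_0$ adds exactly the drift $\cot_2(W_j-W_k|_{t_k=\tau_k})\,dt_j$, which by the discussion at the end of Section \ref{section-det} identifies $\eta_j^{\tau_k}$ (with driving function $W_j|^k_{\tau_k}$ and speed $(W_{j,1}|^k_{\tau_k})^2$) as a time-change of a radial SLE$_\kappa(2,\ulin\rho)$ with force points $e^{i\ha w_k(\tau_k)}$ and $g_k(\tau_k,e^{iv_s})$, stopped on exit from $\cal D$.

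The main obstacle is showing that $M$ is a true $\F$-martingale rather than merely a local one, so that the change of measure is globally consistent on $\cal D$ and the conditional computation above is valid. The standard remedy is to construct an exhausting sequence of separable bounded $\F$-stopping regions strictly inside $\cal D$ on which $M$ is uniformly integrable, and then appeal to Proposition \ref{transience} applied to each marginal SLE$_\kappa(\ulin\rho)$ curve to show that this sequence exhausts $\cal D$ almost surely; combined with Proposition \ref{right-continuous-0} this passes the martingale identity to the limiting region.
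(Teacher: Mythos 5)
Your overall strategy --- build a two-time-parameter positive process $M$ on $\cal D$ that serves as a Girsanov/coupling density, localize on subregions $[\ulin 0,\tau_{\ulin\xi}]$ where it is bounded, verify the martingale property there, and glue the local couplings using a consistency argument (the stochastic coupling technique of \cite{reversibility,duality}) --- is exactly the paper's approach, and your use of a different reference measure $\PP_0$ (product of SLE$_\kappa(\ulin\rho)$ laws without the mutual force point, rather than the paper's product of plain Brownian-motion laws $\PP^{\ii}_B$) is a stylistic variation that does not by itself cause trouble: the RN derivative against $\PP_0$ is the paper's $M_*$ divided by the product of the one-parameter marginal SLE$_\kappa(\ulin\rho)$ densities, and those marginal densities involve no two-time-parameter quantities such as $W_{j,1}$ or the double-integral term. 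So far, so good.

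The genuine gap is in the form of $M$ that you propose. You list only $|\sin_2(W_1-W_2)|^{2/\kappa}$, $W_{1,1}^{\alpha}W_{2,1}^{\alpha}$, $|\sin_2(W_j-V_s)|^{\rho_s/\kappa}$, and $V_{s,1}^{\bullet}$, and assert that the exponents can be tuned to kill the finite-variation part of $\pa_j\log M$. They cannot. The exponent on $W_{j,1}$ is forced to be $\bb=(6-\kappa)/(2\kappa)$ (it is the unique choice that cancels, after Girsanov, the $\kappa\bb W_{j,2}$ drift in $\pa_j W_j$ coming from (\ref{-3}) and It\^o), and once you have $W_{j,1}^{\bb}$, formula (\ref{paW1bc}) shows that its $\pa_j$-derivative contributes a drift $\frac{\cc}{6}W_{j,S}\,\pa t_j-\frac{\bb}{6}(W_{j,1}^2-1)\,\pa t_j$, where $W_{j,S}$ is the Schwarzian quantity and $\cc$ is the central charge. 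No factor in your list produces a compensating $W_{j,S}$ term: the $\sin_2$ factors yield $\cot_2$-type drifts and the $V_{s,1}$ factors yield $\cot_2'$-type drifts via (\ref{dWj1Vs1}), none of which involve $W_{j,S}$. So with your ansatz, $M$ is not a local martingale and the Girsanov step fails. The paper removes this drift by inserting the explicitly two-time-parameter factor
\[
I^{-\cc/6}=\exp\Big(-\tfrac{\cc}{6}\int_0^{t_1}\!\!\int_0^{t_2}W_{1,1}^2W_{2,1}^2\,\cot_2'''(W_1-W_2)\,ds_2\,ds_1\Big),
\]
whose $\pa_j$-derivative is $-\frac{\cc}{6}W_{j,S}\,\pa t_j$ by (\ref{pajWKS}) (using $W_{j,S}|^k_0\equiv 0$), together with $e^{\frac{\bb}{6}\ha\mA}$ (with $\ha\mA=\mA-t_1-t_2$, see (\ref{pahamA})) to kill the $-\frac{\bb}{6}(W_{j,1}^2-1)$ piece, a further factor $e^{\frac{(\rho_\Sigma+2)(\rho_\Sigma+6)}{8\kappa}\mA}$ to absorb the constant multiples of $W_{j,1}^2\,\pa t_j$ arising from (\ref{WkVr})--(\ref{WjVr}), and the cross factor $\prod_{r<s}|\sin_2(V_r-V_s)|^{\rho_r\rho_s/(2\kappa)}$ needed when $m\ge2$. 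None of these is merely a normalization; they are required for the drift cancellation you invoke. A secondary imprecision worth flagging: you call $\alpha$ a ``central-charge exponent associated with the $\rho=2$ force point,'' but $\alpha=\bb$ is the boundary scaling exponent, determined by the map-composition structure and independent of the value $2$; the central charge $\cc$ enters only through the interaction term $I^{-\cc/6}$. Conflating these two roles is likely why the $I$ factor was overlooked.
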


The $\eta_1$ and $\eta_2$ in Proposition \ref{prop-commu} are said to  commute with each other.
The idea of the commutation relation between SLE curves is generated in \cite{Julien}.
The chordal counterpart of this proposition, with  $e^{iw_1},e^{iw_2},e^{i\ulin v},0$ replaced by $w_1,w_2,\ulin v,\infty$, follows easily from the imaginary geometry developed in \cite{MS1}.
 {In that case, one may construct the  two chordal SLE$_\kappa(2,\ulin\rho)$ curves as two flow lines of a GFF in $\HH$ with piecewise constant boundary data changing values at $w_1,w_2,v_1,\dots,v_m$, where the constants are determined by $\kappa,\ulin\rho$.}
Similarly,  Proposition \ref{prop-commu} could be probably proved by developing a radial version of imaginary geometry theory. We will not develop such theory here. Instead, we give another proof, which provides us some information {of the coupling measure, e.g., Lemma \ref{RN-Lemma},} that will be needed later. The proof in the special case that $m=2$ and $\rho_1=\rho_2=2$ were given in
\cite[Section 3]{Two-Green-interior}. Here we are interested in  the case that $m=2$ and $\rho_1=\rho_2=\kappa-4$.
For the reader's convenience  {and future reference}, we provide a complete proof of Proposition \ref{prop-commu} here.

For $j=1,2$, let $\Xi_j$ denote the space of simple crosscuts of $\D$ that separate $e^{iw_j} $ from $e^{iw_{3-j}},e^{iv_1} ,\dots,e^{iv_m},0$. For $j=1,2$ and $\xi_j\in\Xi_j$, let $\tau^j_{\xi_j}$ be the first time that $\eta_j$ hits the closure of $\xi_j$. 
Let $\Xi=\{(\xi_1,\xi_2)\in\Xi_1\times\Xi_2,\dist(\xi_1,\xi_2)>0\}$. For $\ulin\xi=(\xi_1,\xi_2)\in\Xi$, let $\tau_{\ulin\xi}=(\tau^1_{\xi_1},\tau^2_{\xi_2})$. We may pick a countable subset $\Xi^*$ of $\Xi$ such that for every $\ulin\xi=(\xi_1,\xi_2)\in\Xi$, there is $\ulin\xi^*=(\xi_1^*,\xi_2^*)$ such that $\xi_j^*$ disconnects $\xi_j$ from $0$, $j=1,2$. The significance of $\Xi$ and $\Xi^*$ is that $[\ulin 0, \tau_{\ulin\xi}]\subset \cal D$ for any $\ulin\xi\in\Xi$, and ${\cal D}=\bigcup_{\xi\in \Xi^*} [\ulin 0, \tau_{\ulin\xi}]$.

We call the joint law of $\ha w_1$ and $\ha w_2$ in Proposition \ref{prop-commu} a global commutation coupling. Such a measure is unique, if it exists, because the stated property determines the marginal law of $\eta_2$ (taking $\tau_1=0$) and the conditional law of $\eta_1$ given the part of $\eta_2$ up to any $\F^2$-stopping time $\tau_2$ that happens before $\eta_2$ ends.  Let $\ulin\xi=(\xi_1,\xi_2)\in\Xi$. If $\eta_1,\eta_2$ satisfy the properties in Proposition \ref{prop-commu} with the following modifications: (i) $\tau_k$ is required to be $\le \tau_{\xi_k}$, and (ii)  the time that $\eta_j$ hits $\eta_k[0,\tau_k]$ is replaced by $\tau_{\xi_j}$, then we call the joint law of the driving functions for $\eta_1,\eta_2$ a local commutation coupling within $\ulin\xi$. The global commutation coupling is automatically a local commutation coupling within $\ulin\xi$ for every $\ulin\xi \in\Xi$.  A local commutation coupling within $\ulin\xi$, when restricted to $\F^1_{\tau^1_{\xi_1}}\vee\F^2_{\tau^2_{\xi_2}}$, is unique.

\subsubsection{Two-variable local martingale}
For $j=1,2$, let $\PP^j_*$ denote the law of the driving function of a radial SLE$_\kappa(2,\ulin\rho)$ curve in $\D$ started from $e^{iw_j}$ aimed at $0$ with force points $(e^{iw_{3-j}},e^{i\ulin v})$, which is a probability measure on $\Sigma$. Let $\PP^{\ii}_*=\PP^1_*\times \PP^2_*$. The superscript $\ii$ means ``independence''. We may prove the existence of the global commutation coupling using the stochastic coupling technique developed in \cite{reversibility,duality}. It suffices to prove that there is a positive continuous process $\ha M_*$ defined on $\cal D$ with the following properties.
\begin{itemize}
\item $\ha M_*(\cdot,0)=\ha M_*(0,\cdot)\equiv 1$.
\item For any $\ulin\xi\in\Xi$, $\log(\ha M_*)$ is uniformly bounded on $[\ulin 0,\tau_{\ulin\xi}]$.
\item For any $\ulin\xi\in\Xi$, $\R_+^2\ni \ulin t\mapsto \ha M_*(\ulin t\wedge \tau_{\ulin\xi})$ is an $\R_+^2$-indexed martingale under $\PP^{\ii}_*$.
\item For any $\ulin\xi\in\Xi$, the probability measure $\PP^{\ulin\xi}_*$ on $\Sigma^2$ defined by $d\PP^{\ulin\xi}_*=\ha M_*(\tau_{\ulin\xi}) d\PP^{\ii}_*$ is a local commutation coupling within $\ulin\xi$.
\end{itemize}
We have the following proposition (cf.\ \cite[Sections 6 and 7]{reversibility}).

\begin{Proposition}
Whenever the above $\ha M_*$ exists, there exists a global commutation coupling, denoted by $\PP^c_*$, which agrees with $\PP^{\ulin\xi}_*$ on $\F^1_{\tau^1_{\xi_1}}\vee \F^2_{\tau^2_{\xi_2}}$ for any $\ulin\xi=(\xi_1,\xi_2)\in\Xi$. In particular, for any $\ulin\xi=(\xi_1,\xi_2)\in\Xi$, $\PP^c_*$ is absolutely continuous w.r.t.\ $\PP^{\ii}_*$ on $\F^1_{\tau^1_{\xi_1}}\vee \F^2_{\tau^2_{\xi_2}}$, and the Radon-Nikodym derivative is $\ha M_*(\tau_{\ulin\xi})$. \label{prop-M-coupling}
\end{Proposition}

For $j=1,2$, let $\PP^j_B$ denote the law of the process $w_j+\sqrt\kappa B(t)$, $0\le t<\infty$, where $B$ is a standard linear Brownian motion. Let $\PP^{\ii}_B=\PP^1_B\times \PP^2_B$. In order to construct $\ha M_*$, it suffices to find another positive continuous process $M_*$ on $\cal D$ with the following properties.
\begin{itemize}
  \item For any $\ulin\xi\in\Xi$, $\log(M_*)$ is uniformly bounded on $[\ulin 0,\tau_{\ulin\xi}]$.
  \item For any $\ulin\xi\in\Xi$, $\R_+^2\ni \ulin t\mapsto  M_*(\ulin t\wedge \tau_{\ulin\xi})$ is an $\R_+^2$-indexed martingale under $\PP^{\ii}_B$.
  \item For any $\ulin\xi\in\Xi$, the probability measure $\PP^{\ulin\xi}_*$ on $\Sigma^2$ defined by $d\PP^{\ulin\xi}_*= (M_*(\tau_{\ulin\xi})/M_*(\ulin 0)) d\PP^{\ii}_B$ is a local commutation coupling within $\ulin\xi$.
\end{itemize}
If such $M_*$ exists, then for any $j\in\{1,2\}$ and $\xi_j\in\Xi_j$, $\PP^j_*$ is absolutely continuous w.r.t.\ $\PP^j_B$ on $\F^j_{\tau^j_{\xi_j}}$, and the Radon-Nikodym derivative is $M_*|^{3-j}_0(\tau^j_{\xi_j})/M_*(\ulin 0)$. So for any $\ulin\xi=(\xi_1,\xi_2)\in\Xi$, $\PP^{\ii}_*$  is absolutely continuous w.r.t.\ $\PP^{\ii}_B$ on $\F^1_{\tau^1_{\xi_1}}\vee \F^2_{\tau^2_{\xi_2}}$, and the Radon-Nikodym derivative is $\frac{M_*(\tau^1_{\xi_1},0)M_*(0,\tau^2_{\xi_2})}{M_*(0,0) M_*(0,0)}$.  Thus, the $\ha M_*$ defined by $\ha M_*(t_1,t_2)=\frac{M_*(t_1,t_2)M_*(0,0)}{M_*(t_1,0)M_*(0,t_2)}$ satisfies the properties stated in the previous paragraph. Moreover, the global commutation coupling $\PP^c_*$ is absolutely continuous w.r.t.\ $\PP^{\ii}_B$ on $\F^1_{\tau^1_{\xi_1}}\vee \F^2_{\tau^2_{\xi_2}}$ for any $\ulin\xi=(\xi_1,\xi_2)\in\Xi$, and the Radon-Nikodym derivative is $M_*(\tau_{\ulin\xi})/M_*(\ulin 0)$. From now on, we are going to construct such $M_*$.

First suppose $(\ha w_1,\ha w_2)$ follows the law $\PP^{\ii}_{B}$. Then for two independent Brownian motions $B_1$ and $B_2$, we have $\ha w_j(t)=w_j+\sqrt\kappa B_j(t)$, $t\ge 0$, $j=1,2$. 


Recall the boundary scaling exponent $\bb$ and central charge $\cc$  in the literature: 
$$ \bb=\frac{6-\kappa}{2\kappa},\quad \cc=\frac{(3\kappa-8)(6-\kappa)}{2\kappa}.$$ 
Fix $j\ne k\in\{1,2\}$. Let $\tau_k$ be an $\F^k$-stopping time with $\tau_k<\ha T_k$. Let $\F^{(j,\infty)}$ denote usual augmentation of the $\R_+$-indexed filtration $(\F^j_t\vee \F^k_{\infty})_{t\ge 0}$. By independence, $B_j$ is an $\F^{(j,\infty)}$-Brownian motion. From now on, we will repeatedly apply It\^o's formula (cf.\ \cite{RY}), where all SDE are $\F^{(j,\infty)}$-adapted. By (\ref{-3}), we have
$$dW_j|^k_{\tau_k}(t)=W_{j,1}|^k_{\tau_k}(t)\sqrt\kappa dB_j(t)-\kappa\bb W_{j,2}|^k_{\tau_k}(t) d t_j. $$
To make the symbols less heavy, we write the SDE as
\BGE \pa_j W_j=W_{j,1}\sqrt\kappa \pa B_j-\kappa \bb W_{j,2}\pa t_j.\label{WB}\EDE
The symbol ``$\pa_j$'' in the SDE could be understood as the partial derivative w.r.t.\ the $j$-th variable.
We keep in mind that the $k$-th variable is fixed to be $\tau_k$.
Using (\ref{1/2-4/3}), we get
\BGE \frac{\pa_jW_{j,1}^{\bb}}{W_{j,1}^{\bb} }=\bb\frac{W_{j,2}}{W_{j,1}}\sqrt\kappa\pa B_j+\frac{\cc}6 W_{j,S}\pa t_j-\frac {\bb}6(W_{j,1}^2-1)\pa t_j.\label{paW1bc}\EDE
Using (\ref{dWkVs},\ref{dWj1Vs1}) we get for $r\ne s\in\{1,\dots,m\}$,
\BGE \frac{\pa_j \sin_2(W_k-V_r)^{\frac {\rho_r}\kappa}}{\sin_2(W_k-V_r)^{\frac {\rho_r}\kappa}}
 =-\frac{\rho_r}{2\kappa} [1+\cot_2(W_j-W_k)\cot_2(W_j-V_r)]W_{j,1}^2\pa t_j;\label{WkVr}\EDE
 \BGE \frac{\pa_j \sin_2(V_r-V_s)^{\frac {\rho_r\rho_s}{2\kappa}}}{\sin_2(V_r-V_s)^{\frac {\rho_r\rho_s}{2\kappa}}}
 =-\frac{\rho_r\rho_s}{4\kappa} [1+\cot_2(W_j-V_r)\cot_2(W_j-V_s)]W_{j,1}^2\pa t_j;\label{VrVs}\EDE
 \BGE \frac{\pa_j W_{k,1}^{\bb} }{W_{k,1}^{\bb}}
 =-\frac{\bb}{2} [1+\cot_2(W_j-W_k)\cot_2(W_j-W_k)]W_{j,1}^2\pa t_j;\label{WkWk}\EDE
 \BGE \frac{\pa_j V_{r,1}^{\frac {\rho_r(\rho_r+4-\kappa)}{4\kappa}}}{V_{r,1}^{\frac {\rho_r(\rho_r+4-\kappa)}{4\kappa}}}
 =-\frac{\rho_r(\rho_r+4-\kappa)}{8\kappa} [1+\cot_2(W_j-V_r)\cot_2(W_j-V_r)]W_{j,1}^2\pa t_j;\label{VrVr}\EDE
$$\frac{\pa_j \sin_2(W_j-W_k)}{\sin_2(W_j-W_k)}=\frac { \sqrt\kappa}2\cot_2(W_j-W_k)W_{j,1} \pa  B^k_{\tau_k}-\frac \kappa 2 \bb\cot_2 (W_j-W_k) W_{j,2} \pa t_j $$
$$+\frac 12 \cot_2 (W_j-W_k)^2 W_{j,1}^2 \pa t_j- \frac \kappa 8 W_{j,1}^2 \pa t_j;$$
$$\frac{\pa_j \sin_2(W_j-V_r)}{\sin_2(W_j-V_r)}=\frac { \sqrt\kappa}2\cot_2(W_j-V_r)W_{j,1} \pa  B_j -\frac \kappa 2 \bb\cot_2 (W_j-V_r) W_{j,2}\pa t_j$$
$$ +\frac 12 \cot_2 (W_j-V_r)^2 W_{j,1}^2 \pa t_j-  \frac \kappa 8 W_{j,1}^2 \pa t_j.$$
The last two formulas further imply that
$$\frac{\pa_j \sin_2(W_j-W_k)^{\frac {\rho_0}\kappa }}{\sin_2(W_j-W_k)^{\frac {\rho_0}\kappa }}=\frac{\rho_0}2\cot_2(W_j-W_k) W_{j,1}\frac{\pa B_j}{\sqrt\kappa}-\frac{\rho_0}2\bb \cot_2(W_j-W_k) W_{j,2}\pa t_j$$
\BGE +\frac{\rho_0(\rho_0+4-\kappa)}{8\kappa}\cot_2(W_j-W_k)^2 W_{j,1}^2 \pa t_j-\frac {\rho_0}8 W_{j,1}^2 \pa t_j,\quad \rho_0\in\R;\label{Wjk}\EDE
$$\frac{\pa_j \sin_2(W_j-V_r)^{\frac{\rho_r}\kappa}}{\sin_2(W_j-V_r)^{\frac{\rho_r}\kappa}}=  \frac {\rho_r}2\cot_2(W_j-V_r)W_{j,1}\frac{ \pa  B_j}{\sqrt\kappa} -\frac {\rho_r} 2\bb \cot_2 (W_j-V_r)  W_{j,2} \pa t_j$$
   \BGE  +\frac{\rho_r(\rho_r+4-\kappa)}{8\kappa}\cot_2(W_j-V_r)^2W_{j,1}^2\pa t_j-  \frac {\rho_r} 8 W_{j,1}^2 \pa t_j.\label{WjVr}\EDE
Define a function $I$ on $\cal D$ by
$$I(t_1,t_2)=\exp\Big(\int_0^{t_1} \int_0^{t_2} W_{1,1}(s_1,s_2)^2 W_{2,1}(s_1,s_2)^2 \cot_2'''(W_1(s_1,s_2)-W_2(s_1,s_2))ds_2ds_1\Big).$$
By (\ref{pajWKS}) and the fact that $W_{j,S}|^k_0\equiv 0$, we get
\BGE \frac{\pa_j I^{-\frac{\cc}6}}{I^{-\frac{\cc}6}}=-\frac{\cc}6 W_{j,S}\pa t_j.\label{paF}\EDE
Define $\ha \mA$ on $\cal D$ by $\ha \mA(t_1,t_2)=\mA(t_1,t_2)-t_1-t_2$. By (\ref{pamA}),
\BGE \pa_j \ha \mA=(W_{j,1}^2-1)\pa t_j.\label{pahamA}\EDE
Let $\rho_\Sigma=\sum_s \rho_s$. Define a positive continuous process $M_*$ on $\cal D$ by
$$M_*=e^{\frac{(\rho_\Sigma+2)(\rho_\Sigma+6)}{8\kappa}\mA}e^{\frac {\bb}6 \ha \mA} W_{1,1}^{\bb}W_{2,1}^{\bb} I^{-\frac{\cc}6}|\sin_2(W_1-W_2)|^{\frac 2\kappa } \prod_{1\le r<s\le m} |\sin_2(V_r-V_s)|^{\frac{\rho_r\rho_s}{2\kappa}}$$ \BGE \times \prod_{s\in\{1,\dots,m\}}\Big(V_{s,1}^{\frac{\rho_s(\rho_s+4-\kappa)}{4\kappa}} \prod_{j\in\{1,2\}}|\sin_2(W_j-V_s)|^{\frac{\rho_s}\kappa} \Big) . \label{M}\EDE
Combining (\ref{pamA},\ref{paW1bc}-\ref{pahamA}), where $\rho_0$ is set to be $2$ in (\ref{Wjk}),  we get
\BGE \frac{\pa_j M_*}{M_*}=\bb\frac{W_{j,2}}{W_{j,1}}\sqrt\kappa\pa B_j+\cot_2(W_j-W_k)W_{j,1} \frac{\pa  B_j}{\sqrt\kappa}+\sum_r \frac {\rho_r}2\cot_2(W_j-V_r)W_{j,1}\frac{ \pa  B_j}{\sqrt\kappa}  .\label{pajM}\EDE
This means that $M_*|^k_{\tau_k}(t)$ is a continuous $\F^{(j,\infty)}$-local martingale.

\begin{Lemma}
	For any $\ulin \xi\in\Xi$, $|\log(M_*)|$ is uniformly bounded on $[\ulin 0,\tau_{\ulin\xi}]$ by a  constant depending only on $\kappa,\ulin\xi,e^{iw_1},e^{iw_2},e^{iv_1},\dots,e^{iv_m}$. \label{uniform}
\end{Lemma}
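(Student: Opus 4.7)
The plan is to show that on $[\ulin 0,\tau_{\ulin\xi}]$ every factor appearing in the product defining $M_*$ in (\ref{M}) is bounded above and bounded below away from $0$ by a constant depending only on $\kappa$, $\ulin\xi$, and $e^{iw_1},e^{iw_2},e^{iv_1},\dots,e^{iv_m}$. The key geometric input is that, for every $(t_1,t_2)\in[\ulin 0,\tau_{\ulin\xi}]$, the hull $K_j(t_j)$ is contained in the closure of the component of $\D\sem\xi_j$ that contains $e^{iw_j}$. By the definitions of $\Xi$ and $\Xi_j$, these two closed regions are mutually disjoint, bounded away from $0$, and separated by positive Euclidean distance from each $e^{iv_s}$, and these separations are uniform in $(t_1,t_2)$ by the monotonicity $K_j(t_j)\subset K_j(\tau^j_{\xi_j})$ of the Loewner hulls.

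First, I would bound the capacity terms. Since $K(t_1,t_2)=K_1(t_1)\cup K_2(t_2)$ lies in a fixed compact set $K_0\subset\D\sem\{0\}$, monotonicity of $\dcap$ gives $0\le \mA(t_1,t_2)\le \dcap(K_0)<\infty$, and since $t_j\le \mA$ we also get $|\ha\mA|\le 2\mA$. Consequently the prefactors $e^{\frac{(\rho_\Sigma+2)(\rho_\Sigma+6)}{8\kappa}\mA}$ and $e^{\frac{\bb}{6}\ha\mA}$ are bounded above and bounded below away from $0$.

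Next I would use Koebe distortion and conformal invariance of harmonic measure to control the remaining factors. Each $e^{iv_s}$ has positive distance from $K(t_1,t_2)$, so Koebe's $1/4$ theorem and Koebe distortion applied to $g((t_1,t_2),\cdot)$ at $e^{iv_s}$ give uniform two-sided bounds on $|V_{s,1}|$; the analogous argument applied to $g_{k,t_j}(t_k,\cdot)=g_{K_{k,t_j}(t_k)}$ at $\ha w_j(t_j)$, using that $K_{k,t_j}(t_k)=g_j(t_j,K_k(t_k))$ stays at positive distance from $e^{i\ha w_j(t_j)}$ (which follows from the crosscut separation combined with another application of Koebe to $g_j(t_j,\cdot)$), yields uniform bounds on $|W_{j,1}|$. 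Uniform lower bounds on $|\sin_2(W_j-V_s)|$, $|\sin_2(W_1-W_2)|$, and $|\sin_2(V_r-V_s)|$ then follow from conformal invariance of harmonic measure from $0$: each arc on $\partial(\D\sem K(t_1,t_2))$ between consecutive prime ends in the list $\eta_1(t_1),\eta_2(t_2),e^{iv_1},\dots,e^{iv_m}$ carries harmonic measure from $0$ bounded below by a positive constant, hence so does its image on $\partial\D$ under $g((t_1,t_2),\cdot)$, and the length of an arc on $\partial\D$ carrying harmonic measure from $0$ at least $\delta$ is bounded below in terms of $\delta$.

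Finally, the factor $I^{-\cc/6}$ is handled by noting that the separation of $W_1-W_2$ from $2\pi\Z$ just established makes $\cot_2'''(W_1-W_2)$ bounded, which combined with the bounds on $W_{1,1},W_{2,1}$ shows the integrand defining $I$ is uniformly bounded on $[0,t_1]\times[0,t_2]$, so $|\log I|\le C\cdot t_1t_2\le C\cdot \mA^2$. Summing the bounded contributions of the logarithms of every factor in (\ref{M}) yields the conclusion. The step I expect to be the main technical point is establishing the uniform separation constants: the assertions that $\dist(K_1(t_1),K_2(t_2))$, $\dist(K(t_1,t_2),\{0\})$, and $\dist(K(t_1,t_2),e^{iv_s})$ are all bounded below uniformly in $(t_1,t_2)\in[\ulin 0,\tau_{\ulin\xi}]$ require the hulls $K_j(\tau^j_{\xi_j})$ to still lie on the prescribed side of $\xi_j$, after which the desired separations reduce to separations within a fixed compact containing region and the Koebe-based arguments above go through.
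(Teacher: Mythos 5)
Your proposal is correct in outline and takes a genuinely different, self-contained route from the paper. The paper's own proof of Lemma \ref{uniform} is a one-line reduction to boundedness of $\mA$, $\log|\sin_2(\cdot)|$-type terms, $\log V_{s,1}$ and $\log|W_{j,1}|$, followed by a citation to the proof of Lemma 3.1 of \cite{Two-Green-interior} where exactly these bounds were established. What you do instead is reconstruct that argument from scratch: you first establish the uniform compact containment $K(t_1,t_2)\subset\lin\Omega_1\cup\lin\Omega_2$ (the $\xi_j$-components containing $e^{iw_j}$), deduce uniform bounds on $\mA$ and $\ha\mA$ via monotonicity of $\dcap$, control $V_{s,1}$ and $W_{j,1}$ via Koebe distortion applied at boundary points kept a fixed distance from the hulls, control the $|\sin_2(\cdot)|$ factors via conformal invariance of harmonic measure from $0$, and bound $\log I$ by bounding the integrand and then $t_1 t_2\le \mA^2$. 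This is essentially the content of the cited proof; your version makes the dependence on $\ulin\xi$ and the marked points transparent, which the paper leaves implicit. The one place you rightly flag as requiring care is the uniform separation estimates (distance of $K_{k,t_j}(t_k)$ from $e^{i\ha w_j(t_j)}$, and the harmonic-measure lower bounds); both follow from an extremal-length comparison with the fixed crosscut configuration $\ulin\xi$, which is the same technical input the cited lemma relies on. Two minor imprecisions that do not affect the conclusion: since $t_j\le\mA$, one in fact has $|\ha\mA|\le\mA$ rather than $2\mA$; and for the $|\sin_2|$ lower bounds one needs both complementary arcs between each pair of marked prime ends to carry positive harmonic measure (i.e., the angle is bounded away from $2\pi\Z$, not just from $0$), which your argument does supply since all $m+2$ arcs between consecutive prime ends are covered.
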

\begin{proof}
  It suffices to show that $\mA$, $\log|\sin_2(W_1-W_2)|$, $\log|\sin_2(W_j-V_s)|$,  $\log|\sin_2(V_r-V_s)|$,  $\log V_{s,1}$, $\log|W_{j,1}|$, $j=1,2$, $r\ne s\in\{1,\dots,m\}$,	are all bounded in absolute value on $[\ulin 0,\tau_{\ulin\xi}]$ by constants depending only on $\ulin\xi,e^{iw_1},e^{iw_2},e^{iv_1},\dots,e^{iv_m}$. These statements were all proved in the proof of  Lemma 3.1 of \cite{Two-Green-interior}.
\end{proof}

Let $\ulin \xi=(\xi_1,\xi_2)\in\Xi$. Let $j\ne k\in\{1,2\}$. Let $\tau_k\le \tau_k'$ be two $\F^k$-stopping times such that $\tau_k\le \tau^k_{\xi_k}$. By Lemma \ref{uniform} and (\ref{pajM}),  $M|^k_{\tau_k}(\cdot\wedge \tau_{\xi_j})$ under $\PP^{\ii}_B$ is an $\F^{(j,\infty)}$-martingale closed by $M|^k_{\tau_k}(\tau^j_{\xi_j})$. The filtration $\F^{(j,\infty)}$ in the statement can be replaced by $(\F^j_t\vee \F^k_{\tau_k'})_{t\ge 0}$  since $M|^k_{\tau_k}(\cdot\wedge \tau_{\xi_j})$ is adapted to it. Let $\F$ denote the $\R_+^2$-indexed filtration generated by $\F^1$ and $\F^2$. Applying the above result twice: first to $k=1$ and $\tau_1=\tau_1'=\tau^1_{\xi_1}$, and then to $k=2$, $\tau_2=t_2\wedge \tau^2_{\xi_2}$ and $\tau_2'=t_2$,   we get the following lemma (cf.\ \cite[Corollary 3.2]{Two-Green-interior}).


\begin{Lemma}
  For any $\ulin\xi=(\xi_1,\xi_2)\in\Xi$, $\R_+^2\ni \ulin t\mapsto M_*(\ulin t\wedge  \tau_{\ulin\xi})$ is an $\F$-martingale under $\PP^{\ii}_B$ closed by $M_*(\tau_{\ulin\xi})$. In particular, we have $\EE^{\ii}_B[M_*(\tau_{\ulin\xi})]=M_*(\ulin 0)$. \label{M-mtgl}
\end{Lemma}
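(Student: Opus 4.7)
The plan is to prove the stronger closure identity
\[ \EE^{\ii}_B[M_*(\tau_{\ulin\xi})\mid \F_{\ulin t}] = M_*(\ulin t\wedge \tau_{\ulin\xi}),\quad \ulin t\in\R_+^2, \]
from which the $\F$-martingale property for $\ulin t\mapsto M_*(\ulin t\wedge \tau_{\ulin\xi})$ follows by the tower property, and the identity $\EE^{\ii}_B[M_*(\tau_{\ulin\xi})]=M_*(\ulin 0)$ follows by taking $\ulin t=\ulin 0$. Since $\F$ is separable, $\F_{\ulin t}=\F^1_{t_1}\vee\F^2_{t_2}$, so closure is verified by iterated conditioning that chains two applications of the one-parameter martingale statement recalled just above the lemma.

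The first step is to apply that one-parameter result with $k=1$ and $\tau_1=\tau_1'=\tau^1_{\xi_1}$, which tells us that $M_*(\tau^1_{\xi_1},\cdot\wedge \tau^2_{\xi_2})$ is a martingale in the filtration $(\F^2_t\vee\F^1_\infty)_{t\ge 0}$ under $\PP^{\ii}_B$, closed by $M_*(\tau_{\ulin\xi})$. In particular,
\[ \EE^{\ii}_B[M_*(\tau_{\ulin\xi})\mid \F^2_{t_2}\vee \F^1_\infty]=M_*(\tau^1_{\xi_1},t_2\wedge \tau^2_{\xi_2}). \]
Since $\F^1_{t_1}\vee \F^2_{t_2}\subset \F^2_{t_2}\vee \F^1_\infty$, the tower property yields
\[ \EE^{\ii}_B[M_*(\tau_{\ulin\xi})\mid \F^1_{t_1}\vee \F^2_{t_2}]=\EE^{\ii}_B[M_*(\tau^1_{\xi_1},t_2\wedge \tau^2_{\xi_2})\mid \F^1_{t_1}\vee \F^2_{t_2}]. \]
The second step is to apply the one-parameter result with $k=2$, $\tau_2=t_2\wedge\tau^2_{\xi_2}$, $\tau_2'=t_2$, and use the upgraded filtration $(\F^1_t\vee\F^2_{t_2})_{t\ge 0}$: the process $M_*(\cdot\wedge\tau^1_{\xi_1},t_2\wedge\tau^2_{\xi_2})$ is a martingale in this filtration closed by $M_*(\tau^1_{\xi_1},t_2\wedge\tau^2_{\xi_2})$, so
\[ \EE^{\ii}_B[M_*(\tau^1_{\xi_1},t_2\wedge\tau^2_{\xi_2})\mid \F^1_{t_1}\vee\F^2_{t_2}]=M_*(t_1\wedge\tau^1_{\xi_1},t_2\wedge\tau^2_{\xi_2})=M_*(\ulin t\wedge\tau_{\ulin\xi}). \]
Combining the two identities proves the closure relation.

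I expect the main obstacle to be the bookkeeping of filtrations: at each step the one-parameter martingale statement must be applied with the correct $\sigma$-algebras so that the two conditional expectations chain through the tower property; in particular the first application must be used in its $\F^{(2,\infty)}$ form (rather than the $\F^1_{\tau^1_{\xi_1}}$-upgrade) so that the intermediate filtration contains $\F^1_{t_1}\vee\F^2_{t_2}$, while the second must be used in its upgraded form $(\F^1_t\vee\F^2_{t_2})_{t\ge 0}$ so that the outer conditioning sits directly in the filtration. The role of Lemma \ref{uniform} is essential but somewhat hidden: uniform boundedness of $|\log M_*|$ on each $[\ulin 0,\tau_{\ulin\xi}]$ is what upgrades the continuous local martingales produced by \eqref{pajM} to genuine bounded martingales closed by the stated terminal values, which is what makes each step of the iterated conditioning well-defined and exact.
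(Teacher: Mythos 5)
Your argument is correct and is essentially the paper's own proof unrolled: the closure identity $\EE^{\ii}_B[M_*(\tau_{\ulin\xi})\mid \F^1_{t_1}\vee\F^2_{t_2}]=M_*(\ulin t\wedge\tau_{\ulin\xi})$ is obtained by chaining two applications of the one-parameter martingale statement through the tower property, exactly as the paper instructs. You also correctly pin down the one point the paper's terse ``apply twice'' phrasing leaves implicit, namely that the first application must be taken against the full $\F^{(2,\infty)}$-filtration (so that $\F^1_{t_1}\vee\F^2_{t_2}$ is contained in the intermediate conditioning $\sigma$-algebra) while the second is taken against the upgraded filtration $(\F^1_t\vee\F^2_{t_2})_{t\ge 0}$.
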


\subsubsection{Construction of the coupling measure}\label{subsubsection-construction}

Fix $\ulin\xi\in\Xi$. We may now define a new probability measure $\til\PP_*$ by $d\til\PP_*/d\PP^{\ii}_B= M_*(\tau_{\ulin\xi})/M_*(\ulin 0)$. Fix $j\ne k\in\{1,2\}$ and an $\F^k$-stopping time $\tau_k$ with $\tau_k\le \tau^k_{\xi_k}$. Define
$$\til B_j(t)=B_j(t)-\int_0^{t\wedge \tau^j_{\xi_j}}\frac{2}{2\sqrt\kappa} \cot_2(W_j|^k_{\tau_k}(s)-W_k|^k_{\tau_k}(s))W_{j,1}|^k_{\tau_k}(s)^2ds$$
$$-\sum_{l=1}^m \int_0^{t\wedge \tau^j_{\xi_j}}\frac{\rho_s}{2\sqrt\kappa} \cot_2(W_j|^k_{\tau_k}(s)-V_l|^k_{\tau_k}(s))W_{j,1}|^k_{\tau_k}(s)^2 ds,\quad t\ge 0.$$
By (\ref{pajM}) and the Girsanov Theorem,  $\til B_j$ is an $\F^{(j,\infty)}$-Brownian motion under $\til\PP_*$. By (\ref{WB}) and the definition of $\til B$, we find that the $W_j$ (with $t_k$ fixed to be $\tau_k$) satisfies the SDE
$$\pa_j W_j=W_{j,1}\sqrt\kappa \pa \til B+  \cot_2({W_j-W_k})W_{j,1}^2\pa t_j +\sum_{s=1}^m \frac{\rho_s}2  \cot_2({W_j-V_s})W_{j,1}^2\pa t_j,\quad 0\le t_j\le \tau^j_{\xi_j}.$$
Since $\eta_j^{\tau_k}(t_j)$, $0\le t_j<T^{\cal D}_j(\tau_k)$, is a radial Loewner curve with speed $(W_{j,1}|^k_{\tau_k})^2$ driven by $W_j|^k_{\tau_k}$, the SDE implies that, under $\til\PP_*$, conditionally on $\F^k_{\tau_k}$, the $\eta_j^{\tau_k}$ up to $\tau^j_{\xi_j}$ has the law of a time-change of a radial SLE$_\kappa(2,\ulin\rho)$ curve in $\D$ started from $g_k(\tau_k, e^{iw_j})$ aimed at $0$ with force points $e^{iw_k(\tau_k)},g_k(\tau_k, e^{i\ulin v})$. This means that $\til\PP_*$ is a local commutation coupling within $\ulin\xi$. Thus, the $M_*$ satisfies all required properties. By Proposition \ref{prop-M-coupling}, we then complete the proof of Proposition \ref{prop-commu}. Moreover, we get the following proposition.

\begin{Proposition}
  For any $\ulin\xi=(\xi_1,\xi_2)\in\Xi$, the global coupling measure in Proposition \ref{prop-commu} is absolutely continuous w.r.t.\ $\PP^{\ii}_B$ on $\F^1_{\tau^1_{\xi_1}}\vee \F^2_{\tau^2_{\xi_2}}$, and the Radon-Nikodym derivative is $M_*(\tau_{\ulin\xi})/M_*(\ulin 0)$, where $M_*$ is defined by (\ref{M}). \label{prop-P*}
\end{Proposition}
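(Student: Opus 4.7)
The plan is to observe that Proposition \ref{prop-P*} is essentially encoded in the construction of $M_*$ already carried out in the section; the proof amounts to chaining together three ingredients that are all in place.

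First, I would use Lemma \ref{M-mtgl}, which gives that $\R_+^2\ni \ulin t\mapsto M_*(\ulin t\wedge \tau_{\ulin\xi})$ is an $\F$-martingale under $\PP^{\ii}_B$ closed by $M_*(\tau_{\ulin\xi})$ and in particular that $\EE^{\ii}_B[M_*(\tau_{\ulin\xi})]=M_*(\ulin 0)$. Hence setting $d\til\PP^{\ulin\xi}_*/d\PP^{\ii}_B := M_*(\tau_{\ulin\xi})/M_*(\ulin 0)$ defines a genuine probability measure on $(\Omega,\F^1_{\tau^1_{\xi_1}}\vee\F^2_{\tau^2_{\xi_2}})$.

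Second, I would verify that $\til\PP^{\ulin\xi}_*$ is a local commutation coupling within $\ulin\xi$. This is exactly the Girsanov calculation performed in the paragraph after Lemma \ref{M-mtgl}: for each $j\ne k\in\{1,2\}$ and each $\F^k$-stopping time $\tau_k\le \tau^k_{\xi_k}$, the SDE (\ref{pajM}) for $M_*|^k_{\tau_k}$ combined with Girsanov's theorem inserts precisely the drifts $\cot_2(W_j-W_k)W_{j,1}^2\pa t_j$ and $\sum_s \frac{\rho_s}{2}\cot_2(W_j-V_s)W_{j,1}^2\pa t_j$ into (\ref{WB}) for $W_j|^k_{\tau_k}$. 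Since $\eta_j^{\tau_k}$ is a radial Loewner curve driven by $W_j|^k_{\tau_k}$ with speed $(W_{j,1}|^k_{\tau_k})^2$, the resulting SDE is exactly that of a (time-changed) radial SLE$_\kappa(2,\ulin\rho)$ in $\D$ started from $g_k(\tau_k,e^{iw_j})$ aimed at $0$ with force points $(e^{i\ha w_k(\tau_k)},g_k(\tau_k,e^{i\ulin v}))$, which is the defining conditional property of a local commutation coupling within $\ulin\xi$.

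Third, I would invoke uniqueness: as noted in the paragraph introducing $\PP^{\ulin\xi}_*$, a local commutation coupling within $\ulin\xi$ is uniquely determined when restricted to $\F^1_{\tau^1_{\xi_1}}\vee\F^2_{\tau^2_{\xi_2}}$, and the global coupling $\PP^c_*$ produced by the stochastic coupling technique of \cite{reversibility,duality} agrees with every such local coupling on this $\sigma$-algebra by construction. Combining these two facts with Step 2 gives $\PP^c_* = \til\PP^{\ulin\xi}_*$ on $\F^1_{\tau^1_{\xi_1}}\vee\F^2_{\tau^2_{\xi_2}}$, yielding the claimed Radon-Nikodym derivative $M_*(\tau_{\ulin\xi})/M_*(\ulin 0)$.

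The only delicate point in this outline is the two-directional applicability of Girsanov's theorem in Step 2, since the measure change must simultaneously produce the correct conditional SDE along the $t_1$-slice (with $t_2$ fixed) and along the $t_2$-slice (with $t_1$ fixed). This is handled exactly as in the proof of Lemma \ref{M-mtgl}: one applies the one-parameter Girsanov argument first in one variable with the other frozen at an $\F^k$-stopping time, then in the remaining variable, using the separable-filtration framework of Section \ref{section-two-parameter} (in particular Proposition \ref{separable-lemma}) to conclude that the two passages are compatible.
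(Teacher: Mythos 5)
Your proposal is correct and follows essentially the same route as the paper: Proposition \ref{prop-P*} is a direct consequence of the construction of $M_*$, namely Lemma \ref{M-mtgl} (which makes $d\til\PP^{\ulin\xi}_*/d\PP^{\ii}_B=M_*(\tau_{\ulin\xi})/M_*(\ulin 0)$ a probability), the Girsanov computation built on (\ref{pajM}) and (\ref{WB}) (which identifies $\til\PP^{\ulin\xi}_*$ as a local commutation coupling within $\ulin\xi$), and the stochastic coupling technique together with the uniqueness of local commutation couplings (which shows $\PP^c_*$ agrees with $\til\PP^{\ulin\xi}_*$ on $\F^1_{\tau^1_{\xi_1}}\vee\F^2_{\tau^2_{\xi_2}}$). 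The paper treats the proposition as an immediate by-product of the paragraph preceding it, and your three-step outline matches that reasoning.
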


{
\begin{Lemma}
  Suppose $\kappa>4$, $m=2$, $\rho_1=\rho_2=\kappa-4$, and $w_1>v_1>w_2>v_2>w_1-2\pi$. Let $\eta_1$ and $\eta_2$ be given by Proposition \ref{prop-commu}. Let ${\cal D}=\{(t_1,t_2)\in\R_+^2:\eta_1[0,t_1]\cap \eta_2[0,t_2]=\emptyset\}$. Then a.s.\ ${\cal D}=\R_+^2$.  \label{D=R^2}
\end{Lemma}}
\begin{proof}
Let $\ha w_j$ be the driving function of $\eta_j$, $j=1,2$. 
 Since the force values $\rho_1=\rho_2=\kappa-4>\frac \kappa 2-2$, and $2>0$, by Proposition \ref{transience}, a.s.\ $\eta_j$  does not intersect $A_{3-j}$, and so has lifetime $\infty$. Fix a deterministic time $t_2\in\N$. Conditionally on $\F^2_{t_2}$, the $g_2(t_2,\cdot)$-image of the part of $\eta_1$ before hitting $\eta_2[0,t_2]$ has the law of a time-change of a radial SLE$_\kappa(2,\kappa-4,\kappa-4)$ curve in $\D$ started from $g_2(t_2,e^{iw_1})$ aimed at $0$ with force points $e^{i\ha w_2(t_2)},g_2(t_2,e^{iv_1}),g_2(t_2,e^{iv_2})$. If $\eta_1$ does hit $\eta_2[0,t_2]$, then the (time-changed) radial SLE$_\kappa(2,\kappa-4,\kappa-4)$ curve $\eta_1^{t_2}$ hits the arc on $\pa\D$ with endpoints $g_2(t_2,e^{iv_1}),g_2(t_2,e^{iv_2})$ that contains $e^{i\ha w_2(t_2)}$. By Proposition \ref{transience} again, the latter event has probability zero. Thus,  $\PP^c_*$-a.s.\ $\eta_1$ does not intersect $\eta_2[0,t_2]$, which implies that $\R_+\times [0,t_2]\subset \cal D$. Since this holds for every $t_2\in\N$, we get the conclusion.
\end{proof}

{
We use  $\PP^c_*$ to denote the joint law of the driving functions $\ha w_1$ and $\ha w_2$ of the $\eta_1$ and $\eta_2$ in Lemma \ref{D=R^2}. The superscript $c$ stands for coupling, and the subscript $*$ means that both curves end at $0$. }

For the $w_j$ and $v_j$ as above, we will need a different kind of commutation coupling  as follows. Define $M_c$ on $\cal D$ by
\BGE M_c=e^{\frac{(\kappa-6)(\kappa-2)}{8\kappa}\mA}e^{\frac {\bb}6 \ha \mA} W_{1,1}^{\bb}W_{2,1}^{\bb}I^{-\frac{\cc}6}|\sin_2(W_1-W_2)|^{\frac{\kappa-6}\kappa} .\label{Mc}\EDE
Let $\Xi$ be as usual for $e^{iw_j},e^{iv_j}$, $j=1,2$. Then Lemma \ref{uniform} holds for $M_c$.
Fix $j\ne k\in\{1,2\}$. Let $\tau_k$ be an $\F^k$-stopping time.
Using (\ref{pamA},\ref{paW1bc},\ref{WkWk},\ref{Wjk} (for $\rho_0=\kappa-6$),\ref{paF},\ref{pahamA}), we see that $M_c$, with $t_k$ fixed to be $\tau_k$, is a continuous local martingale under $\PP^{\ii}_B$ satisfying the following SDE:
$$ \frac{\pa_j M_c}{M_c}=\bb\frac{W_{j,2}}{W_{j,1}}\sqrt\kappa\pa B_j+\frac{\kappa-6}2\cot_2(W_j-W_k)W_{j,1} \frac{\pa  B_j}{\sqrt\kappa}.$$ 
So Lemma \ref{M-mtgl} also holds for $M_c$.

Fix $\ulin\xi=(\xi_1,\xi_2)\in\Xi$. One may define a probability measure $\til \PP_c$ by $d\til \PP_c/d\PP^{\ii}_B=M_c(\tau_{\ulin\xi})/M_c(\ulin 0)$. Fix $j\ne k\in\{1,2\}$ and an $\F^k$-stopping time $\tau_k$ with $\tau_k\le \tau^k_{\xi_k}$. Using the Girsanov Theorem, one can show that, under $\til \PP_c$, conditionally on $\F^k_{\tau_k}$, the $g_k(\tau_k,\cdot)$ image of the part of $\eta_j$ up to $\tau^j_{\xi_j}$ has the law of a time-change of a radial SLE$_\kappa(\kappa-6,0,0)$ curve in $\D$ started from $g_k(\tau_k, e^{iw_j})$ aimed at $0$ with force points $e^{iw_k(\tau_k)},g_k(\tau_k,e^{iv_1}),g_k(\tau_k,e^{iv_2})$. So we get the following proposition.

\begin{Proposition}
  Let $\kappa\in(0,\infty)$. There is a coupling of two random radial Loewner curves $\eta_j(t)$, $0\le t<\ha T_j$, $j=1,2$, with radial Loewner driving functions $\ha w_j$ and maps $g_j(t,\cdot)$ such that $\ha w_j(0)=w_j$ and the following holds.
  For $j=1,2$, $\eta_j$ is   a radial SLE$_\kappa(\kappa-6,0,0)$ curve  in $\D$ started from $e^{iw_j}$ aimed at $0$ with force points $e^{iw_{3-j}},e^{iv_1},e^{iv_2}$. For any $j\ne k\in\{1,2\}$ and any $\F^k$-stopping time $\tau_k$ with $\tau_k<\ha T_k$, conditionally on $\F^k_{\tau_k}$, the $\eta_j^{\tau_k}(t_j)$, $0\le t_j<T^{\cal D}_j(\tau_k)$, has the law of a time-change of a radial SLE$_\kappa(\kappa-6,0,0)$ curve in $\D$ started from $g_k(\tau_k,e^{iw_j})$ aimed at $0$ with force points $e^{i\ha w_k(\tau_k)},g_k(\tau_k,e^{iv_1}),g_k(\tau_k,e^{iv_2})$ stopped at the first time that it separates $0$ from any of the three force points in $\D$. Moreover,  for any $\ulin\xi=(\xi_1,\xi_2)\in\Xi$, the joint law of $\ha w_1$ and $\ha w_2$ is absolutely continuous w.r.t.\ $\PP^{\ii}_B$ on  $\F^1_{\tau^1_{\xi_1}}\vee \F^2_{\tau^2_{\xi_2}}$, and the Radon-Nikodym derivative is  $M_c(\tau_{\ulin\xi})/M_c(\ulin 0)$, where $M_c$ is defined by (\ref{Mc}).
  \label{prop-commu-c}
\end{Proposition}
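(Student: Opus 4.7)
The plan is to follow the template already used for Proposition \ref{prop-commu}: construct an explicit positive process $M_c$ on $\cal D$ that is a local martingale under $\PP^{\ii}_B$ in each coordinate, then apply the stochastic coupling technique of \cite{reversibility,duality} to convert it into the desired global coupling. The definition (\ref{Mc}) of $M_c$ has already been supplied, so the task is to verify it satisfies the three bullet properties listed earlier (uniform boundedness of $\log M_c$ on $[\ulin 0,\tau_{\ulin\xi}]$, the $\R_+^2$-martingale property, and the local commutation identification).

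First I would establish the uniform bound on $\log M_c$ over $[\ulin 0,\tau_{\ulin\xi}]$ exactly as in Lemma \ref{uniform}; since $M_c$ has no $V_s$ factors, one only needs to control $\mA,\ha\mA,\log W_{j,1},\log|\sin_2(W_1-W_2)|,\log I$, each of which was already shown to be bounded on these sets. Next I would apply It\^o's formula in the $j$-th direction with $t_k$ frozen at an $\F^k$-stopping time $\tau_k$. Using (\ref{pamA}) for $e^{\frac{(\kappa-6)(\kappa-2)}{8\kappa}\mA}$, (\ref{pahamA}) for $e^{\frac{\bb}{6}\ha\mA}$, (\ref{paW1bc}) for $W_{j,1}^{\bb}$, (\ref{paF}) for $I^{-\cc/6}$, (\ref{WkWk}) for $W_{k,1}^{\bb}$ and (\ref{Wjk}) with $\rho_0=\kappa-6$ for $|\sin_2(W_1-W_2)|^{(\kappa-6)/\kappa}$, the drift contributions should cancel to give the SDE
\[
\frac{\pa_j M_c}{M_c}=\bb\frac{W_{j,2}}{W_{j,1}}\sqrt\kappa\,\pa B_j+\frac{\kappa-6}{2}\cot_2(W_j-W_k)W_{j,1}\,\frac{\pa B_j}{\sqrt\kappa}.
\]
Repeating the argument leading to Lemma \ref{M-mtgl} then yields the $\F$-martingale property of $\ulin t\mapsto M_c(\ulin t\wedge\tau_{\ulin\xi})$.

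Having $M_c$ in hand, I would fix $\ulin\xi\in\Xi$, define $\til\PP_c$ on $\F^1_{\tau^1_{\xi_1}}\vee\F^2_{\tau^2_{\xi_2}}$ by $d\til\PP_c/d\PP^{\ii}_B=M_c(\tau_{\ulin\xi})/M_c(\ulin 0)$, and apply Girsanov's theorem. Combining the drift shift read off from the display above with (\ref{WB}), the driving equation for $W_j|^k_{\tau_k}$ becomes
\[
\pa_j W_j=W_{j,1}\sqrt\kappa\,\pa\til B_j+\frac{\kappa-6}{2}\cot_2(W_j-W_k)W_{j,1}^2\,\pa t_j,\quad 0\le t_j\le\tau^j_{\xi_j},
\]
which, given that $\eta_j^{\tau_k}$ is the radial Loewner curve with speed $(W_{j,1}|^k_{\tau_k})^2$ driven by $W_j|^k_{\tau_k}$, identifies it as a time-change of a radial SLE$_\kappa(\kappa-6,0,0)$ with the stated force points (the two zero-force points $g_k(\tau_k,e^{iv_s})$ do not contribute drift; they enter only through the stopping rule, whence the curve halts when it separates $0$ from any force point). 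This establishes that $\til\PP_c$ is a local commutation coupling within $\ulin\xi$, and the stochastic coupling technique then patches these local couplings into the global law $\PP^c$ with the claimed Radon-Nikodym derivative.

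The main obstacle is the It\^o calculation confirming that the prefactor exponents in (\ref{Mc}) are tuned so that all drift terms cancel. Concretely, the $W_{j,1}^2\,\pa t_j$ contributions from $e^{\frac{(\kappa-6)(\kappa-2)}{8\kappa}\mA}$ (via (\ref{pamA})) must balance the quadratic-variation and $W_{j,1}^2$-drift pieces produced by (\ref{Wjk}) with $\rho_0=\kappa-6$ and by (\ref{WkWk}); the $(W_{j,1}^2-1)$-drift from (\ref{pamA}) is absorbed by $e^{\frac{\bb}{6}\ha\mA}$ via (\ref{pahamA}); and the Schwarzian-type drift from (\ref{paW1bc}) is cancelled by $I^{-\cc/6}$ through (\ref{paF}). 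Once this bookkeeping is carried out — it is strictly simpler than the one already performed for $M_*$ because no $V_s$-factors are present — the rest of the proof is a direct adaptation of the argument for Proposition \ref{prop-commu}.
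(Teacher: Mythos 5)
Your proposal follows the paper's own argument essentially line for line: define $M_c$, reuse Lemma \ref{uniform} (now with no $V_s$-factors), derive the displayed SDE $\frac{\pa_j M_c}{M_c}=\bb\frac{W_{j,2}}{W_{j,1}}\sqrt\kappa\,\pa B_j+\frac{\kappa-6}{2}\cot_2(W_j-W_k)W_{j,1}\,\frac{\pa B_j}{\sqrt\kappa}$ from the cited formulas, invoke Lemma \ref{M-mtgl}, apply Girsanov, and patch via the stochastic coupling technique. The only slip is attributing the $(W_{j,1}^2-1)$-drift to (\ref{pamA}) rather than to (\ref{paW1bc}); otherwise this is the paper's proof.
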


By \cite{SW}, for $j=1,2$, the $\eta_j$ in Proposition \ref{prop-commu-c} is a time-change of a chordal SLE$_\kappa$ in $\D$ from $e^{iw_j}$ to $e^{iw_{3-j}}$ stopped at the first time that it separates $0$ from any of $e^{iw_{3-j}},e^{iv_1},e^{iv_2}$ in $\D$.  {When $\kappa\in(0,8]$, the coupling of $\eta_1$ and $\eta_2$ in Proposition \ref{prop-commu-c} can be easily constructed using the DMP and reversibility of chordal SLE$_\kappa$. For the construction, we start with a chordal SLE$_\kappa$ in $\D$ from $e^{iw_1}$ to $e^{iw_2}$, let $\eta_1$ and $\eta_2$ be respectively the part of $\eta$ and the reversal of $\eta$ up to the first time that it separates $0$ from any of the three force points in $\D$. But such a construction does not provide us the Radon-Nikodym derivative process $M_c$.
}


Let {$\PP^c_c$} denote the joint law of {the driving functions} $\ha w_1$ and $\ha w_2$  {of the $\eta_1$ and $\eta_2$} obtained from Proposition \ref{prop-commu-c}. {The superscript and subscript respectively stand  for coupling and chordal.
}

{We are mostly interested in the case $\kappa\in(4,8)$. In that case, under $\PP^c_c$ the $\eta_1$ and $\eta_2$  are parts of the same chordal SLE$_\kappa$ curve in $\D$ from $e^{iw_1}$ to $e^{iw_2}$. Under $\PP^c_*$ the $\eta_1$ and $\eta_2$  both end at $0$, and $\eta_1\cup A_1$ is disjoint from $\eta_2\cup A_2$, where for $j=1,2$, $A_j$ is the connected component of $\D\sem \{e^{iv_1},e^{iv_2}\}$ that contains $e^{iw_j}$. Thus, under $\PP^c_*$, $0$ is a cut point of $\eta_1\cup \eta_2\cup A_1\cup A_2\cup\{0\}$. Intuitively, we may understand $\PP^c_*$ as the chordal SLE$_\kappa$ curve in $\D$ from $e^{iw_1}$ to $e^{iw_2}$ conditioned on the singular event that $0$ is a cut point of $\eta\cup A_1\cup A_2$. See Figure \ref{figure1}.
}

\begin{figure}[t]
\centering
\includegraphics[scale=0.80]{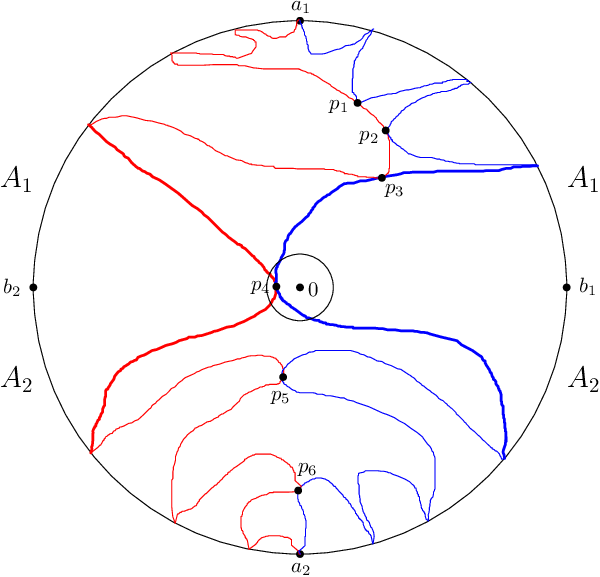}
{\caption{The above figure is a sketch of the left boundary (red) and right boundary (blue) of a chordal SLE$_\kappa$ curve $\gamma$ ($\kappa\in(4,8)$) in $\D$ connecting $a_1$ with $a_2$, where each boundary is a boundary-touching simple curve composed of countably many crosscuts of $\D$, exactly one of which connects the upper semicircle $A_1$ with the lower semicircle $A_2$. The $p_1,\dots,p_6$ illustrated are intersections of the two boundaries, and are cut points of $\gamma$, but only the intersection $p_4$ of the two crosscuts connecting $A_1$ and $A_2$  is a cut point of $\gamma\cup A_1\cup A_2$. The small circle has radius $r$. If one conditions $\gamma$ on the event that $\gamma\cup A_1\cup A_2$ has a cut point in $\{|z|<r\}$, and lets $r\to 0$, then the result is a curve from $a_1$ to $a_2$ passing through its cut point $0$, and the two sub-curves (respectively from $a_1,a_2$ to $0$) jointly follow the law $\PP^c_*$. }}
\label{figure1}
\end{figure}

Let $M_{*\to c}=M_c/M_*$, which can be expressed in terms of the $\alpha_0$ and $\til G$  defined in (\ref{alpha0},\ref{G}):
\BGE M_{*\to c}=e^{-\alpha_0 \mA }\til G(W_1,W_2,V_1,V_2)^{-1}. \label{M*c}
\EDE



\begin{Lemma} Suppose $\kappa\in(4,8)$.
  For any $\F$-stopping time $\ulin T$, $\PP^c_c$ restricted to $\F_{\ulin T}\cap \{\ulin T\in \cal D\}$ is absolutely continuous w.r.t.\ $\PP^c_*$, and the RN derivative is $M_{*\to c}(\ulin T)/M_{*\to c}(\ulin 0)$. In other words, if $A\in \F_{\ulin T}$ and $A\subset \{\ulin T\in\cal D\}$, then $\PP^c_c[A]=\EE^c_*[{\bf 1}_A M_{*\to c}(\ulin T)/M_{*\to c}(\ulin 0)]$.\label{RN-Lemma}
\end{Lemma}
\begin{proof}
Let $\ulin\xi=(\xi_1,\xi_2) \in\Xi$ and $\ulin t=(t_1,t_2) \in\R_+^2$. By Lemma \ref{M-mtgl}, Proposition \ref{prop-P*},  and the facts $\F^1_{t_1\wedge \tau^1_{\xi_1}}\vee \F^2_{t_2\wedge \tau^2_{\xi_2}}\subset \F_{\ulin t }$ and that $M_*(\ulin t\wedge \tau_{\ulin\xi})$ is $\F^1_{t_1\wedge \tau^1_{\xi_1}}\vee \F^2_{t_2\wedge \tau^2_{\xi_2}}$-measurable, we see that
$$\frac{d\PP^c_*|{\F^1_{t_1\wedge \tau^1_{\xi_1}}\vee \F^2_{t_2\wedge \tau^2_{\xi_2}}}}{d\PP^{\ii}_B|\F^1_{t_1\wedge \tau^1_{\xi_1}}\vee \F^2_{t_2\wedge \tau^2_{\xi_2}}}=\frac{\EE\Big[M_{*}(\tau_{\ulin\xi})\Big|\F^1_{t_1\wedge \tau^1_{\xi_1}}\vee \F^2_{t_2\wedge \tau^2_{\xi_2}}\Big]}{M_{*}(\ulin 0)}=\frac{M_{*}(\ulin t\wedge \tau_{\ulin\xi})}{M_{*}(\ulin 0)}.$$
A similar formula holds with $\PP^c_c$ and $M_c$ respectively in place of $\PP^c_*$ and $M_*$. So we get
\BGE \frac{d\PP^c_c|{\F^1_{t_1\wedge \tau^1_{\xi_1}}\vee \F^2_{t_2\wedge \tau^2_{\xi_2}}}}{d\PP^c_*|{\F^1_{t_1\wedge \tau^1_{\xi_1}}\vee \F^2_{t_2\wedge \tau^2_{\xi_2}}}}=\frac{M_{*\to c}(\ulin t\wedge \tau_{\ulin\xi})}{M_{*\to c}(\ulin 0)}.\label{RNc*xi}\EDE

Let $\ulin T=(T_1,T_2)$ be an $\F$-stopping time. Let $\ulin\xi \in\Xi^*$. Fix $A\in \F_{\ulin T}$ with $A\subset\{\ulin T < \tau_{\ulin \xi}\}$. 
  For $n\in\N$, let $\ulin T^n=(T_1^n, T_2^n)$, where $T_j^{n}=\frac{\lceil 2^n T\rceil}{2^n}$, $j=1,2$. It is easy to see that each $\ulin T^{n}$ is an  $\F$-stopping time, and $\ulin T^n\downarrow \ulin T$.  Let $A_n=A\cap \{\ulin T^{n}<\tau_{\ulin \xi}\}$, $n\in\N$. Then $A_n\uparrow A$.  Fix $n\in\N$. By Proposition \ref{separable-lemma}, $A_{n}\in \F_{\ulin T^{n}}$. We know that $\ulin T^{n}$ takes values in $\frac 1{2^n}\Z_+^2$. Let $\ulin s=(s_1,s_2) \in \frac 1{2^n}\Z_+^2$ and $A_{n,\ulin s}=A_{n}\cap \{\ulin T^{n}=\ulin s\}$. By Proposition \ref{T<S}, $A_{n,\ulin s}\in \F_{\ulin s}$. Since $\F^j_{s_j}\cap \{s_j\le s_j\wedge \tau^j_{\xi_j}\}\subset \F^j_{s_j\wedge \tau^j_{\xi_j}}$, $j=1,2$, by a monotone class argument we get $\F_{\ulin s}\cap \{\ulin s\le \ulin s\wedge \tau_{\ulin\xi}\}\subset \F^1_{t_1\wedge \tau^1_{\xi_1}}\vee \F^2_{t_2\wedge \tau^2_{\xi_2}}$.
  Since $A_{n,\ulin s}\in \F_{\ulin s}\cap\{\ulin s<\tau_{\ulin\xi}\}$, we see that $A_{n,\ulin s}\in \F^1_{t_1\wedge \tau^1_{\xi_1}}\vee \F^2_{t_2\wedge \tau^2_{\xi_2}}$.
  By (\ref{RNc*xi}),
  $$\PP^c_c[A_{n,\ulin s}]=\EE^c_*[{\bf 1}_{A_{n,\ulin s}} M_{*\to c}(\ulin s\wedge \tau_{\ulin\xi})/M_{*\to c}(\ulin 0)]=\EE^c_*[{\bf 1}_{A_{n,\ulin s}} M_{*\to c}(\ulin T^{n})/M_{*\to c}(\ulin 0)],$$
  where the last equality holds because $\ulin T^{n}=\ulin s\le \tau_{\ulin\xi}$ on $A_{n,\ulin s}$. Summing up over $\ulin s \in \frac 1{2^n}\Z_+^2$, we get
  $\PP^c_c[A_n]= \EE^c_*[{\bf 1}_{A_{n}} M_{*\to c}(\ulin T^{n})/M_{*\to c}(\ulin 0)]$. Sending $n\to\infty$ and using dominated convergence theorem, we get $\PP^c_c[A]= \EE^c_*[{\bf 1}_{A } M_{*\to c}(\ulin T)/M_{*\to c}(\ulin 0)]$. Here we use the facts that $\ulin T^n<\tau_{\ulin\xi}$ on $A_n$ and that $\log M_{*\to c}$ is uniformly bounded on $[0,\tau_{\ulin\xi}]$. This means that, for any $\xi\in\Xi^*$, $\PP^c_c$ restricted to $\F_{\ulin T}\cap \{\ulin T<\tau_{\ulin\xi}\}$ is absolutely continuous w.r.t.\ $\PP^c_*$, and the RN derivative is $M_{*\to c}(\ulin T)/M_{*\to c}(\ulin 0)$. The conclusion of the lemma then follows since $\bigcup_{\ulin\xi\in\Xi^*}   \{\ulin T<\tau_{\ulin\xi}\}=\{\ulin T\in\cal D\}$.
\end{proof}

\subsection{A time curve  in the time region}\label{section-u}
Suppose $\eta_1$ and $\eta_2$ are random radial Loewner curves driven by $\ha w_1$ and $\ha w_2$, which jointly follow one of the three laws $\PP^{\ii}_B,\PP^c_*,\PP^c_c$.
Let $\theta=V_1-V_2$ and $Z_j=W_j-V_j$, $j=1,2$. By \cite[(4.1)]{Two-Green-interior},
$$\pa_j \theta =\frac{-W_{j,1}^2 \sin_2(\theta)}{\sin_2(W_j-V_1)\sin_2(W_j-V_2)}\pa t_j,\quad j=1,2.$$
When $j=2$, using  $0>\sin_2(W_j-V_1)\sin_2(W_j-V_2)\ge -\sin(\theta/4)^2$ and (\ref{pamA}), we get
\BGE \pa_2 \theta \ge 2W_{2,1}^2 \cot(\theta/4)\pa t_j=2 \cot(\theta/4) \pa_2 \mA.\label{pathetamA}\EDE

Recall that most of the processes we have encountered are defined on the time region ${\cal D}\subset \R_+^{{2}}$.
From now on, suppose $v_1-v_2=\pi$. Then $\theta(\ulin 0)=\pi$. By  \cite[Section 4]{Two-Green-interior}, there exists a continuous increasing curve $\ulin u=(u_1,u_2):\R_+\to \lin{\cal D}$ with $\ulin u(0)=\ulin 0$, and {an extended number} $T^u\in(0,\infty]$, such that $\ulin u$ is strictly increasing and takes values  in $\cal D$ on $[0,T^u)$, takes constant values in $\pa D$ on $[T^u,\infty)$, and for any $t\in[0,T^u)$, $\mA(\ulin u(t))=t$ and $\theta(\ulin u(t))=\pi$.
We use such curve $\ulin u$ to obtain a one-time-parameter process $X^u:=X\circ \ulin u$ from any two-time-parameter process $X$ on $\cal D$. We have the following facts.
\begin{itemize}
  \item For $0\le t<T^u$, $\mA^u(t)=t$, $\theta^u(t)=\pi$, and $Z_j^u\in(0,\pi)$, $j=1,2$.
  \item For any $j\in\{1,2\}$ and $t\ge 0$, $u_j(t)\le t$.
  \item $\ulin u$ is differentiable with positive derivatives on $[0,T^u)$ that satisfy
  \BGE (W_{j,1}^u)^2 u_j'=\frac{\sin(Z_j^u)}{\sin(Z_1^u)+\sin(Z^u_2)},\quad \mbox{on }[0,T^u),\quad j=1,2.\label{uj'}\EDE
\item For any deterministic time $t\in\R_+$, $\ulin u(t)$ is an $\F$-stopping time.
\end{itemize}

Using the last property, we define an $\R_+$-indexed filtration $\F^u$ by $\F^u_t=\F_{\ulin u(t)}$, $t\ge 0$. For $\xi=(\xi_1,\xi_2)\in\Xi$, let $\tau^u_{\ulin\xi}$ denote the first $t\ge 0$ such that $u_1(t)=\tau^1_{\xi_1}$ or $u_2(t)=\tau^2_{\xi_2}$, whichever comes first. Then for any $\xi\in\Xi$, $\tau^u_{\ulin\xi}$ is an $\F^u$-stopping time; $\ulin u(\tau^u_{\ulin\xi})$ is an $\F$-stopping time; and for every deterministic $t\in\R_+$, $\ulin u(t\wedge \tau^u_{\ulin\xi})$ is an $\F$-stopping time.

First, suppose $(\ha w_1,\ha w_2)$ follows the law $\PP^{\ii}_B$. Then there are independent Brownian motions $B_1$ and $B_2$ such that $\ha w_j(t_j)=w_j+\sqrt\kappa B_j(t_j)$, $t_j\ge 0$, $j=1,2$. So we get five continuous $\F$-martingales: $\ha w_j(t_j)$, $\ha w_j(t_j)^2-\kappa t_j$, $j=1,2$, and $\ha w_1(t_1)\ha w_2(t_2)$. Using Proposition \ref{OST} and the fact that for any $t\in\R_+$,  $\ulin u(t)$ is an $\F$-stopping time bounded by $(t,t)$, we see that $\ha w^u_j(t)$, $\ha w^u_j(t)^2-\kappa u_j(t)$, $j=1,2$, and $\ha w^u_1(t)\ha w^u(t)$ are all $\F^u$-martingales. Note that $\ha w^u_j(t)=\ha w_j(u_j(t))$. So we get quadratic variation and covariation for $\ha w_j^u$, $j=1,2$:
\BGE \langle \ha w_j^u\rangle_t=\kappa u_j(t), \quad j=1,2,\quad \langle \ha w_1^u,\ha w_2^u\rangle \equiv 0.\label{quadratic-w}\EDE

Fix $\ulin\xi=(\xi_1,\xi_2)\in\Xi$. Let $M\in\{M_*,M_c\}$. We have known that $M(\cdot\wedge \tau_{\ulin\xi})$ is a bounded $\F$-martingale (under $\PP^{\ii}_B$). From Proposition \ref{OST}, $M(\ulin u(\cdot)\wedge \tau_{\ulin\xi})$ is an $\F^u$-martingale. Since $\tau^u_{\ulin\xi}$ is an $\F^u$-stopping time, we see that $M(\ulin u(\cdot\wedge \tau^u_{\ulin\xi})\wedge \tau_{\ulin\xi})=M(\ulin u(\cdot\wedge \tau^u_{\ulin\xi})) =M^u(\cdot\wedge \tau^u_{\ulin\xi})$
is an $\F^u$-martingale. Here the first equality holds because $\ulin u(t\wedge \tau^u_{\ulin\xi})\le \tau_{\ulin\xi}$. Since $[0,T^u)=\bigcup_{\xi\in\Xi^*} [0,\tau^u_{\ulin\xi}]$ and $\Xi^*$ is countable, we conclude that $M_*^u$ and $M_c^u$ are $\F^u$-local martingales with lifetime $T^u$ under the probability measure $\PP^{\ii}_B$.

Next, we compute the SDE for $M_*^u$ in terms of $\ha w_1^u$ and $\ha w_2^u$. By (\ref{M}) we may express $M_*^u$ as a product of several factors. Among these factors, $\sin_2(W_1^u-W_2^u)^{\frac 2\kappa}$, $(W_{j,1}^u)^b$ and $|\sin_2(W_j^u-V_s^u)|^{1-\frac 4\kappa}$, $j,s\in\{1,2\}$, contribute the diffusion part of $M_*^u$, and other factors are differentiable in $t$. For $j\ne k\in\{1,2\}$ and $s\in\{1,2\}$, using (\ref{dWkVs},\ref{-3},\ref{1/2-4/3}), we get the following SDEs:
\BGE dW_j^u=W_{j,1}^u d\ha w_j^u-\kappa\bb W_{j,2} u_j' dt+\cot_2(W_j^u-W_k^u)(W_{k,1}^u)^2u_k' dt, \label{dWju}\EDE
$$\frac{d(W_{j,1}^u)^{\bb}}{(W_{j,1}^u)^{\bb}}=\bb\frac{W_{j,2}^u}{W_{j,1}^u}\,d\ha w_j^u+\mbox{drift terms},$$
$$\frac{d\sin_2(W_j^u-V_s^u)^{1-\frac 4\kappa}}{\sin_2(W_j^u-V_s^u)^{1-\frac 4\kappa}}=\frac{\kappa-4}{2\kappa} \cot_2(W_j^u-V_s^u)W_{j,1}^u d\ha w_j^u +\mbox{drift terms},$$
$$\frac{d\sin_2(W_1^u-W_2^u)^{ \frac 2\kappa}}{\sin_2(W_1^u-W_2^u)^{\frac 2\kappa}}=\frac{1}{\kappa} \cot_2(W_j^u-W_k^u)[W_{j,1}^u d\ha w_j^u-W_{k,1}^u d\ha w_k^u] +\mbox{drift terms}.$$
Since we already know that $M_*^u$, $\ha w_1^u$ and $\ha w_2^u$ are $\F^u$-local martingales, we get the SDE:
\BGE \frac{dM_*^u}{M_*^u}=\sum_{j=1}^2\Big[\bb\frac{W_{j,2}^u}{W_{j,1}^u}+\frac 1\kappa \cot_2(W_j^u-W_{3-j}^u)W_{j,1}^u+\sum_{s=1}^2 \frac{\kappa-4}{2\kappa} \cot_2(W_j^u-V_s^u)W_{j,1}^u \Big] d\ha w_j^u.\label{dMu}\EDE

\begin{Lemma}
  Under $\PP^c_*$,  for $j=1,2$, $\ha w_j^u$ satisfies the SDE
  \BGE d\ha w_j^u=\sqrt{\kappa u_j'}dB_j^u+\Big[\kappa\bb \frac{W_{j,2}^u}{W_{j,1}^u}+\cot_2(W_j^u-W_{3-j}^u)W_{j,1}^u+\sum_{l=1}^2 \frac{\kappa-4}{2} \cot_2(W_j^u-V_l^u) W_{j,1}^u\Big]u_j'dt,\label{SDE-wu}\EDE
  \label{P*BM}
where $B_j^u(t)$, $j=1,2$, are two independent Brownian motions.
\end{Lemma}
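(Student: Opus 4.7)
The plan is to derive the SDE by a Girsanov change of measure from $\PP^{\ii}_B$ to $\PP^c_*$, localized along the countable family $\Xi^*$, starting from a Lévy representation of $\ha w_j^u$ under $\PP^{\ii}_B$. From (\ref{quadratic-w}) we already know that, under $\PP^{\ii}_B$, each $\ha w_j^u-w_j$ is a continuous $\F^u$-martingale with $d\langle \ha w_j^u\rangle_t=\kappa u_j'(t)\,dt$ (the absolute continuity of $u_j$ with derivative $u_j'$ on $[0,T^u)$ is given by (\ref{uj'})) and vanishing cross-variation. By Lévy's characterization applied in the filtration $\F^u$, we obtain independent standard Brownian motions $B_1^u,B_2^u$ on $[0,T^u)$ under $\PP^{\ii}_B$ with $d\ha w_j^u=\sqrt{\kappa u_j'}\,dB_j^u$ for $j=1,2$.

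Fix $\ulin\xi\in\Xi^*$ and let $\Psi_j$ denote the bracketed coefficient of $d\ha w_j^u$ in (\ref{dMu}). Combining Proposition \ref{prop-P*} with Lemma \ref{uniform} (boundedness of $\log M_*$ on $[\ulin 0,\tau_{\ulin\xi}]$) and Lemma \ref{M-mtgl} (martingale property of $M_*(\cdot\wedge\tau_{\ulin\xi})$ under $\PP^{\ii}_B$), the argument of Lemma \ref{RN-Lemma}, applied to $M_*/M_*(\ulin 0)$ in place of $M_{*\to c}/M_{*\to c}(\ulin 0)$ and to the bounded $\F$-stopping times $\ulin u(t\wedge\tau^u_{\ulin\xi})\le\tau_{\ulin\xi}$, yields that the restriction of $\PP^c_*$ to $\F^u_{\tau^u_{\ulin\xi}}$ is absolutely continuous with respect to $\PP^{\ii}_B$, with Radon-Nikodym derivative $M_*^u(\tau^u_{\ulin\xi})/M_*(\ulin 0)$, and that $M_*^u(\cdot\wedge\tau^u_{\ulin\xi})$ is a bounded continuous $\F^u$-martingale under $\PP^{\ii}_B$. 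From (\ref{dMu}) and the vanishing cross-variation of $\ha w_1^u,\ha w_2^u$,
$$d\langle \ha w_j^u,M_*^u\rangle_t=M_*^u(t)\,\Psi_j(t)\,d\langle \ha w_j^u\rangle_t=M_*^u(t)\,\Psi_j(t)\,\kappa u_j'(t)\,dt.$$
Girsanov's theorem then implies that, under $\PP^c_*$ and up to $\tau^u_{\ulin\xi}$, the processes
$$\til B_j^u(t):=B_j^u(t)-\int_0^t\Psi_j(s)\sqrt{\kappa u_j'(s)}\,ds,\qquad j=1,2,$$
are continuous local martingales whose quadratic and cross variations agree with those of $B_j^u$ under $\PP^{\ii}_B$ (the brackets are invariant under absolutely continuous changes of measure). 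A second application of Lévy's characterization upgrades $(\til B_1^u,\til B_2^u)$ to a pair of independent standard Brownian motions under $\PP^c_*$; rewriting $d\ha w_j^u=\sqrt{\kappa u_j'}\,dB_j^u=\sqrt{\kappa u_j'}\,d\til B_j^u+\kappa u_j'\Psi_j\,dt$ then gives exactly (\ref{SDE-wu}).

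Since $\Xi^*$ is countable and $[0,T^u)=\bigcup_{\ulin\xi\in\Xi^*}[0,\tau^u_{\ulin\xi}]$, the Brownian motions $\til B_j^u$ constructed on the various intervals $[0,\tau^u_{\ulin\xi}]$ are automatically consistent, since each is recovered pathwise from $\ha w_j^u$ and the drift term, and hence extend to independent standard Brownian motions $B_j^u$ on all of $[0,T^u)$ satisfying (\ref{SDE-wu}) throughout. The main technical obstacle is the localization step: $M_*^u$ is only a local martingale globally, so Girsanov cannot be applied directly, and one must instead work intervalwise on $[0,\tau^u_{\ulin\xi}]$ where the density is bounded; the bookkeeping needed to pass the absolute continuity statement from $\F^1_{\tau^1_{\xi_1}}\vee\F^2_{\tau^2_{\xi_2}}$ down to $\F^u_{\tau^u_{\ulin\xi}}$ via the $\F$-stopping times $\ulin u(t\wedge\tau^u_{\ulin\xi})$ is precisely the content of the argument in Lemma \ref{RN-Lemma}, and the present situation is parallel.
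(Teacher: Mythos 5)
Your proposal is correct and follows essentially the same approach as the paper: a Girsanov change of measure from $\PP^{\ii}_B$ to $\PP^c_*$ localized on $[0,\tau^u_{\ulin\xi}]$ for $\ulin\xi\in\Xi^*$, using (\ref{dMu}) for the drift, the RN identity from Proposition~\ref{prop-P*}, and L\'evy's characterization. The paper phrases the Girsanov step by defining $\til w_j^u:=\ha w_j^u-\mbox{drift}$ and checking that $\til w_j^u M_*^u$ is a local martingale under $\PP^{\ii}_B$, whereas you compute $d\langle \ha w_j^u,M_*^u\rangle$ and apply Girsanov directly to the derived Brownian motions; these are equivalent formulations. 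One small omission: to conclude that $B_j^u$ are genuine Brownian motions on all of $[0,\infty)$ rather than only on $[0,T^u)$, you should invoke Lemma~\ref{D=R^2}, which gives $T^u=\infty$ $\PP^c_*$-a.s.; the paper does so explicitly.
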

\begin{proof}
This lemma is similar to \cite[Lemma 4.3]{Two-Green-interior}, and the proof uses a Girsanov-type argument. Here is a sketch. We first define $\til w_j^u$, $j=1,2$, such that
\begin{align}
\til w_j^u(t)=\ha w_j^u(t)-\int_0^t &\Big[\kappa\bb \frac{W_{j,2}^u(s)}{W_{j,1}^u(s)}+\cot_2(W_j^u(s)-W_{3-j}^u(s))W_{j,1}^u(s) \nonumber \\
  & +\sum_{l=1}^2 \frac{\kappa-4}{2} \cot_2(W_j^u(s)-V_l^u(s)) W_{j,1}^u(s)\Big]u_j'(s)ds.\label{til-w-j}
\end{align}
By L\'evy's characterization of Brownian motion, it suffices to show that $\til w_1^u$ and $\til w_2^u$ are local martingals under $\PP^c_*$, $d\langle \til w_j^u\rangle= d u_j$, $j=1,2$, and $\langle \til w_1^u,\til w_2^u\rangle\equiv 0$.

Using (\ref{dMu},\ref{til-w-j}) and It\^o's formula, it is straightforward to calculate that $\til w_j^uM_*^u$, $j=1,2$, are $\F^u$-local martingales under $\PP^{\ii}_B$. Fix $j\in\{1,2\}$. Now we prove that $\til w_j^u$ is a local martingale under $\PP^c_*$. It suffices to prove that, for any $\ulin\xi\in\Xi$, $\til w_j^u(\cdot\wedge \tau^u_{\ulin \xi})$ is an $\F^u$-martingale under $\PP^c_*$. Fix $\ulin\xi\in\Xi$. Using an argument similar to the proof of Lemma \ref{uniform}, we can show that $|\til w_j^u|$ is bounded on $[0,\tau^u_{\ulin\xi}]$ by a constant depending only on $\kappa,\ulin\xi,w_1,w_2,v_1,v_2$. So we see that $\til w_j^u(\cdot\wedge \tau^u_{\ulin\xi})M_*^u(\cdot\wedge \tau^u_{\ulin\xi})$, $j=1,2$, are bounded $\F^u$-martingales under $\PP^{\ii}_B$.

To prove that $\til w_j^u(\cdot\wedge \tau^u_{\ulin \xi})$ is a martingale under $\PP^c_*$, we fix $t>s\ge 0$ and $A\in\F^u_s$, and need to prove that
\BGE \EE^c_*[{\bf 1}_A \til w_j^u(t\wedge \tau^u_{\ulin \xi})]= \EE^c_*[{\bf 1}_A \til w_j^u(s\wedge \tau^u_{\ulin \xi})].
\label{A}
\EDE
Let $A_1=A\cap \{\tau^u_{\ulin \xi}\le s\}$ and $A_2=A\cap \{\tau^u_{\ulin \xi}> s\}$. Then (\ref{A}) would follow from
\BGE \EE^c_*[{\bf 1}_{A_1} \til w_j^u(t\wedge \tau^u_{\ulin \xi})]= \EE^c_*[{\bf 1}_{A_1} \til w_j^u(s\wedge \tau^u_{\ulin \xi})];
\label{A1}
\EDE
\BGE \EE^c_*[{\bf 1}_{A_2} \til w_j^u(t\wedge \tau^u_{\ulin \xi})]= \EE^c_*[{\bf 1}_{A_2} \til w_j^u(s\wedge \tau^u_{\ulin \xi})].
\label{A2}
\EDE
Equality (\ref{A1}) is trivially true because $\til w_j^u(t\wedge \tau^u_{\ulin \xi})=\til w^j_u(\tau^u_{\ulin\xi})=\til w_j^u(s\wedge \tau^u_{\ulin \xi})$ on $A_1$. 


Since $\til w_j^u(\cdot\wedge \tau^u_{\ulin\xi})M_*^u(\cdot\wedge \tau^u_{\ulin\xi})$ is an $\F^u$-martingale under $\PP^{\ii}_B$ and $A_2\in \F^u_s$, we get
$$\EE^{\ii}_B[{\bf 1}_{A_2} \til w_j^u(t\wedge \tau^u_{\ulin\xi})M_*^u(t\wedge \tau^u_{\ulin\xi})]
=\EE^{\ii}_B[{\bf 1}_{A_2} \til w_j^u(s\wedge \tau^u_{\ulin\xi})M_*^u(s\wedge \tau^u_{\ulin\xi})].$$
Then (\ref{A2}) would follow from the above equality and the equalities
\BGE \EE^{\ii}_B[{\bf 1}_{A_2} \til w_j^u(r\wedge \tau^u_{\ulin\xi})M_*^u(r\wedge \tau^u_{\ulin\xi})] = M_*(\ulin 0) \EE^c_*[{\bf 1}_{A_2} \til w_j^u(r\wedge \tau^u_{\ulin \xi})],\quad r\in\{s,t\} .\label{rst}\EDE

We now prove (\ref{rst}) in the case $r=t$. The case $r=s$ is similar.
By Proposition \ref{prop-P*}, $\PP^c_*|\F_{\tau_{\ulin\xi}}$ is absolutely continuous w.r.t.\  $\PP^{\ii}_B|\F_{\tau_{\ulin\xi}}$, and the RN derivative is $M_*(\tau_{\ulin\xi})/M_*(\ulin 0)$.
Since $M_*(\cdot\wedge \tau_{\ulin\xi})$ is a martingale, for any two-dimensional stopping time $\ulin T\le \tau_{\ulin\xi}$, $\PP^c_*|\F_{\ulin T}$ is absolutely continuous w.r.t.\  $\PP^{\ii}_B|\F_{\ulin T}$, and the RN derivative is $M_*(\ulin T)/M_*(\ulin 0)$. Let $\ulin T=\ulin u(t\wedge \tau_{\ulin \xi})$. Then $\ulin T$ is a stopping time $\le \tau_{\ulin\xi}$. Since $\{\tau^u_{\ulin \xi}> s\}\subset \{\ulin u(s)\le \ulin u(s\wedge \tau^u_{\ulin\xi})\}$ and $A\in \F^u_s=\F_{\ulin u(s)}$, we get $A_2\in \F_{\ulin u(s\wedge \tau^u_{\ulin\xi})}\subset \F_{\ulin u(t\wedge \tau_{\ulin \xi})}=\F_{\ulin T}$. Thus, ${\bf 1}_{A_2} \til w_j^u(t\wedge \tau^u_{\ulin \xi})$ is $\F_{\ulin T}$-measurable. So we get (\ref{rst}) in the case $r=t$ using the RN derivative between   $\PP^c_*|\F_{\ulin T}$ and $\PP^{\ii}_B|\F_{\ulin T}$.

Now we have proved that $\til w_j^u$, $j=1,2$, are $\F^u$-martingales under $\PP^c_*$. 
Since (\ref{quadratic-w}) holds for $\ha w^u_j$ under $\PP^{\ii}_B$, it also holds for $\til w^u_j$ under $\PP^{\ii}_B$. For any $\xi\in\Xi$, using the local absolute continuity between $\PP^c_*|\F_{\tau_{\ulin\xi}}$ and $\PP^{\ii}_B|\F_{\tau_{\ulin\xi}}$, we can then conclude that (\ref{quadratic-w}) also holds for $\til w^u_j$ under $\PP^{c}_*$ up to $\tau^u_{\ulin\xi}$. Since this holds for any $\xi\in\Xi$, we get $d\langle \til w_j^u\rangle= d u_j$, $j=1,2$, and $\langle \til w_1^u,\til w_2^u\rangle\equiv 0$ throughout. The proof is then complete.
\end{proof}

Now we suppose $(\ha w_1,\ha w_2)$ follows the law $\PP^c_*$, and let $B^u_1$ and $B^u_2$ be the independent Brownian motions from Lemma \ref{P*BM}. Combining (\ref{uj'},\ref{dWju},\ref{SDE-wu}), we get, for $j=1,2$,
$$dW_j^u=W_{j,1}^u \sqrt{\kappa u_j'} dB_j^u+ \cot_2(W_j^u-W_{3-j}^u)dt + \sum_{s=1}^2 \frac{\kappa -4}{2} \cot_2(W_j^u-V_s^u) (W_{j,1}^u)^2u_j' dt .$$
Using (\ref{dWkVs}) we get, for $j=1,2$,
$$dV_j^u=-\cot_2(W_j^u-V_j^u)(W_{j,1}^u)^2 u_j'dt-\cot_2(W_{3-j}^u-V_j^u)(W_{3-j,1}^u)^2 u_{3-j}'dt.$$
Since $Z_j^u=W_j^u-V_j^u$, $W_{3-j}^u-V_j^u=W_{3-j}^u-V_{3-j}^u\pm \pi=Z_{3-j}^u\pm \pi$, $W_j^u-W_{3-j}^u=Z_j^u-Z_{3-j}^u \mp \pi$, and $W_j^u-V_{3-j}^u=Z_j^u\mp\pi$,
combining the above two displayed formulas with (\ref{uj'}), we get, for $j=1,2$,
\BGE dZ_j^u= \sqrt{\frac{\kappa \sin(Z_j^u)}{\sin(Z_1^u)+\sin(Z_2^u)}}dB^u_j +\frac{(\kappa-2)\cos (Z_j^u)}{\sin(Z_1^u)+\sin(Z_2^u)} \,dt .\label{dZju}\EDE

\begin{Remark}
  If the two force values $\kappa-4$ and $\kappa-4$ at $e^{iv_1}$ and $e^{iv_2}$ are respectively replaced by $\rho_1,\rho_2\in\R$, then $Z_j^u$, $j=1,2$, satisfy the SDE:
  $$dZ_j^u= \sqrt{\frac{\kappa \sin(Z_j^u)}{\sin(Z_1^u)+\sin(Z_2^u)}}dB^u_j +\frac{(2+(\rho_1+\rho_2)/2)\cos (Z_j^u)}{\sin(Z_1^u)+\sin(Z_2^u)} \,dt +\frac{(\rho_j-\rho_{3-j})/2}{\sin(Z_1^u)+\sin(Z_2^u)} \,dt.$$
\end{Remark}

\subsection{Transition density}
We are going to derive the transition density of $(Z^u_1,Z^u_2)$ under $\PP^c_*$ or $\PP^c_c$ following the approach of \cite[Section 5]{Two-Green-interior}. The idea, originated in \cite[Appendix B]{tip}, is to solve a PDE eigenvalue problem using orthogonal {polynomials}. First suppose that the underlying probability measure is $\PP^c_*$. Recall that $(Z^u_1,Z^u_2)$ satisfy (\ref{dZju}), where $B^u_1$ and $B^u_2$ are independent Brownian motions  by Lemma \ref{P*BM}.
Define $B_+^u$ and $B_-^u$ by
$$B^u_\pm(t)=\int_0^t  \sqrt{\frac{  \sin(Z_1^u(s))}{\sin(Z_1^u(s))+\sin(Z_2^u(s))}}dB^u_1(s)\pm \int_0^t  \sqrt{\frac{  \sin(Z_2^u(s))}{\sin(Z_1^u(s))+\sin(Z_2^u(s))}}dB^u_1(s).$$
Then $B_+^u$ and $B_-^u$ are standard linear Brownian motions with quadratic covariation
\BGE d\langle B_+^u,B_-^u\rangle_t=\cot_2(Z_1^u+Z_2^u)\tan_2(Z_1^u-Z_2^u)dt.\label{covB+-}\EDE
Define $Z_\pm^u=(Z_1^u\pm Z_2^u)/2$. Then $Z_+^u\in(0,\pi)$, $Z_-^u\in(-\pi/2,\pi/2)$, and they satisfy the SDEs:
$$dZ_+^u=\frac{\sqrt\kappa}2 dB_+^u+\frac{\kappa-2}2\cot(Z^u_+)dt;$$
$$dZ_-^u=\frac{\sqrt\kappa}2 dB_-^u-\frac{\kappa-2}2\tan(Z^u_-)dt.$$
Let $X=\cos(Z^u_+)$ and $Y=\sin(Z^u_-)$. Then $X,Y\in(-1,1)$, and satisfy the SDEs
$$ dX=-\frac{\sqrt\kappa}2 \sqrt{1-X^2}dB^u_+-\Big(\frac{\kappa-2}2+\frac \kappa 8\Big)Xdt; $$ 
$$ dY=+\frac{\sqrt\kappa}2 \sqrt{1-Y^2}dB^u_--\Big(\frac{\kappa-2}2+\frac \kappa 8\Big)Ydt.$$ 
From (\ref{covB+-}) we have
$$ d\langle X,Y\rangle_t=-\frac \kappa 4 XYdt.$$ 
We have $(X,Y)\in\D$ because $X^2+Y^2=1-\sin(Z_1^u)\sin(Z_2^u)<1$.

Define a second order differential operator $\cal L$ by
$${\cal L}:=\frac\kappa 8(1-x^2)\pa_x^2+\frac\kappa 8(1-y^2)\pa_y^2-\frac \kappa 4 xy \pa_x\pa_y-(\frac{\kappa-2}2+\frac \kappa 8 )x\pa_x-(\frac{\kappa-2}2+\frac \kappa 8 )y\pa_y.$$
Using orthogonal polynomials, we may find eigenvectors and eigenvalues  of $\cal L$. Define
$$ \lambda_s=-\frac \kappa 8 s(s+4-\frac 8\kappa  ),\quad s\in\Z_+=\{0,1,2,3,\dots\}.$$ 
A straightforward calculation shows that, for any $n,m\in\Z_+$, ${\cal L}(x^ny^m)$ equals $\lambda_{n+m} x^ny^m$ plus a polynomial in $x,y$ of degree less than $n+m$. Hence, for each $n,m\in\Z_+$, there is a polynomial $P_{(n,m)}(x,y)$ of degree $n+m$, which equals $x^ny^m$ plus a polynomial in $x,y$ of degree less than $n+m$, such that ${\cal L} P_{(n,m)}=\lambda_{n+m} P_{(n,m)}$.

Let $\Psi(x,y)=(1-x^2-y^2)^{1-\frac4\kappa}$, and define  $\langle f,g\rangle_\Psi:=\int\!\!\int_{\D} f(x,y)g(x,y)\Psi(x,y)dxdy$. Since $\Psi\equiv 0$ on $\TT$, using integration by parts, we can show that for smooth functions $f$ and $g$ on $\lin{\D}$, $\langle \L f,g\rangle_\Psi=\langle f,\L g\rangle_\Psi$. In fact, if we let $a_{xx}=\frac\kappa 8(1-x^2)$, $a_{yy}=\frac \kappa 8(1-y^2)$, $a_{xy}=a_{yx}=-\frac \kappa 8xy$, $b_x=-(\frac{\kappa-2}2+\frac \kappa 8 )x$, and $b_y=-(\frac{\kappa-2}2+\frac \kappa 8 )y$, then both $\langle \L f,g\rangle_\Psi$ and $\langle f,\L g\rangle_\Psi$ equal
$$-\int\!\!\int_{\D} [(\pa_x f) a_{xx}\Psi (\pa_x g)+(\pa_x f)a_{xy}\Psi(\pa_y g)+(\pa_y f) a_{yx}\Psi (\pa_x g) +(\pa_y f) a_{yy}\Psi (\pa_y g) ]dxdy.$$
Here we use $\pa_x(a_{xx}\Psi)+\pa_y(a_{xy}\Psi)=b_x\Psi$ and $\pa_x(a_{yx}\Psi)+\pa_y(a_{yy}\Psi)=b_y\Psi$.

Thus, the eigenvectors of $\cal L$ with different eigenvalues are orthogonal w.r.t.\ $\langle \cdot\rangle_\Psi$, and we may use $P_{(n,m)}$, $n,m\in\Z_+$, to construct an $\langle \cdot\rangle_\Psi$-orthonormal family of polynomials $v_{(n,s)}$, $n\in\Z_+$, $s\in\{0,1,\dots,n\}$, such that $v_{(n,s)}$ is of degree $n$ and ${\cal L} v_{(n,s)}=\lambda_n v_{(n,s)}$. From \cite[Section 1.2.2]{orthogonal-2}, we may choose $v_{(n,s)}$ such that for each $n\ge 0$, $v_{(n,0)},v_{(n,1)},\dots,v_{(n,n)}$ are given by
$$v_{n,j,1}=h_{n,j,1}P_j^{(1-\frac4\kappa,n-2j)}(2r^2-1)r^{n-2j}\cos((n-2j)\theta),\quad 0\le j\le \lfloor n/2\rfloor,$$
$$v_{n,j,2}=h_{n,j,2}P_j^{(1-\frac4\kappa,n-2j)}(2r^2-1)r^{n-2j}\sin((n-2j)\theta),\quad 0\le j\le \lfloor (n-1)/2\rfloor,$$
where $P_j^{(1-\frac4\kappa,n-2j)}$ are Jacobi polynomials of index $(1-\frac4\kappa,n-2j)$, $(r,\theta)$ is the polar coordinate of $(x,y)$: $x=r\cos\theta$ and $y=r\sin\theta$, and $h_{n,j,i}>0$ are normalization constants. Using the polar integration and Formula \cite[Table 18.3.1]{NIST:DLMF}:
$$ \int_{-1}^1 P_j^{(\alpha,\beta)}(x)^2(1-x)^\alpha(1+x)^\beta dx=\frac{2^{\alpha+\beta+1}\Gamma(j+\alpha+1)\Gamma(j+\beta+1)} {j!(2j+\alpha+\beta+1)\Gamma(j+\alpha+\beta+1)}$$ 
with $\alpha=1-\frac 4\kappa$ and $\beta=n-2j$, we compute
$$ h_{n,j}:=h_{n,j,1}=h_{n,j,2}=\sqrt{\frac {1+{\bf 1}_{n\ne 2j}}\pi\cdot\frac{j!(n+2-\frac 4\kappa)\Gamma(n-j+2-\frac 4\kappa)}{\Gamma(j+2-\frac 4\kappa)\Gamma(n-j+1)}}.$$ 
Using the supremum  norm (over $[-1,1]$) of $P_j^{(\alpha,\beta)}$ (\cite[18.14.1,18.14.2]{NIST:DLMF}):
$$ \|P_j^{(\alpha,\beta)}\|_\infty=\frac{\Gamma(\max\{\alpha,\beta\}+j+1)} {j!\Gamma(\max\{\alpha,\beta\}+1)},\quad\mbox{if } \max\{\alpha,\beta\}\ge -1/2 \mbox{ and }\min\{\alpha,\beta\}> -1,$$ 
we get
\BGE \|v_{n,j,1}\|_\infty =\|v_{n,j,2}\|_\infty=h_{n,j}\max\Big\{\frac{\Gamma(2-\frac 4\kappa+j)}{j!\Gamma(2-\frac 4\kappa)},\frac{\Gamma(n-j+1)}{j!\Gamma(n-2j+1)}\Big\}.\label{v}\EDE

For $t>0$, $(x,y),(x^*,y^*)\in\D$, we define
\BGE p_t((x,y),(x^*,y^*))=\sum_{n=0}^\infty\sum_{s=0}^n \Psi(x^*,y^*)v_{(n,s)}(x,y)v_{(n,s)}(x^*,y^*)e^{\lambda_nt}, \label{pt}\EDE
and $p_\infty(x^*,y^*)=\frac 1\pi(2-\frac 4\kappa) \Psi(x^*,y^*)$, which is the  term in the summation  for $n=s=0$. 
The following propositions are similar to Lemma 5.1, Lemma 5.2 and Corollary 5.3 of \cite{Two-Green-interior}. The proofs use the estimate (\ref{v}) and the orthogonality between $v_{n,s}$ w.r.t.\ $\langle \cdot\rangle_\Psi$. The exponent $1-\frac 58\kappa$ in Lemma \ref{asym-lemma} is the $\lambda_1$ here.

\begin{Lemma}
  For any $t_0>0$, the series in (\ref{pt}) converges uniformly on $[t_0,\infty)\times \lin\D\times \lin\D$, and there is $C_{t_0}\in(0,\infty)$ depending only on $\kappa$ and $t_0$ such that
$$ |p_t((x,y),(x^*,y^*))-p_\infty(x^*,y^*)|\le  C_{t_0} e^{(1-\frac58 \kappa)t}p_\infty(x^*,y^*),\quad t\ge t_0,\quad (x,y),(x^*,y^*)\in\lin\D.$$ 
Moreover, for any $t>0$ and $(x^*,y^*)\in \lin\D$,
$$ p_\infty(x^*,y^*)=\int\!\!\int_{\D} p_\infty(x,y)p_t((x,y),(x^*,y^*))dxdy.$$ 
\label{asym-lemma}
\end{Lemma}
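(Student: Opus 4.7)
The plan is to exploit the spectral decomposition provided by the orthonormal family $\{v_{(n,s)}\}$. The eigenvalues $\lambda_n=-\frac{\kappa}{8}n(n+4-\frac{8}{\kappa})$ are quadratic in $n$ with negative leading coefficient, so $\lambda_n\to-\infty$ like $-n^2$, and in particular the gap $\lambda_n-\lambda_1$ also tends to $-\infty$ quadratically. On the other hand, (\ref{v}) together with Stirling's formula shows that $\|v_{(n,s)}\|_\infty$ grows at most exponentially in $n$: the binomial-type factor $(n-j)!/(j!(n-2j)!)$ is bounded by $2^n$, and both $h_{n,j}$ and the other Gamma factor are polynomial in $n$ for fixed $\kappa$. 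Hence for each $t_0>0$ the series defining $p_t$ converges absolutely and uniformly on $[t_0,\infty)\times\lin\D\times\lin\D$, which proves the first assertion.

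For the error estimate, I would separate the $(n,s)=(0,0)$ term, which equals $p_\infty(x^*,y^*)$ exactly because $\lambda_0=0$ and a direct calculation from the formula for $h_{0,0}$ gives $v_{(0,0)}^2=\frac{1}{\pi}(2-\frac{4}{\kappa})$. Subtracting and bounding termwise,
\[
|p_t((x,y),(x^*,y^*))-p_\infty(x^*,y^*)|\le \Psi(x^*,y^*)\sum_{n\ge 1}\sum_{s=0}^n \|v_{(n,s)}\|_\infty^2\,e^{\lambda_n t}.
\]
Factoring out $e^{\lambda_1 t}$ and using $t\ge t_0$, the remaining series $\sum_{n\ge 1}(n+1)\max_s\|v_{(n,s)}\|_\infty^2\,e^{(\lambda_n-\lambda_1)t_0}$ converges by the previous paragraph, since the quadratic decay beats the exponential growth. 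Dividing by $p_\infty(x^*,y^*)=v_{(0,0)}^2\Psi(x^*,y^*)$ yields the advertised bound with some constant $C_{t_0}$ depending only on $\kappa$ and $t_0$, and with exponent $\lambda_1=1-\frac{5\kappa}{8}$.

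For the integral identity, fix $t>0$, choose $t_0\in(0,t)$, and integrate $p_t((x,y),(x^*,y^*))$ against $p_\infty(x,y)=v_{(0,0)}^2\Psi(x,y)$ in $(x,y)$. By the uniform convergence just established, one may interchange sum and integral to get
\[
\int\!\!\int_\D p_\infty(x,y)\,p_t((x,y),(x^*,y^*))\,dxdy=v_{(0,0)}\Psi(x^*,y^*)\sum_{n,s}e^{\lambda_n t}v_{(n,s)}(x^*,y^*)\langle v_{(n,s)},v_{(0,0)}\rangle_\Psi,
\]
and $\langle v_{(n,s)},v_{(0,0)}\rangle_\Psi=\delta_{(n,s),(0,0)}$ by orthonormality, so only the $(0,0)$ term survives and returns $v_{(0,0)}^2\Psi(x^*,y^*)=p_\infty(x^*,y^*)$. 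The main obstacle is the bookkeeping of growth rates for $\|v_{(n,s)}\|_\infty$ via (\ref{v}): once Stirling-type estimates confirm that the combination of Gamma factors grows no faster than $C_\kappa^n$ for some $C_\kappa>1$, the quadratic decay in $e^{\lambda_n t_0}$ decisively wins, and the rest is a routine manipulation of orthogonal expansions.
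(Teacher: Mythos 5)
Your proof is correct and follows essentially the same route as the paper, which cites Lemma 5.1 of the two-curve Green's function paper and explicitly notes that the argument rests on the sup-norm estimate (\ref{v}) and the $\langle\cdot\rangle_\Psi$-orthogonality of the $v_{(n,s)}$: separate the $(0,0)$ term (which is exactly $p_\infty(x^*,y^*)$), bound the tail termwise using the at-most-exponential growth of $\|v_{(n,s)}\|_\infty$ against the quadratic decay of $\lambda_n$, and use orthonormality to kill all but the constant term in the integral identity. The only cosmetic gap is that the Stirling bookkeeping (that $h_{n,j}$ and $\Gamma(2-\frac4\kappa+j)/(j!\Gamma(2-\frac4\kappa))$ are polynomial while $\binom{n-j}{j}\le 2^n$) is stated rather than carried out, but the assertions are accurate.
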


\begin{Lemma}
 $(X,Y)$ (under $\PP^c_*$) has transition density $p_t $ and invariant density $p_\infty $.
\end{Lemma}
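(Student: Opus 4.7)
The plan is to identify $p_t$ as the transition density by an eigenfunction expansion argument, using that each $v_{(n,s)}$ is a polynomial eigenfunction of the generator $\L$ of $(X,Y)$ with eigenvalue $\lambda_n$. Concretely, I will show that for any continuous $f:\lin\D\to\R$,
$$\EE_{(x,y)}[f(X_t,Y_t)] = \int\!\!\int_\D f(x^*,y^*)\,p_t((x,y),(x^*,y^*))\,dx^*dy^*, \qquad t>0,$$
whenever $(X_0,Y_0)=(x,y)\in\D$.

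\textbf{Step 1 (eigenfunctions give martingales).} Apply It\^o's formula to $v_{(n,s)}(X_t,Y_t)e^{-\lambda_n t}$ using the SDEs for $X,Y$ and the quadratic covariation formula. Since the second-order operator in the SDEs is precisely $\L$, the drift of $v_{(n,s)}(X_t,Y_t)$ is $\L v_{(n,s)}(X_t,Y_t)\,dt=\lambda_n v_{(n,s)}(X_t,Y_t)\,dt$, and so the drift of $v_{(n,s)}(X_t,Y_t)e^{-\lambda_n t}$ vanishes. Thus it is a continuous local martingale, and because $v_{(n,s)}$ and its first partials are bounded on $\lin\D$ while $(X_t,Y_t)\in\D$, the local martingale is bounded on any finite interval, hence a true martingale. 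Taking expectations gives $\EE_{(x,y)}[v_{(n,s)}(X_t,Y_t)]=v_{(n,s)}(x,y)\,e^{\lambda_n t}$.

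\textbf{Step 2 (polynomials).} Any polynomial $P$ on $\lin\D$ is a finite combination $P=\sum_{n\le N}\sum_{s}c_{(n,s)}v_{(n,s)}$ with $c_{(n,s)}=\langle P,v_{(n,s)}\rangle_\Psi$. By linearity and Step 1,
$$\EE_{(x,y)}[P(X_t,Y_t)]=\sum_{n\le N}\sum_{s}c_{(n,s)}v_{(n,s)}(x,y)\,e^{\lambda_n t}.$$
Expanding $c_{(n,s)}$ and interchanging the finite sum with the integral, the right-hand side equals $\int\!\!\int_\D P(x^*,y^*)\,q_t^{(N)}((x,y),(x^*,y^*))\,dx^*dy^*$, where $q_t^{(N)}$ is the partial sum of the $p_t$-series up through level $N$. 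By the orthogonality of $\{v_{(n,s)}\}$ in $L^2(\Psi\,dx^*dy^*)$, the tail terms ($n>N$) integrate to zero against $P$, so we may replace $q_t^{(N)}$ by the full series, which converges uniformly for $t>0$ by Lemma \ref{asym-lemma}. Hence $\EE_{(x,y)}[P(X_t,Y_t)]=\int\!\!\int_\D P\cdot p_t$.

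\textbf{Step 3 (general $f$ and invariance).} Polynomials are dense in $C(\lin\D)$ in the supremum norm by Weierstrass. Choosing polynomials $P_k\to f$ uniformly, the left-hand side of the identity converges by bounded convergence, and the right-hand side converges since $p_t((x,y),\cdot)$ is a finite positive measure: integrating $p_t$ using the expansion (\ref{pt}) and the orthogonality of every $v_{(n,s)}$ with $n\ge 1$ against the constant function $1\propto v_{(0,0)}$ shows $\int\!\!\int p_t((x,y),\cdot)=1$. This gives the desired identity for all continuous $f$, so $p_t$ is the transition density. The invariance $\int\!\!\int p_\infty(x,y)p_t((x,y),(x^*,y^*))\,dxdy=p_\infty(x^*,y^*)$ is the second assertion in Lemma \ref{asym-lemma}, which shows $p_\infty$ is an invariant density.

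The only real obstacle is justifying that the formal eigenfunction expansion faithfully represents the diffusion semigroup. This is where Weierstrass combined with the orthogonality provided by the inner product $\langle\cdot,\cdot\rangle_\Psi$ is essential; the symmetry $\langle\L f,g\rangle_\Psi=\langle f,\L g\rangle_\Psi$ (established via integration by parts using that $\Psi$ vanishes on $\TT$) ensures $p_t$ inherits the required symmetry and completeness, and the uniform convergence from Lemma \ref{asym-lemma} legalizes all term-by-term operations performed on the series.
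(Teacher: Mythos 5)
Your argument is correct and follows the same route the paper delegates to \cite[Lemmas 5.1--5.2]{Two-Green-interior}: identify $v_{(n,s)}(X_t,Y_t)e^{-\lambda_nt}$ as a bounded martingale via It\^o and $\L v_{(n,s)}=\lambda_n v_{(n,s)}$, then pass from polynomials to $C(\lin\D)$ using the $\langle\cdot,\cdot\rangle_\Psi$-orthonormality together with the uniform convergence supplied by Lemma \ref{asym-lemma}. One small remark: in Step 3 you invoke that $p_t((x,y),\cdot)$ is a \emph{positive} measure, but what you actually verify is only $\int p_t=1$; fortunately the limit interchange needs only boundedness of $p_t$ (which Lemma \ref{asym-lemma} gives) and $|\D|<\infty$, and positivity of $p_t$ is then a consequence of the established identity $\EE_{(x,y)}[f(X_t,Y_t)]=\int f\,p_t$ applied to $f\ge 0$, so there is no gap.
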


\begin{Corollary}
 $(Z^u_1,Z^u_2)$ under $\PP^c_*$ has   transition density
$$ p^Z_t(\ulin z,\ulin z^*) :=p_t((\cos_2(z_1+z_2),\sin_2(z_1-z_2)),(\cos_2(z_1^*+z_2^*),\sin_2(z_1^*-z_2^*)))\frac{\sin z_1^* +\sin z_2^*}{4}$$
and  invariant density
$p^Z_\infty ( \ulin z^*):= p_\infty( \cos_2(z_1^*+z_2^*),\sin_2(z_1^*-z_2^*))\frac{\sin z_1^* +\sin z_2^*}{4}$.
\label{transition-P*}
\end{Corollary}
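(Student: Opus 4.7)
The plan is to deduce the Corollary from the immediately preceding lemma by the smooth change of variables that converts $(Z_1^u,Z_2^u)$ into $(X,Y)$. Recall from Section \ref{section-u} that we set $X=\cos(Z_+^u)=\cos_2(Z_1^u+Z_2^u)$ and $Y=\sin(Z_-^u)=\sin_2(Z_1^u-Z_2^u)$, so the map
\begin{equation*}
\phi(z_1,z_2):=(\cos_2(z_1+z_2),\sin_2(z_1-z_2))
\end{equation*}
sends $(Z_1^u,Z_2^u)$ pathwise to $(X,Y)$. Hence, once we know that $(X,Y)$ has transition density $p_t$ and invariant density $p_\infty$ on $\D$, the statement for $(Z_1^u,Z_2^u)$ will follow by pushforward.

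First I would verify that $\phi$ is a $C^\infty$ bijection from $(0,\pi)^2$ onto $\D$ up to a set of Lebesgue measure zero. The formal inverse reads $z_1=\arccos(x)+\arcsin(y)$, $z_2=\arccos(x)-\arcsin(y)$, and one checks that $(z_1,z_2)\in(0,\pi)^2$ translates exactly to $x^2+y^2<1$, which parallels the computation $X^2+Y^2=1-\sin(Z_1^u)\sin(Z_2^u)<1$ already carried out in the excerpt. Next I would compute the Jacobian determinant via the product-to-sum identity $\sin A\cos B=\tfrac12(\sin(A+B)+\sin(A-B))$:
\begin{equation*}
|\det D\phi(z_1,z_2)|=\tfrac12\,\sin_2(z_1+z_2)\cos_2(z_1-z_2)=\tfrac14(\sin z_1+\sin z_2).
\end{equation*}
This is precisely the factor $\tfrac14(\sin z_1^*+\sin z_2^*)$ appearing in the defining formulas for $p^Z_t$ and $p^Z_\infty$.

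With these two ingredients in hand, the Corollary is immediate from Lemma \ref{asym-lemma} and the preceding transition-density lemma via the standard change-of-variables rule for densities. Writing $(X(s),Y(s))=\phi(Z_1^u(s),Z_2^u(s))$ and using that $\phi$ is a bijection, for any Borel $A\subset(0,\pi)^2$ one has
\begin{equation*}
\PP^c_*[(Z_1^u(s+t),Z_2^u(s+t))\in A\mid \F^u_s]=\int_{\phi(A)}p_t(\phi(Z_1^u(s),Z_2^u(s)),(x^*,y^*))\,dx^*dy^*,
\end{equation*}
and substituting $(x^*,y^*)=\phi(\ulin z^*)$ with $\ulin z^*\in A$ produces $\int_A p^Z_t(\ulin z,\ulin z^*)\,d\ulin z^*$ with the Jacobian factor absorbed into the integrand. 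The argument for the invariant density is identical. I do not anticipate a serious obstacle here: the content of the Corollary is just a transfer of Lemma \ref{asym-lemma} through a smooth bijection. The only minor point worth mentioning is that $(Z_1^u,Z_2^u)$ takes values in $(0,\pi)^2$ throughout $[0,T^u)$, which was already recorded in Section \ref{section-u}, so the change-of-variables identity holds everywhere on the lifetime.
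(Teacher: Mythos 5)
Your proposal is correct and takes the same (standard change-of-variables) route the paper intends; the paper itself gives no proof here, simply noting these results are analogous to [Two-Green-interior, Lemmas 5.1--5.3]. Your computation of the Jacobian $|\det D\phi|=\tfrac12\sin_2(z_1+z_2)\cos_2(z_1-z_2)=\tfrac14(\sin z_1+\sin z_2)$ is right, and the identity $X^2+Y^2=1-\sin z_1\sin z_2$ does show that $\phi$ maps $(0,\pi)^2$ bijectively onto $\D$. One small citation slip: the statement follows by pushing forward the \emph{preceding unnumbered lemma} (``$(X,Y)$ has transition density $p_t$ and invariant density $p_\infty$''), not Lemma \ref{asym-lemma}, which only concerns uniform convergence of the eigenfunction series; you should cite the former.
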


We may then use Lemma \ref{RN-Lemma} to derive the transition density for $(Z^u_1,Z^u_2)$ under the law $\PP^c_c$. Let $t\in\R_+$. Since $\ulin u(t)$ is an $\F$-stopping time, and $\ulin  u(t)\in\cal D$ iff $T^u>t$, by Lemma \ref{RN-Lemma},
\BGE \frac{d\PP^c_c|\F^u_t\cap \{T^u>t\}}{d\PP^c_*|\F^u_t\cap \{T^u>t\}}=\frac{M_{*\to c}^u(t)}{M_{*\to c}^u(0)}.\label{d/d}\EDE
Let $\til G^u(z_1,z_2):=\til G(\pi+z_1,\pi,z_2,0)$. By (\ref{M*c}), we get $M_{*\to c}^u(t)= e^{-\alpha_0 t} \til G^u(Z_1^u(t),Z_2^u(t))^{-1}$. 
Combining this with (\ref{d/d}) and Corollary \ref{transition-P*}, we get the transition density for $(Z^u_1,Z^u_2)$ under the law $\PP^c_c$ in the lemma below, which resembles  \cite[Lemma 5.4]{Two-Green-interior}.

\begin{Lemma}
	Under $\PP^c_c$, $\ulin Z^u(t):=(Z^u_1(t),Z^u_2(t))$, $0\le t<T^u$, has transition density
$$\til p^Z_t(\ulin z,\ulin z^*):=  e^{-\alpha_0t} p^Z_t(\ulin z,\ulin z^*) \frac{\til G^u(\ulin z)}{\til G^u(\ulin z^*)}.$$
Here  the meaning of the transition density is: if $\ulin Z^u$ starts with $\ulin z\in(0,\pi)^2$, then for any $t>0$ and any bounded measurable function $f$ on $(0,\pi)^2$,
$$\EE^c_c[{\bf 1}_{\{T^u>t\}} f(\ulin Z^u(t))]=\int_{(0,\pi)^2} \til p^Z_t(\ulin z,\ulin z^*) f(\ulin z^*) d\ulin z^*.$$
\label{Lemma-til-p}
\end{Lemma}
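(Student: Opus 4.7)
The plan is to derive the transition density under $\PP^c_c$ directly from the known transition density under $\PP^c_*$ (Corollary \ref{transition-P*}) via the Radon-Nikodym relation (\ref{d/d}). Both ingredients are already in place: Corollary \ref{transition-P*} gives the density $p^Z_t$ under $\PP^c_*$, and the identity
\[
M^u_{*\to c}(t) = e^{-\alpha_0 t}\,\til G^u(Z^u_1(t),Z^u_2(t))^{-1}
\]
identifies the Radon-Nikodym derivative between $\PP^c_c$ and $\PP^c_*$ on $\F^u_t\cap\{T^u>t\}$ as an explicit function of the endpoint $\ulin Z^u(t)$.

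First I would fix $\ulin z\in(0,\pi)^2$, a time $t>0$, and a bounded measurable $f:(0,\pi)^2\to\R$, and start the process from $\ulin Z^u(0)=\ulin z$. I would check the following measurability point: the map $\ulin s\mapsto Z_j(\ulin s)$ is continuous on $\cal D$, and $\ulin u(t)$ is an $\F$-stopping time with $\{T^u>t\}=\{\ulin u(t)\in\cal D\}\in\F^u_t$; hence $\mathbf 1_{\{T^u>t\}} f(\ulin Z^u(t))\cdot \til G^u(\ulin Z^u(t))^{-1}$ is $\F^u_t$-measurable and bounded (note $\til G^u$ is bounded away from $0$ on $(0,\pi)^2$ only locally, but on $\{T^u>t\}$ one can localize with $\tau^u_{\ulin\xi}$ from Section \ref{section-u} and let $\ulin\xi$ exhaust $\cal D$, then apply monotone/dominated convergence).

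Next I would apply (\ref{d/d}) to compute
\begin{align*}
\EE^c_c[\mathbf 1_{\{T^u>t\}} f(\ulin Z^u(t))]
&= \EE^c_*\Big[\mathbf 1_{\{T^u>t\}} f(\ulin Z^u(t))\,\frac{M^u_{*\to c}(t)}{M^u_{*\to c}(0)}\Big]\\
&= e^{-\alpha_0 t}\,\til G^u(\ulin z)\, \EE^c_*\Big[\mathbf 1_{\{T^u>t\}} f(\ulin Z^u(t))\,\til G^u(\ulin Z^u(t))^{-1}\Big],
\end{align*}
where I have used $M^u_{*\to c}(0)=\til G^u(\ulin z)^{-1}$ (since $\mA(\ulin 0)=0$ and $\ulin Z^u(0)=\ulin z$) together with the formula for $M^u_{*\to c}$. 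Then Corollary \ref{transition-P*} evaluates the remaining $\PP^c_*$-expectation as $\int_{(0,\pi)^2} p^Z_t(\ulin z,\ulin z^*)\, f(\ulin z^*)\,\til G^u(\ulin z^*)^{-1}\,d\ulin z^*$, and combining these two displays yields precisely the claimed formula with $\til p^Z_t(\ulin z,\ulin z^*)=e^{-\alpha_0 t}\,p^Z_t(\ulin z,\ulin z^*)\,\til G^u(\ulin z)/\til G^u(\ulin z^*)$.

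The only mildly delicate step is the localization needed to legitimately unwrap $\til G^u(\ulin Z^u(t))^{-1}$ against $f$: $\til G^u$ vanishes on the boundary of $(0,\pi)^2$, so one cannot directly quote Lemma \ref{RN-Lemma} with an unbounded integrand. I would handle this by taking an increasing sequence of $\ulin\xi_n\in\Xi^*$ with $\bigcup_n[\ulin 0,\tau_{\ulin\xi_n}]=\cal D$, replacing $\{T^u>t\}$ by $\{t<\tau^u_{\ulin\xi_n}\}$ on which $\log M^u_{*\to c}$ is uniformly bounded by Lemma \ref{uniform}, apply (\ref{d/d}) and Corollary \ref{transition-P*} on each such event, and then pass to the limit by monotone convergence (taking first $f\ge 0$, then general bounded $f$). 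Everything else is routine bookkeeping.
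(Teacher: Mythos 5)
Your proposal is correct and follows exactly the paper's route: apply Lemma \ref{RN-Lemma} at the $\F$-stopping time $\ulin u(t)$ to obtain the Radon--Nikodym identity (\ref{d/d}), substitute the explicit form $M^u_{*\to c}(t)=e^{-\alpha_0 t}\,\til G^u(\ulin Z^u(t))^{-1}$ (noting $M^u_{*\to c}(0)=\til G^u(\ulin z)^{-1}$), and then evaluate the resulting $\PP^c_*$-expectation with the transition density $p^Z_t$ from Corollary \ref{transition-P*}.

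One remark on the step you flag as delicate: the localization with $\tau^u_{\ulin\xi_n}$ is harmless but not actually needed. Lemma \ref{RN-Lemma} already asserts that $\PP^c_c$ restricted to $\F_{\ulin T}\cap\{\ulin T\in\cal D\}$ is absolutely continuous with respect to $\PP^c_*$ with density $M_{*\to c}(\ulin T)/M_{*\to c}(\ulin 0)$; once equality of measures is established, the corresponding integral identity holds for every nonnegative $\F_{\ulin T}$-measurable function supported on $\{\ulin T\in\cal D\}$ (indicators, then simple functions, then monotone limits), and hence for bounded $f$ by decomposing into positive and negative parts, with the left side $\EE^c_c[\mathbf 1_{\{T^u>t\}}f(\ulin Z^u(t))]$ automatically finite. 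So there is no issue of applying Lemma \ref{RN-Lemma} with an ``unbounded integrand'' --- the potentially unbounded factor $\til G^u(\ulin Z^u(t))^{-1}$ is part of the density, not the integrand, and absolute continuity for events is exactly the statement that licenses the extension. Your exhaustion argument reaches the same conclusion, just with a bit more machinery than is necessary.
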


We compute that $\frac{p_\infty^z(z_1,z_2)}{\til G^u(z_1,z_2)}=C\cos_2(z_1-z_2)^{2-\frac 8\kappa}\sin_2(z_1+z_2)$ for some constant $C\in(0,\infty)$ depending only on $\kappa$. Since $2-\frac 8\kappa>0$, we may define
\BGE {\cal Z}=\int_{(0,\pi)^2} \frac{p^Z_\infty(\ulin z)} {\til G^u(\ulin z)} d\ulin z \in(0,\infty),\label{calZ}\EDE
\BGE \til p^Z_\infty(\ulin z)=\frac 1{{\cal Z}}\frac{p^Z_\infty(\ulin z)} {\til G^u(\ulin z)},\quad \ulin z\in(0,\pi)^2.\label{tilpZinfty}\EDE
The following lemma resembles \cite[Lemma 5.5]{Two-Green-interior}.

\begin{Lemma}
\begin{enumerate}
  \item [(i)] For any $t>0$ and $\ulin z^*\in(0,\pi)^2$, \BGE \int_{(0,\pi)^2} \til p^Z_\infty(\ulin z)\til p^Z_t(\ulin z,\ulin z^*)d\ulin z=\til p^Z_\infty(\ulin z^*)e^{-\alpha_0t}.
	\label{invariant-psudo}\EDE
\item [(ii)] If $\ulin Z^u$ starts from $\ulin z\in(0,\pi)^2$, then as $t\to \infty$,
	$$\PP_c^c[T^u>t]={\cal Z} \til G^u(\ulin z) e^{-\alpha_0t}(1+O(e^{(1-\frac58 \kappa)t})),$$
	$$ \til p^Z_t(\ulin z,\ulin z^*)=\PP_c^c[T^u>t]\til p^Z_\infty(\ulin z^*)(1+O(e^{(1-\frac58 \kappa)t})),$$
where the implicit constant depends only on $\kappa$. The two formulas
 together imply that
\BGE \til p^Z_t(\ulin z,\ulin z^*)={\cal Z} \til G^u(\ulin z) \til p^Z_\infty(\ulin z^*)e^{-\alpha_0t} (1+O(e^{(1-\frac58 \kappa)t})),\quad \ulin z,\ulin z^*\in (0,\pi)^2. \label{tilptT>t}\EDE
\end{enumerate}
\end{Lemma}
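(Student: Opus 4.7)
The plan is to derive both parts directly from the spectral estimate of Lemma \ref{asym-lemma}, pushing its content through the change of density $\til p^Z_t(\ulin z,\ulin z^*) = e^{-\alpha_0 t} p^Z_t(\ulin z,\ulin z^*)\til G^u(\ulin z)/\til G^u(\ulin z^*)$ from Lemma \ref{Lemma-til-p} and the normalization $\til p^Z_\infty(\ulin z) = {\cal Z}^{-1} p^Z_\infty(\ulin z)/\til G^u(\ulin z)$ from (\ref{tilpZinfty}). Both statements are then essentially bookkeeping of the factor $e^{-\alpha_0 t} \til G^u(\ulin z)/\til G^u(\ulin z^*)$ over the known asymptotics of $p^Z_t$.

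For part (i), I would substitute these two expressions into the left-hand side of (\ref{invariant-psudo}). The $\til G^u(\ulin z)$ in the numerator of $\til p^Z_t$ cancels the one in the denominator of $\til p^Z_\infty$, while $e^{-\alpha_0 t}$, ${\cal Z}^{-1}$ and $\til G^u(\ulin z^*)^{-1}$ factor out of the integral. What is left inside is $\int_{(0,\pi)^2} p^Z_\infty(\ulin z)\, p^Z_t(\ulin z,\ulin z^*)\, d\ulin z$, which equals $p^Z_\infty(\ulin z^*)$: this is the invariance identity in Lemma \ref{asym-lemma} transferred through the change of variables of Corollary \ref{transition-P*}, the Jacobian $(\sin z_1+\sin z_2)/4$ appearing identically on both sides. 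Recombining the prefactor gives $\til p^Z_\infty(\ulin z^*) e^{-\alpha_0 t}$ as required.

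For part (ii), the key input is the uniform estimate of Lemma \ref{asym-lemma}, which delivers $p_t((x,y),(x^*,y^*)) = p_\infty(x^*,y^*)(1+O(e^{(1-\frac58\kappa)t}))$ with multiplicative error uniform over $\lin\D\times\lin\D$. The common Jacobian then transfers this to $p^Z_t(\ulin z,\ulin z^*) = p^Z_\infty(\ulin z^*)(1+O(e^{(1-\frac58\kappa)t}))$, uniformly on $(0,\pi)^2\times(0,\pi)^2$. Inserting this into the formula for $\til p^Z_t$ and rewriting $p^Z_\infty(\ulin z^*)/\til G^u(\ulin z^*) = {\cal Z}\,\til p^Z_\infty(\ulin z^*)$ yields the combined formula (\ref{tilptT>t}) uniformly in $\ulin z,\ulin z^*$.

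The two individual asymptotics then fall out of (\ref{tilptT>t}). Integrating it over $\ulin z^*\in(0,\pi)^2$ produces the first: by Lemma \ref{Lemma-til-p} with $f\equiv 1$, the left-hand side equals $\PP^c_c[T^u>t]$, and by (\ref{calZ})--(\ref{tilpZinfty}) we have $\int\til p^Z_\infty = 1$, leaving $\PP^c_c[T^u>t] = {\cal Z}\,\til G^u(\ulin z)\,e^{-\alpha_0 t}(1+O(e^{(1-\frac58\kappa)t}))$. Dividing the combined formula by this first asymptotic and absorbing $(1+O(e^{(1-\frac58\kappa)t}))^{-1}$ into an error of the same order gives the second. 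The only delicate point worth flagging is uniformity of the relative error as $\ulin z^*$ approaches $\partial(0,\pi)^2$, where $p^Z_\infty$ vanishes; since Lemma \ref{asym-lemma} phrases the error as a multiple of $p_\infty$ itself, the ratio $p_t/p_\infty$ is controlled uniformly up to the boundary and no separate estimate is needed.
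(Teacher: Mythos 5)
Your argument is correct and is essentially the intended one: substitute the definitions of $\til p^Z_t$ and $\til p^Z_\infty$, cancel the $\til G^u$ factors, and reduce part (i) to the exact invariance identity for $p_\infty$ and part (ii) to the uniform multiplicative estimate $|p_t-p_\infty|\le C_{t_0}e^{(1-\frac58\kappa)t}p_\infty$ from Lemma \ref{asym-lemma} (applied with a fixed $t_0$ depending only on $\kappa$, which is all that is needed for an ``as $t\to\infty$'' statement), transferred to the $\ulin z$-coordinates via the common Jacobian of Corollary \ref{transition-P*}. Your closing remark that the relative form of the error in Lemma \ref{asym-lemma} makes it harmless at the boundary, where $p^Z_\infty$ vanishes, is exactly the right thing to notice; the only small point left implicit is that integrating (\ref{tilptT>t}) in $\ulin z^*$ requires the $O(\cdot)$ to be uniform in $\ulin z^*$ so that the error term can be pulled out of $\int\til p^Z_\infty\,d\ulin z^*=1$, but you have already established that uniformity.
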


\section{Proof of the Main Theorem}\label{section-cutpoint}
We now start the proof of Theorem \ref{Main-thm}. By conformal invariance, Koebe's distortion theorem and reflection symmetry, we may assume that $D=\D$, $z_0=0$, and $a_j=e^{iw_j}$ and $b_j=e^{iv_j}$, $j=1,2$, where  $w_1>v_1>w_2>v_2>w_1-2\pi$.  Then we have ${G_{D;a_1,b_1,a_2,b_2}(0)}=\til G(w_1,v_1,w_2,v_2)$.
Let $\gamma_1$ be as in the theorem, and let $\gamma_2$ be a time-reversal of $\gamma_1$. By reversibility, $\gamma_2$ is a chordal SLE$_\kappa$ in $\D$ from $e^{iw_2}$ to $e^{iw_1}$. 
We may assume that for $j=1,2$, $\gamma_j$ is parametrized by some capacity viewed from $a_{3-j}$. This means that there is a conformal map $f_j:\D\conf \HH$, which sends $a_{3-j}$ to $\infty$ such that $\hcap_2 f_j (\gamma_j[0,t])=t$ for $t\ge 0$.
For $r\in (0,1)$, let  $E(r)$ denote the event that  $\gamma\cup A_1\cup A_2$ has a cut point  that lies in $\{|z|<r\}$, which also depends on $w_1,v_1,w_2,v_2$, and let $P(w_1,v_2,w_2,v_2;r)=\PP[E(r)]$. We need to show that there is a constant $C_0>0$ depending only on $\kappa$ such that
\BGE P(w_1,v_1,w_2,v_2;r)=C_0  \til G^u(w_1,v_1,w_2,v_2)  r^{\alpha_0}(1+O(r^{\beta_0})),\quad \mbox{as }r\to 0.\label{final}\EDE

For $j=1,2$, let $\ha S_j$ be the first time that $\gamma_j$ separates $0$ from any of $e^{iw_{3-j}},e^{iv_1},e^{iv_2}$ in $\D$. Let $v_j(t)=\dcap(\gamma_j[0,t])$, $0\le t<\ha S_j$, $\ha T_j=v_j(\ha S_j)$,   $u_j=v_j^{-1}:[0,\ha T_j)\to [0,\ha S_j)$, and $\eta_j=\gamma_j\circ u_j$. By \cite{SW}, $\eta_j$ is a radial SLE$_\kappa(\kappa-6,0,0)$ curve in $\D$ started from $e^{iw_j}$ with force points $e^{iw_{3-j}},e^{iv_1},e^{iv_2}$. Let $\ha w_j$ be a radial Loewner driving function of $\eta_j$ with $\ha w_j(0)=w_j$. We will use the notation in Section \ref{section-det} for the radial Loewner curves $\eta_1,\eta_2$ in this section.


For $j=1,2$, let ${\cal G}^j$ and $\F^j$ be respectively the   filtrations generated by $\gamma_j$ and $\eta_j$. Then $\ha S_j$ is an ${\cal G}^j$-stopping time. Let $\lin\F^j$ and $\lin {\cal G}^j$ be respectively the right-continuous augmentation of $\F^j$ and ${\cal G}^j$. We extend $u_j$ to $[0,\infty]$ such that $u_j\equiv \infty$ on $[\ha T_j,\infty]$. The following is a well-known proposition about time-change.

\begin{Proposition}
  For $j\in\{1,2\}$, 
  if $\tau$ is an $\lin\F^j$-stopping time, then $u_j(\tau)$ is an $\lin{\cal G}^j$-stopping time,  and $\lin\F^j_{\tau} \subset \lin{\cal G}^j_{u_j(\tau)}$. \label{tau-sigma}
\end{Proposition}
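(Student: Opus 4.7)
The plan is a standard time-change argument exploiting that $v_j$ is continuous, strictly increasing on $[0,\hat S_j)$, and $\mathcal G^j$-adapted (since $v_j(s)=\dcap(\gamma_j[0,s])$ is determined by $\gamma_j|_{[0,s]}$), with $u_j=v_j^{-1}$ its continuous inverse time change. For deterministic $t\ge 0$, the identity $\{u_j(t)\le s\}=\{v_j(s)\ge t\}$---valid for all $s\ge 0$ by continuity, strict monotonicity and the convention $u_j\equiv\infty$ on $[\hat T_j,\infty]$---exhibits $u_j(t)$ as a $\mathcal G^j$-stopping time. Since $v_j$ is $\mathcal G^j$-adapted, the stopped path $v_j|_{[0,u_j(t)]}$ is $\mathcal G^j_{u_j(t)}$-measurable; inverting it shows that $u_j|_{[0,t]}$ is $\mathcal G^j_{u_j(t)}$-measurable; composing with the stopped curve $\gamma_j|_{[0,u_j(t)]}$ (itself $\mathcal G^j_{u_j(t)}$-measurable) gives $\eta_j|_{[0,t]}=\gamma_j\circ u_j|_{[0,t]}\in\mathcal G^j_{u_j(t)}$, hence $\mathcal F^j_t\subseteq\mathcal G^j_{u_j(t)}$.

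For a general $\overline{\mathcal F}^j$-stopping time $\tau$, I would use the dyadic approximation $\tau^{(n)}=k/2^n$ on $\{(k-1)/2^n\le\tau<k/2^n\}$ (and $\tau^{(n)}=\infty$ on $\{\tau=\infty\}$), so that $\tau^{(n)}\downarrow\tau$. By the strict-inequality characterization in the one-dimensional analogue of Proposition \ref{right-continuous-0} (which gives $\{\tau<t\}\in\mathcal F^j_t$), each $\tau^{(n)}$ is an $\mathcal F^j$-stopping time of countable range. Partitioning by the values of $\tau^{(n)}$ and combining the deterministic case with the standard fact that $B\in\mathcal G^j_\sigma$ implies $B\cap\{\sigma\le s\}\in\mathcal G^j_s$ for any $\mathcal G^j$-stopping time $\sigma$, one obtains that $u_j(\tau^{(n)})$ is a $\mathcal G^j$-stopping time, and that any $A\in\overline{\mathcal F}^j_\tau$ lies in $\mathcal G^j_{u_j(\tau^{(n)})}$ (using that $A\cap\{\tau^{(n)}=k/2^n\}=A\cap(\{\tau<k/2^n\}\setminus\{\tau<(k-1)/2^n\})\in\mathcal F^j_{k/2^n}\subseteq\mathcal G^j_{u_j(k/2^n)}$). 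Continuity of $u_j$ then gives $u_j(\tau^{(n)})\downarrow u_j(\tau)$, so
$$A\cap\{u_j(\tau)<s\}=\bigcup_n A\cap\{u_j(\tau^{(n)})<s\}\in\mathcal G^j_s,\qquad s\ge 0.$$
Applied with $A=\Omega$, this identifies $u_j(\tau)$ as an $\overline{\mathcal G}^j$-stopping time; for general $A\in\overline{\mathcal F}^j_\tau$ it yields $\overline{\mathcal F}^j_\tau\subseteq\overline{\mathcal G}^j_{u_j(\tau)}$, in both cases via the strict-inequality characterization of Proposition \ref{right-continuous-0}.

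The only real obstacle is bookkeeping with the right-continuous augmentations, which is handled throughout by working with the strict-inequality characterizations $\{\tau<t\}\in\mathcal F^j_t$ and $A\cap\{\tau<t\}\in\mathcal F^j_t$ rather than $\{\tau\le t\}\in\overline{\mathcal F}^j_t$; everything else is a routine consequence of the continuity and monotonicity of the time change.
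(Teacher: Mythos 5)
Your argument is correct and follows essentially the same route as the paper's: establish the result for deterministic $t$ via the time-change identity and the $\mathcal G^j$-adaptedness of $v_j$, then pass to a general $\lin\F^j$-stopping time by approximating it from above with countably-valued $\F^j$-stopping times and invoking the strict-inequality characterization of $\lin{\cal G}^j$-stopping times; your dyadic approximation and the paper's double union over rationals are cosmetic variants of the same device. One small imprecision: since $v_j$ is only defined on $[0,\ha S_j)$, the identity should read $\{u_j(t)\le s\}=\{s<\ha S_j,\ v_j(s)\ge t\}\cup\{s\ge\ha S_j,\ t<\ha T_j\}$ (each piece $\mathcal G^j_s$-measurable); writing it simply as $\{v_j(s)\ge t\}$ presumes a convention for $v_j$ beyond $\ha S_j$, and even then one must be careful at the boundary $t=\ha T_j$ where $u_j$ jumps to $\infty$.
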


Fix $j\ne k\in\{1,2\}$. Let $\tau_k$ be an $\F^k$-stopping time less than $\ha T_k$ and let $\sigma_k=u_k(\tau_k)$, which by Proposition \ref{tau-sigma} is an $\lin {\cal G}^k_{\sigma_k}$-stopping time.
By DMP of chordal SLE$_\kappa$, conditionally on $\lin {\cal G}^k_{\sigma_k}$, the part of $\gamma_k$ after $\sigma_k$ is a chordal SLE$_\kappa$ in $\D\sem K_k({\tau_k})$ from $\gamma_k(\sigma_k)=\eta_k(\tau_k)$ to $e^{iw_j}$.
Since $\gamma_j$ is a time-reversal of $\gamma_k$, by reversibility of chordal SLE$_\kappa$, conditionally on $\lin {\cal G}^k_{\sigma_k}$, the part of $\gamma_j$ up to the time that it hits $\eta_k(\tau_k)$ is a chordal SLE$_\kappa$ in $\D\sem K_k({\tau_k})$ from $e^{i w_j}$ to $\eta_k(\tau_k)$. Recall that $\eta_j$ is a time-change of the part of $\gamma_j$ up to the first time that it separates $0$ from any of $e^{iw_k},e^{iv_1},e^{iv_2}$ in $\D$. So the part of $\eta_j$ up to the first time that it hits $\eta_k[0,\tau_k]$ is a time-change of the part of $\gamma_j$ up to the first time that it separates $0$ from any of $e^{iw_k},e^{iv_1},e^{iv_2}$ in $\D\sem K_k({\tau_k})$.

Since $g_k({\tau_k},\cdot)$ maps $\D\sem K_k({\tau_k})$ conformally onto $\D$, sends $\eta_k(\tau_k)$ to $e^{i\ha w_k(\tau_k)}$, and is measurable w.r.t.\ $\F^k_{\tau_k}\subset \lin{\cal G}^k_{\sigma_k}$, by conformal invariance of chordal SLE, we see that, conditionally on $\lin{\cal G}^k_{\sigma_k}$, the $g_k({\tau_k},\cdot)$-image of the part of $\eta_j$ up to the first time that it hits $\eta_k[0,\tau_k]$ is a time-change of a chordal SLE$_\kappa$ in $\D$ from $g_k({\tau_k},e^{iw_j})$ to $e^{i\ha w_k(\tau_k)}$ up to the first time that it separates $0$ from any of $e^{i\ha w_k(\tau_k)},g_k({\tau_k},e^{iv_1}),g_k({\tau_k},e^{iv_2})$ in $\D$, which by \cite{SW} is a time-change of a radial SLE$_\kappa(\kappa-6,0,0)$ curve in $\D$ started from $g_k({\tau_k},e^{iw_j})$ with force points $e^{i\ha w_k(\tau_k)},g_k({\tau_k},e^{iv_1}),g_k({\tau_k},e^{iv_2})$. Since $g^k_{\tau_k}$ and $e^{i\ha w_k(\tau_k)}$ are $\F^k_{\tau_k}$-measurable, the above statement holds with $\F^k_{\tau_k}$ in place of $\lin{\cal G}^k_{\sigma_k}$. So we obtain the commutation coupling in Proposition \ref{prop-commu-c}, and the law of $(\ha w_1,\ha w_2)$ is $\PP^c_c$.

Let $\F$ and ${\cal G}$ be respectively the separable $\R_+^2$-indexed filtration generated by $\F^1,\F^2$ and   ${\cal G}^1,{\cal G}^2$, and let $\lin\F$ and $\lin{\cal G}$ be their right-continuous augmentation. Let $\ulin{\ha T}=(\ha T_1,\ha T_2)$ and $\ulin {\ha S}=(\ha S_1,\ha S_2)$. Define $u_{\otimes} $  on $[0,\infty]^2$ such that if $(t_1,t_2)\in [0,\ha T_1)\times [0,\ha T_2)$, $u_\otimes (t_1,t_2)=(u_1(t_1),u_2(t_2))$, and otherwise $u_\otimes (t_1,t_2)=(\infty,\infty)$.

\begin{Lemma}
For any $\lin\F$-stopping time $\ulin T$, $ u_{\otimes}(\ulin T)$ is an $\lin{\cal G}$-stopping time and $\lin\F_{\ulin T} \subset \lin{\cal G}_{u_\otimes(\ulin T)}$. \label{tau-sigma-2}
\end{Lemma}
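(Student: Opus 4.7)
The plan is to reduce the two-parameter claim to the one-parameter Proposition \ref{tau-sigma} applied separately to each coordinate, then glue the two coordinates together using separability of the filtrations. Since $\lin\F$ and $\lin{\cal G}$ are right-continuous, Proposition \ref{right-continuous-0} lets me replace the stopping condition for $\ulin T$ with $\{\ulin T < \ulin t\} \in \F_{\ulin t}$ and characterizes $\lin{\cal G}_{u_\otimes(\ulin T)}$ analogously. Hence it suffices to establish, for every $A \in \lin\F_{\ulin T}$ and every $\ulin s = (s_1, s_2) \in \R_+^2$, the inclusion $A \cap \{u_\otimes(\ulin T) < \ulin s\} \in {\cal G}_{\ulin s}$; specializing to $A = \Omega$ gives the stopping-time assertion, and the general case yields the $\sigma$-algebra inclusion.

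The key step is a countable decomposition using rationals. Because each $u_j$ is continuous and strictly increasing on $[0, \ha T_j)$ with $u_j \equiv \infty$ on $[\ha T_j, \infty]$, the event $\{u_j(T_j) < s_j\}$ holds iff there is $q_j \in \Q_+$ with $T_j < q_j < \ha T_j$ and $u_j(q_j) < s_j$. Setting $E_j^{q_j} := \{q_j < \ha T_j,\ u_j(q_j) < s_j\}$, this yields
\begin{equation*}
A \cap \{u_\otimes(\ulin T) < \ulin s\} = \bigcup_{(q_1,q_2) \in \Q_+^2} \bigl( A \cap \{\ulin T < (q_1, q_2)\} \bigr) \cap E_1^{q_1} \cap E_2^{q_2}.
\end{equation*}
Since $u_j$ and $\ha T_j$ are $\cal G^j$-adapted, $E_j^{q_j} \in {\cal G}^j_{s_j}$. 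Moreover, Proposition \ref{tau-sigma} applied to the constant stopping time $q_j$ gives $\lin\F^j_{q_j} \subset \lin{\cal G}^j_{u_j(q_j)}$, so via the right-continuous characterization of $\lin{\cal G}^j_{u_j(q_j)}$, every $A_j \in \F^j_{q_j}$ satisfies $A_j \cap E_j^{q_j} \in {\cal G}^j_{s_j}$.

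The main obstacle is that $A \cap \{\ulin T < (q_1, q_2)\}$ lies in the joined $\sigma$-algebra $\F_{(q_1,q_2)} = \F^1_{q_1} \vee \F^2_{q_2}$ rather than in a product of coordinate $\sigma$-algebras, so I cannot factor it directly across coordinates. To overcome this I will apply a Dynkin-class argument: fixing $(q_1, q_2) \in \Q_+^2$, I consider the family $\cal H$ of all $B \in \F^1_{q_1} \vee \F^2_{q_2}$ with $B \cap E_1^{q_1} \cap E_2^{q_2} \in {\cal G}_{\ulin s}$. By the coordinatewise step, $\cal H$ contains the $\pi$-system of products $A_1 \cap A_2$ with $A_j \in \F^j_{q_j}$, since the intersection lies in $\cal G^1_{s_1} \vee \cal G^2_{s_2} = {\cal G}_{\ulin s}$. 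Closure under complementation inside $\F^1_{q_1} \vee \F^2_{q_2}$ holds because $E_1^{q_1} \cap E_2^{q_2} \in {\cal G}_{\ulin s}$, and closure under countable disjoint unions is immediate, so $\cal H$ is a $\lambda$-system containing a generating $\pi$-system and therefore equals $\F_{(q_1,q_2)}$. Applying this with $B = A \cap \{\ulin T < (q_1,q_2)\}$, which lies in $\F_{(q_1,q_2)}$ by the defining property of $\lin\F_{\ulin T}$, and then taking a countable union over $(q_1,q_2) \in \Q_+^2$ delivers both conclusions.
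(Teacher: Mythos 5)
Your proof is correct and follows essentially the same route as the paper: the same rational countable decomposition of $A\cap\{u_\otimes(\ulin T)<\ulin s\}$, the same invocation of Proposition \ref{right-continuous-0} to pass between $\le$ and $<$ on both sides, Proposition \ref{tau-sigma} for the coordinatewise inclusion, and a monotone-class argument to pass from rectangles in $\F^1_{q_1}\vee\F^2_{q_2}$ to the whole $\sigma$-algebra. The only difference is presentational: you spell out the Dynkin $\pi$-$\lambda$ argument that the paper compresses into the phrase ``by a monotone class argument.''
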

\begin{proof}
  Let $A\in\lin\F_{\ulin T}$ and $\ulin s=(s_1,s_2)\in\R_+^2$. We have
  \BGE A\cap\{u_\otimes(\ulin T)<\ulin s\}=\bigcup_{\ulin p\in \Q_+^2} (A\cap \{\ulin T<\ulin p\}\cap \{ u_\otimes(\ulin p)<\ulin s\}).\label{A-inclusion}\EDE
  By Proposition \ref{right-continuous-0}, $A\cap \{\ulin T<\ulin p\}\in \F_{\ulin p}$. For any $j=1,2$ and $B_j\in \F^j_{p_j}$, by Proposition \ref{tau-sigma},  $B_j\cap \{u_j(p_j)<s_j\}\in {\cal G}^j_{s_j}$.
  By a monotone class argument, we get $B\cap  \{ u_\otimes(\ulin p)<\ulin s\}\in {\cal G}_{\ulin s}$ for any $B\in \F_{\ulin p}$. So $A\cap \{\ulin T<\ulin p\}\cap \{ u_\otimes(\ulin p)<\ulin s\}\in {\cal G}_{\ulin s}$. By (\ref{A-inclusion}) we get $A\cap\{u_\otimes (\ulin T)<\ulin s\}\in {\cal G}_{\ulin s}$. Since this holds for all $A\in\lin\F_{\ulin T}$ and $\ulin s\in\R_+^2$, by Proposition \ref{right-continuous-0}, we see that $u_\otimes (\ulin T)$ is an $\lin{\cal G}$-stopping time and $\lin\F_{\ulin T}\subset \lin{\cal G}_{u_\otimes (\ulin T)}$.
\end{proof}

\begin{Lemma}
  Let $0<r<1/4$.  Let $\ulin T=(T_1,T_2)$ be an $\lin\F$-stopping time such that $T_j<-\log(4r)$, $j=1,2$, and when $\ulin T\in\cal D$, $\mA(\ulin T)<-\log(4r)$. On the event $\{\ulin T\in\cal D\}$, let $R_+>R_-\in (0,1)$ be such that $\frac{R_\pm}{(1\pm R_\pm)^2}=e^{\mA(\ulin T)}r$, which exist uniquely because $e^{\mA(\ulin T)}r<1/4$.
Then
  \BGE \PP[E(r)|\lin\F_{\ulin T}]\lesseqgtr {\bf 1}_{\{\ulin T\in\cal D\}}\cdot P (W_1(\ulin T) ,V_1(\ulin T) ,W_2(\ulin T) ,V_2(\ulin T) ; R_\pm ).\label{gtreqless}\EDE
 Here and below the symbol $\lesseqgtr $ means that when we choose $+$ and $-$ in the $\pm$ on the RHS, the inequality holds with $\le$ and $\ge$, respectively.
  \label{conditional prob}
\end{Lemma}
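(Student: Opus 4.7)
I would work on the event $\{\ulin T\in\cal D\}$, where the conformal map $g:=g(\ulin T,\cdot):\D\sem K(\ulin T)\conf \D$ is well-defined with $g(0)=0$ and $g'(0)=e^{\mA(\ulin T)}$. The assumption $\mA(\ulin T)<-\log(4r)$ gives $e^{\mA(\ulin T)}r<\tfrac14$, so Koebe's $1/4$-theorem applied to $g^{-1}$ yields $\dist(0,K(\ulin T))\ge e^{-\mA(\ulin T)}/4>r$, and hence $\{|z|<r\}\subset \D\sem K(\ulin T)$. Applying the Koebe distortion bound $\frac{s}{(1+s)^2}|(g^{-1})'(0)|\le |g^{-1}(w)|\le \frac{s}{(1-s)^2}|(g^{-1})'(0)|$ for $|w|=s$, together with the defining equations $R_\pm/(1\pm R_\pm)^2=e^{\mA(\ulin T)}r$, I would obtain the Koebe sandwich
\begin{equation*}
\{|w|<R_-\}\ \subset\ g(\{|z|<r\})\ \subset\ \{|w|\le R_+\}.
\end{equation*}

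The substantive ingredient is a two-parameter domain Markov property: conditionally on $\lin\F_{\ulin T}$ restricted to $\{\ulin T\in\cal D\}$, the $g$-image of $(\gamma\cup A_1\cup A_2)\cap \lin{\D\sem K(\ulin T)}$ should have the same distribution as a fresh copy of $\gamma'\cup A'_1\cup A'_2$ in $\D$ with parameters $(W_1(\ulin T),V_1(\ulin T),W_2(\ulin T),V_2(\ulin T))$ in place of $(w_1,v_1,w_2,v_2)$. Proposition \ref{prop-commu-c} only delivers this when $\ulin T=(\tau_1,0)$ or $(0,\tau_2)$ is a one-parameter stopping time; for a genuine two-parameter $\lin\F$-stopping time I would need the two-directional domain Markov property of chordal SLE$_\kappa$ (for $\kappa\in(4,8)$) mentioned in the introduction and to be established in the appendix. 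This is where reversibility of chordal SLE$_\kappa$ enters, and is the main technical obstacle of the proof.

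Given these two ingredients, the conclusion is immediate. Since $g$ extends to a homeomorphism of the relevant prime-end closures and cut-point-ness is a topological invariant, a point $z\in\{|z|<r\}\subset \D\sem K(\ulin T)$ would be a cut point of $\gamma\cup A_1\cup A_2$ iff $g(z)$ is a cut point of the $g$-image configuration (the portions of $A_1,A_2$ swallowed by $K(\ulin T)$ do not affect interior cut points lying in $\D\sem K(\ulin T)$). Hence $\PP[E(r)\mid \lin\F_{\ulin T}]$ equals the probability under the fresh conditional law that the image configuration has a cut point in $g(\{|z|<r\})$; the Koebe sandwich then pinches this between $P(W_1(\ulin T),V_1(\ulin T),W_2(\ulin T),V_2(\ulin T);R_-)$ and $P(W_1(\ulin T),V_1(\ulin T),W_2(\ulin T),V_2(\ulin T);R_+)$, which is exactly (\ref{gtreqless}).
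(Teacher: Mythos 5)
Your plan correctly identifies the two main ingredients for the part of the argument that lives on the event $\{\ulin T\in\cal D\}$: the Koebe sandwich $\{|w|<R_-\}\subset g(\ulin T,\{|z|<r\})\subset\{|w|\le R_+\}$, and the two-directional DMP from the appendix (combined with the bridge, via Lemma \ref{tau-sigma-2}, from the radial filtration $\lin\F_{\ulin T}$ to the chordal one $\lin{\cal G}_{u_\otimes(\ulin T)}$ in which Theorem \ref{DMP-thm} actually applies). These steps match the paper.

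The genuine gap is that you never treat the event $\{\ulin T\notin\cal D\}$. The upper-bound half of (\ref{gtreqless}) requires $\PP[E(r)\mid\lin\F_{\ulin T}]=0$ a.s.\ on $\{\ulin T\notin\cal D\}$, i.e.\ $E(r)\subset\{\ulin T\in\cal D\}$ up to a null event, and this is not automatic: a priori, $\gamma_1\cup A_1\cup A_2$ could have a cut point in $\{|z|<r\}$ even though one of the curves has already separated $0$ or the two curves have already met by time $\ulin T$. The paper opens the proof with precisely this claim, and it costs something: one invokes SLE duality (the outer boundary of $\gamma_1$ consists of two simple curves from $a_1$ to $a_2$ whose nontrivial intersections are exactly the cut points of $\gamma_1$) to deduce that on $E(r)$ neither $\gamma_j$ hits $A_{3-j}$ nor disconnects $0$ nor intersects the other curve before reaching $\{|z|\le r\}$, so $(\tau^1_r,\tau^2_r)\in\cal D$; Koebe's $1/4$ theorem then gives $\tau^j_r\ge-\log(4r)\ge T_j$, and the HC property of $\cal D$ yields $\ulin T\in\cal D$. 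Note also that this is the only place the hypothesis $T_j<-\log(4r)$ is used — the fact that your argument never invokes it is a signal that a step is missing. Without this paragraph, the stated conditional inequality does not follow, so the proposal as written is incomplete for the $\le$ direction.
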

\begin{proof}
For $j=1,2$, let $\tau^j_r$ and $\til \tau^j_r$ be respectively the first time that $\eta_j$ and $\gamma_j$ visits $\{|z|\le r\}$. First, suppose the event $E(r)$ happens. Then $\gamma_1$ and $\gamma_2$ do reach $\{|z|\le r\}$, i.e., $\til\tau^j_r<\infty$, $j=1,2$. By duality of SLE (cf.\ \cite[{Theorem 1.4}]{MS1} and {\cite[the discussion before Theorem 1.5]{MW}}), the {left boundary and right boundary} of $\gamma_1$ are two {(simple) flow lines, whose intersection is the cut-set of $\eta_1$}. In fact, the two simple flow lines are SLE$_{\kappa'}(\kappa'-4,\frac{\kappa'}2-2)$ and SLE$_{\kappa'}(\frac{\kappa'}2-2,\kappa'-4)$ curves, where $\kappa'=16/\kappa$, starting from $\eta_1(0)$ with the force points being $\eta_1(0)^+$ and $\eta_1(0)^-$. We do not need the laws of these curves. Thus, for $j=1,2$, $\gamma_j[0,\til\tau^j_r]$ does not intersect $A_{3-j}$ or disconnect $0$ from $\pa\D$; and $\gamma_1[0,\til\tau^1_r]\cap \gamma_2[0,\til \tau^2_r]=\emptyset$. The former condition implies that $\til\tau^j_r<\ha S_j$, and so $\tau^j_r=v_j(\til\tau^j_r)<\ha T_j$; and the latter implies that $\eta_1[0,\tau^1_r]\cap \eta_2[0,\tau^2_r]=\emptyset$. So we get $(\tau^1_r,\tau^2_r)\in\cal D$.
Since $\D\sem K_j(\tau^j_r)\supset \{|z|<r\}$,  by Koebe's $1/4$ theorem, $\tau^j_r\ge -\log(4r)\ge T_j$, $j=1,2$.   Since $(\tau^1_r,\tau^2_r)\in\cal D$ and $\cal D$ is an HC region, we get $\ulin T\in\cal D$. Thus, $E(r)\subset \{\ulin T\in\cal D\}$. 

Now we assume that $\{\ulin T\in\cal D\}$ (instead of $E(r)$) happens. Let $\til\gamma_1$ denote the part of $\gamma_1$ between $\eta_1(T_1)$ and $\eta_2(T_2)$. Let $\til A_j:=A_j\cup \eta[0,T_j]$, $j=1,2$. Then $\gamma_1\cup A_1\cup A_2=\til\gamma_1\cup\til A_1\cup \til A_2$.
By Lemma \ref{tau-sigma-2}, $u_\otimes (\ulin T)$ is an $\lin{\cal G}$-stopping time. By Theorem \ref{DMP-thm}, conditionally on $\lin{\cal G}_{u_\otimes (\ulin T)}$ and the event $\{\ulin T\in{\cal D}\}$, $\til\gamma$ is a time-change of a chordal SLE$_\kappa$ in $\D\sem K(\ulin T)$ from $\eta_1(T_1)=\gamma_1(u_1(T_1))$ to $\eta_2(T_2)=\gamma_2(u_2(T_2))$. Since the event $\{\ulin T\in{\cal D}\}$ and the random elements  $\D\sem K(\ulin T)$ and $\eta_j(T_j)$, $j=1,2$, are all $\lin\F_{\ulin T}$-measurable, the above statement holds with $\lin\F_{\ulin T}$ in place of $\lin{\cal G}_{u_\otimes (\ulin T)}$. Since $g(\ulin T,\cdot)$ is $\lin\F_{\ulin T}$-measurable, and sends $\eta_1(T_1)$ and $\eta_2(T_2)$ respectively to $e^{iW_1(\ulin T)}$ and $e^{iW_2(\ulin T)}$, we conclude that, conditionally on $\lin\F_{\ulin T}$ and the event  $\{\ulin T\in{\cal D}\}$, $g(\ulin T,\til\gamma)$ is a time-change of a chordal SLE$_\kappa$ in $\D$ from $e^{iW_1(\ulin T) }$ to $e^{iW_2(\ulin T) }$.

 For $j=1,2$, let $A_j(\ulin T)$ be the boundary arc of $\D$ connecting $e^{iV_1(\ulin T)}$ and $e^{iV_2(\ulin T)}$ which contains $e^{iW_j(\ulin T)}$. Since $g(\ulin T,\cdot)$ sends $e^{iv_1}$ and $e^{iv_2}$ respectively to $e^{iV_1(\ulin T)}$ and $e^{iV_2(\ulin T)}$, that $\til\gamma_1\cup\til A_1\cup \til A_2$ contains a cut point in $\{|z|<r\}$ is then equivalent to that $g(\ulin T,\til \gamma_1)\cup A_1(\ulin T)\cup A_2(\ulin T)$ has a cut point contained in $g (\ulin T,\{|z|<r\})$. Recall that $g(\ulin T,\cdot) $ maps $\D\sem K(\ulin T)$ conformally onto $\D$, fixes $0$, and has derivative $e^{\mA(\ulin T)}$ at $0$.  By Koebe's distortion theorem {applied to $e^{\mA(\ulin T)} g(\ulin T,\cdot)^{-1}$,
$$\Big\{|z|<\frac {e^{-\mA(\ulin T)} R}{(1+R)^2}\Big\}\subset g(\ulin T,\cdot)^{-1}(\{|z|<R\})\subset \Big\{|z|<\frac {e^{-\mA(\ulin T)} R}{(1-R)^2}\Big\},\quad \forall R\in(0,1). $$
Taking $R=R_\pm$, we get $\{|z|<R_-\}\subset g(\ulin T,\{|z|<r\})\subset \{|z|<R_+\}$.} Then (\ref{gtreqless}) follows immediately.
\end{proof}

For $z_1,z_2\in(0,\pi)$, we write $P^u(z_1,z_r;r)$ for $P(\pi+z_1,\pi,z_2,0;r)$. By rotation symmetry, if $v_1-v_2=\pi$, then $P(w_1,v_1,w_2,v_2;r)=P^u(w_1-v_1,w_2-v_2;r)$.

\begin{Lemma}
  Let $\til p_t^Z(\ulin z,\ulin z^*)$, $t\in(0,\infty)$, $\ulin z,\ulin z^*\in(0,\pi)^2$, be the transition density given by Lemma \ref{Lemma-til-p}. Let $r\in(0,1/4)$ and $t_0\in(0,-\log(4r))$. Let $R_+,R_-\in(0,1)$ be such that $\frac{R_\pm}{(1\pm R_\pm)^2}=e^{t_0} r$. Then for any $\ulin z\in(0,\pi)^2$,
  \BGE P^u(\ulin z;r)\lesseqgtr \int_{(0,\pi)^2} P^u (\ulin z^*,R_\pm ) \til p^Z_{t_0}(\ulin z,\ulin z^*) d\ulin z^*.\label{int-pZ}\EDE \label{tilptP(r)}
\end{Lemma}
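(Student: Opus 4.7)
The plan is to combine the conditional estimate of Lemma \ref{conditional prob} with the explicit transition density of Lemma \ref{Lemma-til-p} by choosing the stopping time to be $\ulin T:=\ulin u(t_0)$, where $\ulin u$ is the time curve in ${\cal D}$ constructed in Section \ref{section-u}. Since $\ulin u(t)$ is an $\F$-stopping time for each deterministic $t$, $\ulin T$ is in particular an $\lin\F$-stopping time. The virtue of this choice is twofold: on the event $\{T^u>t_0\}$ it gives $\mA(\ulin T)=t_0$ deterministically (so $R_\pm$ in Lemma \ref{conditional prob} depend only on $t_0$ and $r$, not on $\omega$), and it keeps $\theta(\ulin T)=V_1(\ulin T)-V_2(\ulin T)=\pi$, so that the $P$ appearing on the right-hand side is automatically of the normalized form $P^u$.

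First I would verify the hypotheses of Lemma \ref{conditional prob}. The bound $u_j(t_0)\le t_0<-\log(4r)$ gives $T_j<-\log(4r)$; and when $\ulin T\in{\cal D}$, the identity $\mA(\ulin u(t_0))=t_0$ produces $\mA(\ulin T)<-\log(4r)$. Since $\{\ulin u(t_0)\in{\cal D}\}=\{T^u>t_0\}$ by construction of $\ulin u$, and since on this event $V_1(\ulin T)-V_2(\ulin T)=\pi$ and $W_j(\ulin T)-V_j(\ulin T)=Z_j^u(t_0)$, rotation invariance of the law of $\gamma_1$ reduces $P(W_1(\ulin T),V_1(\ulin T),W_2(\ulin T),V_2(\ulin T);R_\pm)$ to $P^u(Z_1^u(t_0),Z_2^u(t_0);R_\pm)$. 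Lemma \ref{conditional prob} therefore gives
$$\PP[E(r)\mid\lin\F_{\ulin T}]\lesseqgtr {\bf 1}_{\{T^u>t_0\}}\,P^u(\ulin Z^u(t_0);R_\pm).$$

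Taking expectation under $\PP^c_c$ (the joint law of $\ha w_1,\ha w_2$ identified at the beginning of this section via reversibility and the coupling of Proposition \ref{prop-commu-c}) and using the tower property,
$$P^u(\ulin z;r)=\EE^c_c[\PP[E(r)\mid\lin\F_{\ulin T}]]\lesseqgtr \EE^c_c[{\bf 1}_{\{T^u>t_0\}}\,P^u(\ulin Z^u(t_0);R_\pm)],$$
where on the left we used $\PP^c_c[E(r)]=P^u(\ulin z;r)$ with $\ulin Z^u(0)=\ulin z$. Applying Lemma \ref{Lemma-til-p} to the bounded measurable test function $\ulin z^*\mapsto P^u(\ulin z^*;R_\pm)$ converts the right-hand side into $\int_{(0,\pi)^2}P^u(\ulin z^*;R_\pm)\,\til p^Z_{t_0}(\ulin z,\ulin z^*)\,d\ulin z^*$, which is precisely (\ref{int-pZ}).

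The substantive work is therefore already packaged into Lemmas \ref{conditional prob} and \ref{Lemma-til-p}; the only real obstacle here is the bookkeeping that identifies the indicator ${\bf 1}_{\{\ulin T\in{\cal D}\}}$ appearing in Lemma \ref{conditional prob} with the indicator ${\bf 1}_{\{T^u>t_0\}}$ built into the transition density $\til p^Z_{t_0}$, so that the two estimates chain together cleanly.
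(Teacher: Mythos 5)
Your proof is correct and takes essentially the same approach as the paper: apply Lemma \ref{conditional prob} to the stopping time $\ulin u(t_0)$, use the fact that $\mA(\ulin u(t_0))=t_0$ and $\theta(\ulin u(t_0))=\pi$ on $\{T^u>t_0\}$ together with rotation symmetry to reduce $P$ to $P^u$, then integrate against the transition density from Lemma \ref{Lemma-til-p}. The only cosmetic difference is that the paper re-derives $u_j(t_0)\le t_0$ in both cases $\ulin u(t_0)\in\cal D$ and $\ulin u(t_0)\notin\cal D$, whereas you cite it directly from the list of properties in Section \ref{section-u}, which is equally valid.
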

\begin{proof}
  Fix $\ulin z=(z_1,z_2)\in(0,\pi)^2$. Let $w_1=\pi+z_1$, $v_1=\pi$, $w_2=z_2$, $v_2=0$.  Then $\PP[E(r)]=P^u(\ulin z,r)$. Since $v_1-v_2=\pi$, we may define the curve $\ulin u=(u_1,u_2)$  as in Section \ref{section-u}. If $\ulin u(t_0)\in \cal D$, then $u_j(t_0)\le \mA(\ulin u(t_0))=t_0$, $j=1,2$. If $\ulin u(t_0)\not\in\cal D$, then $t_0\ge T^u$ and $\ulin u(t_0)=\ulin u(T^u)=\lim_{t\uparrow T^u}\ulin u(t)\le T^u\le t_0$ because for $0\le t<T^u$, we have $\ulin u(t)\in\cal D$, and so $u_j(t)\le t\le T^u$, $j=1,2$. Thus, in any case we have $u_j(t_0) \le t_0$, $j=1,2$.

  Apply Lemma \ref{conditional prob} to the $\F$-stopping time $\ulin u(t_0)$. Since $\mA(\ulin u(t_0))=t_0$ when  $\ulin u(t_0)\in\cal D$, the $R_\pm$ here agree with the $R_\pm$ in Lemma \ref{conditional prob}. From $V^u_1(t_0)-V^u_2(t_0)=\pi$, we then get
  $$\PP[E(r)|\F^u_{t_0}]\lesseqgtr {\bf 1}_{\ulin u(t_0)\in\cal D} P^u (Z^u_1(t_0),Z^u_2(t_0);R_\pm).$$
By integration we get
\BGE P^u(\ulin z;r)=\PP[E(r)]\lesseqgtr \EE[{\bf 1}_{\ulin u(t_0)\in\cal D} P^u (\ulin Z^u(t_0);R_\pm)].\label{int-pZ2}\EDE
  Since $(\ha w_1,\ha w_2)$ follows the law $\PP^c_c$, $\til p^Z_t(\ulin z,\ulin z^*)$ is the transition density of the process $(\ulin Z^u(t))$. So the density of $\ulin Z^u(t_0)$ is $\til p^Z_{t_0}(\ulin z,\cdot)$, and the RHS of (\ref{int-pZ2}) agrees with the RHS of (\ref{int-pZ}).
\end{proof}

Let $\til p^Z_\infty$ be given by (\ref{tilpZinfty}{)}. Define
$\lin P(r)=\int_{(0,\pi)^2} P^u(\ulin z,r)\til p^Z_\infty(\ulin z)d\ulin z$, $r\in(0,1)$.

\begin{Lemma}
We have  $\lin P(r)=\ha c r^{\alpha_0}(1+O(r))$ as  $r\downarrow 0$, where $\ha c\ge 0$ is   a constant   depending only on $\kappa$, and the implicit constants in $O(r)$ depend only on $\kappa$.
\label{P(r)}
\end{Lemma}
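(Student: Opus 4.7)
The plan is to convert the pointwise bound of Lemma \ref{tilptP(r)} into a tight functional inequality for $\lin P$ itself, by averaging against the reference density $\til p^Z_\infty$ and collapsing the resulting iterated integral via identity (\ref{invariant-psudo}). First I would multiply the two-sided inequality of Lemma \ref{tilptP(r)} by $\til p^Z_\infty(\ulin z)$, integrate over $\ulin z\in(0,\pi)^2$, and swap the order of integration on the right: the inner integral $\int \til p^Z_\infty(\ulin z)\,\til p^Z_{t_0}(\ulin z,\ulin z^*)\,d\ulin z$ equals $e^{-\alpha_0 t_0}\til p^Z_\infty(\ulin z^*)$ by (\ref{invariant-psudo}), so the double integral collapses to $e^{-\alpha_0 t_0}\lin P(R_\pm)$. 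This yields, for every $r\in(0,1/4)$ and every $t_0\in(0,-\log(4r))$,
\BGEN e^{-\alpha_0 t_0}\lin P(R_-)\;\le\;\lin P(r)\;\le\;e^{-\alpha_0 t_0}\lin P(R_+). \EDEN

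Next, I would reparametrize via $s:=e^{t_0}r\in(r,1/4)$, noting that $R_\pm=R_\pm(s)$ then depends only on $s$ through $R_\pm/(1\pm R_\pm)^2=s$. Setting $F(r):=r^{-\alpha_0}\lin P(r)$ and using $R_\pm(s)/s=(1\pm R_\pm(s))^2$, the inequality rewrites as
\BGEN (1-R_-(s))^{2\alpha_0}F(R_-(s))\;\le\;F(r)\;\le\;(1+R_+(s))^{2\alpha_0}F(R_+(s)),\qquad 0<r<s<1/4. \EDEN
Two features will be exploited: the bounds are independent of $r$ for $r<s$; and $R_\pm(s)=s+O(s^2)$ as $s\downarrow 0$. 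Taking $s=s_0\in(0,1/4)$ fixed and using $\lin P\le 1$, $F$ is uniformly bounded on $(0,s_0)$.

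To show that $\lim_{r\to 0}F(r)$ exists, write $\lin F:=\limsup_{r\to 0}F(r)$ and $\ulin F:=\liminf_{r\to 0}F(r)$, both finite. Since $R_+:(0,1/4)\to(0,1)$ is a continuous increasing bijection with $R_+(s)\downarrow 0$ as $s\downarrow 0$, the image $R_+((0,s_0))$ is an interval $(0,R_+(s_0))$ that shrinks to $\{0\}$; hence $\liminf_{s\to 0}F(R_+(s))=\ulin F$. Taking $\limsup_{r\to 0}$ and then $\liminf_{s\to 0}$ of the upper bound, together with $(1+R_+(s))^{2\alpha_0}\to 1$, gives $\lin F\le\ulin F$, so $\ha c:=\lim_{r\to 0}F(r)$ exists in $[0,\infty)$ and depends only on $\kappa$. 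For the rate, I would let $r\downarrow 0$ with $s$ fixed in each of the two bounds to obtain $\ha c\in\bigl[(1-R_-(s))^{2\alpha_0}F(R_-(s)),\,(1+R_+(s))^{2\alpha_0}F(R_+(s))\bigr]$, then set $r_*:=R_\pm(s)$ (which sweeps out a neighborhood of $0$ as $s$ varies) and use $R_\pm(s)=s+O(s^2)$ to convert $(1\pm R_\pm(s))^{\pm 2\alpha_0}=1+O(R_\pm(s))$; this produces $F(r_*)=\ha c(1+O(r_*))$, i.e.\ $\lin P(r)=\ha c\, r^{\alpha_0}(1+O(r))$ as $r\downarrow 0$, with the implicit constant depending only on $\kappa$.

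The delicate step will be the identification $\liminf_{s\to 0}F(R_+(s))=\ulin F$ that drives the existence of the limit: it uses not merely that $R_+(s)\to 0$ but that $R_+$ is a homeomorphism from a neighborhood of $0$ onto a neighborhood of $0$, so that arbitrarily small values of $r$ are actually realized as $R_+(s)$ for some small $s$. The fact that the Koebe discrepancy $R_\pm(s)-s$ is exactly $O(s^2)$, rather than merely $o(s)$, is also what enables the rate $O(r)$ stated in the lemma; were the Koebe bounds weaker, we would only recover $o(1)$ convergence.
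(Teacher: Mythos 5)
Your proposal is correct and follows essentially the same route as the paper: integrate the two-sided bound of Lemma \ref{tilptP(r)} against $\til p^Z_\infty$, collapse via (\ref{invariant-psudo}) to $\lin P(r)\lesseqgtr e^{-\alpha_0 t_0}\lin P(R_\pm)$, pass to $Q(r)=r^{-\alpha_0}\lin P(r)$, exploit the freedom in $t_0$ to obtain a near-multiplicative comparison, and conclude both existence of the limit and the rate. The one inessential remark is your closing sentence about $R_\pm(s)-s=O(s^2)$ being what enables the rate $O(r)$: in fact once you substitute $r_*=R_\pm(s)$ the rate comes purely from the expansion $(1\pm r_*)^{\mp2\alpha_0}=1+O(r_*)$, so the Koebe discrepancy between $s$ and $R_\pm(s)$ plays no role there; the paper avoids this detour by choosing two different $t_0$'s to hit a common target $R=R_+=R_-$ and deducing the clean Cauchy bound $(1+2a)^{-2\alpha_0}Q(r_1)\le(1-2a)^{-2\alpha_0}Q(r_2)$ for $r_1,r_2\in(0,a]$.
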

\begin{proof}
Let $r,t_0,R_\pm$ be as in Lemma \ref{tilptP(r)}. By integrating both sides of (\ref{int-pZ}) against $\til p^Z_\infty(\ulin z)$ and using (\ref{invariant-psudo}), we get $\lin P(r)\lesseqgtr e^{-\alpha_0 t_0} \lin P(R_\pm)$.
Let $Q(r)=r^{-\alpha_0}\lin P(r)$. Since $\frac{R_\pm}{(1\pm R_\pm)^2}=e^{t_0}r$, we get
\BGE   Q(r)\lesseqgtr (1\pm R_\pm )^{2\alpha_0} Q(R_\pm ) .\label{t0r}\EDE
Let $R\in(0,\frac 16]$ and $r\in(0,\frac{R}{2}]$. Then $r<\frac{R}{(1+R)^2}<\frac{R}{(1-R)^2}<\frac 14$. So there are $t_+,t_-\in (0,-\log(4r))$ such that $\frac R{(1\pm R)^2}=e^{t_\pm} r$. By (\ref{t0r}) (which does not contain $t_0$),
\BGE  Q(r)\lesseqgtr (1\pm R)^{2\alpha_0} Q(R),\quad \mbox{if }0<r\le R/2<R\le1/6.\label{pre-limit}\EDE
Thus, for any $a\in(0,\frac 1{12}]$ and $r_1,r_2\in (0,a]$, we have  $(1+2a)^{-2\alpha_0} Q(r_1)\le (1-2a)^{-2\alpha_0} Q(r_2)$. This implies that $\lim_{r\downarrow 0} Q(r)$ converges. Let the limit be denoted by $\ha c$, which is nonnegative and depends only on $\kappa$. Letting $r\downarrow 0$ in (\ref{pre-limit}), we get
$ Q(R)\gtreqless \ha c (1\pm R)^{-2\alpha_0}$, if $R\le 1/6$. This immediately implies the conclusion.
\end{proof}

Recall the $\cal Z$ defined in (\ref{calZ}). Let $C_0=\ha c{\cal Z}$.   Then  $C_0$ is a nonnegative constant depending only on $\kappa$. We are now ready to prove that (\ref{final}) holds for such $C_0$.
First suppose $v_1-v_2=\pi$. Then $P(w_1,v_1,w_2,v_2;r)=P^u(z_1,z_2;r)$ and $\til G(w_1,v_1,w_2,v_2)=\til G^u(z_1,z_2)$, where $z_j=w_j-v_j$. Recall that $\beta_0=1-\frac 8{5\kappa}$. Let $R=r^{\beta_0}$. When $r$ is small enough (depending on $\beta_0$), we may choose $t_+,t_-\in (0,-\log(4r))$ such that $\frac{R}{(1\pm R)^2}=e^{t_\pm} r$. By (\ref{tilptT>t}) and Lemmas \ref{tilptP(r)} and \ref{P(r)},
  $$P^u(\ulin z;r)\lesseqgtr \int_{(0,\pi)^2} P^u (\ulin z^*,R ) \til p^Z_{t_\pm}(\ulin z,\ulin z^*) d\ulin z^*
 ={\cal Z} \til G^u(\ulin z) e^{-\alpha_0 t_\pm} \lin P(R) (1+O(e^{(1-\frac 58\kappa)t_\pm}))$$
  $$ =C_0 \til G^u(\ulin z) e^{-\alpha_0 t_\pm} R^{\alpha_0}(1+O(R)+O(e^{(1-\frac 58\kappa)t_\pm}))=C_0 \til G^u(\ulin z) r^{\alpha_0}(1+O(R)+O(e^{(1-\frac 58\kappa)t_\pm})).$$
Since $O(R)=O(e^{(1-\frac 58\kappa)t_\pm})=O(r^{\beta_0})$, the above formula implies that
  \BGE P^u(\ulin z;r)=C_0 \til G^u(\ulin z) r^{\alpha_0} (1+O(r^{\beta_0})),\quad \mbox{as }r\downarrow 0.\label{final-u}\EDE
 So we have finished the proof of (\ref{final}) in the case $v_1-v_2=\pi$.

Now suppose $\theta(\ulin 0)=v_1-v_2<\pi$.  By (\ref{pathetamA}), $\theta$ is increasing in $t_2$. Let $\tau_2$ be the first $t_2\in (0,\ha T_2)$ such that $\theta(0,\tau_2)=\pi$, if this time exists; otherwise let $\tau_2=\ha T_2$. From (\ref{pathetamA}) and that $\mA(0,t)=t$, we see that $\cos(\theta(0,t)/4)\le e^{-t/2} \cos(\theta(\ulin 0)/4)\le e^{-t/2}$, $0\le t<\ha T_2$. Then for $0\le t<\tau_2$, $t\le -2\log \cos(\theta(0,t)/4)\le \log(2)$, which implies that $\tau_2\le \log(2)$. Suppose $r\in(0,1/8)$. Then $\tau_2\le -\log(4r)$. Let $R_\pm\in(0,1)$ be such that $\frac{R_\pm}{(1\pm R_\pm)^2}=e^{\tau_2} r$. Then $R_\pm =e^{\tau_2} r(1+O(r))$ as $r\downarrow 0$. We now apply Lemma \ref{conditional prob} to $\ulin T=(0,\tau_2)$. Since $\theta(0,\tau_2)=\pi$ when $(0,\tau_2)\in\cal D$, using (\ref{final-u}), we get, as $r\downarrow 0$,
$$\PP[E(r)|\F_{(0,\tau_2)}]\lesseqgtr {\bf 1}_{\{(0,\tau_2)\in\cal D\}} P^u( \ulin Z(0,\tau_2);R_\pm)={\bf 1}_{\{(0,\tau_2)\in\cal D\}} P^u(\ulin Z(0,\tau_2);e^{\tau_2}r (1+O(r)))$$
$$=C_0 {\bf 1}_{\{(0,\tau_2)\in\cal D\}}  e^{\alpha_0\tau_2} \til G^u(\ulin Z(0,\tau_2)) r^{\alpha_0} (1+O(r^{1-\frac{8}{5\kappa}})), $$
which together with (\ref{M*c}) implies that
$$\PP[E(r)]=C_0 r^{\alpha_0} (1+O(r^{1-\frac{8}{5\kappa}})) \EE[{\bf 1}_{\{(0,\tau_2)\in\cal D\}}  e^{\alpha_0\mA(0,\tau_2)} \til G^u(\ulin Z(0,\tau_2))]$$
$$=C_0 r^{\alpha_0}  (1+O(r^{1-\frac{8}{5\kappa}}))\EE[{\bf 1}_{\{(0,\tau_2)\in\cal D\}}   M_{*\to c}(0,\tau_2)^{-1}] $$
Since $\EE=\EE^c_c$, by Lemmas \ref{D=R^2} and \ref{RN-Lemma},
$$\EE[{\bf 1}_{\{(0,\tau_2)\in\cal D\}}  M_{*\to c}(0,\tau_2)^{-1}] =M_{*\to c }(\ulin 0)^{-1} \PP^c_*[(0,\tau)\in{\cal D}]=M_{*\to c }(\ulin 0)^{-1}=\til G(w_1,v_1,w_2,v_2).$$
The last two displayed formulas together imply that (\ref{final}) holds in the case $v_1-v_2<\pi$.

The case that $v_1-v_2>\pi$ can be handled in a similar way. By (\ref{pathetamA}),  $\theta$ is decreasing in $t_1$. We may  apply Lemma \ref{conditional prob} to $\ulin T=(\tau_1,0)$, where $\tau_1$ is the first $t_1\in [0,\ha T_1)$ such that $\theta(t_1,0)=\pi$, if this time exists; and $\tau_1=\ha T_1$ otherwise. Thus, (\ref{final}) holds in all cases.

It remains to show that $C_0>0$. Suppose $C_0=0$. Then for any $z_0\in D$, we have $\PP[E_{z_0}(r)]=0$ when $r>0$ is small enough, which implies that  a.s.\ $\gamma_1\cup A_1\cup A_2$ does not have a cut point. However, since $\kappa\in(4,8)$, $\gamma_1$ does have a cut point, and when this happens, one can always find $b_1$ and $b_2$ in some deterministic countable dense subset of $\pa D\sem \{a_1,a_2\}$ such that $\gamma_1\cup A_1\cup A_2$ {contains a cut point, where for $j=1,2$, $A_j$ is the connected component of $\pa\D\sem \{b_1,b_2\}$ that contains $a_j$}. This implies that for some choice of $b_1,b_2$, the probability that  $\gamma_1\cup A_1\cup A_2$ has a cut point is positive.  The contradiction shows that $C_0>0$. The proof   is now complete.



\appendixpage
\begin{appendices}

\section{Domain Markov Property in Two Directions}\label{Section-DMP-bi}
Let $\kappa\in (0,8]$. In this appendix, we combine the reversibility of chordal SLE$_\kappa$ with the usual (one-directional) DMP  to derive a two-directional DMP.

Let $D$ be a simply connected domain with  locally connected boundary. Let $a_1,a_2$ be distinct prime ends of $D$. For $j=1,2$, let $\gamma_j$ be a chordal SLE$_\kappa$ curve in $D$ from $a_j$ to $a_{3-j}$, parametrized by the capacity viewed from $a_{3-j}$, such that $\gamma_1$ and $\gamma_2$ are time-reversal of each other. Such $\gamma_1,\gamma_2$ exist by reversibility of SLE$_\kappa$. Let $\phi_2$ be the decreasing auto-homeomorphism of $[0,\infty]$ such that $\gamma_1\circ \phi_2=\gamma_2$. For $j=1,2$ and $t\ge 0$, let $D^j_t$ be the connected component of $D\sem \gamma_j[0,t]$, which shares the prime end $a_{3-j}$ with $D$. We may view $\gamma_j(t)$ as a prime end of $D^j_t$. For $j=1,2$, let ${\cal D}_j=\{(t_1,t_2)\in\R_+^2:\gamma_j(t_j)\not\in \gamma_{3-j}[0,t_{3-j}]\}$ and ${\cal D}_{\cap}={\cal D}_1\cap {\cal D}_2$. When $t_1<\phi_2(t_2)$, let $D_{\ulin t}$ denote the connected component of $D\sem \bigcup_{j=1,2} \gamma_j[0,t_j]$ whose closure contains $\gamma_1[t_1,\phi_2(t_2)]$. If $\ulin t\in {\cal D}_\cap$, $D_{\ulin t}$ shares the prime end $\gamma_j(t_j)$ with $D^j_{t_j}$, $j=1,2$. For $j=1,2$, let ${\cal G}^j$ be the $\R_+$-indexed filtration generated by $\gamma_j$. Let ${\cal G}$ be the separable $\R_+^2$-indexed filtration generated by ${\cal G}^1$ and ${\cal G}^2$, and $\lin{\cal G}$ be the right-continuous   augmentation of $\cal G$.  We are going to prove the following theorem, which has been used in
Lemma \ref{conditional prob}.

\begin{Theorem}
If $\ulin T=(T_1,T_2)$ is an $\lin{\cal G}$-stopping time, then conditionally on $\lin{\cal G}_{\ulin T}$ and the event $\{\ulin T\in{\cal D}_{\cap}\}$, $\gamma_1|_{[T_1,\phi_2(T_2)]}$ is a time-change of a chordal SLE$_\kappa$ curve from the prime end $\gamma_1(T_1)$ to the prime end $\gamma_2(T_2)$ in $D_{\ulin T}$.
\label{DMP-thm}
\end{Theorem}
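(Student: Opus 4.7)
The strategy is to handle deterministic $\ulin T$ by combining the one-directional DMP with reversibility (applicable since $\kappa\in(0,8]$), then lift to a general $\lin{\cal G}$-stopping time by dyadic approximation from above.

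For deterministic $\ulin T = (t_1,t_2)$ on $\{\ulin T \in {\cal D}_{12}\}$, apply DMP to $\gamma_1$ at $t_1$: given ${\cal G}^1_{t_1}$, the tail $\gamma^*(s):=\gamma_1(t_1+s)$ is a chordal SLE$_\kappa$ from $\gamma_1(t_1)$ to $a_2$ in $D^1_{t_1}$. Reversibility gives a time-reversal $\gamma^{*R}$, parametrized by capacity from $\gamma_1(t_1)$ in $D^1_{t_1}$, which is itself a chordal SLE$_\kappa$ from $a_2$ to $\gamma_1(t_1)$ in $D^1_{t_1}$; as a subset of $D$, $\gamma^{*R}$ coincides with $\gamma_2|_{[0,\phi_2(t_1)]}$. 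Let $\til t_2$ be the time at which $\gamma^{*R}$ first reaches $\gamma_2(t_2)$; the event $\{\til t_2 \le s\}$ is determined by $\gamma^{*R}|_{[0,s]}$ alone, being the event that the trace $\gamma^{*R}[0,s]$ is a subset of $D$ whose capacity from $a_1$ in $D$ is at least $t_2$. Hence $\til t_2$ is a stopping time in the natural filtration of $\gamma^{*R}$. A second DMP applied to $\gamma^{*R}$ at $\til t_2$ yields that, conditionally on ${\cal G}^1_{t_1}\vee \sigma(\gamma^{*R}|_{[0,\til t_2]}) = {\cal G}^1_{t_1}\vee{\cal G}^2_{t_2} = {\cal G}_{\ulin T}$, the remainder $\gamma^{*R}|_{[\til t_2,\infty)}$ is a chordal SLE$_\kappa$ from $\gamma_2(t_2)$ to $\gamma_1(t_1)$ in the component of $D^1_{t_1}\sem \gamma^{*R}[0,\til t_2]$ sharing the prime end $\gamma_1(t_1)$, and this component is precisely $D_{\ulin T}$. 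A final appeal to reversibility shows that $\gamma_1|_{[t_1,\phi_2(t_2)]}$ is a time-change of a chordal SLE$_\kappa$ from $\gamma_1(t_1)$ to $\gamma_2(t_2)$ in $D_{\ulin T}$.

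For a general $\lin{\cal G}$-stopping time $\ulin T$, set $\ulin T^n=\lceil 2^n \ulin T\rceil/2^n$. Each $\ulin T^n$ is a ${\cal G}$-stopping time taking values in $\frac{1}{2^n}\Z_+^2$, with $\ulin T^n\downarrow \ulin T$ and $\bigcap_n {\cal G}_{\ulin T^n}=\lin{\cal G}_{\ulin T}$ by Proposition \ref{right-continuous-0}. Decomposing $\{\ulin T^n\in{\cal D}_{12}\}$ over the countably many possible values $\ulin s$ of $\ulin T^n$ and using ${\cal G}_{\ulin T^n}\cap\{\ulin T^n=\ulin s\}\subset {\cal G}_{\ulin s}$ (as in the proof of Lemma \ref{RN-Lemma}), the deterministic case on each piece gives the statement conditioned on ${\cal G}_{\ulin T^n}$. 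Passing to the limit $n\to\infty$ via continuity of $\gamma_1,\gamma_2$, Carath\'eodory convergence $D_{\ulin T^n}\to D_{\ulin T}$, continuous dependence of the chordal SLE$_\kappa$ law on its data, and a reverse-martingale convergence argument yields the claim under $\lin{\cal G}_{\ulin T}$-conditioning.

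The main obstacle I anticipate is the rigorous verification that $\til t_2$ is a stopping time of $\gamma^{*R}$ in the deterministic step: although geometrically transparent, the argument juggles two capacity-based parametrizations (capacity from $a_1$ in $D$ for $\gamma_2$ versus capacity from $\gamma_1(t_1)$ in $D^1_{t_1}$ for $\gamma^{*R}$) and relies on the set-theoretic identification between the traces of $\gamma^{*R}$ and a prefix of $\gamma_2$. A secondary care point is the limit step, which invokes continuity of the chordal SLE$_\kappa$ law under simultaneous perturbations of its starting point, target prime end, and domain along random sequences; this is available in the literature but must be cited with attention to the joint mode of convergence used here.
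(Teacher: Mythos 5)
Your proposal follows essentially the same road-map as the paper: handle a deterministic time by chaining (one-directional DMP) $\to$ (reversibility) $\to$ (DMP at a stopping time) $\to$ (reversibility), then lift to a general stopping time by dyadic approximation from above. The two orderings differ only by symmetry (the paper fixes $t_2$ first and applies DMP to $\gamma_2$; you fix $t_1$ first and apply DMP to $\gamma_1$), so the deterministic step is correct in outline, provided the prime-end identifications are justified on $\{\ulin T\in{\cal D}_{12}\}$ (this is the content of Lemma~\ref{f1t}, and your identification of $D_{\ulin T}$ with the relevant component of $D^1_{t_1}\setminus\gamma^{*R}[0,\til t_2]$ does implicitly rely on it). Your observation that $\til t_2$ is a stopping time of $\gamma^{*R}$ (augmented with ${\cal G}^1_{t_1}$) is also correct, and is not where the difficulty lies.

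The genuine gap is in the limit step, which you dismiss in a sentence (``Carath\'eodory convergence \dots, continuous dependence of the chordal SLE$_\kappa$ law on its data, and a reverse-martingale convergence argument''). To make this precise, you must show that the \emph{driving function} $\ha v_1^{\ulin T^n}$ of the time-changed image curve $\zeta_1^{\ulin T^n}$ converges a.s.\ (pointwise, say) to $\ha v_1^{\ulin T}$ as $\ulin T^n\downarrow\ulin T$; only then does the identity $\EE[{\bf 1}_{A_n} f(\ha v_1^{\ulin T^n}(s_1),\dots)] = \PP[A_n]\,\EE[f(\sqrt\kappa B_{s_1},\dots)]$, available at the dyadic level, pass to the limit and yield the required conditional independence and law. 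This convergence is not an off-the-shelf ``continuity of the SLE law in its data'' result: because the conditioning $\sigma$-algebras, the initial segments $\gamma_j[0,T_j^n]$, the capacity parametrizations, and the centered Loewner maps are all varying simultaneously, one must construct a two-parameter-indexed continuous version of the triple (image curve, time-change, driving function) over the region ${\cal D}_1$. That is exactly Lemma~\ref{version}, whose proof relies on the Carath\'eodory-topology machinery of Lemma~\ref{compact3'}, and it is also what underlies the multi-case structure (Cases 1--4, then General) of the paper's proof of Lemma~\ref{DMP-lem}. Without establishing something equivalent, your ``reverse-martingale convergence argument'' has no continuous process to converge along, and the proof does not close. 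A secondary omission is the $\lin{\cal G}_{\ulin T}$-measurability of the random conformal map $h_{\ulin T}$ (the analogue of Lemma~\ref{TinD}), which is also needed to make the conditional statement well-posed.
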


\begin{Remark}
A more specific statement of the theorem is the following. The event $\{\ulin T\in{\cal D}_{\cap}\}$ is $\lin {\cal G}_{\ulin T}$-measurable, and on this event, there are
\begin{itemize}
  \item an $\lin{\cal G}_{\ulin T}$-measurable conformal map $h_{\ulin T}$ from $\HH$ onto $D_{\ulin T}$, which has continuation to $\lin \HH^\#$, and sends $0$ and $\infty$ respectively to the prime ends $\gamma_1(T_1)$ and $\gamma_2(T_2)$,
\item a standard chordal SLE$_\kappa$ curve $\beta_1^{\ulin T} $ conditionally independent of $\lin{\cal G}_{\ulin T}$, and
    \item an increasing homeomorphism $u_1^{\ulin T}$ from $[0,\infty]$ onto $[T_1,\phi_2(T_2)]$,
\end{itemize}
such that   $\gamma_1\circ u_1^{\ulin T} =h_{\ulin T} \circ \beta_1^{\ulin T}$ a.s.\ on $\{\ulin T\in\cal D_\cap\}$.

The random curve $\beta_1^{\ulin T}$ is only defined on the event $\{\ulin T\in{\cal D}_{\cap}\}$. By saying that $\beta_1^{\ulin T} $ is a standard chordal SLE$_\kappa$ curve conditionally independent of $\lin{\cal G}_{\ulin T}$, we mean that  under the new probability measure $\PP[\cdot|\{\ulin T\in{\cal D}_{\cap}\}]$, where $\PP$ is the original underlying probability measure, $\beta_1^{\ulin T}$ has the law of a standard chordal SLE$_\kappa$ curve and is independent of $\lin{\cal G}_{\ulin T}$.
\end{Remark}

\begin{Remark}
  In the statement of the theorem, we condition on the $\sigma$-algebra $\lin{\cal G}_{\ulin T}$ and an event $\{\ulin T\in {\cal D}_{\cap}\}$, which is $\lin{\cal G}_{\ulin T}$-measurable. The bigger the event, the stronger the statement is. For the application of Theorem \ref{DMP-thm} in Lemma \ref{conditional prob}, we only need a weaker result, in which ${\cal D}_{\cap}$ is replaced by ${\cal D}_0=\{(t_1,t_2)\in\R_+^2: \gamma_1[0,t_1]\cap \gamma_2[0,t_2]=\emptyset\}$, which is a proper subset of ${\cal D}_{\cap}$ in the case $\kappa\in(4,8]$. 
  The strongest possible form of the theorem for $\kappa\in(4,8)$ is when ${\cal D}_{\cap}$ is replaced by the bigger set $\{(t_1,t_2)\in\R_+^2:t_1<\phi_2(t_2)\}$. In that case, the statement has to be modified since  $D_{\ulin T}$ may not share the prime end $\gamma_j(T_j)$ with $D^j_{T_j}$, and so we may not view $\gamma_j(T_j)$ as a prime end of $D_{\ulin T}$. 
  The proof of such a strong statement remains open to the author.
\end{Remark}

\begin{Remark}
  Theorem \ref{DMP-thm} is intuitively correct, but still requires a proof, which we could not find in the literature. We make a complete (nontrivial) proof of the theorem here because it is a solid step in the development of the multi-time-parameter framework. 

The essential property of the chordal SLE$_\kappa$ curve for $\kappa\in(0,8)$ used in the proof is the fact that such a curve commutes with its time reversal, which is also a chordal SLE$_\kappa$ curve. Thus, the argument can be easily extended to any commuting pair of SLE-type curves. Taking the $\eta_1$ and $\eta_2$ in Proposition \ref{prop-commu} for example, we have the following result. Let ${\cal D}$ be as defined in Section \ref{section-det}. Let $\F^j$ be the $\R_+$-indexed filtration generated by $\eta_j$, $j=1,2$, and let $\lin\F$ be the right-continuous augmentation of the separable $\R_+^2$-indexed filtration generated by $\F^1$ and $\F^2$. Then for any $\lin\F$-stopping time $\ulin T=(T_1,T_2)$, conditionally on $\lin\F_{\ulin T}$ and $\{\ulin T\in{\cal D}\}$, for $j=1,2$, the $g(\ulin T,\cdot)$-images of $\eta_j(T_j+\cdot)$ is a time-change of a radial SLE$_\kappa(2,\ulin\rho)$ curve in $\D$ started from $e^{iW_j(\ulin T)}$ aimed at $0$ with force points $e^{iW_{3-j}(\ulin T)},e^{iV_1(\ulin T)},\dots, e^{iV_m(\ulin T)}$. In addition, 
the two new curves $\ha\eta_1$ and $\ha\eta_2$ commute in the same way as in $\eta_1$ and $\eta_2$ do.  
A similar result has been used in \cite[Lemma 4.1]{Two-Green-boundary}.
\end{Remark}

We now give a sketch of the proof. Recall that the one-directional DMP follows from the strong Markov property of Brownian motion and the definition of chordal Loewner equation. The standard way to prove the strong Markov property of Brownian motion at an arbitrary stopping time $T$ is to approximate $T$ by a decreasing sequence of stopping times $(T_n)$ taking countably many values and apply the Markov property of Brownian motion at deterministic times to prove that the strong Markov property holds for each $T_n$.

  To prove Theorem \ref{DMP-thm}, we could also approximate $\ulin T$ using a decreasing sequence of stopping times $(\ulin T^n)$ such that each $\ulin T^n$ takes countably many values. Using the one-directional DMP and reversibility of chordal SLE$_\kappa$, it is easy to prove that the theorem holds for a deterministic  times $\ulin t=(t_1,t_2)$, which  means that, conditionally on $\lin{\cal G}_{\ulin t}$ and the event $\{\ulin t\in{\cal D}_{\cap}\}$, $\gamma_1|_{[t_1,\phi_2(t_2)]}$ is a time-change of a chordal SLE$_\kappa$ curve from $\gamma_1(t_1)$ to  $\gamma_2(t_2)$ in the domain $D_{\ulin t}$. Since each $\ulin T^n$ takes countably many values, the statement also holds for $\ulin T^n$.

Extending the results from $\ulin T^n$ to $\ulin T$ takes significant amount of work. We observe that, for each $\ulin t=(t_1,t_2)\in{\cal D}_{\cap}$, there exists at least one conformal map from $\HH$ onto $D_{\ulin t}$, which sends $0$ and $\infty$ respectively to the prime ends $\gamma_1(t_1)$ and $\gamma_2(t_2)$, but such conformal maps are not unique. We will prove Lemma \ref{f1t}, which says that we may choose the conformal maps $f^1_{\ulin t}$, $\ulin t\in {\cal D}_{\cap}$, such that $f^1_{\ulin t}(z)$ is jointly continuous in $\ulin t$ and $z$. The continuity fact implies that $f^1_{\ulin T}=\lim_{n\to\infty} f^1_{\ulin T^n}$ is $\lin{\cal G}_{\ulin T}$-measurable.

Fix $\ulin t\in{\cal D}_{\cap}$. By DMP and reversibility of chordal SLE$_\kappa$, there is a random Loewner curve $\eta_{\ulin t}$ in $\HH$, which is a time-change of a standard SLE$_\kappa$ such that $f^1_{\ulin t}\circ \eta_{\ulin t}=\eta_1|_{[t_1,\phi_2(t_2)]}$. Suppose $\eta_{\ulin t}$ is driven by $\ha w_{\ulin t}$ with speed $u_{\ulin t}$. Let $\ha v_{\ulin t}=\ha w_{\ulin t}\circ u_{\ulin t}^{-1}$. Then $(\ha v_{\ulin t})$ has the law of $(\sqrt\kappa B_t)$. Since $\ulin T^n$ takes countably many values, $(\ha v_{\ulin T^n}(t))$ also has the law of $(\sqrt\kappa B_t)$. We will prove Lemma \ref{version}, which will imply that $\ha w_{\ulin t}(t)$ and $u_{\ulin t}(t)$ both have versions that are jointly continuous in $\ulin t$ and $t$, and so the same is true for $\ha v_{\ulin t}(t)$, which then implies that $\ha v_{\ulin T}(t)=\lim_{n\to \infty} \ha v_{\ulin T^n}(t)$ has the law of $(\sqrt\kappa B_t)$. This fact shows that $\eta_{\ulin T}(t)$ is a time-change of a standard SLE$_\kappa$. Since $f^1_{\ulin t}\circ \eta_{\ulin t}=\eta_1|_{[t_1,\phi_2(t_2)]}$ and $f^1_{\ulin T}$ is $\lin{\cal G}_{\ulin T}$-measurable, we arrive at the conclusion.

The proofs of Lemmas \ref{version} and \ref{f1t} use the Carath\'eodory topology for the convergence of domains developed in \cite[Sections 5.1-5.2]{LERW}, whose definition and basic properties will be recalled in Definition \ref{Def-Cara} and Propositions \ref{domain-converge-inverse} and \ref{compact-hull}. We will further develop this topology to include one prime end and derive some of its properties  in Lemma \ref{compact3'}.

The above description of the proof gives the basic idea but is very sketchy and   not precise. Here are some examples. We will not go directly from $\ulin T^n$ to $\ulin T$. Instead, we will first approximate $\ulin T=(T_1,T_2)$ using $(T^n_1,T_2)$, where $T^n_1$ takes countably many values (but $(T^n_1,T_2)$ does not), and then approximate each $(T^n_1,T_2)$ using stopping times taking countably many values.  Lemma \ref{version} will appear before Lemma \ref{f1t} and be used in the proof of the latter lemma.  The precise statement about the law of $(\ha v_{\ulin T^n}(t))$ should be: conditionally on $\lin{\cal G}_{\ulin T^n}$ and the event $\{\ulin T^n\in {\cal D}_{\cap}\}$, $(\ha v_{\ulin T^n}(t))$ has the law of $(\sqrt\kappa B_t)$.

Two groups of intertwined factors contribute to the significant length of the proof of Theorem \ref{DMP-thm}. The factors on the Probability side is the treatment of two-time-variable filtration, stopping times and stochastic processes; the factors on the Complex Analysis side is the argument involving Loewner's equation, prime ends, Carath\'eodory topology and extremal length. In addition, we have to deal with the annoying random time-changes and the fact that many random elements are not defined on the whole probability space.

\vskip 3mm
Now we start the proof of Theorem \ref{DMP-thm}.
By conformal invariance, it suffices to work on standard chordal SLE$_\kappa$ curves. Let  $J(z)=-1/z$.  By reversibility of chordal SLE$_\kappa$, there are two standard chordal SLE$_\kappa$ curves $\eta_1$ and $\eta_2$, and a decreasing auto-homeomorphism $\phi_2$ of $[0,\infty]$ such that $\eta_1\circ \phi_2=J\circ \eta_2$. For $j=1,2$, let $\ha w_j$ be the chordal Loewner driving function for $\eta_j$,   $K^j_t$, $0\le t<\infty$, be the chordal Loewner hulls driven by $\ha w_j$, $\eta_j^J=J\circ \eta_j$, and $K^{j,J}_{t}=J(K^j_{t})$. 
We also view $\eta_j(t)$ and $\eta_j^J(t)$ as prime ends of $\HH\sem K^j_t$ and $\HH\sem K^{j,J}_t$, respectively.
For $j=1,2$, let $\F^j$ be the filtration generated by $\ha w_j$. Let ${\cal F}$ be the separable $\R_+^2$-indexed filtration generated by ${\cal F}^1$ and ${\cal F}^2$, and $\lin{\cal F}$ be the right-continuous  augmentation of $\cal F$. Let ${\cal D}_j=\{(t_1,t_2)\in\R_+^2:  \eta_{j}^J(t_{j})\not\in \eta_{3-j}[0,t_{3-j}]\}$, $j=1,2$, and ${\cal D}_{\cap} ={\cal D}_1\cap {\cal D}_2$. For $\ulin t\in{\cal D}_{\cap}$, there is a unique connected component of $\HH\sem (\eta_1[0,t_1]\cup \eta_2^J[0,t_2])$, which shares the prime end $\eta_1(t_1)$ with $\HH\sem K^1_{t_1}$ and the prime end $\eta_2^J(t_2)$ with $\HH\sem K^{2,J}_{t_2}$. Let this component be denoted by $H^1_{\ulin t}$.
Theorem \ref{DMP-thm} is equivalent to the following theorem.

\begin{Lemma}
  Let $\ulin T=(T_1,T_2)$ be an $\lin\F$-stopping time. Then the event $\{\ulin T\in{\cal D}_\cap\}\in\lin\F_{\ulin T}$, and on this event there are
  \begin{itemize}
\item  an $\F_{\ulin T}$-measurable random conformal map $f _{\ulin T}$ from $\HH$ onto $H^1_{\ulin T}$, which sends $0$ and $\infty$  to the prime ends $\eta_1(T_1)$ and $\eta_2^J(T_2)$, respectively,
    \item   a standard chordal SLE$_\kappa$ curve $\zeta_1^{\ulin T} $ conditionally independent of $\lin\F_{\ulin T}$, and
    \item   an increasing homeomorphism $u_1^{\ulin T}$ from $[0,\infty]$ onto $[T_1,\phi_2(T_2)]$,
  \end{itemize}
such that   $\eta_1\circ u_1^{\ulin T} =f_{\ulin T} \circ \zeta_1^{\ulin T}$ a.s.\ on $\{\ulin T\in\cal D_\cap\}$.
\label{DMP-lem}
\end{Lemma}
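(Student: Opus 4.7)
The plan is to reduce to deterministic $\ulin s=(s_1,s_2)$ and prove the deterministic version by combining two applications of the standard one-directional DMP of chordal SLE$_\kappa$ with the reversibility theorem, then lift to general stopping times by discretization.

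For deterministic $\ulin s$, I would first apply the usual DMP to $\eta_1$ at time $s_1$: conditionally on $\F^1_{s_1}$, the centered image $\til\eta(t):=g^1_{s_1}(\eta_1(s_1+t))-\ha w_1(s_1)$ (reparametrized by capacity) is a standard chordal SLE$_\kappa$ in $\HH$, and the conformal map $F:=g^1_{s_1}-\ha w_1(s_1)$ sends $\HH\sem K^1_{s_1}$ onto $\HH$ and the prime end $\eta_1(s_1)$ to $0$. By reversibility, $\til\eta$ has a time-reversal which, after post-composition with $J$, is another standard chordal SLE$_\kappa$ curve $\til\zeta$ from $0$ to $\infty$; equivalently there is an increasing auto-homeomorphism $\til\phi$ of $[0,\infty]$ with $\til\eta\circ\til\phi=J\circ\til\zeta$. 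Via the identity $\eta_1\circ\phi_2=J\circ\eta_2$ and conformal invariance, $\eta_2^J$ is, while it remains in $\HH\sem K^1_{s_1}$, an $\F^1_{s_1}$-measurable time-change of $F^{-1}\circ J\circ\til\zeta$. On $\{\ulin s\in{\cal D}_{12}\}$ the whole arc $\eta_2^J[0,s_2]$ lies in this region, so $F(\eta_2^J(s_2))$ corresponds to $\til\zeta$ at some time $\til s_2$ that is $\F^1_{s_1}\vee\F^2_{s_2}$-measurable.

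Second, I apply the one-directional DMP to $\til\zeta$ at the stopping time $\til s_2$ (of its natural filtration, enlarged by $\F^1_{s_1}$). This yields that $\til\zeta|_{[\til s_2,\infty)}$, suitably centered, is a standard chordal SLE$_\kappa$ in $\HH$ conditionally on $\F^1_{s_1}\vee\F^2_{s_2}$. Composing back through $J\circ\til\phi^{-1}$ and $F^{-1}$ produces a time-changed chordal SLE$_\kappa$ from the prime end $\eta_1(s_1)$ to the prime end $\eta_2^J(s_2)$ in the component of $\HH\sem(\eta_1[0,s_1]\cup\eta_2^J[0,s_2])$ that shares prime ends with both $\HH\sem K^1_{s_1}$ and $\HH\sem K^{2,J}_{s_2}$, i.e.\ in $f_{\ulin s}(\HH)$. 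Separability of $\F$ (Proposition \ref{separable}) gives $\lin\F_{\ulin s}=\lin\F^1_{s_1}\vee\lin\F^2_{s_2}$, completing the deterministic case.

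To extend to stopping times, I would use the discretization $\ulin T^{(n)}:=2^{-n}\lceil 2^n\ulin T\rceil$. By Proposition \ref{right-continuous-0} each $\ulin T^{(n)}$ is an $\lin\F$-stopping time taking values in $2^{-n}\Z_+^2$, $\ulin T^{(n)}\downarrow\ulin T$, and $\lin\F_{\ulin T}=\bigcap_n\lin\F_{\ulin T^{(n)}}$. Partitioning $\Omega$ by the countably many values of $\ulin T^{(n)}$ as in the proof of Lemma \ref{RN-Lemma}, the deterministic result applies on each atom $\{\ulin T^{(n)}=\ulin s\}\in\lin\F_{\ulin s}$. Sending $n\to\infty$ using continuity of $\ha w_j$ and of the Loewner maps in $t$, together with the resulting Carath\'eodory convergence of $f_{\ulin T^{(n)}}$, delivers the statement for $\ulin T$ and also the measurability of $\{\ulin T\in {\cal D}_{12}\}$ in $\lin\F_{\ulin T}$.

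The main obstacle is the intersection regime $\kappa\in(4,8]$: on $\{\ulin s\in{\cal D}_{12}\}$ the hulls $K^1_{s_1}$ and $K^{2,J}_{s_2}$ can intersect nontrivially, so $\HH\sem(\eta_1[0,s_1]\cup\eta_2^J[0,s_2])$ splits into several components, and one must pick out the unique component $D_{\ulin s}$ that simultaneously shares the prime end $\eta_1(s_1)$ with $\HH\sem K^1_{s_1}$ and the prime end $\eta_2^J(s_2)$ with $\HH\sem K^{2,J}_{s_2}$, and verify that its image under $F$ matches the correct side of $\til\zeta[0,\til s_2]$ after the reversal. This is precisely where the prime-end framework of Section \ref{preliminary} is essential, and the reason the lemma must be stated with ${\cal D}_{12}$ rather than with $\cal D$: it is the event on which both target prime ends are accessible inside the same remaining component.
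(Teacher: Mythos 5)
You take the same two-step skeleton as the paper — one-directional DMP, then reversibility, then a second DMP — with the roles of $\eta_1$ and $\eta_2$ interchanged: you condition on $\F^1_{s_1}$ first, the paper's Case~1 conditions on $\F^2_{t_2}$ first. Both orders work in principle, but yours gives the \emph{reversal} of what is asked for: tracing through your maps, $F^{-1}\circ J\circ\til\zeta(\til s_2+\cdot)$ runs from $\eta_2^J(s_2)$ to $\eta_1(s_1)$, not the other way around, so a third reversibility is still needed to match the increasing homeomorphism $u_1^{\ulin T}:[0,\infty]\to[T_1,\phi_2(T_2)]$ in the statement (this extra reversibility is exactly what the paper's Case~4 supplies). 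Also, your assertion that on $\{\ulin s\in{\cal D}_{12}\}$ the whole arc $\eta_2^J[0,s_2]$ lies in $\HH\sem K^1_{s_1}$ is false for $\kappa\in(4,8)$ — on ${\cal D}_{12}$ only the two \emph{tips} are required to avoid the other arc, and the arcs themselves may cross — and this directly contradicts your own (correct) closing remark that the component structure and prime-end verification must be sorted out. That verification is precisely the paper's Lemma~\ref{f1t}; naming the obstacle is not the same as resolving it.

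The genuine gap, though, is the passage from countably-valued stopping times to general ones. Your plan — discretize both coordinates as $\ulin T^{(n)}=2^{-n}\lceil2^n\ulin T\rceil$ and then "send $n\to\infty$ using continuity of $\ha w_j$ and of the Loewner maps, together with the resulting Carath\'eodory convergence of $f_{\ulin T^{(n)}}$" — hides the hard part. Each object that reversibility produces ($\til\zeta$, $\til\phi$, hence $\til s_2$ and the final driving function) is only defined on a full-probability event that depends on the deterministic value of $\ulin s$, so before any limit can be taken one must first build a single jointly measurable and \emph{continuous} version of the reverse-direction curve, time change, and driving function over the random region ${\cal D}_1$. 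That is exactly the paper's Lemma~\ref{version}, and its proof requires the locally-uniform-convergence estimates for conjugated Loewner maps in Lemma~\ref{compact3'}; Carath\'eodory convergence of the domains $f_{\ulin T^{(n)}}(\HH)$ by itself does not give convergence of the driving functions $\ha v_1^{\ulin T^{(n)}}$, which is what the dominated-convergence step in the conditional-independence argument actually needs. Moreover, by letting both coordinates of $\ulin T^{(n)}$ vary at once you change the first conformal frame $F^{(n)}$, the reversed curve $\til\zeta^{(n)}$, and the second stopping time $\til s_2^{(n)}$ simultaneously, so the compatibility of $\zeta_1^{\ulin T^{(n)}}$ across different $n$ is not apparent. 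The paper's staging (Cases~1--3 discretize $T_1$ with $T_2$ fixed, so that only the second DMP time moves inside a \emph{single} reversed curve $\zeta_1^{T_2}$ whose driving function is written explicitly via the continuous process $\ha v_1^*$; Case~4 then swaps indices; the general case discretizes only $T_1$) is not a stylistic choice — it is designed so that at each step exactly one thing moves, which is what makes the limit tractable.
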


The rest of the appendix is devoted to the proof of Lemma \ref{DMP-lem}.
We first review some revised Carath\'eodory topology introduced in \cite{LERW}, which is similar to but different from the Carath\'eodory kernel convergence in \cite[Section 1.4]{Pom}.

\begin{Definition}
  Let $(D_n)_{n\in\N}$ and $D$ be domains in $\C$. We say that $(D_n)$ converges to $D$ in the Carath\'eodory topology, and write $D_n\dto D$, if
  \begin{enumerate}
    \item [(i)] for every compact set $K\subset D$, there exists $n_0\in\N$ such that $K\subset D_n$ if $n\ge n_0$;
    \item [(ii)] for every $z_0\in \pa D$ there exists $z_n\in\pa D_n$ for each $n$  such that $z_n\to z_0$.
  \end{enumerate}
\label{Def-Cara}
\end{Definition}

Let $(D_n)$ and $D$ be as in the definition.  Suppose $f_n:D_n\to \C$ and $f:D\to \C$.
By $f_n\luto f$ in $D$ we mean that $f_n$ converges to $f$ locally uniformly in $D$, i.e., uniformly on every compact subset of $D$. 
The following is \cite[Lemma 5.1]{LERW}.

\begin{Proposition}
  Let $D_n$, $n\in\N$, and $D$ be domains in $\C$ such that $D_n\dto D$. Let $f_n:D_n\conf E_n$ for some domain $E_n\subset \C$, $n\in\N$. Suppose $f_n\luto f$ in $D$ for some nonconstant $f:D\to \C$. Then $f$ is a conformal map, $E_n\dto E:=  f(D)$, and $f_n^{-1}\luto f^{-1}$ in $E$. \label{domain-converge-inverse}
\end{Proposition}

The following proposition is well known (cf.\ \cite[Proposition 2.3]{Two-Green-boundary}).

\begin{Proposition}
   If $K$ is an $\HH$-hull with $K\subset \{|z-x_0|\le r\}$ for some $x_0\in\R$ and $r>0$, then  $ |g_K(z)-z|\le 3r$ for any $z\in\C\sem K^{\doub}$. \label{g-z-sup}
\end{Proposition}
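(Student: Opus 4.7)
The plan is a Schwarz-reflection plus maximum-modulus argument that reduces the global bound to an estimate on $\pa K^{\doub}$. By the translation invariance $g_{K-x_0}(z-x_0) = g_K(z)-x_0$ I may assume $x_0=0$, so $K\subset\lin\D_r$ and consequently $K^{\doub}\subset\lin\D_r$. Set $\phi(z):= g_K(z)-z$. Since $g_K$ is real on $\R\sem K^{\doub}$, mapping it monotonically onto $\R\sem[a,b]$ where $[a,b]:=g_K(\pa K\cap\lin\HH)$ is the image of the prime ends of $K$, Schwarz reflection extends $g_K$ (and hence $\phi$) analytically to $\C\sem K^{\doub}$ with $g_K(\lin z)=\lin{g_K(z)}$; the hydrodynamic normalization $g_K(z) = z + \hcap(K)/z + O(z^{-2})$ at infinity gives $\phi(\infty)=0$.

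Applying the maximum modulus principle to $\phi$ on $\{|z|<R\}\sem K^{\doub}$ and letting $R\to\infty$, $\sup|\phi|$ over $\C\sem K^{\doub}$ is attained on $\pa K^{\doub}$, so it suffices to verify $|\phi|\le 3r$ there. Fix $z\in\pa K^{\doub}$: then $|z|\le r$ (since $K^{\doub}\subset\lin\D_r$) and, by the Schwarz extension, $g_K(z)\in[a,b]\subset\R$. The key auxiliary fact I need is
\[
[a,b]\subset[-2r,2r],
\]
so that $|g_K(z)|\le 2r$ on $\pa K^{\doub}$. Granting this, for $z\in\pa K^{\doub}$,
\[
|\phi(z)| = |g_K(z)-z|\le |g_K(z)|+|z|\le 2r+r=3r,
\]
and the maximum modulus then transfers this estimate to all of $\C\sem K^{\doub}$, completing the proof.

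To establish $[a,b]\subset[-2r,2r]$, the model case is the half-disk $K_0:=\lin\D_r\cap\HH$, whose explicit Loewner map $g_{K_0}(z)=z+r^2/z$ satisfies $g_{K_0}(re^{i\theta})=2r\cos\theta$, so $g_{K_0}(\pa K_0\cap\lin\HH) = [-2r,2r]$. Since $K\subset K_0$, the Loewner factorization $g_{K_0}=g_H\circ g_K$ holds for the residual hull $H:=g_K(K_0\sem K)$; tracking the boundary correspondence shows that $[a,b]$ lies inside the base interval of $H$, while $g_H$ maps that base interval into $[-2r,2r]$, from which $[a,b]\subset[-2r,2r]$ follows. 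Alternatively, one can use the Herglotz--Stieltjes representation $g_K^{-1}(z)-z = \int_a^b(u-z)^{-1}\,d\nu(u)$ with $\nu\ge 0$ supported in $[a,b]$ of total mass $\hcap(K)\le r^2$, combined with $g_K^{-1}(a),g_K^{-1}(b)\in\lin K\subset\lin\D_r$, to pin the endpoints. The main obstacle is making this auxiliary inclusion fully rigorous: naive monotonicity $K\subset K_0$ does not transfer directly to the image intervals, and one must verify carefully that $g_H$ does not push the endpoints of $[a,b]$ outward past $\pm 2r$; the remaining steps are routine applications of Schwarz reflection and the maximum principle.
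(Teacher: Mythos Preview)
The paper does not prove this proposition; it simply cites it as well known (referring to Proposition~2.3 of \cite{Two-Green-boundary}). Your approach---Schwarz reflection plus maximum modulus, reducing everything to the auxiliary inclusion $S_K=[c_K,d_K]\subset[-2r,2r]$---is the standard one and is correct in outline.

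The factorization sketch for the auxiliary inclusion has a logical slip as written. From ``$[a,b]$ lies inside the base interval $[a_H,b_H]$ of $H$'' together with ``$g_H$ maps $[a_H,b_H]$ into $[-2r,2r]$'' you cannot conclude $[a,b]\subset[-2r,2r]$; that pair of facts only gives $g_H([a,b])\subset[-2r,2r]$. What you actually need is $[a_H,b_H]\subset[c_H,d_H]$, and this is exactly the paper's inequality~(\ref{g><x}) applied to the hull $H$: letting $x\downarrow b_H$ in $g_H(x)>x$ gives $d_H\ge b_H$, and symmetrically $c_H\le a_H$. The boundary correspondence in $g_{K_0}=g_H\circ g_K$ then forces $[c_H,d_H]=[c_{K_0},d_{K_0}]=[-2r,2r]$, so the chain
\[
[c_K,d_K]\subset[a_H,b_H]\subset[c_H,d_H]=[-2r,2r]
\]
closes. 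You rightly flagged this as the main obstacle; once (\ref{g><x}) is invoked it is one line, and no separate Herglotz argument is needed.

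A minor technical point: for a general $\HH$-hull, $\pa K^{\doub}$ need not be locally connected, so $g_K$ may fail to extend continuously there and you cannot literally evaluate $g_K(z)$ for $z\in\pa K^{\doub}$. Replace that step by the cluster-set version: every boundary cluster value of $g_K$ lies in $S_K\subset[-2r,2r]$, so $\limsup_{w\to\zeta}|g_K(w)-w|\le 2r+r=3r$ for each $\zeta\in\pa K^{\doub}$, and the maximum principle in $\limsup$ form finishes the argument.
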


For a nonempty $\HH$-hull $K$, we write $a_K=\min(\lin K\cap\R)$, $b_K=\max(\lin K\cap \R)$, $B_K=[a_K,b_K]$, and $K^{\doub}=K\cup \{\lin z:z\in K\}\cup B_K$. For $K\subset L\in \cal H$ with $L\ne\emptyset$, by Schwarz reflection principle, $g_K$ extends to a conformal map from  $\C\sem (K^{\doub}\cup B_L)$ conformally onto $\C\sem S^L_K$ for some compact interval $S^L_K$. We write $S_K$ for $S_K^K$. Let
 $c^L_K<d^L_K $ and $c_K<d_K$ be such that $S^L_K=[c^L_K,d^L_K]$ and $S_K=[c_K,d_K]$.  Then $g_K$ maps $(-\infty,a_K)$ and $(b_K,\infty)$ respectively onto $(-\infty,c_K)$ and $(d_K,\infty)$, and satisfies that
\BGE g_K(x)<x,\quad \forall x\in(-\infty,a_K);\quad g_K(x)>x,\quad \forall x\in (b_K,\infty).\label{g><x}\EDE
Let $f_K=g_K^{-1}$. Then $f_K:\HH\conf \HH\sem K$ and $f_K:\C\sem S_K\conf \C\sem K^{\doub}$.



Let $\cal H$ denote the space of $\HH$-hulls. There is a metric $d_{\cal H}$ on $\cal H$ such that $K_n\to K$ w.r.t.\ $d_{\cal H}$ iff $f_{K_n}\luto f_K$ in $\HH$. By Proposition \ref{domain-converge-inverse}, this implies that $\HH\sem K_n\dto \HH\sem K$. But the converse is not true. For  $K\in{\cal H}$, let ${\cal H}(K)=\{L\in{\cal H}:L\subset K\}$. The following is \cite[Lemma 5.4 (i)]{LERW}.

\begin{Proposition}
   For any $K\in\cal H$ with $K\ne\emptyset$,  ${\cal H}(K)$ is compact w.r.t.\ $d_{\cal H}$. This means that every sequence $(K_n)$ in ${\cal H}(K)$ contains a convergent subsequence w.r.t.\ $d_{\cal H}$ with limit in ${\cal H}(K)$. Moreover, if $K_n\to K_0$ in ${\cal H}(K)$, then $f_{K_n}\luto f_{K_0}$ in $\C\sem S_K$, $\C\sem (K_n^{\doub}\cup B_K)\dto \C\sem (K_0^{\doub}\cup B_K)$, and $g_{K_n}\luto g_{K_0}$ in $\C\sem (K_0^{\doub}\cup B_K)$. \label{compact-hull}
\end{Proposition}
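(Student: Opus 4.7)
The strategy is to view the Schwarz-reflected uniformizing maps $\{f_L:L\in{\cal H}(K)\}$ as a normal family of univalent functions on the fixed $K$-independent domain $\C\sem S_K$, extract subsequential limits via Montel, and identify the limit as $f_{L_0}$ for some hull $L_0\in{\cal H}(K)$.

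First I would establish a uniform estimate. Fix $x_0\in\R$ and $r>0$ with $K\subset\{|z-x_0|\le r\}$; every $L\in{\cal H}(K)$ lies in the same disc, so Proposition \ref{g-z-sup} gives $|g_L(z)-z|\le 3r$ on $\C\sem L^{\doub}$, and inverting yields $|f_L(w)-w|\le 3r$ on $\C\sem S_L$. Moreover, $L\subset K$ forces $S_L\subset S_K$: writing $g_K=g_{K/L}\circ g_L$ for the residual $\HH$-hull $K/L:=g_L(K\sem L^{\doub})$ and applying (\ref{g><x}) to $g_{K/L}$ on the two real rays outside $K/L$ gives $c_K\le c_L\le d_L\le d_K$. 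Therefore $\{f_L\}_{L\in{\cal H}(K)}$ is uniformly $3r$-close to the identity on $\C\sem S_K$, and in particular is a normal family there by Montel's theorem.

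Given a sequence $L_n\in{\cal H}(K)$, pass to a subsequence along which $f_{L_n}\luto f_*$ on $\C\sem S_K$ for some holomorphic $f_*$ with $f_*(w)-w\to 0$ at infinity; since each $f_{L_n}$ is univalent and $f_*$ is nonconstant, Hurwitz's theorem makes $f_*$ univalent, and by Schwarz reflection of each $f_{L_n}$ the limit $f_*$ is real-symmetric, so $f_*(\HH)\subset\HH$. Set $L_0:=\HH\sem f_*(\HH)$; then $L_0$ is bounded and relatively closed in $\HH$ with $\HH\sem L_0=f_*(\HH)$ simply connected, so $L_0$ is an $\HH$-hull, and the hydrodynamic normalization inherited from the $f_{L_n}$ identifies $f_*|_\HH$ with $f_{L_0}$. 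The bound $|f_*-\id|\le 3r$ combined with $f_{L_n}(\C\sem S_K)\supset\C\sem(K^{\doub}\cup B_K)$ forces $L_0\subset K$, so $L_0\in{\cal H}(K)$ and $L_n\to L_0$ in $d_{\cal H}$ along the subsequence, which is the claimed compactness. The moreover clause then follows by restricting the convergence $f_{L_n}\luto f_{L_0}$ to $\HH$ (giving the $d_{\cal H}$-convergence) and by applying Proposition \ref{domain-converge-inverse} to the univalent maps $f_{L_n}:\C\sem S_K\to f_{L_n}(\C\sem S_K)$, which yields both $\C\sem(L_n^{\doub}\cup B_K)\dto\C\sem(L_0^{\doub}\cup B_K)$ and $g_{L_n}=f_{L_n}^{-1}\luto f_{L_0}^{-1}=g_{L_0}$ locally uniformly on $\C\sem(L_0^{\doub}\cup B_K)$; uniqueness of limits along all subsequences upgrades this to convergence along the full sequence.

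The main obstacle, I expect, is verifying that the normal-family limit $f_*$ corresponds to a genuine $\HH$-hull $L_0\subset K$: one needs that $L_0$ is relatively closed in $\HH$, that $\HH\sem L_0$ is simply connected matching the hydrodynamic normalization at infinity, that the real-symmetric and hydrodynamic properties survive the local-uniform limit, and that the slack between convergence on $\C\sem S_K$ and convergence on $\HH$ (with $S_L\subsetneq S_K$ in general, so that $f_{L_n}$ has more room to move real boundary points) does not leave spurious residual pieces of $L_0$ clinging to the real axis outside $K$.
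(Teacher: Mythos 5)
You are proving a result that the paper does not prove itself: Proposition \ref{compact-hull} is quoted verbatim from \cite[Lemma~5.4(i)]{LERW}, so there is no in-paper proof to compare against, and I am assessing your argument on its own terms.

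The overall strategy --- view the reflected maps $f_L$ as a normal family on the fixed domain $\C\sem S_K$, extract a locally uniform subsequential limit, and identify it as $f_{L_0}$ --- is the right one, and the preliminary step $S_L\subset S_K$ for $L\in{\cal H}(K)$ is correct; your justification via $g_K=g_{K/L}\circ g_L$ and (\ref{g><x}) is compressed but sound, provided you also record that $g_L(a_K^-)=c^K_L\le a_{K/L}$, which is what licenses applying (\ref{g><x}) to $g_{K/L}$ at the point $g_L(a_K^-)$.

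The step that fails is the deduction of $L_0\subset K$. The inclusion you invoke, $f_{L_n}(\C\sem S_K)\supset\C\sem(K^{\doub}\cup B_K)$, is false for every proper $L_n\subsetneq K$. Indeed $g_{L_n}$ maps $(-\infty,a_K)$ increasingly onto $(-\infty,c^K_{L_n})$ where $c^K_{L_n}:=g_{L_n}(a_K^-)$, and since $c_K=g_{K/L_n}(c^K_{L_n})<c^K_{L_n}$ whenever $K/L_n\ne\emptyset$, the nontrivial interval $[c_K,c^K_{L_n})\subset S_K$ is in the range of $g_{L_n}|_{(-\infty,a_K)}$; the corresponding real points of $\C\sem K^{\doub}$ are therefore not in $f_{L_n}(\C\sem S_K)$. (This is exactly the ``spurious residual pieces clinging to the real axis outside $K$'' that you flag as a worry in your final paragraph.) To repair this, you should pass simultaneously to a locally uniform subsequential limit $g_*$ of the inverse maps $g_{L_n}$ on $\C\sem K^{\doub}$, which is a normal family there by the uniform bound of Proposition \ref{g-z-sup}; the uniform bound rules out a constant limit, the minimum principle for the non-negative harmonic function $\Imm g_*$ forces $g_*(\HH\sem K)\subset\HH$, and one checks $f_*\circ g_*=\id$ on $\HH\sem K$ by passing to the limit in $f_{L_n}\circ g_{L_n}=\id$. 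This gives $\HH\sem K\subset f_*(\HH)$, i.e.\ $L_0\subset K$.

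The same mismatch of domains affects your derivation of the ``moreover'' clause. Applying Proposition \ref{domain-converge-inverse} to $f_{L_n}:\C\sem S_K\to f_{L_n}(\C\sem S_K)$ yields $f_{L_n}(\C\sem S_K)\dto f_{L_0}(\C\sem S_K)$ and convergence of the inverses on $f_{L_0}(\C\sem S_K)$, but $f_{L_n}(\C\sem S_K)$ is a proper subdomain of $\C\sem(L_n^{\doub}\cup B_K)=f_{L_n}(\C\sem S^K_{L_n})$ precisely because $S^K_{L_n}\subsetneq S_K$ in general, so this does not produce the claimed Carath\'eodory convergence of $\C\sem(L_n^{\doub}\cup B_K)$ nor the locally uniform convergence of $g_{L_n}$ on the larger domain $\C\sem(L_0^{\doub}\cup B_K)$. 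You either need to work with the genuine Schwarz-reflected domains $\C\sem S^K_{L_n}$ and establish separately that $S^K_{L_n}\to S^K_{L_0}$, or argue directly via normality of $(g_{L_n})$ together with the observation that compact subsets of $\C\sem(L_0^{\doub}\cup B_K)$ are eventually contained in $\C\sem(L_n^{\doub}\cup B_K)$ once $\HH\sem L_n\dto\HH\sem L_0$ is known.
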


Let ${\cal P}$ denote the space of pairs $(K,p)$, where $K$ is an $\HH$-hull, and $p$ is a prime end of $\HH\sem K$ other than $\infty$. Note that $g_K(p)\in\R$ for $(K,p)\in\cal P$. Equip ${\cal P}$ with the metric
 $$d_{\cal P}((K_1,p_1),(K_2,p_2))=d_{\cal H}(K_1,K_2)+|g_{K_1}(p_1)-g_{K_2}(p_2)|.$$
 For $(K,p)\in {\cal P}$, let $g_{K,p}=g_K-g_K(p)$ and $f_{K,p}=g_{K,p}^{-1}$. If $(K_n,p_n)\to (K,p)$ w.r.t.\ $d_{\cal P}$, then $g_{K_n,p_n}\luto g_{K,p}$ in $\HH\sem K$ and $f_{K_n,p_n}\luto f_{K,p}$ in $\HH$.  For a nonempty $\HH$-hull $L$, let ${\cal P}(L)$ denote the set of  $(K,p)\in\cal P$ such that $K \in{\cal H}(L)$, and $p\not\in (-\infty,a_{L})\cup(b_{L} ,\infty)$. Then $g_K(p)\in S^L_K$. Let $S^L_{K,p}=S^L_K-g_K(p)$. Then $0\in S^L_{K,p}$ and $g_{K,p}:\C\sem (K^{\doub}\cup B_L)\conf \C\sem S^L_{K,p}$.

The following example is important. Suppose $K_t$, $0\le t<T$, are chordal Loewner   hulls driven by $\ha w$. For $0\le t<T$, let $p_t$ be the prime end $g_{K_t}^{-1}(\ha w(t))$  of $\HH\sem K_t$. Then  $(K_t,p_t)$, $0\le t<T$, is a continuous curve in $\cal P$. The corresponding maps $g_{K_t,p_t}$ and $f_{K_t,p_t}$ are called centered Loewner maps in the literature.

Let $\Xi$ denote the set of pairs $(L^1,L^2)\in{\cal H}^2$ such that $L^j$ contains a neighborhood of $0$ in $\HH$, $j=1,2$, and $\dist((L^1)^{\doub},J((L^2)^{\doub}))>0$. For each $\ulin L=(L^1,L^2)\in\Xi$, let ${\cal PP}(\ulin L)={\cal P}(L^1)\times {\cal P}(L^2)$. Let ${\cal PP}=\bigcup_{\ulin L\in \Xi} {\cal PP}(\ulin L)$. Suppose $(K_1,p_1;K_2,p_2)\in{\cal PP}$. Define $H^1_{K_1;K_2}=\HH\sem (K_1\cup J(K_2))$. Then $H^1_{K_1;K_2}$ is simply connected and shares the prime ends $p_1$ and $J(p_2)$ respectively with $\HH\sem K_1$ and $\HH\sem J(K_2)$. Let $g_{K_2,p_2}^J=J\circ g_{K_2,p_2}\circ J$ and $f_{K_2,p_2}^J=(g_{K_2,p_2}^J)^{-1}$.  Let $\til K_1,\til p_1$ be respectively the $ g_{K_2,p_2}^J$-images of $K_1,p_1$.  Since  $g_{K_2,p_2}^J:H^1_{K_1;K_2}\conf \HH\sem \til K_1$, we see that $\HH\sem \til K_1$ is also simply connected. Since $J(p_2)$ is a prime end of  $H^1_{K_1,K_2}$ bounded away from $K_1$, and is sent to $\infty$ by $g_{K_2,p_2}^J$, we see that $\HH\sem \til K_1$ shares the prime end $\infty$ with $\HH$. Thus, $\til K_1\in\cal H$. Since $p_1$ and $J(p_2)$ are distinct prime ends of  $\HH_{K_1,K_2}$, we have $\til p_1=g_{K_2,p_2}^J(p_1)\ne g_{K_2,p_2}^J(J(p_2))=\infty$ in terms of prime ends. So $(\til K_1,\til p_1)\in\cal P$. 


\begin{Lemma}
  Let $(L^1,L^2)\in \Xi$. For $j=1,2$, let $(K^j_n,p^j_n)$, $n\in\N$, be a convergent sequence in ${\cal P}(L^j)$ with limit $(K^j_\infty,p^j_\infty)\in {\cal P}(L_j)$. For $n\in\lin\N:=\N\cup\{\infty\}$, let $\til K^1_n,\til p^1_n$ be respectively the $ g_{K^2_n,p^2_n}^ J$-images of $K^1_n,p^1_n$. Then $(\til K^1_n,\til p^1_n)\to (\til K^1_\infty,\til p^1_\infty)$ w.r.t.\ $d_{\cal P}$, $g_{\til K^1_n}\luto g_{\til K^1_\infty}$ in $\C\sem ((\til K^1_\infty)^{\doub}\cup g_{K^2_\infty,p^2_\infty}^J(B_{L^1}))$, and $g_{\til K^1_n}^{-1}\luto g_{\til K^1_\infty}^{-1}$ in $\C\sem (S_{\til K^1_\infty}\cup g_{\til K^1_\infty}\circ g_{K^2_\infty,p^2_\infty}^J(B_{L^1}))$.
   \label{compact3'}
\end{Lemma}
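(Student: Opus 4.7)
The plan is to parlay the $d_{\cal P}$ convergence $(K^2_n, p^2_n) \to (K^2_\infty, p^2_\infty)$ into locally uniform convergence of $g^J_{K^2_n, p^2_n}$ on a fixed neighborhood of $(L^1)^{\doub}$, then use Carath\'eodory convergence of source domains plus compactness to identify all the other limits. First I would combine the $d_{\cal H}$-part of the hypothesis (which gives $g_{K^2_n} \luto g_{K^2_\infty}$ on $\C \sem ((K^2_\infty)^{\doub} \cup B_{L^2})$ by Proposition \ref{compact-hull}) with the scalar convergence $g_{K^2_n}(p^2_n) \to g_{K^2_\infty}(p^2_\infty)$ to get $g_{K^2_n, p^2_n} \luto g_{K^2_\infty, p^2_\infty}$ on the same set. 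Conjugation by $J$ yields $g^J_{K^2_n, p^2_n} \luto g^J_{K^2_\infty, p^2_\infty}$ on an open set $U$ which, by the $\Xi$-separation $\dist((L^1)^{\doub}, J((L^2)^{\doub})) > 0$, contains $(L^1)^{\doub}$. Consequently $\{g^J_{K^2_n, p^2_n}\}$ is uniformly bounded on $(L^1)^{\doub}$, so the $\til K^1_n = g^J_{K^2_n, p^2_n}(K^1_n)$ all lie in a common $\HH$-hull, which one can take to be a slight thickening of $L^* := g^J_{K^2_\infty, p^2_\infty}(L^1)$ (itself an $\HH$-hull since $g^J_{K^2_\infty, p^2_\infty}: \HH \sem J(K^2_\infty) \conf \HH$).

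To prove $d_{\cal H}(\til K^1_n, \til K^1_\infty) \to 0$, I would argue by compactness and Carath\'eodory identification. Proposition \ref{compact-hull} applied separately to $K^1_n$ and to $J(K^2_n)$ implies $\HH_{K^1_n; K^2_n} := \HH \sem (K^1_n \cup J(K^2_n)) \dto \HH_{K^1_\infty; K^2_\infty}$ in the Carath\'eodory sense (items (i) and (ii) being immediate from the respective $d_{\cal H}$ convergences and the $\Xi$-separation). Since $g^J_{K^2_n, p^2_n} \luto g^J_{K^2_\infty, p^2_\infty}$ on an open set containing these closed domains, Proposition \ref{domain-converge-inverse} gives $\HH \sem \til K^1_n \dto \HH \sem \til K^1_\infty$. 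Extracting any $d_{\cal H}$-convergent subsequence $\til K^1_{n_k} \to K_*$ via compactness (Proposition \ref{compact-hull}), the Carath\'eodory limit identification forces $K_* = \til K^1_\infty$. Claims (2) and (3) then follow by applying Proposition \ref{compact-hull} to $\til K^1_n$ inside thickenings $L^*_\epsilon \supset L^*$ with $\epsilon \downarrow 0$: each $\epsilon$ gives convergence on $\C \sem ((\til K^1_\infty)^{\doub} \cup B_{L^*_\epsilon})$, and the union over $\epsilon > 0$ recovers convergence on $\C \sem ((\til K^1_\infty)^{\doub} \cup g^J_{K^2_\infty, p^2_\infty}(B_{L^1}))$ once one notes $B_{L^*} = g^J_{K^2_\infty, p^2_\infty}(B_{L^1})$ (the latter being a real interval, as the image of an interval under the monotone real-analytic extension of $g^J_{K^2_\infty, p^2_\infty}$).

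The prime-end convergence $g_{\til K^1_n}(\til p^1_n) \to g_{\til K^1_\infty}(\til p^1_\infty)$ is the most delicate step. I would introduce the conformal composite $\Psi_n := g_{\til K^1_n} \circ g^J_{K^2_n, p^2_n} \circ f_{K^1_n}$, defined on $\HH \sem g_{K^1_n}(J(K^2_n))$ and taking values in $\HH$, with the key identity $\Psi_n(x_n) = g_{\til K^1_n}(\til p^1_n)$ for $x_n := g_{K^1_n}(p^1_n)$. The $d_{\cal P}$ hypothesis on $(K^1_n, p^1_n)$ gives $x_n \to x_\infty := g_{K^1_\infty}(p^1_\infty)$, and the $\Xi$-separation forces $x_\infty$ outside the compact set $g_{K^1_\infty}(J(K^2_\infty))$, so $\Psi_\infty$ extends analytically to a complex neighborhood of $x_\infty$ by Schwarz reflection. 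Combining the locally uniform convergences of $f_{K^1_n}$ (Proposition \ref{compact-hull} on $K^1_n$), $g^J_{K^2_n, p^2_n}$ (Step 1), and $g_{\til K^1_n}$ (just established), one obtains $\Psi_n \luto \Psi_\infty$ near $x_\infty$, whence $\Psi_n(x_n) \to \Psi_\infty(x_\infty)$ as required.

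The main obstacle is the Schwarz-extension bookkeeping: producing the exact excluded set $g^J_{K^2_\infty, p^2_\infty}(B_{L^1})$ in parts (2) and (3) requires the thickening-and-limit argument of paragraph two, and ensuring $x_\infty$ lies in the common domain of analyticity of the three factors of $\Psi_\infty$ requires tracking the compact set $g_{K^1_\infty}(J(K^2_\infty))$ against $x_\infty$ via the $\Xi$-separation. Beyond these careful but routine bookkeeping steps, no new ingredient beyond the half-plane capacity/Schwarz-reflection toolbox and the Carath\'eodory machinery of Propositions \ref{domain-converge-inverse} and \ref{compact-hull} is needed.
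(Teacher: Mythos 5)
Your overall strategy matches the paper's: establish $\til K^1_n\to\til K^1_\infty$ via Carath\'eodory convergence and compactness, introduce the composite $\Psi_n=g_{\til K^1_n}\circ g^J_{K^2_n,p^2_n}\circ f_{K^1_n}$ (the paper's $g_n$) for the prime-end convergence, and handle parts (2)(3) via Proposition \ref{compact-hull}. Two of your choices are genuinely different but valid: you get the uniform bound on $\til K^1_n$ from uniform boundedness of $g^J_{K^2_n,p^2_n}$ on the compact $(L^1)^{\doub}$, whereas the paper uses an extremal-distance argument; and your thickening argument with $L^*_\epsilon\downarrow L^*$ and the identity $B_{L^*}=g^J_{K^2_\infty,p^2_\infty}(B_{L^1})$ does recover the locally uniform convergence of $g_{\til K^1_n}$ on $\C\sem((\til K^1_\infty)^{\doub}\cup g^J_{K^2_\infty,p^2_\infty}(B_{L^1}))$. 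For the third claim, however, Proposition \ref{compact-hull} inside $L^*_\epsilon$ only gives $f_{\til K^1_n}\luto f_{\til K^1_\infty}$ on $\C\sem S_{L^*_\epsilon}$, whose union over $\epsilon$ is $\C\sem S_{L^*}$---strictly smaller than the claimed $\C\sem(S_{\til K^1_\infty}\cup g_{\til K^1_\infty}\circ g^J_{K^2_\infty,p^2_\infty}(B_{L^1}))=\C\sem S^{L^*}_{\til K^1_\infty}$. To get the full domain you must, as the paper does, apply Proposition \ref{domain-converge-inverse} to the convergence of $g_{\til K^1_n}$ from the second claim rather than to Proposition \ref{compact-hull} directly.

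The genuine gap is in the prime-end step. You write that combining the locally uniform convergences of $f_{K^1_n}$, $g^J_{K^2_n,p^2_n}$, and $g_{\til K^1_n}$ yields $\Psi_n\luto\Psi_\infty$ near $x_\infty$. But $x_\infty=g_{K^1_\infty}(p^1_\infty)$ lies in $S^{L^1}_{K^1_\infty}\subset S_{L^1}$ (generically even in $S_{K^1_\infty}$ when $p^1_\infty$ is a boundary prime end of $K^1_\infty$), and Proposition \ref{compact-hull} only gives $f_{K^1_n}\luto f_{K^1_\infty}$ on $\C\sem S_{L^1}$. So the three-factor decomposition of $\Psi_n$ is not even defined---let alone convergent---in any neighborhood of $x_\infty$. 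You correctly observe that the \emph{limit} $\Psi_\infty$ extends analytically across $S_{K^1_\infty}$ by Schwarz reflection, but the same must be shown for the \emph{convergence} $\Psi_n\to\Psi_\infty$, and that does not follow from the factor-by-factor argument. The paper closes this exactly: it first obtains $\Psi_n\luto\Psi_\infty$ on $\C\sem(S^{L^1}_{K^1_\infty}\cup g_{K^1_\infty}\circ J((K^2_\infty)^{\doub}))$, then Schwarz-reflects each $\Psi_n$ (so they become conformal on $\C\sem g_{K^1_n}\circ J((K^2_n)^{\doub})$, mapping $S_{K^1_n}$ onto $S_{\til K^1_n}$), and applies the maximum principle on a small contour enclosing $S^{L^1}_{K^1_\infty}$ to propagate the convergence across the slit. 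Your last paragraph asserts that no new ingredient is needed ``beyond the Schwarz-reflection toolbox and the Carath\'eodory machinery,'' but the maximum principle (or an equivalent normal-family estimate bounding the reflected $\Psi_n$ near $S_{K^1_\infty}$) is precisely the additional ingredient required.
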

\begin{proof}
  From $K^j_n\to K^j_\infty$, $j=1,2$, we know that $H^1_{K^1_n;K^2_n}\dto H^1_{K^1_\infty;K^2_\infty}$. Since $g_{K^2_n,p^2_n}^J\luto g_{K^2_\infty,p^2_\infty}^J$ in $H^1_{K^1_\infty,K^2_\infty}$, by Proposition \ref{domain-converge-inverse}, $\HH\sem \til K^1_n\dto \HH\sem \til K^1_\infty$. Let $\Lambda=\Lambda(L^1,L^2)\in (0,\infty)$ be the extremal distance (cf.\ \cite{Ahl}) between $(L^2)^{\doub}$ and $J((L^1)^{\doub})$. By comparison principle, for any $n\in\N$, the extremal distance between $(K^2_n)^{\doub}\cup B_{L^2}$ and $J((K^1_n)^{\doub}\cup B_{L^1})$ is at least $\Lambda$. By conformal invariance, the extremal distance between $S^{L^2}_{K^2_n,p^2_n}$ and $g_{K^2_n,p^2_n}\circ J((K^1_n)^{\doub})$ is at least $\Lambda$. Since $S^{L^2}_{K^2_n,p^2_n}$ contains $0$ and has length $\ge b_{L^2}-a_{L^2}>0$ by (\ref{g><x}), there is $r=r(L^1,L^2)>0$ such that $\dist(0,g_{K^2_n,p^2_n}\circ J(K^1_n))\ge r$, which implies that $\til K^1_n\subset \{|z|\le 1/r\}$. 
   By Proposition \ref{compact-hull}, $\{\til K^1_n\}$ is pre-compact. Since $\HH\sem \til K^1_n\dto \HH\sem \til K^1_\infty$, we get $\til K^1_n\to \til K^1_\infty$ w.r.t.\ $d_{\cal H}$.

For each $n\in\lin\N$,  $g_n:=g_{\til K^1_n}\circ g_{K^2_n,p^2_n}^J\circ f_{K^1_n}$ is a conformal map from $\C\sem (S_{K^1_n}\cup g_{K^1_n}\circ J((K^2_n)^{\doub}))$ onto $\C\sem( S_{\til K^1_n}\cup g_{\til K^1_n}\circ J(S_{K^2_n}))$, and maps a neighborhood of $S_{K^1_n}$ to a neighborhood of  $S_{\til K^1_n}$. By Schwarz reflection principle, it extends to a conformal map on $\C\sem g_{K^1_n}\circ J((K^2_n)^{\doub})$, and maps $S_{K^1_n}$ onto $S_{\til K^1_n}$. From $K^1_n\to K^1_\infty$, $\til K^1_n\to \til K^1_\infty$ and $(K^2_n,p^2_n)\to (K^2_\infty,p^2_\infty)$, we know that $g_n\luto g_0$ in $\C\sem ( S^{L_1}_{K^1_\infty} \cup g_{K^1_\infty}\circ J((K^2_\infty)^{\doub}))$. By maximum principle, we get $g_n\luto g_0$ in $\C\sem g_{K^1_\infty}\circ J((K^2_\infty)^{\doub})$. Since $g_{\til K^1_n}(\til p^1_n)=g_n\circ g_{K^1_n}(p^1_n)$ for $n\in\lin\N$,  and $g_{K^1_n}(p^1_n)\to g_{K^1_\infty}(p^1_\infty)\in \C\sem g_{K^1_\infty}\circ J((K^2_\infty)^{\doub})$, we get $g_{\til K^1_n}(\til p^1_n)\to g_{\til K^1_\infty}(\til p^1_\infty)$.
Thus, $(\til K^1_n,\til p^1_n)\to (\til K^1_\infty,\til p^1_\infty)$.

Since $K^1_n\to K^1_\infty$ in ${\cal H}(L^1)$, we get $\C\sem((K^1_n)^{\doub}\cup B_{L^1})\dto \C\sem ((K^1_\infty)^{\doub}\cup B_{L^1})$. By Proposition \ref{domain-converge-inverse}, we get
$\C\sem ((\til K^1_n)^{\doub} \cup g_{K^2_n,p^2_n}^J(B_{L^2}))\dto \C\sem ((\til K^1_\infty)^{\doub} \cup g_{K^2_\infty,p^2_\infty}^J(B_{L^1}))$.
From the first paragraph of the proof, there is $R\in(0,\infty)$ such that $\til K^1_n\subset \{|z|<R\}$ for all $n\in\lin\N$. By Proposition \ref{g-z-sup}, $|g_{\til K^1_n}(z)-z|\le 3R$ for all $n\in\lin\N$ and $z\in \C\sem (\til K^1_n)^{\doub}$. Thus, any subsequence of $\{g_{\til K^1_n}\}$ contains a further subsequence, which converges locally uniformly in $\C\sem ((\til K^1_\infty)^{\doub} \cup g_{K^2_\infty,p^2_\infty}^J(B_{L^1}))$. The limit must be $g_{\til K^1_\infty}$ since we already know that $g_{\til K^1_n}\luto g_{\til K^1_\infty}$ in $\HH\sem \til K^1_\infty$. So we get $ g_{\til K^1_n} \luto g_{\til K^1_\infty}$ in $\C\sem ((\til K^1_\infty)^{\doub} \cup g_{K^2_\infty,p^2_\infty}^J(B_{L^1}))$. By Proposition \ref{domain-converge-inverse},  $g_{\til K^1_n}^{-1}\luto g_{\til K^1_\infty}^{-1}$ in
$g_{\til K^1_\infty}(\C\sem ((\til K^1_\infty)^{\doub} \cup g_{K^2_\infty,p^2_\infty}^J(B_{L^1})))=\C\sem (S_{\til K^1_\infty}\cup g_{\til K^1_\infty}\circ g_{K^2_\infty,p^2_\infty}^J(B_{L^1})))$.
\end{proof}

 If $\eta(t)$, $0\le t<T$, is a continuous curve in $\lin\HH$, and there are a continuous function $\ha w$ on $[0,T)$ and a continuous and strictly increasing function $u$ on $[0,T)$ with $u(0)=0$, such that $\eta\circ u^{-1}(t)$, $0\le t<u(T)$, is the chordal Loewner curve driven by $\ha w\circ u^{-1}$, then we say that $\eta$ is a chordal Loewner curve with speed $du$ driven by $\ha w$. For each $0\le t<T$, we also understand  $\eta(t)$ as the prime end $g_{\eta[0,t]}^{-1}(\ha w(t))$ of $\HH\sem \Hull(\eta[0,t])$. The following  proposition is a well-known result in the Loewner theory, and so we omit its proof.

\begin{Proposition}
	Suppose $\eta(t)$, $0\le t<T$, is a chordal Loewner curve. Let $I$ be an open real interval, which contains $\eta\cap\R$.   Let $U$ be a subdomain of $\HH$, which is a neighborhood of $I$ in $\HH$, and contains  $\bigcup_{0\le t<T} \Hull(\eta[0,t])$. Let $\psi$ be a conformal map from $U$ into $\HH$, which extends continuously to $U \cup I$, and maps $I$ into $\R$.  Then $\psi\circ \eta(t)$, $0\le t<T$, is a chordal Loewner curve with some speed, and for each $t\in[0,T)$, $\psi$ sends the prime end $\eta(t)$ of $\HH\sem \Hull(\eta[0,t])$ to the prime end $\psi\circ \eta(t)$ of $\HH\sem \Hull(\psi\circ\eta[0,t])$. 
\label{Loewner-KL-chordal}
\end{Proposition}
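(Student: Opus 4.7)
The plan is to check the standard characterization of a chordal Loewner curve with speed $du$: with $\beta(t):=\psi\circ\eta(t)$, $\til K_t:=\Hull(\beta[0,t])$ and $u(t):=\hcap_2\til K_t$, I need to show that (a) $\beta$ is continuous in $\lin\HH$ with $\beta(0)\in\R$ and each $\til K_t$ is an $\HH$-hull; (b) $\til K_t$ depends continuously on $t$ in $d_{\cal H}$ and $u$ is continuous and strictly increasing from $u(0)=0$; and (c) the tip $\beta(t)$ is realized as the prime end $g_{\til K_t}^{-1}(\til w(t))$ for a continuous $\til w:[0,T)\to\R$. Reparametrizing by $u$ then yields that $\beta\circ u^{-1}$ is a standard chordal Loewner curve driven by $\til w\circ u^{-1}$, which is the desired conclusion.

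For (a), the hypothesis $\Hull(\eta[0,t])\subset U$ implies that each point $\eta(t)\in\lin{K_t}$ lies in $U\cup I$ (if it lies on $\R$, then in $\eta\cap\R\subset I$), so the continuous extension of $\psi$ to $U\cup I$ makes $\beta$ continuous in $\lin\HH$ with $\beta(0)\in\psi(I)\subset\R$. Boundedness of $\psi\circ\eta[0,t]$ in $\lin\HH$ then makes $\til K_t$ a genuine $\HH$-hull. For the continuity in (b), I would combine the continuity of $K_t$ in $d_{\cal H}$ (automatic for chordal Loewner hulls) with Proposition \ref{domain-converge-inverse}: Carath\'eodory convergence $\HH\sem K_{t_n}\dto\HH\sem K_t$ together with locally uniform convergence of $\psi\circ f_{K_{t_n}}$ in $\HH$ passes to the analogous Carath\'eodory convergence on the image side, and hence to $\til K_{t_n}\to\til K_t$ in $d_{\cal H}$. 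Continuity of $u=\hcap_2\til K_t$ then follows from continuity of $\hcap$ in $d_{\cal H}$. Strict monotonicity of $u$ I would obtain by a direct computation modelled on the radial analog Lemma \ref{Loewner-KL}: the map $\alpha_t:=g_{\til K_t}\circ\psi\circ f_{K_t}$ is conformal in a one-sided neighborhood of $\ha w(t)$ in $\lin\HH$, sends $\ha w(t)$ to $\til w(t):=g_{\til K_t}(\beta(t))$, and differentiating in $t$ against the chordal Loewner equation $\pa_t f_{K_t}(z)=-2 f_{K_t}'(z)/(z-\ha w(t))$ shows that $\til K_t$ is a chordal Loewner chain with speed $|\alpha_t'(\ha w(t))|^2>0$ and driving function $\til w$.

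For (c), I would track the prime end $\eta(t)$ through three successive identifications: $\eta(t)$ as a prime end of $\HH\sem K_t$ coincides with a prime end of $U\sem K_t$ because $U$ contains a Euclidean neighborhood of the underlying point; $\psi$ conformally transports it to a prime end of $\psi(U)\sem\til K_t$; and since $\psi(U)$ is a Euclidean neighborhood of $\beta(t)$, this in turn is a prime end of $\HH\sem\til K_t$, whose image under $g_{\til K_t}$ is the required $\til w(t)\in\R$. The main obstacle is the continuity of $\til w$: this is routine when $\eta(t_0)\in\HH$ by local conformality together with the hull convergence $\til K_{t_n}\to\til K_{t_0}$, but at times $t_0$ with $\eta(t_0)\in I$ it requires combining the continuous extension of $\psi$ to $U\cup I$ with a Koebe-distortion estimate for $g_{\til K_{t_n}}$ on shrinking half-disk neighborhoods of $\beta(t_0)$, so as to control the prime end $\beta(t_n)$ through the composition uniformly in $n$.
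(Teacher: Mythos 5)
The paper does not actually prove this statement; it says the proposition is ``well known'' and omits the proof, so there is no argument in the paper to compare against. As a standalone argument your outline is the standard Schramm--Wilson ``conformal image of a Loewner chain'' lemma: the map $\alpha_t=g_{\til K_t}\circ\psi\circ f_{K_t}$, analytic across $\R$ near $\ha w(t)$ by Schwarz reflection, gives $\til w(t)=\alpha_t(\ha w(t))$ and speed $|\alpha_t'(\ha w(t))|^2>0$, and the chain-rule ODE for $\til g_t:=g_{\til K_t}$ yields the Loewner structure with continuous driving function. (The author records the radial analog of exactly this, with the same $|\alpha_t'|^2$ speed, in a commented-out lemma in the source, so this is plainly the intended proof.)

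One imprecision is worth flagging. In step (b) you invoke ``locally uniform convergence of $\psi\circ f_{K_{t_n}}$ in $\HH$,'' but $\psi\circ f_{K_{t_n}}$ is defined only on $g_{K_{t_n}}(U\setminus K_{t_n})$, not on all of $\HH$; what Proposition \ref{domain-converge-inverse} then delivers is Carath\'eodory convergence of $\psi(U)\setminus\til K_{t_n}$, not of $\HH\setminus\til K_{t_n}$, and the latter is what $\til K_{t_n}\to\til K_t$ in $d_{\cal H}$ requires. This is repairable by a compactness extraction via Proposition \ref{compact-hull}: any $d_{\cal H}$-subsequential limit $L$ of the $\til K_{t_n}$ satisfies $\psi(U)\setminus L=\psi(U)\setminus\til K_t$ by uniqueness of Carath\'eodory limits, and since $L$ and $\til K_t$ both lie inside $\psi(U)$, $L=\til K_t$. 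Also, (b) and (c) in your plan are not really independent: once the $\alpha_t$ computation yields the Loewner ODE for $\til g_t$, continuity of $\til w$ comes out by construction, including at times $t_0$ with $\eta(t_0)\in I$, so the Koebe-distortion detour sketched at the end of (c) is superfluous. The prime-end bookkeeping is otherwise sound; the two facts to make explicit are that the prime end $\eta(t)$ of $\HH\setminus K_t$ has a neighborhood inside $U\setminus K_t$ (using that $f_{K_t}$ extends continuously to $\lin\HH$ and that $\eta(t)$ lies in the $\lin\HH$-open set $U\cup I$), and, symmetrically via Schwarz reflection of $\psi$ across $I$, that $\psi(U)\cup\psi(I)$ is a $\lin\HH$-neighborhood of $\beta(t)$, so the image prime end is shared with $\HH\setminus\til K_t$.
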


Now we come back to the proof of Lemma \ref{DMP-lem}. For $j=1,2$ and $t\ge 0$, let $g^j_t$ denote the centered Loewner map for $\eta_j$ at time $t$, i.e.,   $g^j_t=g_{K^j_t}-\ha w_j(t)$. Let $f^j_t=(g^j_t)^{-1}$ and $f^{j,J}_t=J\circ f^j_t\circ J$. 
Fix $t_2\ge 0$. By DMP of SLE, conditionally on $\F^2_{t_2}$, $\eta_2(t_2+\cdot)$ is a chordal SLE$_\kappa$ in $\HH\sem K^2_{t_2}$ from $\eta_2(t_2)$ to $\infty$. Since $f^2_{t}$ maps $\HH$ conformally onto $\HH\sem K^2_t$, extends continuously to $\lin\HH^\#$, and sends $0$ and $\infty$ respectively to $\eta_2(t)$ and $\infty$, there is a standard chordal SLE$_\kappa$ curve $\eta_2^{t_2+}$ independent of $\F^2_{t_2}$ such that a.s.\ $\eta_2(t_2+\cdot)= f^2_{t_2}\circ \eta_2^{t_2+}$. The $\eta_2^{t_2+}$ is the chordal Loewner curve driven by $\ha w_2^{t_2+}:=\ha w_2(t_2+\cdot)-\ha w_2(t_2)$. Let $K^{2,t_2+}_{t}=\Hull(\eta_2^{t_2+}[0,t])$ and $g^{2,t_2+}_t$ be the centered Loewner map for $\eta_2^{t_2+}$, i.e., $g_{K^{2,t_2+}_t}-\ha w_2^{t_2+}(t)$. Then $f^2_{t_2}=f^2_{t_2+t}\circ g^{2,t_2+}_t$, $t\ge 0$.

Since $\eta_1(t)$, $0\le t\le \phi_2(t_2)$, is a time-reversal of $ \eta^J_2(t)$, $t_2\le t\le \infty$,
 by reversibility of SLE$_\kappa$, conditionally on $\F^2_{t_2}$, $\eta_1(t)$, $0\le t\le \phi_2(t_2)$, is a time-change of a chordal SLE$_\kappa$ in $\HH\sem K^{2,J}_{t_2}$ from $0$ to $\eta^J_2(t_2)$.  
Since $f^{2,J}_{t_2}:(\HH;0,\infty)\conf ( \HH\sem K^{2,J}_{t_2};0, \eta^J_2(t_2))$, there is a standard chordal SLE$_\kappa$ curve $\zeta_1^{t_2}$ independent of $\F^2_{t_2}$, and an increasing homeomorphism $u_1^{t_2}$ from $[0,\phi_2(t_2)]$ onto $[0,\infty]$, such that a.s.\ $\eta_1=  f^{2,J}_{t_2}\circ \zeta_1^{t_2}\circ u_1^{t_2}$ on $[0,\phi_2(t_2)]$. Let $\ha v^{t_2}_1$ be the driving function for $\zeta_1^{t_2}$. Let $\eta^{t_2}_1=\zeta^{t_2}_1\circ u^{t_2}_1$ and $\ha w^{t_2}_1=\ha v^{t_2}_1\circ u^{t_2}_1$. Then $\eta^{t_2}_1$ is the chordal Loewner curve with speed $d u^{t_2}_1$ driven by $\ha w^{t_2}_1$, and a.s.\  $\eta_1= f^{2,J}_{t_2}\circ \eta_1^{t_2}$ on $[0,\phi_2(t_2)]$.

\begin{Lemma}
The functions $\eta_1^{t_2}(t_1),u_1^{t_2}(t_1),\ha w_1^{t_2}(t_1)$ all have  continuous versions on ${\cal D}_2=\{(t_1,t_2):\eta_2^J(t_2)\not\in\eta_1[0,t_1]\}$. More specifically, there are continuous processes $\eta_1^*:{\cal D}_2\to\lin\HH$, $u_1^*:{\cal D}_2\to\R_+$ and $\ha w_1^*:{\cal D}_2\to \R$ such that  for any fixed $t_2\in\R_+$, $\eta_1^*(\cdot,t_2)$ is the chordal Loewner curve with speed $du_1^*(\cdot,t_2)$  driven by $\ha w_1^*(\cdot,t_2)$; and
a.s.\ $\eta_1^*(\cdot,t_2)$, $u_1^*(\cdot,t_2)$ and $\ha w_1^*(\cdot,t_2)$ are respectively equal to $\eta_1^{t_2}$, $u_1^{t_2}$ and $\ha w_1^{t_2}$, which further implies that a.s.\ $\eta_1|_{[0,\phi_2(t_2))}=f^{2,J}_{t_2}\circ \eta_1^*(\cdot,t_2)$. Here for two functions to be equal, they must have the same domain.
\label{version}
\end{Lemma}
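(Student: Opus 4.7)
The plan is to construct the joint versions explicitly from the pair $(\eta_1,\eta_2^J)$ via the conformal maps $g^{2,J}_{t_2}$, rather than by gluing together the $t_2$-slice constructions $\eta_1^{t_2},u_1^{t_2},\ha w_1^{t_2}$. For $(t_1,t_2)\in{\cal D}_1$ I would set
$\eta_1^*(t_1,t_2):=g^{2,J}_{t_2}(\eta_1(t_1))$, interpreted as a prime end of $\HH$, and then $K^*(t_1,t_2):=\Hull(g^{2,J}_{t_2}(\eta_1[0,t_1]))$, $u_1^*(t_1,t_2):=\hcap_2(K^*(t_1,t_2))$, and $\ha w_1^*(t_1,t_2):=g_{K^*(t_1,t_2)}(\eta_1^*(t_1,t_2))$. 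Since $\eta_1=\eta_2^J\circ\phi_2^{-1}$ with $\phi_2$ decreasing, the trace $\eta_1[0,t_1]$ is contained in $\lin{\HH\sem K^{2,J}_{t_2}}^{\#}$; the condition defining ${\cal D}_1$ guarantees that the tip $\eta_1(t_1)$ is a prime end of $\HH\sem K^{2,J}_{t_2}$ to which $g^{2,J}_{t_2}$ extends continuously, so all four definitions above are meaningful on ${\cal D}_1$.

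For the marginal identification, fix a deterministic $t_2\ge 0$. The DMP + reversibility argument immediately preceding the lemma produces an equality $\eta_1=f^{2,J}_{t_2}\circ\eta_1^{t_2}$ on $[0,\phi_2(t_2)]$, so applying $g^{2,J}_{t_2}$ to both sides gives $\eta_1^{t_2}(t_1)=\eta_1^*(t_1,t_2)$ a.s. Proposition \ref{Loewner-KL-chordal} then identifies the chordal Loewner speed and driving function of $\eta_1^*(\cdot,t_2)$ with those of $\eta_1^{t_2}$, so a.s.\ $u_1^*(\cdot,t_2)=u_1^{t_2}$ and $\ha w_1^*(\cdot,t_2)=\ha w_1^{t_2}$. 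The final identity $\eta_1(t_1)=f^{2,J}_{t_2}\circ\eta_1^*(t_1,t_2)$ on ${\cal D}_1$ is just the inversion of the defining formula, valid off a single null set on which $\eta_1$ fails to be continuous.

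The main work, and the main obstacle, is joint continuity of $\eta_1^*,u_1^*,\ha w_1^*$ on the open random region ${\cal D}_1$. Exhaust ${\cal D}_1$ by compact sub-regions; on each such compact sub-region, openness of ${\cal D}_1$ together with continuity of $t_1\mapsto\eta_1(t_1)$ and of $t_2\mapsto K^{2,J}_{t_2}$ in $d_{\cal H}$ furnishes a uniform separation between $\eta_1[0,t_1]$ and $K^{2,J}_{t_2}$ and uniform bounds on the relevant hulls. Applying Lemma \ref{compact3'} with $K^1_n=\Hull(\eta_1[0,t_1^n])$, $p^1_n=\eta_1(t_1^n)$, $K^2_n=K^2_{t_2^n}$, $p^2_n=\eta_2(t_2^n)$ along any convergent sequence $(t_1^n,t_2^n)\to(t_1,t_2)$ in such a sub-region, one gets joint continuity of $(t_1,t_2)\mapsto(K^*(t_1,t_2),\eta_1^*(t_1,t_2))$ in $d_{\cal P}$, which in turn yields joint continuity of $u_1^*=\hcap_2\circ K^*$ and $\ha w_1^*$ by the definition of $d_{\cal P}$. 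The delicate prime-end behavior near the tip---for instance, limits approaching a point on $\R$ where $\eta_1$ may have swallowed part of $\pa\HH$---is absorbed into the centered-Loewner-map convergence provided by Propositions \ref{compact-hull} and \ref{domain-converge-inverse}.
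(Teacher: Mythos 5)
Your proposal contains a genuine gap, and it is exactly at the place you flag as the main work: the uniform separation needed to invoke Lemma \ref{compact3'}.

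You propose to set $\eta_1^*(t_1,t_2):=g^{2,J}_{t_2}(\eta_1(t_1))$, $K^*(t_1,t_2):=\Hull(g^{2,J}_{t_2}(\eta_1[0,t_1]))$, etc., and then obtain joint continuity by applying Lemma \ref{compact3'} with $K^1=\Hull(\eta_1[0,t_1])$ and $K^2=K^2_{t_2}$, claiming that ``openness of ${\cal D}_1$ $\ldots$ furnishes a uniform separation between $\eta_1[0,t_1]$ and $K^{2,J}_{t_2}$.'' This is false for $\kappa\in(4,8]$. The set ${\cal D}_1$ only excludes the single tip $\eta_2^J(t_2)$ from $\eta_1[0,t_1]$; it does \emph{not} prevent $\eta_1[0,t_1]$ from meeting $\eta_2^J[0,t_2)$ or $K^{2,J}_{t_2}$. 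Since $\eta_1(t)=\eta_2^J(\phi_1(t))$ and $\eta_2^J$ has self-touching for $\kappa\in(4,8)$ and also touches $\R$, there are points $(t_1,t_2)\in{\cal D}_1\sem{\cal D}_{12}$ at which $\eta_1(t_1)\in\eta_2^J[0,t_2)\subset\pa K^{2,J}_{t_2}$. At such a point no pair $(L^1,L^2)\in\Xi$ contains $\Hull(\eta_1[0,t_1])$ and $K^2_{t_2}$, so Lemma \ref{compact3'} is simply not applicable with your choice of hulls, and no amount of shrinking to compact sub-regions of ${\cal D}_1$ restores the separation (the failure is pointwise, not a boundary effect). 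Your earlier assertion that ``the condition defining ${\cal D}_1$ guarantees that the tip $\eta_1(t_1)$ is a prime end of $\HH\sem K^{2,J}_{t_2}$ to which $g^{2,J}_{t_2}$ extends continuously'' has exactly the same problem: when $\eta_1(t_1)$ is a self-touching point of $\eta_2^J$ visited already before time $t_2$, the point $\eta_1(t_1)$ corresponds to more than one prime end of $\HH\sem K^{2,J}_{t_2}$, and $g^{2,J}_{t_2}(\eta_1(t_1))$ is not canonically a single point of $\R$. So both the well-definedness of $\eta_1^*,\ha w_1^*$ and the continuity argument break down on ${\cal D}_1\sem{\cal D}_{12}$.

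The way the paper sidesteps both of these issues is the actual content of the argument, and it is worth understanding why the extra structure is needed. Instead of mapping $\eta_1$ forward by $g^{2,J}_{t_2}$ directly, the paper covers ${\cal D}_1$ by rational rectangles $R_{\ulin v}=[0,t_1^0]\times[t_2^0,t_2^1]$ and works on the $\HH$ side after the fixed rational map $g^{2,J}_{t_2^0}$: the continuous process is obtained by flowing the already-constructed slice curve $\eta_1^{t_2^0}$ by the \emph{increment} maps $g^{2,t_2^0+,J}_{t_2-t_2^0}$. The condition $R_{\ulin v}\subset{\cal D}_1$ gives $\eta_2^J[t_2^0,t_2^1]\cap\eta_1[0,t_1^0]=\emptyset$, and pushing this through $g^{2,J}_{t_2^0}$ gives disjointness of $\eta_1^{t_2^0}[0,t_1^0]$ from $J\circ\eta_2^{t_2^0+}[0,\Delta t_2]$. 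Those are the hulls that actually need separating, and because the two arcs are disjoint pieces of a single reversed curve on the $\HH$ side, one genuinely gets a pair in $\Xi$ containing $K^{1,t_2^0}_{t_1^0}$ and $K^{2,t_2^0+}_{\Delta t_2}$. The prime-end problem disappears for the same reason: $\eta_1^{t_2^0}$ is a chordal Loewner curve (so $\eta_1^{t_2^0}(t_1)$ is unambiguously a prime end on the $\HH$ side), and the increment map $g^{2,t_2^0+,J}_{t_2-t_2^0}$ is analytic in a neighborhood of $\eta_1^{t_2^0}[0,t_1^0]$, so applying it introduces no new ambiguity. If you modify your construction to use these increment hulls and curves rather than $\Hull(\eta_1[0,t_1])$ and $K^2_{t_2}$, you will essentially recover the paper's argument.
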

\begin{proof}
Let ${\cal A}$ denote the set of triples $(t_1^0;t_2^0,t_2^1)$, where $t_1^0,t_2^0,t_2^1\in\Q_+$ and $t_2^0<t_2^1$. For each $\ulin v=(t_1^0;t_2^0,t_2^1)\in\cal A$, define $R_{\ulin v}=[0,t_1^0]\times [t_2^0,t_2^1]$ and $E_{\ulin v}=\{R_{\ulin v}\subset {\cal D}_2\}$.
We make the following claim: for any $\ulin v=(t_1^0;t_2^0,t_2^1)\in\cal A$, on the event $E_{\ulin v}$, there are continuous processes $\eta_1^*:R_{\ulin v}\to\lin\HH$, $u_1^*:R_{\ulin v}\to\R_+$ and $\ha w_1^*:R_{\ulin v}\to \R$ such that for any $t_2\in[t_2^0,t_2^1]$, $\eta_1^*(\cdot,t_2)$ is the chordal Loewner curve with speed $du_1^*(\cdot,t_2)$  driven by $\ha w_1^*(\cdot,t_2)$; and a.s.\ $\eta_1^*(\cdot,t_2)$, $u_1^*(\cdot,t_2)$ and $\ha w_1^*(\cdot,t_2)$ are respectively equal to the restrictions of $\eta_1^{t_2}$, $u_1^{t_2}$ and $\ha w_1^{t_2}$ to $[0,t_1^0]$.

Fix $\ulin v=(t_1^0;t_2^0,t_2^1)\in\cal A$. Let $\Delta t_2=t_2^1-t_2^0$. Suppose $E_{\ulin v}$ happens. Then $\eta^J_2[t_2^0,t_2^1]\cap \eta_1[0,t_1^0]=\emptyset$. Since $\eta^J_2[t_2^0,t_2^1]=f^{2,J}_{t_2^0}\circ J \circ \eta_2^{t_2^0+}[0,\Delta t_2]$ and $\eta^1[0,t_1^0]=f^{2,J}_{t_2^0}\circ \eta_1^{t_2^0}[0,t_1^0]$, we get
 $(J\circ \eta_2^{t_2^0+}[0,\Delta t_2])\cap \eta^{t_2^0}_1[0,t_1^0]=\emptyset$. Since both $\eta^{t_2}_1$ and $\eta_2^{t_2^0+}$ start from $0$, we can find a (random) pair $\ulin L=(L^1,L^2)\in\Xi$ such that  $K^{1,t_2^0}_{t_1^0}\subset L^1$ and $K^{2,t_2^0+}_{\Delta t_2}\subset L^2$. Then $(K^{1,t_2^0}_{t_1},\eta_1^{t_2^0}({t_1}))$, $0\le t_1\le t_1^0$, and $(K^{2,t_2^0+}_{t_2},\eta_2^{t_2^0+}(t_2))$, $0\le t_2\le \Delta t_2$, are respectively continuous curves in ${\cal P}(L^1)$ and ${\cal P}(L^2)$.

Let $g^{2,t_2^0+,J}_{t}=J\circ g^{2,t_2^0+}_{t}\circ J$. Let $t_2\in [t_2^0,t_2^1]$.  We have a.s., for all $t_1\in[0,t_1^0]$,
$$f^{2,J}_{t_2}\circ \eta_1^{t_2}(t_1)=\eta_1(t_1)=f^{2,J}_{t_2^0}\circ \eta_1^{t_2^0}(t_1) = f^{2,J}_{t_2}\circ g^{2,t_2^0+,J}_{t_2-t_2^0} \circ \eta_1^{t_2^0}(t_1),$$
which implies that a.s.\ for all $t_1\in S_1:= [0,t_1^0]\cap (\eta_1^{t_2^0})^{-1}(\HH)$,    $\eta_1^{t_2}(t_1)= g^{2,t_2^0+,J}_{t_2-t_2^0}\circ \eta_1^{t_2^0}(t_1)$. Since $S_1$ is dense in $[0,t_1^0]$, both $\eta_1^{t_2}$ and $\eta_1^{t_2^0}$ are continuous on $[0,t_1^0]$, and $ g^{2,t_2^0+,J}_{t_2-t_2^0} $ is continuous on $\lin\HH\sem J(\lin{K^{2,t_2^0+}_{t_2-t_2^0}})\supset \lin\HH\sem J(\lin {L^2})\supset \lin {L^1}\supset \eta_1^{t_2^0}[0,t_1^0]$, we get a.s.\ $ \eta^{t_2}_1 = g^{2,t_2^0+,J}_{t_2-t_2^0} \circ \eta^{t_2^0}_1 $ on $[0,t_1^0]$.

Define $\eta_1^*$ and $u_1^*$ by $\eta_1^*(t_1,t_2)= g^{2,t_2^0+,J}_{t_2-t_2^0} \circ \eta^{t_2^0}_1(t_1)$ and $u_1^*(t_1,t_2)=\hcap_2(\eta_1^*([0,t_1],t_2))$ for $(t_1,t_2)\in  R_{\ulin v}$. Then $\eta_1^*$ and $u_1^*$ are continuous on $R_{\ulin v}$ by Proposition \ref{compact-hull}. By the last paragraph, for any $t_2\in[t_2^0,t_2^1]$, a.s.\ $\eta_1^*(\cdot,t_2)=\eta_1^{t_2}$  and $\eta_1 =f^{2,J}_{t_2}\circ \eta_1^*(\cdot,t_2)$ on $[0,t_1^0]$. So $\eta_1^*$ and $u_1^*$ are continuous versions of $\eta_1^{t_2}(t_1)$ and $u_1^{t_2}(t_1)$ on $R_{\ulin v}$. By the continuity of $\eta_1$, $\eta_1^*$, and $f^{2,J}_{t_2}$, we then get a.s.\ $\eta_1(t_1)=f^{2,J}_{t_2}\circ \eta_1^*(t_1,t_2)$ for any $(t_1,t_2)\in R_{\ulin v}$.
By Proposition \ref{Loewner-KL-chordal}, $\eta_1^*(\cdot,t_2)$ is a chordal Loewner curve with speed $du_1^*(\cdot,t_2)$. Define $\ha w_1^*: R_{\ulin v}\to \R$ such that for every $t_2\in[t_2^0,t_2^1]$, $\ha w_1^*(\cdot,t_2)$ is the driving function for $\eta_1^*(\cdot,t_2)$. For any $t_2\in[t_2^0,t_2^1]$, since a.s.\ $\eta_1^*(\cdot,t_2)=\eta_1^{t_2}$ on $[0,t_1^0]$, and $\ha w_1^{t_2}$ is the driving function for $\eta_1^{t_2}$, we find that a.s.\ $\ha w_1^*(\cdot,t_2)=\ha w_1^{t_2}$ on $[0,t_1^0]$. To prove the claim, it remains to prove that $\ha w_1^*$ is a.s.\ continuous on $R_{\ulin v}$.

From $\eta_1=f^{2,J}_{t_2}(\eta_1^*(\cdot,t_2))=f^{2,J}_{t_2^0}(\eta_1^*(\cdot,t_2^0))$, we get $\eta_1^*(\cdot,t_2)=g^{2,t_2^0+,J}_{t_2-t_2^0}\circ \eta_1^*(\cdot,t_2^0)$, $t_2\ge t_2^0$. For $\ulin t=(t_1,t_2)\in R_{\ulin v}$, let $K^{1,t_2}_{t_1}=\Hull(\eta_1^*([0,t_1],t_2))$. Then we have $K^{1,t_2}_{t_1}=g^{2,t_2^0+,J}_{t_2-t_2^0}(K^{1,t_2^0}_{t_1})$ and $\ha w_1^*(\ulin t)=g_{K^{1,t_2}_{t_1}}(\eta_1^*(\ulin t))$. 
Define $G_{\ulin t}=g_{K^{1,t_2}_{t_1}}\circ  g^{2,t_2^0+,J}_{t_2-t_2^0} \circ g_{K^{1,t_2^0}_{t_1}}^{-1}$, which maps $\HH\sem (g_{K^{1,t_2^0}_{t_1}}\circ J(K^{2,t_2^0+}_{t_2-t_2^0}))$ conformally onto $\HH$, and extends to a conformal map from $\C\sem ( g_{K^{1,t_2^0}_{t_1}}\circ J((K^{2,t_2^0+}_{t_2-t_2^0})^{\doub}))$, which contains $\ha w_1^*(t_1^0,t_2)$, into $\C $.   From $\ha w_1^*(\ulin t)=g_{K^{1,t_2}_{t_1}}(\eta_1^*(\ulin t))$, $\ha w_1^*(t_1,t_2^0)=g_{K^{1,t_2^0}_{t_1}}(\eta_1^*(t_1,t_2^0))$, and $\eta_1^*(\cdot,t_2)=g^{2,t_2^0+,J}_{t_2-t_2^0}\circ \eta_1^*(\cdot,t_2^0)$, we get $w_1^*(\ulin t)=G_{\ulin t}(w_1^*(t_1,t_2^0))$. Since $w_1^*(\cdot,t_2^0)=w_1^{t_2^0}$, which is continuous in $t_1$, to show that $w_1^*$ is continuous in $\ulin t$, it suffices to show that $G_{\ulin t}(x)$ is jointly continuous in $\ulin t$ and $x$ on a domain that contains $\{(\ulin t,x): \ulin t\in R_{\ulin v}, x\in S_{K^{1,t_2}_{t_1}}\}$.

Since $G_{\ulin t}$ is a conformal map for each $\ulin t\in\ulin v$, to prove the joint continuity, by maximum principle, it suffices to show that, for any fixed $\ulin t^*=(t_1^*,t_2^*)\in R_{\ulin v}$, there is a Jordan curve $\xi$, whose interior domain contains $S_{K^{1,t_2^0}_{t_1^*}}$, and is contained in $\C\sem ( g_{K^{1,t_2^0}_{t_1^*}}\circ J((K^{2,t_2^0+}_{t_2-t_2^0})^{\doub}))$, such that as $R_{\ulin v}\ni \ulin t^n\to \ulin t^*$, $G_{\ulin t^n}\to G_{\ulin t^*}$ uniformly on $\xi$. To choose such $\xi$, we may first find a Jordan curve $\xi'$, which separates $(L^1)^{\doub}$ from $J((L^2)^{\doub})$, and then let $\xi=g_{K^{1,t_2^0}_{t_1^*}}(\xi')$. Then we use Lemma \ref{compact3'}  to conclude that  $G_{\ulin t^n}\to G_{\ulin t^*}$ uniformly on $\xi$ if $R_{\ulin v}\ni \ulin t^n\to \ulin t^*$.

Now we have proved the claim. Since ${\cal D}_2$ is the union of $R_{\ulin v}$ over those $\ulin v\in\cal A$ such that $R_{\ulin v}\subset {\cal D}_2$, from the claim we conclude the existence of continuous maps $\eta_1^*$, $u_1^* $ and $\ha w_1^*$ defined on ${\cal D}_2$ with all properties described in the statement   except that now we can only say that a.s.\ $\eta_1^*(\cdot,t_2)$, $u_1^*(\cdot,t_2)$ and $\ha w_1^*(\cdot,t_2)$ are respectively equal to the restrictions of $\eta_1^{t_2}$, $u_1^{t_2}$ and $\ha w_1^{t_2}$ to $[0,T_2(t_2))$, where  $T_2(t_2):=\inf\{t_1:\eta_1(t_1)=\eta_2^J(t_2)\}$. From $\eta_1|_{[0,\phi_2(t_2))}=f^{2,J}_{t_2}\circ \eta_1^{t_2}$ we know that a.s.\ $f^{2,J}_{t_2}\circ \eta_1^*(\cdot,t_2)$ equals $\eta_1|_{[0,T_2(t_2))}$. We know that $\eta_1^{t_2}$, $u_1^{t_2}$ and $\ha w_1^{t_2}$ are defined on $[0,\phi_2(t_2))$. To conclude the proof, it suffices to show that a.s.\ $T_2(t_2)= \phi_2(t_2)$.

Since $\eta_1|_{[0,\phi_2(t_2)]}$ is a time-change of a chordal SLE$_\kappa$ in $\HH\sem J(K^{2,J}_{t_2})$ from $0$ to the prime end $\eta_2^J(t_2)$, $\eta_1$ first  visits the prime end $\eta_2^J(t_2)$ at the time $\phi_2(t_2)$. We claim that a.s.\ $\eta_2^J(t_2)$ is the only prime end of $\HH\sem J(K^{2,J}_{t_2})$, which determines the boundary point $\eta_2^J(t_2)$. When $\kappa\le 4$ this is true since  $\eta_2^J$ is simple. When $\kappa>4$, this follows from \cite[Theorem 6.1]{duality2}. Thus, a.s.\ $\eta_1$ first visits the point $\eta_2^J(t_2)$ at the time $\phi_2(t_2)$.  So we get a.s.\ $T_2(t_2)= \phi_2(t_2)$, as desired.
\end{proof}

By discarding a null event we may assume that $\eta_1(t_1)=f^{2,J}_{t_2}\circ \eta_1^*(t_1,t_2)$ for every $(t_1,t_2)\in{\cal D}_2$.
For $\ulin t=(t_1,t_2)\in{\cal D}_2$, let $K^{1,t_2}_{t_1}=\Hull(\eta_1^*([0,t_1],t_2))$ and $f^{1,t_2}_{t_1}$ be the centered Loewner map $f_{K^{1,t_2}_{t_1},\eta^{*}_1(t_1,t_2)}$. Let $f^1_{\ulin t}=f^{2,J}_{t_2}\circ f^{1,t_2}_{t_1}$.  It is clear that $\{\ulin t\in{\cal D}_2\}=\{\eta_2^J(t_2)\not\in \eta_1[0,t_1]\}\in\F_{\ulin t}$, and $f^{2,J}_{t_2}$ is $\F^2_{t_2}$-measurable.
Since $\eta_1^*(t,t_2)$, $0\le t\le t_1$, are determined by $\eta_1|_{[0,t_1]}$ and $\eta_2|_{[0,t_2]}$, $f^{1,t_2}_{t_1}$  defined on the event $\{\ulin t\in{\cal D}_2\}$ is $\F_{\ulin t}$-measurable, and so is  $f^1_{\ulin t}=f^{2,J}_{t_2}\circ f^{1,t_2}_{t_1}$.
Since $\eta_1^*$ is continuous on ${\cal D}_2$, by Proposition \ref{compact-hull}, $(t_1,t_2;z)\mapsto f^{1,t_2}_{t_1}(z)$ is jointly continuous on ${\cal D}_2\times\HH$. Since the same is obviously true for $f^{2,J}_{t_2}(z)$,  $f^1_{\ulin t}(z)$ is also jointly continuous on ${\cal D}_2\times \HH$.



\begin{Lemma}
\begin{enumerate}
\item [(i)] For every $\ulin t=(t_1,t_2)\in{\cal D}_2$, $f^1_{\ulin t}$ is a conformal map on $\HH$,  has continuation on $\lin\HH^\#$, and maps $0$ and $\infty$ respectively to the points $\eta_1(t_1)$ and $\eta_2^J(t_2)$. Moreover, $f^1_{\ulin t}(\HH)$ is the connected component of $\HH\sem (\eta_1[0,t_1]\cup \eta_2^J[0,t_2])$ which  shares the prime end $\eta^J_2(t_2)$ with $\HH\sem K^{2,J}_{t_2}$; and $f^1_{\ulin t}$ sends the prime end $\infty$ to the prime end $\eta_2^J(t_2)$.
    \item [(ii)] If $\ulin t=(t_1,t_2)\in{\cal D}_{\cap}$, then besides the results in (i), $f^1_{\ulin t}(\HH)$ also shares the prime end $\eta_1(t_1)$ with $\HH\sem K^1_{t_1}$, and $f^1_{\ulin t}$ sends the prime end $0$  to the prime end  $\eta_1(t_1)$.
    \end{enumerate}
\label{f1t}
\end{Lemma}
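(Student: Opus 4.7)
The plan is to exploit the factorization $f^1_{\ulin t}=f^{2,J}_{t_2}\circ f^{1,t_2}_{t_1}$ built into the definition. First, the centered chordal Loewner map $f^{1,t_2}_{t_1}:\HH\conf \HH\sem K^{1,t_2}_{t_1}$ fixes $\infty$ and sends the prime end $0$ of $\HH$ to the prime end $\eta_1^*(t_1,t_2)$ of $\HH\sem K^{1,t_2}_{t_1}$, extending continuously to $\lin\HH^{\#}$ because the hull $K^{1,t_2}_{t_1}$ is generated by the continuous curve $\eta_1^*(\cdot,t_2)$. Likewise, $f^{2,J}_{t_2}=J\circ f^2_{t_2}\circ J:\HH\conf \HH\sem K^{2,J}_{t_2}$ fixes $0$ and sends $\infty$ to the prime end $\eta_2^J(t_2)$, with continuous extension to $\lin\HH^{\#}$. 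The composition $f^1_{\ulin t}$ is thus conformal from $\HH$ onto $f^{2,J}_{t_2}(\HH\sem K^{1,t_2}_{t_1})$ with continuous extension to $\lin\HH^{\#}$, and the boundary identifications $f^1_{\ulin t}(0)=\eta_1(t_1)$ (by the last assertion of Lemma \ref{version}) and $f^1_{\ulin t}(\infty)=\eta_2^J(t_2)$ follow at once.

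To finish (i), I would let $U$ denote the asserted connected component of $\HH\sem(\eta_1[0,t_1]\cup\eta_2^J[0,t_2])$. Because the prime end $\eta_2^J(t_2)$ is approached from the unbounded component $\HH\sem K^{2,J}_{t_2}$ of $\HH\sem \lin{\eta_2^J[0,t_2]}$, one has $U\subset \HH\sem K^{2,J}_{t_2}$, so $U$ is a connected component of $(\HH\sem K^{2,J}_{t_2})\sem \eta_1[0,t_1]$ that contains a neighborhood of the prime end $\eta_2^J(t_2)$. Pulling back by $(f^{2,J}_{t_2})^{-1}$ and using $\eta_1(s)=f^{2,J}_{t_2}(\eta_1^*(s,t_2))$ for $0\le s\le t_1$ supplied by Lemma \ref{version}, the preimage $(f^{2,J}_{t_2})^{-1}(U)$ is the component of $\HH\sem \eta_1^*([0,t_1],t_2)$ containing a neighborhood of $\infty$, which is exactly $\HH\sem K^{1,t_2}_{t_1}$ by the definition of the chordal hull. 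Hence $U=f^1_{\ulin t}(\HH)$, and the correspondence $\infty\mapsto \eta_2^J(t_2)$ at the level of prime ends follows from the fact that conformal bijections transport prime ends.

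For (ii), the extra hypothesis $\eta_1(t_1)\notin \eta_2^J[0,t_2]$ is used to promote $f^1_{\ulin t}(0)=\eta_1(t_1)$ to an equality of prime ends. The idea is local: take a small half-disc $N_r=\{|z|<r\}\cap \HH$; its image under $f^{1,t_2}_{t_1}$ is a neighborhood $W_r$ of the prime end $\eta_1^*(t_1,t_2)$ inside the unbounded component $\HH\sem K^{1,t_2}_{t_1}$ of $\HH\sem\lin{\eta_1^*([0,t_1],t_2)}$. Since $f^{2,J}_{t_2}$ is a conformal bijection on $\HH$ and hence a local homeomorphism preserving the two sides of the curve $\eta_1^*(\cdot,t_2)$ near its tip, $f^{2,J}_{t_2}(W_r)=f^1_{\ulin t}(N_r)$ is a neighborhood of the corresponding prime end of $U$ determined by the point $\eta_1(t_1)$, lying on the unbounded side of $\eta_1$ at its tip. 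For $r$ small, this image is contained in a small disc around $\eta_1(t_1)$ disjoint from $\eta_2^J[0,t_2]$ (using the hypothesis); combined with $f^1_{\ulin t}(N_r)\subset U\subset \HH\sem \eta_1[0,t_1]$, this places $f^1_{\ulin t}(N_r)$ in $\HH\sem\lin{\eta_1[0,t_1]}$, and by the side-preservation in its unbounded component $\HH\sem K^1_{t_1}$. Shrinking $r$, one obtains a neighborhood basis of the Loewner-defined prime end $\eta_1(t_1)$ of $\HH\sem K^1_{t_1}$ all lying in $U$, establishing both the shared-prime-end claim and the identification $f^1_{\ulin t}(0)=\eta_1(t_1)$ as prime ends.

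The main subtlety I expect to handle carefully is the local-to-global prime-end bookkeeping: in (i), ensuring that $(f^{2,J}_{t_2})^{-1}(\eta_1[0,t_1])$ inside $\HH\sem K^{2,J}_{t_2}$ equals $\eta_1^*([0,t_1],t_2)$ even along boundary intersections; and in (ii), making rigorous the side-preservation argument so that $f^{2,J}_{t_2}$ matches the Loewner-defined prime end $\eta_1^*(t_1,t_2)$ of $\HH\sem K^{1,t_2}_{t_1}$ with the Loewner-defined prime end $\eta_1(t_1)$ of $\HH\sem K^1_{t_1}$ (as opposed to any bounded-side prime end potentially sharing the same Euclidean location when $\eta_1[0,t_1]$ self-intersects).
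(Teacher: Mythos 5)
Part (i) of your proposal is correct and essentially the paper's argument run in reverse: rather than pushing $f^{1,t_2}_{t_1}(\HH)=\HH\sem K^{1,t_2}_{t_1}$ forward through $f^{2,J}_{t_2}$ and recognizing the image, you pull $U$ back and recognize the preimage; both directions are sound and of the same difficulty.

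Part (ii) has a genuine gap, and it sits exactly where you flag it. Your half-disc/side-preservation sketch needs to place $f^1_{\ulin t}(N_r)$ in the \emph{unbounded} component $\HH\sem K^1_{t_1}$ rather than in a bounded complementary component of $\eta_1[0,t_1]$, but for $\kappa\in(4,8)$ the curve is non-simple, the Euclidean point $\eta_1(t_1)$ can lie on the boundary of several complementary components, and ``the two sides of the curve near its tip'' is not an operationally available notion; orientation-preservation of $f^{2,J}_{t_2}$ by itself does not single out the unbounded side, because $f^{2,J}_{t_2}$ does not preserve $\infty$. You would still need an independent argument that the image neighborhoods lie in $\HH\sem K^1_{t_1}$. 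The paper closes exactly this gap by using the \emph{future} of the curve: choosing $\delta>0$ with $\eta_1[t_1,t_1+\delta]\cap\eta_2^J[0,t_2]=\emptyset$, and letting $S=\{t\in[t_1,t_1+\delta]:\eta_1(t)\in\HH\sem K^1_{t_1}\}$, which is dense in $[t_1,t_1+\delta]$ by the Loewner curve structure. The points $\eta_1(S)$ are then shown to lie simultaneously in $D^1_{\ulin t}$ and in $f^1_{\ulin t}(\HH)$, forcing these two components of $\HH\sem(\eta_1[0,t_1]\cup\eta_2^J[0,t_2])$ to coincide; finally, sending $t\downarrow t_1$ along $S$ matches the Loewner-defined prime end $\eta_1^*(t_1,t_2)$ of $\HH\sem K^{1,t_2}_{t_1}$ with the Loewner-defined prime end $\eta_1(t_1)$ of $\HH\sem K^1_{t_1}$. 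That limiting-along-$S$ argument is precisely the concrete replacement for your ``side-preservation'' step, and your proposal is incomplete without some analogue of it.
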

\begin{proof}
(i)  Let $\ulin t=(t_1,t_2)\in{\cal D}_2$.  Since $f^{2,J}_{t_2}$ and $f^{1,t_2}_{t_1}$ are both conformal maps from $\HH$ into $\HH$, and have continuation on $\lin\HH^\#$, the same is true for $f^1_{\ulin t}=f^{2,J}_{t_2}\circ f^{1,t_2}_{t_1}$. Since $f^{1,t_2}_{t_1}(0)=\eta_1^*(\ulin t)$, we get $f^1_{\ulin t}(0)=f^{2,J}_{t_2}\circ \eta_1^*(\ulin t)=\eta_1(t_1)$. Since $f^{1,t_2}_{t_1}(\HH)=\HH\sem K^{1,t_2}_{t_1}$ is a connected component of $\HH\sem \eta_1^*([0,t_1],t_2)$, $f^1_{\ulin t}(\HH)$ is a connected component of $f^{2,J}_{t_2}(\HH)\sem f^{2,J}_{t_2}\circ \eta_1^*([0,t_1],t_2)=\HH\sem (K^{2,J}_{t_2}\cup \eta_1[0,t_1])$,
which is also a connected component of $\HH\sem (\eta_2^J[0,t_2]\cup \eta_1[0,t_1])$.

 Since $f^{1,t_2}_{t_1}(\HH)=\HH\sem K^{1,t_2}_{t_1}$ is the connected component of $\HH\sem \eta_1^*([0,t_1],t_2)$ that shares the prime end $\infty$ with $\HH$,  we see that $f^1_{\ulin t}(\HH)=f^{2,J}_{t_2}(\HH\sem K^{1,t_2}_{t_1})$ is contained in and shares the prime end $f^{2,J}_{t_2}(\infty)=\eta_2^J(t_2)$ with $f^{2,J}_{t_2}(\HH)=\HH\sem  K^{2,J}_{t_2}$. Moreover, in terms of prime ends, we have
 $$f^1_{\ulin t}(\infty)=J\circ f^2_{t_2}\circ J\circ f^{1,t_2}_{t_1}(\infty)=J\circ f^2_{t_2}\circ J(\infty)
 =J\circ f^2_{t_2}(0)= \eta^J(t_2).$$



(ii) Suppose now $\ulin t\in{\cal D}_{\cap}$. Then $\eta_1(t_1)\not\in \eta_2^J[0,t_2]$. So there is a connected component of $\HH\sem (\eta_1[0,t_1]\cup \eta_2^J[0,t_2])$, which is contained in and shares the prime end $\eta_1(t_1)$ with $\HH\sem K^1_{t_1}$. Let the domain be denoted by $D^1_{\ulin t}$.  There is $\delta>0$ such that $\eta_1[t_1,t_1+\delta]\cap \eta_2^J[0,t_2]=\emptyset$. Let $S=\{t\in[t_1,t_1+\delta]:\eta_1(t)\in \HH\sem K^1_{t_1}\}$. Then $\eta_1(S)\subset D^1_{\ulin t}$, and $S$ is dense in $[t_1,t_1+\delta]$. Since $\eta_1 =f^{2,J}_{t_2}\circ \eta_1^*(\cdot,t_2)$, we get $\eta_1^*(S,t_2)\subset \HH\sem \eta_1^*([0,t_1],t_2)$, which implies that $\eta_1^*(S,t_2)\subset \HH\sem K^{1,t_2}_{t_1}=f^{1,t_2}_{t_1}(\HH)$. So $\eta_1(S)\subset f^{2,J}_{t_2}\circ f^{1,t_2}_{t_1}(\HH)=f^1_{\ulin t}(\HH)$. Now $f^1_{\ulin t}(\HH)$ and $D^1_{\ulin t}$ are both some connected component of $\HH\sem (\eta_1[0,t_1]\cup \eta_2^J[0,t_2])$, and have nonempty intersection: $\eta_1(S)$. So they must agree. Thus, $f^1_{\ulin t}(\HH)$ is contained in and shares the prime end $\eta_1(t_1)$ with $\HH\sem K^1_{t_1}$.

As $t\downarrow t_1$ along $S$, $\eta_1^*(t,t_2)$ and $\eta_1(t)=f^{2,J}_{t_2}\circ \eta_1^*(t,t_2)$ respectively approach the prime ends $\eta_1^*(t_1,t_2)$ of $\HH\sem K^{1,t_2}_{t_1}$ and $\eta_1(t_1)$ of $D^1_{\ulin t}$. So $f^{2,J}_{t_2} $ sends the prime end $\eta_1^*(\ulin t)$ to the prime end $\eta_1(t_1)$. Thus, in terms of prime ends, $f^1_{\ulin t}(0)=f^{2,J}_{t_2}\circ f^{1,t_2}_{t_1}(0)=f^{2,J}_{t_2}(\eta_1^*(\ulin t))=\eta_1(t_1)$.
\end{proof}

\begin{Lemma}
  For any $\lin\F$-stopping time $\ulin T$, the event $\{\ulin T\in{\cal D}_2\}$ is $\lin{\F}_{\ulin T}$-measurable, and $f^1_{\ulin T}$ is an $\lin{\F}_{\ulin T}$-measurable random conformal map defined on the event $\{\ulin T\in{\cal D}_2\}$. \label{TinD}
\end{Lemma}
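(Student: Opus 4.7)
The plan is to reduce the measurability claim to the deterministic-time case by using the characterization of $\lin\F$-stopping times in Proposition \ref{right-continuous-0}: it suffices to show, for every deterministic $\ulin t\in\R_+^2$, that $\{\ulin T\in{\cal D}_1\}\cap\{\ulin T<\ulin t\}\in\F_{\ulin t}$ and that $f^1_{\ulin T}(z){\bf 1}_{\{\ulin T\in{\cal D}_1\}\cap\{\ulin T<\ulin t\}}$ is $\F_{\ulin t}$-measurable for every $z\in\HH$. Two ingredients will drive the argument: (a) the random function $d(\ulin s):=\dist(\eta_2^J(s_2),\eta_1[0,s_1])$ is jointly continuous on $\R_+^2$, its restriction to $[\ulin 0,\ulin t]$ is determined by $\eta_1|_{[0,t_1]}$ and $\eta_2|_{[0,t_2]}$ (hence $\F_{\ulin t}$-measurable as a random continuous function on $[\ulin 0,\ulin t]$), and ${\cal D}_1=\{d>0\}$; (b) for each $z\in\HH$, the map $\ulin s\mapsto f^1_{\ulin s}(z)$ on ${\cal D}_1\cap[\ulin 0,\ulin t]$ is jointly continuous (established in the paragraph preceding the statement) and $\F_{\ulin s}\subset\F_{\ulin t}$-measurable at each fixed $\ulin s\in{\cal D}_1\cap[\ulin 0,\ulin t]$, since $f^1_{\ulin s}=f^{2,J}_{s_2}\circ f^{1,s_2}_{s_1}$ is built from $\F_{\ulin s}$-measurable objects.

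The key auxiliary fact to establish is that on the event $\{\ulin T<\ulin t\}$ the random point $\ulin T$ is $\F_{\ulin t}$-measurable. The Borel $\sigma$-algebra of $[\ulin 0,\ulin t)$ is generated by the half-open boxes $[\ulin 0,\ulin s)$ with $\ulin s\in\Q_+^2\cap[\ulin 0,\ulin t]$, and for such $\ulin s$ one has
\BGE  \{\ulin T<\ulin s\}\cap\{\ulin T<\ulin t\}=\{\ulin T<\ulin s\}\in \F_{\ulin s}\subset\F_{\ulin t},\EDE
where the first containment uses Proposition \ref{right-continuous-0}. A standard monotone-class argument then gives $\{\ulin T\in B\}\cap\{\ulin T<\ulin t\}\in\F_{\ulin t}$ for every Borel $B\subset[\ulin 0,\ulin t)$, which is exactly the required measurability of $\ulin T$ on $\{\ulin T<\ulin t\}$.

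With the auxiliary fact in hand, the remaining steps are routine. Combining it with (a), the composition $d(\ulin T){\bf 1}_{\{\ulin T<\ulin t\}}$ is $\F_{\ulin t}$-measurable, so $\{\ulin T\in{\cal D}_1\}\cap\{\ulin T<\ulin t\}=\{d(\ulin T)>0\}\cap\{\ulin T<\ulin t\}\in\F_{\ulin t}$; letting $\ulin t$ range over $\R_+^2$ and invoking Proposition \ref{right-continuous-0} yields $\{\ulin T\in{\cal D}_1\}\in\lin\F_{\ulin T}$. Combining it instead with (b) gives, for every $z\in\HH$ and every $\ulin t$, that $f^1_{\ulin T}(z){\bf 1}_{\{\ulin T\in{\cal D}_1\}\cap\{\ulin T<\ulin t\}}\in\F_{\ulin t}$, whence $f^1_{\ulin T}(z){\bf 1}_{\{\ulin T\in{\cal D}_1\}}$ is $\lin\F_{\ulin T}$-measurable. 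A conformal self-map of $\HH$ is determined by its values on any countable dense subset, so this upgrades to $\lin\F_{\ulin T}$-measurability of the full random map $f^1_{\ulin T}$ on $\{\ulin T\in{\cal D}_1\}$.

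The main technical hurdle I anticipate is the auxiliary $\F_{\ulin t}$-measurability of $\ulin T$ on $\{\ulin T<\ulin t\}$. In the one-parameter case this is textbook, but in the two-parameter setting with only a separable right-continuous filtration at hand, one cannot in general decompose $\ulin T=(T_1,T_2)$ into coordinate stopping times, so the argument must proceed through the box-generated Borel structure and rely crucially on the strict-inequality characterization of $\lin\F$-stopping times in Proposition \ref{right-continuous-0}. Once this piece is secured, the continuity of $d$ and of $\ulin s\mapsto f^1_{\ulin s}(z)$ established in Lemma \ref{version} and the subsequent discussion does the rest with no further surprises.
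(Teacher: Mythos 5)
Your proposal takes a genuinely different route from the paper. The paper discretizes the stopping time by dyadic upper and lower approximations $\ulin T^{\lfloor n\rfloor}\le\ulin T\le\ulin T^n$, decomposes $\{\ulin T^n=\ulin t\}$ over the countably many grid values $\ulin t$, observes that $\{[\ulin T^{\lfloor n\rfloor},\ulin T^n]\subset{\cal D}_1\}\cap\{\ulin T^n=\ulin t\}=\{\eta_2^J[s_2,t_2]\cap\eta_1[0,t_1]=\emptyset\}\cap\{\ulin T^n=\ulin t\}\in\F_{\ulin t}$ (with $\ulin s=\ulin t-(2^{-n},2^{-n})$), and then lets $n\to\infty$ using the openness of ${\cal D}_1$ and the decreasing-stopping-time part of Proposition \ref{right-continuous-0}. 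You instead establish the auxiliary fact that the restriction of $\ulin T$ to $\{\ulin T<\ulin t\}$ is $\F_{\ulin t}$-measurable (via the $\pi$--$\lambda$ argument on the boxes $[\ulin 0,\ulin s)$ and the strict-inequality characterization of $\lin\F$-stopping times), and then push forward through the jointly continuous function $d(\ulin s)=\dist(\eta_2^J(s_2),\eta_1[0,s_1])$. For the set $\{\ulin T\in{\cal D}_1\}$ this is clean and arguably more transparent than the paper's; the auxiliary fact you isolate is the right two-parameter analogue of the classical statement that $T\wedge t$ is $\F_t$-measurable, and it is applicable more broadly.

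Where you have glossed over something real is in the $f^1_{\ulin T}$ step. You write ``combining it instead with (b)'' as if it were the same composition as for $d$, but it is not: $d|_{[\ulin 0,\ulin t]}$ is a globally defined $\F_{\ulin t}$-measurable random element of $C([\ulin 0,\ulin t],\R)$, so evaluating it at the $\F_{\ulin t}$-measurable point $\ulin T$ is a direct composition of measurable maps, whereas $\ulin s\mapsto f^1_{\ulin s}(z)$ is defined only on the \emph{random} open set ${\cal D}_1\cap[\ulin 0,\ulin t]$, and its extension by $0$ to all of $[\ulin 0,\ulin t]$ is not continuous. Thus one cannot invoke the same ``evaluate a random continuous function at a measurable random point'' reasoning verbatim. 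The gap is closable within your framework, e.g.\ by the cutoff device: set $g_m(\ulin s)=f^1_{\ulin s}(z)\,\phi_m(d(\ulin s))$ with $\phi_m$ continuous, $\equiv 0$ on $(-\infty,1/(2m)]$ and $\equiv 1$ on $[1/m,\infty)$; each $g_m$ is a globally continuous $\F_{\ulin t}$-measurable random element of $C([\ulin 0,\ulin t],\R)$, so $g_m(\ulin T){\bf 1}_{\{\ulin T<\ulin t\}}$ is $\F_{\ulin t}$-measurable, and on the $\F_{\ulin t}$-measurable set $\{d(\ulin T)>1/m\}\cap\{\ulin T<\ulin t\}$ one has $g_m(\ulin T)=f^1_{\ulin T}(z)$; let $m\to\infty$. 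Alternatively one can simply revert to the paper's dyadic approximation $\ulin T^n$ for this step, since $f^1_{\ulin T^n}$ is trivially $\F_{\ulin T^n}$-measurable on $\{\ulin T^n\in{\cal D}_1\}$ (countably many values) and joint continuity of $f^1_{\ulin t}(z)$ on ${\cal D}_1\times\HH$ passes the measurability to the limit. So the plan survives, but the part you dismissed as ``routine'' is exactly where the random domain intervenes, and your write-up should say how you deal with it.
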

\begin{proof} Let $\ulin T=(T_1,T_2)$ be an $\lin\F$-stopping time. For each $n\in\N$ and $j=1,2$, define $T^{\lfloor n\rfloor}_j=2^{-n}\lfloor 2^n T_j\rfloor$ and $T^n_j=T^{\lfloor n\rfloor}_j+2^{-n}$. Let
$\ulin T^{\lfloor n\rfloor}=(T^{\lfloor n\rfloor}_1,T^{\lfloor n\rfloor}_2)$ and $\ulin T^n=(T^n_1,T^n_2)$. Then $\ulin T^{\lfloor n\rfloor}\le T\le \ulin T^{n}$, $\ulin T^n$ is an $\F$-stopping time, $\ulin T^{\lfloor n\rfloor}\uparrow \ulin T$, and $\ulin T^n\downarrow \ulin T$. For each $n\in\N$ and $\ulin t\in (2^{-n}\N)^2$, by Proposition \ref{T<S}
$$\{[\ulin T^{\lfloor n\rfloor},\ulin T^n]\subset {\cal D}_2\}\cap \{\ulin T^n=\ulin t\}
=\{[\ulin t-(2^{-n},2^{-n}),\ulin t]\subset {\cal D}_2\} \cap \{\ulin T^n=\ulin t\}$$
$$=\{\eta_2^J[t_2-2^{-n},t_2]\cap \eta_1 [0,t_1]=\emptyset\} \in \F_{\ulin t}\cap \{\ulin T^n=\ulin t\}\subset \F_{\ulin T^n}.$$
Thus, $\{[\ulin T^{\lfloor n\rfloor},\ulin T^n]\subset {\cal D}_2\}=\bigcup_{\ulin t\in (2^{-n}\N)^2} (\{[\ulin T^{\lfloor n\rfloor},\ulin T^n]\subset {\cal D}_2\}\cap \{\ulin T^n=\ulin t\})\in \F_{\ulin T^n}$.
Similarly, for each $\ulin t\in (2^{-n}\N)^2$, $f^1_{\ulin T^n}$ restricted to $\{\ulin T^n\in {\cal D}_2\}\cap \{\ulin T^n=\ulin t\}$ is measurable w.r.t.\ $\F_{\ulin t}\cap \{\ulin T^n=\ulin t\}\subset \F_{\ulin T^n}$, and so $f^1_{\ulin T^n}$ is $\F_{\ulin T^n}$-measurable on $\{\ulin T^n\in {\cal D}_2\}$.

Since ${\cal D}_2$ is open, by Proposition \ref{right-continuous-0},
$\{\ulin T\in{\cal D}_2\}=\bigcup_{m\ge l}\bigcap_{n\ge m} \{[\ulin T^{\lfloor n\rfloor},\ulin T^n]\subset {\cal D}_2\}\in \F_{\ulin T^l}$ for any $l\in\N$. Thus, $\{\ulin T\in{\cal D}_2\}\in \bigcap_{l} \F_{\ulin T^l}=\lin\F_{\ulin T}$.
Since $f^1_{\ulin t}(z)$ is jointly continuous on ${\cal D}_2\times \HH$,    $f^1_{\ulin T^n}\luto f^1_{\ulin T}$ in $\HH$ on $\{\ulin T\in{\cal D}_2\}$. By Proposition \ref{right-continuous-0}, $f^1_{\ulin T}$ is $\lin\F_{\ulin T}$-measurable.
\end{proof}


\begin{Lemma}
  Let   $\F_n$, $n\in\N$,  and $\F_0$ be sub-$\sigma$-algebras of a probability space $(\Omega,\F,\PP)$ such that $\F_0\subset\bigcap_n \F_n$. Let $E_n\in\F_n$, $n\in\N\cup\{0\}$,   be such that $E_0\subset\liminf E_n$. For $n\in\N\cup\{0\}$, let $X_n(t)$, $t\ge 0$, be a continuous process defined on the event $E_n$ such that a.s.\ on $E_0$, $X_n\to X_0$ pointwise on $ \R_+$. Let $\mu$ be a probability measure on $C(\R_+)$. Suppose for each $n\in\N$, the conditional law of $X_n$ given $\F_n$ and $E_n$ is $\mu$. Then the conditional law of $X_0$ given $\F_0$ and $E_0$ is also $\mu$. \label{Lemma-indep}
\end{Lemma}
\begin{proof} Let $S=\{t_1<\cdots<t_m\}$ be a nonempty finite subset of $\R_+$, and $\mu_S$ be the finite dimensional distribution for $\mu$ on $S$. Then for any $n\in\N$, the conditional law of $(X_n(t_1),\dots,X_n(t_m))$ given $\F_n$ and $E_n$ is $\mu_S$. It suffices to show that the same is true for $X_0$, $\F_0$ and $E_0$. Let $f\in C_b(\R^m,\R)$ 
and $A_0\in \F_0\cap E_0$. Then for every $n\in\N$, $A_0\cap E_n\in \F_n\cap E_n$, and so
$$\EE[{\bf 1}_{A_0\cap E_n} f(X_n(t_1),\dots,X_n(t_m))]=\PP[A_0\cap E_n] \int f d\mu_S.$$
Since $E_0\subset\liminf E_n$, we have ${\bf 1}_{A_0\cap E_n}\to {\bf 1}_{A_0}$. By the continuity of $f$ and that $X_n\to X$, we get $f(X_n(t_1),\dots,X_n(t_m))\to f(X_0(t_1),\dots,X_0(t_m))$.  Letting $n\to \infty$ and applying the dominated convergence theorem, we get
$\EE[{\bf 1}_{A_0 } f(X_0(t_1),\dots,X_0(t_m))]=\PP[A_0] \int f d\mu_S$.
Since this holds for any $f\in C_b(\R^m)$, the conditional law of  $(X_0(t_1),\dots,X_0(t_m))$ given $\F_0$ and $E_0$ is $\mu_S$.
\end{proof}

\begin{proof}[Proof of Lemma \ref{DMP-lem}]
By Lemma \ref{TinD}, $\{\ulin T\in{\cal D}_2\}\in \lin\F_{\ulin T}$ and $f^1_{\ulin T}$ is $\lin\F_{\ulin T}$-measurable. Symmetrically, $\{\ulin T\in{\cal D}_1\}\in \lin\F_{\ulin T}$. So $\{\ulin T\in{\cal D}_{\cap}\}\in \lin\F_{\ulin T}$.
We are going to show that Lemma \ref{DMP-lem} holds with $f_{\ulin T}:=f^1_{\ulin T}=f^{2,J}_{T_2}\circ f^{1,T_2}_{T_1}$. By Lemma \ref{f1t}, on the event $\{\ulin T\in{\cal D}_\cap\}$, $f^1_{\ulin T}$ maps $\HH$ conformally onto $H^1_{\ulin T}$, and sends $0$ and $\infty$ respectively to $\eta_1(t_1)$ and $\eta_2^J(t_2)$.

 We first consider four special cases.  In the first three cases we prove a stronger result, where $\{\ulin T\in{\cal D}_{\cap}\}$ is replaced by a bigger event $\{\ulin T\in{\cal D}_2\}$. 

   \ulin{Case 1.} The $\ulin T$ is a deterministic point $\ulin t=(t_1,t_2)\in\R_+^2$. This is not completely trivial because we have to handle the right-continuous augmentation. We prove Case 1 in two steps.

   \ulin{Step 1.} We prove that the statement holds with $\lin\F$ replaced by $\F$ (without right-continuous augmentation). We give a complete proof for this simpler statement, which also prepares us for the proof in Step 2.  We write $E$ for the event $\{\ulin t\in {\cal D}_2\}$. 

Recall that $\zeta_1^{t_2}$ is a standard chordal SLE$_\kappa$ curve independent of $\F^1_{t_2}$, $u_1^{t_2}$ is an increasing homeomorphism from $[0,\phi_2(t_2)]$ onto $[0,\infty]$, and $\eta_1=f^{2,J}_{t_2}\circ \zeta^1_{t_2}\circ u^1_{t_2}$ on $[0,\phi_2(t_2)]$.
Let $\F^{1,t_2}_t=\sigma(\sigma(\zeta_1^{t_2}(s):s\le t)\cup \F^2_{t_2})$, $t\ge 0$. We extend $u_1^{t_2}$ from  $[0, \phi_2(t_2))$ to $[0,\infty]$ such that it equals $\infty$ on $[\phi_2(t_2),\infty]$.   For any $a\ge 0$, $\{ u_1^{t_2}(t_1)\le a\}=\{\hcap_2( f^{2,J}_{t_2}\circ  \zeta_1^{t_2}[0,a])\ge t\}\in \F^{1,t_2}_a$. Thus, $u_1^{t_2}(t_1)$ is an $\F^{1,t_2}$-stopping time. 
Applying the usual DMP to the standard chordal SLE$_\kappa$ curve $\zeta_1^{t_2}$ at the time $ u_1^{t_2}(t_1)$, we get a standard chordal SLE$_\kappa$ curve $ \zeta^{\ulin t}_1$ defined on the event $E$, which is conditionally independent of $\F^{1,t_2}_{ u _1^{t_2}(t_1)}$, such that $\zeta_1^{t_2}(u_1^{t_2}(t_1)+\cdot)= f^{1,t_2}_{t_1} \circ \zeta^{\ulin t}_1$ a.s\ on $E$. 
Let $u_1^{\ulin t}=(u_1^{t_2})^{-1}(u_1^{t_2}(t_1)+\cdot)$. Then $u_1^{\ulin t}$ is an increasing homeomorphism from $[0,\infty]$ onto $[t_1,\phi_2(t_2)]$. Combining $f^1_{\ulin t}=f^{2,J}_{t_2}\circ  f^{1,t_2}_{t_1}$, $\eta_1= f^{2,J}_{t_2}\circ \zeta_1^{t_2}\circ u_1^{t_2}$ and $ \zeta_1^{t_2}(u_1^{t_2}(t_1)+\cdot)= f^{1,t_2}_{t_1} \circ \zeta^{\ulin t}_1$, we get  $\eta_1\circ u^{\ulin t}_1=f^1_{\ulin t}\circ \zeta^{\ulin t}_1$ a.s.\ on $E$.

Since $\F^2_{t_2}\subset \F^{1,t_2}_{u_1^{t_2}(t_1)}$, and on the event $E$, for $0\le t\le t_1$, $\eta_1(t)= f^{2,J}_{t_2}\circ  \zeta_1^{t_2}\circ u_1^{t_2}(t)$, we have  $\F_{\ulin t}\cap E\subset\F^{1,t_2}_{u_1^{t_2}(t_1)}$. Since given $E$, $\zeta^{\ulin t}_1$ is conditionally independent of $\F^{1,t_2}_{u_1^{t_2}(t_1)}$, it is also conditionally independent of $\F_{\ulin t}$. So we have finished Step 1 of Case 1.

\ulin{Step 2.} We prove that the $\zeta_1^{\ulin t}$ in Step 1 is conditionally independent of $\lin\F_{\ulin t}$. When this is done, the proof of Case 1 is complete. 


Let $\Omega_2=\{( u_1^*(s_1,s_2),s_2):(s_1,s_2)\in{\cal D}_2\}$.
Define  $u_1^{*-1},\zeta_1^*,\ha v_1^* $ on $\Omega_2$ such that for any $s_2\in\R_+$, $u_1^{*-1}(\cdot,s_2)=u_1^*(\cdot,s_2)^{-1}$, $\zeta_1^*(\cdot,s_2)=\eta_1^*(u_1^{*-1}(\cdot,s_2),s_2)$, and $\ha v_1^*(\cdot,s_2)=\ha w_1^*(u_1^{*-1}(\cdot,s_2),s_2)$. Then $\zeta_1^*$ and $\ha v_1^* $ are continuous on $\Omega_2$. For any $s_2\ge 0$,  $\zeta_1^*(\cdot,s_2)$ is the chordal Loewner curve driven by $\ha v_1^*(\cdot,s_2)$, and $\zeta_1^*(\cdot,s_2)$ and $\ha v_1^*(\cdot,s_2)$ are respectively equal to $\zeta_1^{s_2}$ and $\ha v_1^{s_2}$.

 From the construction of $\zeta_1^{\ulin t}(t)$, $t\ge 0$, its driving function, say $\ha v_1^{\ulin t}(t)$, is given by $\ha v_1^{\ulin t}(t):=\ha v_1^{t_2}(u_1^{t_2}(t_1)+t)-\ha v_1^{t_2}(u_1^{t_2}(t_1))$, $t\ge 0$. By discarding a null event, we may assume that $\ha v_1^{\ulin t}= \ha v_1^*(u_1^{*}(t_1,t_2)+\cdot,t_2)-\ha v_1^{*}(u_1^{*}(t_1,t_2),t_2)$. Similarly, we may assume that $u_1^{\ulin t}=u_1^{*-1}(u_1^{*}(t_1,t_2)+\cdot,t_2)$. Then $\ha v_1^{\ulin t}$ and $u_1^{\ulin t}$ are both continuous in $\ulin t$ by continuity of $\ha v_1^*,u_1^*,u_1^{*-1}$.

Choose $\ulin t^n\downarrow \ulin t$. Then $\lin{\F}_{\ulin t}\subset \bigcap_n \F_{\ulin t^n}$. Let $E_n= \{\ulin t^n\in{\cal D}_2\}\in \F_{\ulin t^n}$. Since ${\cal D}_2$ is open, we have $E\subset\liminf E_n$. For any $n\in\N$,   by the result of Step 1,  conditionally on $\F_{\ulin t^n}$ and $E_n$, $\ha v_1^{\ulin t^n}/\sqrt\kappa $ has the law of a standard Brownian motion.  By the continuity of $\ha v^{\ulin t}_1$ in $\ulin t$, $\ha v^{\ulin t^n}\to \ha v^{\ulin t}$ pointwise. By Lemma \ref{Lemma-indep}, $\ha v^{\ulin t}_1$ is conditionally independent of $\lin\F_{\ulin t}$ given $E$. The same is true for $\zeta_1^{\ulin t}$ since it is the chordal Loewner curve determined by $\ha v^{\ulin t}_1$.  This finishes Step 2.

\ulin{Case 2.}  $\ulin T$ takes  values in a countable set $S$. This follows easily from Case 1. 

\ulin{Case 3.} $T_2$ takes values in a countable set $S_2$. For $n\in\N$, define $\ulin T^n=(T^n_1,T_2)$ such that $T_1^n=2^{-n}\lceil 2^n T_1\rceil$. Then each $\ulin T^n$ is an $\lin\F$-stopping time taking values in a countable set. By the results of Cases  1 and 2, for each $n$,  on the event $E_n:=\{\ulin T^n\in{\cal D}_2\}$ there are a chordal Loewner curve $\zeta^{\ulin T^n}_1$, whose driving function is given by $\ha v_1^{\ulin T^n}:=\ha v_1^*(u_1^*(\ulin T^n)+\cdot,T_2)-\ha v_1^*(u_1^*(\ulin T^n),T_2)$, and an increasing homeomorphism $u_1^{\ulin T^n}:=u_1^{*-1}(u_1^*(\ulin T^n)+\cdot,T_2)$ from $[0,\infty]$ onto $[T_1^n,\phi_2(T_2)]$, such that a.s.\ $\eta_1\circ u^{\ulin T^n}_1=f^1_{\ulin T^n}\circ \zeta^{\ulin T^n}_1$ on $E_n$, and $\ha v^{\ulin T^n}_1/\sqrt\kappa $ is a standard Brownian motion conditionally independent of $\lin\F_{\ulin T^n}$ given the event $E_n$. 
Using the same argument as in Step 2 of Case 1, we know that $\ha v_1^{\ulin T}:=\ha v_1^*(u_1^*(\ulin T)+\cdot,T_2)-\ha v_1^*(u_1^*(\ulin T),T_2)$ is  conditionally independent of $\lin\F_{\ulin T}$ with the law of $\sqrt\kappa B$ given $E:=\{\ulin T\in {\cal D}_2\}=\bigcup_n E_n$. Thus, $\ha v_1^{\ulin T}$ a.s.\ generates a chordal Loewner curve $\zeta_1^{\ulin T}$, which has the law of a standard chordal SLE$_\kappa$, and is conditionally independent of $\lin\F_{\ulin T}$ given $E$. Since $\ha v_1^*(\cdot,T_2)$ is the driving function for $\zeta_1^*(\cdot, T_2)$, from the definitions of $\ha v_1^{\ulin T}$ and $f^{1,T_2}_{T_1}$ we know that $\zeta_1^*(u_1^*(\ulin T)+\cdot,T_2)=f^{1,T_2}_{ T_1}\circ \zeta_1^{\ulin T}$. Combining this with $\eta_1^*(\cdot,T_2)=\zeta_1^*(\cdot,T_2)\circ u_1^*(\cdot,T_2)$, $\eta_1=f^{2,J}_{T_2}\circ \eta_1^*(\cdot,T_2)$, $u^{\ulin T}_1=u_1^{*-1}(u_1^*(\ulin T)+t,T_2)$, and $f^1_{\ulin T}=f^{2,J}_{T_2}\circ f^{1,T_2}_{T_1}$, we get $\eta_1\circ u_1^{\ulin T}=f^1_{\ulin T}\circ \zeta_1^{\ulin T}$ a.s.\ on $\{\ulin T\in {\cal D}_2\}$.

\ulin{Case 4.} $T_1$ takes values in a countable set $S_1$. Swapping indices ``$1$'' and ``$2$'' and using the result of Case 3, we know that on the event $\{\ulin T\in{\cal D}_1\}$, there are
  \begin{itemize}
  \item an $\lin\F_{\ulin T}$-measurable conformal map $f^2_{\ulin T}$ on $\HH$   with continuation on $\lin \HH^\#$,
     \item  a standard chordal SLE$_\kappa$ curve $\zeta_2^{\ulin T} $ conditionally independent of $\lin\F_{\ulin T}$, and
    \item an increasing homeomorphism $u_2^{\ulin T}$ from $[0,\infty]$ onto $[T_2,\phi_1(T_1)]$,
\end{itemize}
such that  $\eta_2\circ u_2^{\ulin T} =f^2_{\ulin T} \circ \zeta_2^{\ulin T}$ a.s.\ on  $\{\ulin T\in{\cal D}_1\}$. Moreover, on the event $E_\cap:=\{\ulin T\in{\cal D}_{\cap}\}$, $f^2_{\ulin T}$ maps $\HH$ conformally onto the connected component $D^2_{\ulin T}$ of $\HH\sem (\eta_2[0,T_2]\cup J\circ \eta_1[0,T_1])$ which shares the prime ends $\eta_2(T_2)$ and $J\circ \eta_1(T_1)$ respectively with $\HH\sem K^2_{T_2}$ and $\HH\sem J(K^1_{T_1})$, and sends $0$ and $\infty$ respectively to the prime ends $\eta_2(T_2)$ and $J\circ \eta_1(T_1)$.

Suppose $E_\cap$ happens.
 We have $J(D^2_{\ulin T})=D^1_{\ulin T}$ since they are both a connected component of $\HH\sem (\eta_1[0,T_1]\cup \eta_2^J[0,T_2])$ that shares the prime ends $\eta_1(T_1)$ and $\eta_2^J(T_2)$ respectively with $\HH\sem K^1_{T_1}$ and $\HH\sem K^{2,J}_{T_2}$. Let  $f^{2,J}_{\ulin T}=J\circ f^2_{\ulin T}\circ J$. Then both $f^{2,J}_{\ulin T}$ and $f^1_{\ulin T}$ map $\HH$ conformally onto $J( D^2_{\ulin T})=D^1_{\ulin T}$,  send $0$ and $\infty$ respectively to the same prime ends $\eta_1(T_1)$ and $  \eta_2^J(T_2)$, and  are $\lin\F_{\ulin T}$-measurable. So there is an $\lin\F_{\ulin T}$-measurable random variable $a>0$ such that $f^{2,J}_{\ulin T}=f^1_{\ulin T}(\cdot/a)$.

Applying the reversibility of SLE to $\zeta^{\ulin T}_2$, we get another standard chordal SLE$_\kappa$ curve $\til \zeta^{\ulin T}_1$, which is also conditionally independent of $\lin\F_{\ulin T}$, and a decreasing auto-homeomorphism $\ha \phi_2$ of $[0,\infty]$, such that a.s.\ $J\circ \zeta^{\ulin T}_2=\til \zeta^{\ulin T}_1\circ \ha \phi_2$. Let $u_1^{\ulin T}(t)= \phi_2\circ u^{\ulin T}_2\circ \ha \phi_2^{-1}(a^2 t)$. Then  $u_1^{\ulin T}$ is an increasing homeomorphism from $[0,\infty]$ onto $[T_1,\phi_2(T_2)]$.
 Let $\zeta_1^{\ulin T}(t)=\til \zeta_1^{\ulin T}(a^2 t)/a$. Since $a$ is $\lin\F_{\ulin T}$-measurable, by the scaling property of chordal SLE$_\kappa$,  $\zeta_1^{\ulin T}$ is also a standard chordal SLE$_\kappa$ curve conditionally independent of $\lin\F_{\ulin T}$. Moreover, on $\{\ulin T\in{\cal D}_{\cap}\}$, a.s.\
 $$J\circ \eta_1\circ u_1^{\ulin T}=\eta_2\circ \phi_2^{-1}\circ u_1^{\ulin T}= \eta_2\circ   u^{\ulin T}_2\circ \ha \phi_2^{-1}(a^2 \cdot)= f^2_{\ulin T} \circ \zeta_2^{\ulin T}\circ \ha \phi_2^{-1}(a^2 \cdot)$$
 $$=f^{2}_{\ulin T} \circ J  \circ \til\zeta^{\ulin T}_1(a^2 \cdot) =f^{2}_{\ulin T} \circ J (a \zeta^{\ulin T}_1)=J\circ f^{2,J}_{\ulin T} (a \zeta^{\ulin T}_1)=J\circ f^1_{\ulin T}\circ \zeta_1^{\ulin T},$$  which implies that $\eta_1\circ u_1^{\ulin T}= f^1_{\ulin T}\circ \zeta_1^{\ulin T}$.   This completes Case 4.



\ulin{General Case.}  For $n\in\N$, define $\ulin T^n=(T^n_1,T_2)$ such that $T_1^n=2^{-n}\lceil 2^n T_1\rceil$.
Then each $\ulin T^n$ is an $\lin\F$-stopping time with the first variable taking values in a countable set. Let $E_0=\{\ulin T\in{\cal D}_{\cap}\}$ and $E_n=\{\ulin T^n\in{\cal D}_{\cap}\}$, $n\in\N$. By Case 4, for each $n$, on the event $E_n$, there are a standard chordal SLE$_\kappa$ curve $\zeta_1^{\ulin T^n}$, which is conditionally independent of $\lin\F_{\ulin T^n}$ given $E_n$, and an increasing homeomorphism $u_1^{\ulin T^n}$ from $[0,\infty]$ onto $[T_1^n,\phi_2(T_2)]$, such that $\eta_1\circ  u_1^{\ulin T^n}= f^1_{\ulin T^n}\circ  \zeta_1^{\ulin T^n}$ a.s.\ on $E_n$.

Suppose  $E_n$ happens. Then $T_1^n<\phi_2(T_2)$. Let $u_1^n=(u_1^{\ulin T^n})^{-1}(\cdot+T_1^n)$, which is an increasing homeomorphism from $[0,\phi_2(T_2)-T_1^n]$ onto $[0,\infty]$. Let $\ha v^{\ulin T^n}_1$ be the driving function for $\zeta^{\ulin T^n}_1$.
Let $\eta_1^{\ulin T^n}= \zeta_1^{\ulin T^n}\circ u_1^{n}$ and $\ha w_1^{\ulin T^n}= \ha v_1^{\ulin T^n}\circ u_1^{n}$. Then $\eta_1^{\ulin T^n}$ is the chordal Loewner curve with speed $du_1^n$ driven by $\ha w_1^{\ulin T^n}$. From a.s.\ $\eta_1\circ  u_1^{\ulin T^n}= f^1_{\ulin T^n}\circ  \zeta_1^{\ulin T^n}$  we get a.s.\ $\eta_1(T_1^n+\cdot)=f^1_{\ulin T^n}\circ \eta_1^{\ulin T^n}$.

Define $\eta_1^{T_2,n},u_1^{T_2,n},\ha w_1^{T_2,n}$ on $[0,\phi_2(T_2))$ such that, for $0\le t_1< T_1^n $, $\eta_1^{T_2,n}(t_1)=\eta_1^*(t_1,T_2)$, $u_1^{T_2,n}(t_1)=u_1^*(t_1,T_2)$, and $\ha w_1^{T_2,n}(t_1)=\ha w_1^*(t_1,T_2)$; and for $T_1^n \le t_1<\phi_2(T_2)$, $\eta_1^{T_2,n}(t_1)=f^{1,T_2}_{T_1^n}(\eta_1^{\ulin T^n}(t_1-T_1^n))$, $u_1^{T_2,n}(t_1)=u_1^*(\ulin T^n)+u_1^n(t_1-T_1^n)$, and $\ha w_1^{T_2,n}(t_1)=\ha w_1^*(\ulin T^n)+\ha w_1^{\ulin T^n}(t_1-T_1^n)$. Since $\eta_1^*(\cdot,t_2)$ is the chordal Loewner curve with speed $du_1^*(\cdot,t_2)$ driven by $\ha w_1^*(\cdot,t_2)$, $f^{1,T_2}_{T_1^n}$ is the centered Loewner map for $\eta_1^*(\cdot,t_2)$ at the time $T_1^n$, and $\eta_1^{\ulin T^n}$ is the chordal Loewner curve with speed $d u_1^n$ driven by $\ha w_1^{\ulin T^n}$, we see that  $\eta_1^{T_2,n}$ is the chordal Loewner curve with speed $du_1^{T_2,n}$ driven by $\ha w_1^{T_2,n}$.
From a.s.\ $\eta_1 =f^{2,J}_{T_2}\circ \eta_1^*(\cdot,T_2)$, $\eta_1(T_1^n+\cdot)= f^1_{\ulin T^n}\circ  \eta_1^{\ulin T^n}$, and $f^1_{\ulin T^n}=f^{2,J}_{T_2}\circ f^{1,T_2}_{T_1^n}$, we get $\eta_1(t)=f^{2,J}_{T_2}\circ \eta_1^{T_2,n}(t)$, $0\le t<\phi_2(T_2)$, a.s.\ on $E_n$.

Let $\zeta_1^{T_2,n}=\eta_1^{T_2,n}\circ (u_1^{T_2,n})^{-1}$ and $\ha v_1^{T_2,n}=\ha w_1^{T_2,n}\circ (u_1^{T_2,n})^{-1}$. Then $\zeta_1^{T_2,n}$ is the chordal Loewner curve driven by $\ha v_1^{T_2,n}$, and $\eta_1|_{[0,\phi_2(T_2))}=f^{2,J}_{T_2}\circ \zeta_1^{T_2,n}\circ u_1^{T_2,n}$ a.s.\ on $E_n$. The last equality implies that a.s.\ $\zeta_1^{T_2,n}=\zeta_1^{T_2,m}$ and $u_1^{T_2,n}=u_1^{T_2,m}$ on $E_n\cap E_m$ for any $n,m\in\N$.
Since $E_0\subset \liminf E_n$, we may define a Loewner curve $\zeta_1^{T_2}$ and a time-change function $u_1^{T_2}$ on $E_0$ such that for every $n\in\N$,  $\zeta_1^{T_2}=\zeta_1^{T_2,n}$ and $u_1^{T_2}=u_1^{T_2,n}$ a.s.\ on $ E_0\cap E_n$.  Then we have $\eta_1|_{[0,\phi_2(T_2))}=f^{2,J}_{T_2}\circ \zeta_1^{T_2}\circ u_1^{T_2}$ a.s.\ on $E_0$.  Let $\ha v_1^{T_2}$ be the driving function for $\zeta_1^{T_2}$. Then $v_1^{T_2}=v_1^{T_2,n}$ a.s.\ on $ E_0\cap E_n$.

By the definition of $u_1^{T_2,n}$, $(u_1^{T_2,n})^{-1}(u_1^*(\ulin T^n)+t_1)=T_1^n+(u_1^n)^{-1}(t_1)$. By the definition of $\ha w_1^{T_2,n}$, $\ha w_1^{T_2,n}(T_1^n+(u_1^n)^{-1}(t_1))=\ha w_1^*(\ulin T^n)+\ha w_1^{\lin T^n}\circ (u_1^n)^{-1}(t_1)$. By these two equalities and the definition of $\ha v_1^{T_2,n}$ and that $\ha w_1^{\lin T^n}=\ha v_1^{\lin T^n}\circ u_1^n$ we get $\ha v_1^{\ulin T^n}= \ha v_1^{T_2,n}(u_1^*(\ulin T^n)+\cdot)-v_1^{T_2,n}(u_1^*(\ulin T^n) ) $.  From the independence property of $\zeta_1^{\ulin T^n}$, we see that, conditionally on $\lin\F_{\ulin T^n}$ and the event $E_n$, $\ha v^{\ulin T^n}_1$ has the law of $(\sqrt\kappa B_t)$. It is clear that, a.s.\ on $E_0$, $\ha v^{\ulin T^n}_1\to \ha v^{\ulin T}_1:=\ha v_1^{T_2}( u_1^*(\ulin T)+\cdot)-\ha v_1^{T_2}(u_1^*(\ulin T))$ pointwise on $ \R_+$ since $\ha v_1^{T_2,n}=\ha v_1^{T_2}$ and $ u_1^{T_2,n}=\ha u_1^{T_2}$ for $n$ big enough. By Lemma \ref{Lemma-indep},   conditionally on $\lin\F_{\ulin T}$ and the event $E_0$, $\ha v^{\ulin T}_1 $ also has the law of $(\sqrt\kappa B_t)$. Thus, $\ha v^{\ulin T}_1$ generates a chordal Loewner curve $\zeta_1^{\ulin T}$ defined on $E_0$, which has the law of a standard chordal SLE$_\kappa$ conditionally on $\lin\F_{\ulin T}$ and $E_0$. Since $(f^{1,T_2}_t)_{t\ge 0}$ are the centered Loewner maps for $\eta_1^{T_2}=\zeta_1^{T_2}\circ u_1^*(\cdot,T_2)$, we have $\zeta_1^{T_2}(u_1^*(\ulin T)+\cdot)=f^{1,T_2}_{T_1}\circ \zeta_1^{\ulin T}$. Let $u_1^{\ulin T}(s)=(u_1^{T_2})^{-1}(s+u_1^*(\ulin T))$, $s\ge 0$. Then $u_1^{\ulin T}$ is an increasing homeomorphism from $[0,\infty]$ onto $[T_1,\phi_2(T_2)]$. Since $\eta_1=f^{2,J}_{T_2}\circ \zeta_1^{T_2}\circ u_1^{T_2}$ a.s.\ on $E_0$ and $f^1_{\ulin T}=f^{2,J}_{T_2}\circ f^{1,T_2}_{T_1}$, we then get $\eta_1\circ u_1^{\ulin T}=f^1_{\ulin T}\circ \zeta_1^{\ulin T}$ a.s.\ on $E_0$, as desired.
\end{proof}

\end{appendices}

\end{document}